         \newtheorem{definition}{Definition}[section]
	\newtheorem{remark}[definition]{Remark}
	\newtheorem{example}{Example}
	\newtheorem{proposition}[definition]{Proposition}
	\newtheorem{almost-proposition}[definition]{Almost-Proposition}
	\newtheorem{theorem}[definition]{Theorem}
	\newtheorem{corollary}[definition]{Corollary}
	\newtheorem{lemma}[definition]{Lemma}
	\newtheorem{assumption}[definition]{Assumption}
	\newtheorem*{acknowledgement}{Acknowledgements}
	\def\Hom{{\rm{Hom}}}
	\def\End{{\rm{End}}}
	\def\Aut{{\rm{Aut}}}
	\def\Out{{\rm{Out}}}
	\def\dim{{\rm{dim}}}
	\def\fin{{\rm{fin}}}
	\def\Free{{\rm{Free}}}
	\def\Tilt{{\rm{Tilt}}}
	\def\TrPic{{\rm{TrPic}}}
	\def\StPic{{\rm{StPic}}}
	\def\exp{{\rm{exp}}}
	\def\rad{{\rm{rad}}}
	\def\soc{{\rm{soc}}}	
	\def\id{{\rm{id}}}
	\def\Id{{\rm{Id}}}
	\def\Sph{{\rm{Sph}}}
	\def\Im{{\rm{Im}}}
	\def\Free{{\rm{Free}}}
	\def\St{{\rm{St}}}
	\def\Br{{\rm{Br}}}
	\def\min{{\rm{min}}}
	\def\ker{{\rm{ker}}}
	\def\Im{{\rm{Im}}}
	\def\opp{{\rm{opp}}}
	\def\Re{{\rm{Re}}}
	\def\Sph{{\rm{Sph}}}
	\def\Stab{{\rm{Stab}}}
	\def\CoStab{{\rm{CoStab}}}
	\def\Cone{{\rm{Cone}}}
	\def\Br{{\rm{Br}}}
	\def\P{{\rm{P}}}
	\def\Irr{{\rm{Irr}}}
	\def\Ho{{\rm{Ho^{b}}}}
\title{Stability conditions on Brauer tree algebras}
\author{L{\'e}o Dreyfus-Schmidt}
\address{Universti\'e Paris Denis Diderot - Institut de Math\'ematiques de Jussieu - Paris Rive Gauche, Bat\^iment Sophie Germain, 75205 Paris Cedex 13, France }
\email{leo.dreyfus-schmidt@imj-prg.fr}
\begin{document}
\maketitle

\begin{abstract}
We study the space of stability conditions attached to the derived category of $A_{n}\text{-mod}$ for $A_{n}$ the Brauer tree algebra of the line with $n$ edges. In particular we show that for the Brauer tree algebra $A_{3}$, the connected component of the natural heart of the space of stability conditions is simply connected. However, unlike known examples arising in geometry, the Bridgeland homomorphism is not a covering map. 
\end{abstract}

\setcounter{section}{-1}
\section{Introduction}
\subsection{Motivations}
We are here interested in spaces of stability conditions arising in modular representation theory. We study the space of stability conditions on the bounded derived category of the Brauer tree algebra associated to the line. These algebras arise in the study of cyclic defect blocks of group algebras, and they are also related to the \textit{zig-zag algebras} introduced by Huerfano and Khovanov (cf. [HuKh01]).
\\Motivated by Douglas's work on $\pi$-stability for Dirichlet branes, Bridgeland introduced the notion of stability conditions on a triangulated category. Indeed, he shows in [Bri07] that to any triangulated category $\mathcal{D}$, one can associate a complex manifold $\Stab(\mathcal{D})$, parametrising stability conditions on $\mathcal{D}$. This theory has proven particularly interesting in a wide variety of cases, from K3 surfaces to Kleinian singularities ([Bri09]) through bounded derived category of quivers (c.f [Qiu11] and [Ike14]). Closer to our interest, Thomas showed in [Tho06] that a connected component of the space of stability conditions of the perfect derived category of a certain dg-algebra is the universal cover of a configuration space and that the group of deck transformation is an Artin braid group. It appears that the Brauer tree algebras we are interested are un-dg versions of the dg-algebras considered by Thomas and hence this shall build our intuition.\\\\
We will start by introducing some classical material in Section 1 with in particular a quick overview of the theory of spherical objects in Section 1.3 and the theory of elementary perverse equivalences in Section 1.4. Along the way, we will point out an unfaithful action of the affine braid group of type $\tilde{A_{3}}$ on $\mathcal{D}^{b}(A_{\St_{3}})$, the derived category of the Brauer tree algebra $\St_{3}$ associated to the star with three edges and no exceptional vertex. We will make the connection in the case of symmetric algebras between tilted hearts and derived equivalences in Section 1.7. In Section 1.9 and 1.10, we will bring into play Bridgeland's theory of stability conditions.
Then, we will establish in Section 2 a strategy to try to determine the connected component of the natural heart of the Brauer tree algebra associated to the line with $n$ edges and no exceptional vertex. Finally in Section $2.4$, we will show that $\Stab^{0}(A_{3})$ is indeed simply connected and we will reach our most surprising result in Section $2.5$: the Bridgeland local homeomorphism is not a cover of its image. Last but not least, we will end in Section 3 by discussing what happens for the space of co-stability conditions, introduced by J\o rgensen and Pauksztello and see how this does not lead to anything interesting for symmetric algebras. Indeed, we shall see that the space of co-stability conditions of the bounded homotopy category of a symmetric algebras is actually empty.

\begin{acknowledgement}
I would like to thank Rapha\"el Rouquier for suggesting this very interesting problem. Most of this work has been done while visiting City University of London; I am particularly endebted to Joseph Chuang for his precious help and his constant support. Also, I wish to thank Geordie Williamson for many helpful remarks and comments.
\end{acknowledgement}

\subsection{Notations} Let $k$ be a field. We will always consider essentially small $k$-linear categories and we shall assume all subcategories are closed under isomorphisms. The group of autoequivalences of an additive category $\mathcal{C}$ is denoted by $\Aut(\mathcal{C})$. If $\mathcal{S}$ is a subcategory of $\mathcal{C}$, we define its right orthogonal as $\mathcal{S}^{\perp}:=\{M\in \mathcal{C}|\; \Hom(S,M)=0\;, \forall S \in \mathcal{S} \}$. For $\mathcal{A}$ an abelian category, $\mathcal{D}^{b}(\mathcal{A})$ is the bounded derived category of $\mathcal{A}$ while $\Ho(\mathcal{A})$ is its bounded homotopy category. Suppose $\mathcal{C}$ is triangulated with shift functor $[1]$. The distinguished triangles in $\mathcal{C}$ are denoted by diagrams
$$\xymatrix{ a \ar[rr] && b\ar[dl]
\\                   & c \ar@{-->}[ul]}
$$
where the dotted arrow denotes a map $c\rightarrow a[1]$. The categorical operation denoted by $\langle\; \rangle_{\mathcal{C}}$ is the closure under extension in $\mathcal{C}$. For $X$ a set of objects in $\mathcal{C}$, $\langle X \rangle$ is the thick subcategory of $\mathcal{C}$ generated by $X$. Let $A$ be a finite-dimensional $k$-algebra, $A^{\opp}$ will denote the opposite algebra of $A$. The category of finitely generated left $A$-modules (resp. projective) will be denoted by $A\text{-mod}$ (resp.\;$A\text{-proj}$). The corresponding stable module category will be denoted by $A\text{-stmod}$.  By $K_{0}(A)$, we denote the Grothendieck group of finitely generated $A$-modules; the isomorphism classes $[S]$ of the simple $A$-modules form a $\mathbb{Z}$-basis of $K_{0}(A)$. The group of outer automorphisms of $A$ is denoted by $\Out(A)$. The \textit{derived Picard group} $\TrPic(A)$ is the group of isomorphism classes of two-sided tilting complexes for $A\otimes A^{\opp}$ where the product of two classes is given by tensor product. Lastly, we denote by $\StPic(A)$, the \textit{stable Picard group}, defined as the group of projective-free $(A\otimes A^{\opp})$-modules inducing a self-stable equivalence of $A$. We refer to [RoZi03] for a development of the theory of derived Picard groups for derived categories.

\section{Our toolbox}
\subsection{Brauer tree algebras}
In a very pedantic way, we can define a planar tree $\Gamma$ as the data of $(\Gamma_{V},\Gamma_{E},\tau, \omega)$, where:
\begin{itemize}
\item $\Gamma_{V}$ and $\Gamma_{E}$ are finite sets (corresponding respectively to vertices and edges),
\item $\tau$ is a map from $\Gamma_{E}$ to the set of two-element subsets of $\Gamma_{V}$,
\item $\omega$ is the data, for every $s\in \Gamma_{V}$, of a transitive action of $\mathbb{Z}$ on $\Gamma_{E}^{s}$, the set of edges $e$ such that $s\in \tau(e)$ (\textit{i.e.} a cyclic ordering of the edges containing $s$),
\item for any $s,s'$ two vertices, there is a unique sequence $e_{1},\ldots,e_{n}$ of distincts edges such that $s\in \tau(e_{1}), s'\in \tau(e_{n})$ and $\tau(e_{i})\cap \tau(e_{i+1})\neq 0$.
\end{itemize}
A \textit{Brauer tree} is a tree $\Gamma$ together with a specified vertex $v\in \Gamma_{V}$, called the \textit{exceptional} vertex and a positive integer $m$, the \textit{multiplicity} of the exceptional vertex.\\
To a Brauer tree $(\Gamma,v,m)$, one associates a finite-dimensional symmetric $k$-algebra called a \textit{Brauer tree algebra} which is charaterized by the following properties, up to Morita equivalence.
\\The isoclasses of simple modules are parametrized by the edges of the tree. Let the edges of $\Gamma$ be labelled by $1,\ldots, n$ so that $S_{1}\ldots, S_{n}$ are the corresponding representatives of the simple modules. Denote by $P_{i}$ the projective cover of the simple module $S_{i}$. Then $\rad(P_{i})/\soc(P_{i})$ is the direct sum of two uniserial modules, associated with the vertices adjacent to the edge labelled by $i$. For such an adjacent vertex, let the cyclic ordering of the edges be $(i=i_{0},i_{1},\ldots,i_{l},i_{0})$. Then the composition factors of the corresponding uniserial module are, from top to socle, $S_{i_{1}},\ldots,S_{i_{l}},S_{i_{0}},S_{i_{1}},\ldots,S_{i_{l}}$ where the number of composition factors is $m(l+1)-1$ if the vertex labelled by $i$ is the exceptional vertex, and $l$ otherwise. When $m=1$, the choice of an exceptional vertex is irrelevant.\\
For the representation theorist, Brauer tree algebras are of great interest as any block with cyclic defect group of a group algebra is of this type (see [Ale86]). Moreover by a theorem of Rickard (cf. [Ric89]), two Brauer tree algebras with same number of edges and same multiplicity are derived equivalent. In particular, any Brauer tree algebra $(\Gamma,v,m)$ is derived equivalent to the Brauer tree algebra associated to the line with an exceptional vertex at an end having multiplicity $m$.\\
In what follows, we will only be considering Brauer trees where the multiplicity $m$ is 1, \textit{i.e.} there is no exceptional vertex and to a tree $\Gamma$, we will denote by $A_{\Gamma}$ its associated Brauer tree algebra. From now on, we will write $T_{n}$ for the Brauer tree of the line with $n$ edges and by $A_{n}$, its corresponding Brauer tree algebra (we will omit the index $n$ when there is no possible confusion) such that by denoting its simple modules $S_{i}$ for $i=1,\ldots, n$, we have:
$$T_{n}: \xymatrix{\bullet \ar@{-}[r]^{S_{1}} & \bullet \ar@{-}[r]^{S_{2}}& \bullet \cdots \bullet \ar@{-}[r]^{S_{n}}& \bullet}$$ 
Also in what follows, $\St_{n}$ will denote the Brauer tree of the star with $n$ edges.

\subsection{Self-derived equivalences of the line}
We recall here the main results of Rouquier-Zimmermann ([RoZi03]) on self-derived equivalences of $A_{n}$ the Brauer tree algebra of the line. From what was said above, the Loewy series of the projective indecomposable $A_{n}$-modules are as follows: $P_{1}=$ $ \left(\begin{matrix}
   S_{1}  \\
   S_{2}\\
   S_{1}
\end{matrix}\right)$, $P_{n}=$ $ \left(\begin{matrix}
   S_{n-1}  \\
   S_{n-2}\\
   S_{n-1}
\end{matrix}\right)$ and
$P_{i}= $ $ \left(\begin{matrix}
   & S_{i} &  \\
   S_{i-1} & &S_{i+1}\\
   & S_{i} &
\end{matrix}\right)$  for $i\neq 1, n$.\\
Let $f:  \stackrel[i=1]{n}{\bigoplus} P_{i}\otimes P_{i}^{*} \rightarrow A_{n}$ be a projective cover of the $(A_{n},A_{n})$-bimodule $A_{n}$. 
For $i=1\ldots n$, consider the two-term complex of $(A_{n},A_{n})$-bimodules $$X_{i}:=(0\rightarrow P_{i}\otimes P_{i}^{*}\rightarrow A_{n}\rightarrow 0),$$ where $A_{n}$ is in degree $0$ and the differential is given by $f$. This yields a self-derived equivalence $\phi_{i}:\mathcal{D}^{b}(A_{n}) \stackrel{\sim}{\rightarrow}  \mathcal{D}^{b}(A_{n})$. We have $\phi_{i}(S_{j})=S_{j}$ for $j\neq i$ and $\phi_{i}(S_{i})=\Omega(S_{i})[1]$ where  the Heller translate $\Omega(S_{i})$ is the kernel of the projective cover of $S_{i}$. \\We denote by $\Sph(A_{n})$ the subgroup of $\Aut(\mathcal{D}^{b}(A_{n}))$ generated by $\phi_{1},\ldots, \phi_{n}$. The following two results will be of great importance to us. We implicitly identify $\Sph(A_{n})$ with its image in $\TrPic(A_{n})$.

\begin{proposition}([RoZi03, Proposition 4.4])
We have $\Sph(A_{n}) \cap \Out(A_{n})=1.$
\end{proposition}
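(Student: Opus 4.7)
The plan: Let $\phi \in \Sph(A_{n}) \cap \Out(A_{n})$. Since $\phi$ is an outer automorphism, it is induced by an algebra automorphism of $A_{n}$, so it preserves the natural heart $A_{n}\text{-mod} \subset \mathcal{D}^{b}(A_{n})$ and permutes the isoclasses of simples: there is $\sigma \in \mathfrak{S}_{n}$ with $\phi(S_{i}) \simeq S_{\sigma(i)}$ (as modules concentrated in degree zero) for each $i$. I aim to show that the only element of $\Sph(A_{n})$ enjoying this property is $\Id$.

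First I would pass to the Grothendieck group. The matrix of $\phi$ on $K_{0}(A_{n})$ in the basis $\{[S_{1}],\ldots,[S_{n}]\}$ must be the permutation matrix $P_{\sigma}$. Writing $\phi = \phi_{i_{r}}\cdots \phi_{i_{1}}$, the same matrix is a product of the reflections $M_{\phi_{i}} : [S_{i}] \mapsto [S_{i}] - [P_{i}]$, where $[P_{i}]$ is read off the given Loewy structure of the indecomposable projectives. I would then identify the subgroup $W \subset \mathrm{GL}(K_{0}(A_{n}))$ generated by the $M_{\phi_{i}}$ with the (affine) reflection group naturally associated with the Cartan matrix of $A_{n}$, and classify those elements of $W$ which are permutation matrices in our basis: a direct argument (using a length function, or the structure of a Coxeter group) should show that apart from the identity the only candidate is the flip $\sigma_{0}: i \mapsto n+1-i$, coming from the unique non-trivial symmetry of the Brauer tree $T_{n}$.

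The core of the argument is then to rule out the possibility $\sigma = \sigma_{0}$. Each $\phi_{i}$ fixes $S_{j}$ on the nose for $j\neq i$ and sends $S_{i}$ to $\Omega(S_{i})[1]$, introducing a shift of $+1$ that cannot be cancelled cheaply. I would run an induction on the word length of $\phi$ in the $\phi_{i}$'s --- or equivalently a Garside-type normal form argument in the braid group naturally covering $\Sph(A_{n})$ --- showing that any word whose class in $W$ is $\sigma_{0}$ sends $S_{1}$ to $S_{n}[m]$ with $m\neq 0$. Already the base case $n=2$ is instructive: one computes directly $\phi_{1}\phi_{2}\phi_{1}(S_{1}) \simeq S_{2}[2]$, not $S_{2}$, so this composition realises $\sigma_{0}$ on $K_{0}$ but is not an outer automorphism. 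This contradicts $\phi(S_{1}) \simeq S_{n}$ in degree zero.

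Therefore $\sigma$ must be the identity, so $\phi(S_{i}) \simeq S_{i}$ for every $i$; an algebra automorphism fixing every simple up to isomorphism is inner, giving $\phi = 1$ in $\Out(A_{n})$, and hence in $\TrPic(A_{n})$. The main obstacle I anticipate is the degree-shift bookkeeping of the third step, since it genuinely uses the derived (not merely Grothendieck-level) structure of $\Sph(A_{n})$. In practice this is the point where one must bring in the faithfulness of the spherical twist action on $\mathcal{D}^{b}(A_{n})$, which is itself the central content of the cited result of Rouquier--Zimmermann.
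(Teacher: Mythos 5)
Your argument breaks down at the very last step. The claim that ``an algebra automorphism fixing every simple up to isomorphism is inner'' is false for these algebras: $\Out(A_{n})$ contains a torus of non-inner diagonal automorphisms (rescale the arrows of the quiver presenting $A_{n}$), and every one of them fixes each $S_{i}$ up to isomorphism. So even granting your two reflection-group steps and the exclusion of the flip, you have only shown that any $\phi\in\Sph(A_{n})\cap\Out(A_{n})$ acts trivially on the set of isoclasses of simples and on $K_{0}(A_{n})$; this does not yield $\phi=1$ in $\Out(A_{n})$. Distinguishing elements of $\Sph(A_{n})$ from outer automorphisms that are invisible on simples and on the Grothendieck group is precisely the hard content of the statement, and your proposal has no mechanism for it.

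The paper cites the proof from [RoZi03] but recapitulates it inside the proof of Proposition 2.2, and it resolves exactly this point by passing to the stable Picard group rather than to $K_{0}$: the natural map $\Out(A_{n})\rightarrow\StPic(A_{n})$ is injective, because for $\varphi$ non-inner the twisted bimodule $(A_{n})_{\varphi}$ is a non-projective indecomposable bimodule not isomorphic to $A_{n}$, hence not stably isomorphic to it; on the other hand each generator $\phi_{i}$ of $\Sph(A_{n})$ is given by the two-term complex $P_{i}\otimes P_{i}^{*}\rightarrow A_{n}$, which differs from the identity bimodule by a projective bimodule and therefore induces the identity on $A_{n}\text{-stmod}$. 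Hence $\Sph(A_{n})$ maps to $1$ in $\StPic(A_{n})$ while $\Out(A_{n})$ embeds, and the intersection is trivial; note that this makes your Grothendieck-group analysis unnecessary. As a secondary remark, your step ruling out the flip is itself only a programme: you would need to control the entire fibre of $\Sph(A_{n})$ over $\sigma_{0}$ (pure braids act trivially on $K_{0}$ but not on objects), and the faithfulness you invoke is Theorem 1.3 (Khovanov--Seidel), which is not the content of [RoZi03, Proposition 4.4] and is logically independent of it.
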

In [RoZi03], the authors show that the $\phi_{i}$'s define a weak braid group action on $\mathcal{D}^{b}(A_{n})$. We recall the classical presentation of the Artin braid group on $n+1$ strings: $$\Br_{n+1}=\{s_{1}, \ldots,s_{n} \vert \; s_{i}s_{i+1}s_{i}=s_{i+1}s_{i}s_{i+1}, \; s_{i}s_{j}=s_{j}s_{i} \; \text{for}\; |i-j|> 1 \}.$$
\begin{theorem}([RoZi03, Theorem 4.5])
There is a surjective group morphism $\Br_{n+1}\rightarrow \Sph(A_{n})$, $s_{i}\mapsto \phi_{i}$. Also, if $\omega_{0}$ is the longest element of $\mathfrak{S}_{n}$, we have $\omega_{0}^{2}\mapsto[2n]$. 
\end{theorem}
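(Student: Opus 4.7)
My plan breaks into three parts: the braid relations among the $\phi_i$, which produce the morphism; surjectivity, which is automatic; and the shift identity $\omega_0^2\mapsto[2n]$. Surjectivity needs no argument: $\Sph(A_n)$ is by definition the subgroup of $\Aut(\mathcal{D}^{b}(A_n))$ generated by $\phi_1,\ldots,\phi_n$, so any well-defined map sending $s_i$ to $\phi_i$ is automatically surjective onto $\Sph(A_n)$.

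For the relations I would work at the level of the two-sided complexes $X_i$. The combinatorial input is the geometry of the line $T_n$: two edges $i,j$ with $|i-j|>1$ share no vertex, so $\Hom_{A_n}(P_i,P_j)=e_i A_n e_j=0$, while adjacent edges share exactly one vertex, giving a one-dimensional $\Hom$-space between the corresponding projectives. The vanishing $e_i A_n e_j=0$ kills the bottom term of the tensor product $X_i\otimes_{A_n} X_j$, so that $X_i\otimes_{A_n} X_j$ and $X_j\otimes_{A_n} X_i$ reduce to manifestly isomorphic bimodule complexes, yielding $\phi_i\phi_j\cong\phi_j\phi_i$. The braid relation $\phi_i\phi_{i+1}\phi_i\cong\phi_{i+1}\phi_i\phi_{i+1}$ I would handle by expanding both triple tensor products using the Loewy structure of $P_i, P_{i+1}$ recalled above and constructing an explicit chain map between them, whose components are built from the adjunction data of the projective covers $f$.

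For the identity $\omega_0^2\mapsto[2n]$, since $\omega_0^2$ is central in $\Br_{n+1}$ its image commutes with every $\phi_i$. I would fix a reduced expression $\omega_0=(s_1)(s_2s_1)\cdots(s_n\cdots s_1)$ and compute directly on each simple $S_k$ by iterating the rules $\phi_i(S_i)=\Omega(S_i)[1]$ and $\phi_i(S_j)=S_j$ for $j\neq i$, resolving each Heller translate $\Omega(S_i)$ that appears via the Loewy series of the corresponding projective. Careful bookkeeping of shifts through both factors of $\omega_0^2$ yields $S_k[2n]$ on every simple, which—together with the fact that $\omega_0^2$ commutes with all $\phi_i$ and that the simples generate $\mathcal{D}^{b}(A_n)$ as a triangulated category—forces the autoequivalence to coincide with $[2n]$.

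The principal obstacle is the braid relation, where one genuinely needs an isomorphism of bimodule complexes and not merely equality of induced classes in $K_0$ or of functors on a single object. To avoid a brute differential-chase I would appeal to the principle that a two-sided tilting complex is controlled by its class in $\TrPic(A_n)$ together with the induced functor on a tilting object; this reduces the bimodule check to computing the action of both triple compositions on the indecomposable projectives $P_j$, which is tractable from the explicit formula for $\phi_i$.
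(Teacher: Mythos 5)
The paper does not actually prove this statement: it is quoted from [RoZi03, Theorem 4.5], with Proposition 1.1 and the Seidel--Thomas machinery of Section 1.3 recalled alongside it as the ingredients of Rouquier and Zimmermann's argument. So your proposal must stand on its own. The easy parts do: surjectivity is tautological, and the commutation $\phi_i\phi_j\cong\phi_j\phi_i$ for $|i-j|>1$ follows as you say, since $\Hom_{A_n}(P_i,P_j)=0$ kills the degree $-2$ term of $X_i\otimes_{A_n}X_j$ and leaves two manifestly isomorphic bimodule complexes.

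The genuine gap is in your final ``reduction'', and it infects both the braid relation and the $\omega_0^2\mapsto[2n]$ step. A two-sided tilting complex is \emph{not} determined by the induced one-sided complex, i.e.\ by its effect on the projectives $P_j$, and an autoequivalence is not determined by its values on the simples: two two-sided tilting complexes with isomorphic one-sided restrictions differ by twisting by an automorphism of $A_n$, and $\Out(A_n)$ is nontrivial (it contains a torus rescaling arrows, which fixes every simple and every projective up to isomorphism). So checking both triple products on objects only yields $\phi_i\phi_{i+1}\phi_i\cong\phi_{i+1}\phi_i\phi_{i+1}\circ\alpha$ for some $\alpha\in\Out(A_n)$, and likewise your computation on simples identifies the image of $\omega_0^2$ only as $[2n]\circ\beta$ with $\beta$ an outer automorphism fixing the simples; the claim that ``the simples generate $\mathcal{D}^{b}(A_n)$, hence the functor is $[2n]$'' is a standard fallacy, since a functor can fix all objects up to isomorphism without being isomorphic to a shift. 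The missing ingredient is exactly Proposition 1.1, $\Sph(A_n)\cap\Out(A_n)=1$ (proved in [RoZi03] via the stable Picard group, where each $\phi_i$ acts trivially but a non-inner automorphism does not): for the braid relation the discrepancy $\alpha=(\phi_{i+1}\phi_i\phi_{i+1})^{-1}\phi_i\phi_{i+1}\phi_i$ lies in $\Sph(A_n)\cap\Out(A_n)$ and hence vanishes. Alternatively, prove the relation honestly at the level of functors via the Seidel--Thomas intertwining isomorphism $\Phi\phi_E\Phi^{-1}\cong\phi_{\Phi(E)}$ together with $\phi_{P_i}\phi_{P_{i+1}}(P_i)\cong P_{i+1}$ (Proposition 1.7 of the paper), which avoids any differential chase. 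For $\omega_0^2\mapsto[2n]$ the residual outer automorphism must likewise be killed by an argument you have not supplied; note that Brauer tree algebras are periodic, so even the stable-category criterion needs care once shifts are involved. As written, your argument establishes all the nontrivial relations only up to an undetermined outer automorphism.
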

In the case of $A_{2}$, the Brauer tree algebra with only two edges and no exceptional vertex, Rouquier and Zimmermann showed that the previous map is also injective and hence an isomorphism. More generally for any $n\geq 2$, the faithfulness of the braid group action was proved by Khovanov and Seidel in [KhSe02] using geometric techniques.

\begin{theorem}([KhSe02])
The group morphism $\Br_{n+1}\rightarrow \Aut(\mathcal{D}^{b}(A_{n}))$, $\sigma_{i}\mapsto \phi_{i}$ is injective.
\end{theorem}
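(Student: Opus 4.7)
The approach is the one of Khovanov--Seidel: build a geometric model for $\mathcal{D}^{b}(A_{n})$ and pull faithfulness back from the classical faithfulness of the action of the mapping class group on isotopy classes of arcs on a disk. Fix a disk $D$ with $n+1$ marked points and let $\mathrm{MCG}$ denote its mapping class group relative to these points. It is classical that $\mathrm{MCG}\cong \Br_{n+1}$, with $\sigma_{i}$ corresponding to the positive half-twist around a fixed arc $\gamma_{i}$ between the $i$-th and $(i+1)$-th marked point, and that in this model the action on the set of isotopy classes of admissible arcs in $D$ is faithful.

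The first step is to construct an assignment $\gamma\mapsto L_{\gamma}\in\mathcal{D}^{b}(A_{n})$ from isotopy classes of (appropriately graded) arcs to objects of the derived category, sending the distinguished reference arcs $\gamma_{1},\ldots,\gamma_{n}$ to the simple modules $S_{1},\ldots,S_{n}$. Using the explicit two-term bimodule complex $X_{i}$ defining $\phi_{i}$, a direct calculation on arcs crossing $\gamma_{i}$ should show that this assignment intertwines the half-twist action on arcs with the spherical twist action on objects, \emph{i.e.} $L_{\sigma_{i}\cdot\gamma}\cong \phi_{i}(L_{\gamma})$. The second and crucial step is to prove that the total graded Hom dimension
$$\sum_{p\in\mathbb{Z}}\dim_{k}\Hom_{\mathcal{D}^{b}(A_{n})}(L_{\gamma},L_{\gamma'}[p])$$
computes, up to a standard correction for shared endpoints, the minimal geometric intersection number $i(\gamma,\gamma')$. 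The natural route is to reduce to local models around each transverse intersection and to argue that the resulting double complex has no cancellation, so that the algebraic count matches the geometric one on the nose.

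Granting this, suppose $\beta\in \Br_{n+1}$ maps to the identity of $\Aut(\mathcal{D}^{b}(A_{n}))$. Then for every pair $i,j$ and every shift $p$ one has $\dim_{k}\Hom(L_{\beta\cdot\gamma_{i}},L_{\gamma_{j}}[p]) = \dim_{k}\Hom(L_{\gamma_{i}},L_{\gamma_{j}}[p])$, hence by the intersection formula $\beta\cdot\gamma_{i}$ and $\gamma_{i}$ share the same intersection pattern with the complete arc system $\{\gamma_{j}\}_{j}$; a standard rigidity lemma for arcs on surfaces then forces $\beta\cdot\gamma_{i}$ to be isotopic to $\gamma_{i}$ for every $i$, and faithfulness of $\mathrm{MCG}$ on arc systems yields $\beta=1$. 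The technical heart of the argument, and the obstacle one spends most energy on, is precisely the identification of Hom dimensions with geometric intersection numbers: setting up the category of arcs so that $L_{\gamma}$ is manifestly well-defined on isotopy classes, producing a canonical graded lift of every transverse intersection so that cohomological degree matches a grading on the disk side, and ruling out unforeseen cancellations in the complex that computes $\Hom(L_{\gamma},L_{\gamma'}[\bullet])$. The compatibility with $\phi_{i}$ in step one is a finite and essentially combinatorial verification; the main creative input is the construction of the geometric side itself.
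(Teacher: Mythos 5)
The paper offers no proof of this statement: Theorem 1.3 is imported from [KhSe02], and the text simply records that Khovanov and Seidel proved faithfulness ``using geometric techniques.'' So the only meaningful comparison is with the Khovanov--Seidel argument itself, and your outline is indeed that argument: a disk with $n+1$ marked points, an assignment from (graded) arcs to objects of the derived category intertwining half-twists with the $\phi_{i}$, an identification of total Hom dimension with geometric intersection number, and a rigidity statement for the mapping class group action on arcs. As a roadmap it is the right one; as a proof it is a plan, since the two steps you flag as ``technical'' (well-definedness of $\gamma\mapsto L_{\gamma}$ on isotopy classes, the intertwining with $\phi_{i}$, and above all the no-cancellation statement for the complex computing $\Hom(L_{\gamma},L_{\gamma'}[\bullet])$) are essentially the entire content of [KhSe02].

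There is one point where your setup, as written, would actually fail. You send the reference arcs $\gamma_{1},\ldots,\gamma_{n}$ to the simple modules $S_{1},\ldots,S_{n}$. In Khovanov--Seidel the objects $L_{\gamma}$ are bounded complexes of \emph{projective} modules and the basic arcs correspond to the indecomposable projectives $P_{i}$; this matches the present paper, where $\phi_{i}$ is the twist along the $0$-spherical object $P_{i}$ (Remark 1.10), not along $S_{i}$. The distinction is not cosmetic: $A_{n}$ is symmetric and not semisimple, so each $S_{i}$ is $\Omega$-periodic and $\sum_{p}\dim_{k}\ext^{p}(S_{i},S_{j})$ is infinite. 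Hence the proposed equality between the total graded Hom dimension and a finite geometric intersection number cannot hold with $L_{\gamma_{i}}=S_{i}$; it is only for perfect complexes that the total Hom is finite and the formula $\sum_{p}\dim_{k}\Hom(L_{\gamma},L_{\gamma'}[p])=2I(\gamma,\gamma')$ makes sense (and is the hard theorem of [KhSe02]). A second, smaller gap is in your last step: you test $\beta\cdot\gamma_{i}$ only against the $n$ basic arcs, and geometric intersection numbers with this finite, non-filling collection do not in general determine an isotopy class of arc. The argument in [KhSe02] instead uses that a trivially acting $\beta$ preserves $I(\beta\cdot\gamma,\gamma')$ for \emph{all} pairs of admissible arcs, from which $\beta=1$ does follow by the Alexander method.
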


\subsection{Auto-equivalences and braid groups}
\subsubsection{Spherical objects and twist functors}
We quickly review the definitions and main properties of the theory of spherical objects introduced in [SeTh01]. We will denote by $\mathcal{D}$ the bounded derived category of an abelian category or of a dg-category. One important feature distinguishing $\mathcal{D}$ from arbitrary triangulated categories is that cones are functorial.

\begin{definition}
Let $E\in \mathcal{D}$ a bounded complex and suppose that $\Hom^{\bullet}(-,E)$ and $\Hom^{\bullet}(E,-)$ have finite total dimension. Let $n\in \mathbb{N}^{*}$, we say that $E$ is $n$-spherical if
\begin{enumerate}
\item $\Hom^{i}(E,E)$ is equal to $k$ for $i=0$ and $i=n$, and zero otherwise.
\item The pairing $\Hom^{j}(F,E)\times \Hom^{n-j}(E,F)\rightarrow \Hom^{n}(E,E)\simeq k$ is nondegenerate for all $F\in \mathcal{D}$ and $j\in \mathbb{Z}$.
\end{enumerate}
To a spherical object, we associate a \textit{twist} functor $\phi_{E}:\mathcal{D}\rightarrow \mathcal{D}$ by $\phi_{E}(F)=\Cone(\Hom(E,F)\otimes E \rightarrow F)$.
\end{definition}

However the case $n=0$ has to be treated separately. A $0$-spherical object $E$ is such that $\Hom^{\bullet}(E,E)$ is two-dimensional and concentrated in degree zero and the pairings $\Hom^{j}(F,E)\times \Hom^{-j}(E,F)\rightarrow \Hom^{0}(E,E)/ k\cdot\id_{E}$ are nondegenerate.

\begin{definition}
An $(A_{m})$-configuration in $\mathcal{D}$ is a collection of $n$-spherical objects $(E_{1},\cdots,E_{m})$ satisfying
$$\dim_{k}\Hom_{\mathcal{D}}^{\bullet}(E_{i},E_{j})= \begin{cases}1 \; \vert i-j \vert=1,
  \\ 0\; \vert i-j\vert \geq 2.
  \end{cases}$$
\end{definition}

\begin{theorem}([SeiTho])
The twist $\phi_{E}$ along a spherical object $E$ is an exact auto-equivalence of $\mathcal{D}$. Moreover if $(E_{1},\cdots,E_{m})$ is an $(A_{m})$-configuration, we have
$\Br_{m+1}\rightarrow \Aut(\mathcal{D})$, $\sigma_{i}\mapsto \phi_{E_{i}}$.
\end{theorem}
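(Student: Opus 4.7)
The plan is to prove the two assertions separately: first that $\phi_{E}$ is an autoequivalence of $\mathcal{D}$, then that the braid relations hold when we have an $(A_{m})$-configuration.

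For the first part, I would construct a candidate inverse $\phi_{E}'$ by the \emph{dual} twist, $\phi_{E}'(F) := \Cone(F \to \Hom^{\bullet}(F,E)^{*} \otimes E)[-1]$, where the map is the co-evaluation coming from the nondegenerate Serre-type pairing of the spherical condition. Exactness of $\phi_{E}$ is immediate from the functoriality of cones in $\mathcal{D}$ noted just before the definition. A direct computation using the defining triangle $\Hom^{\bullet}(E,E)\otimes E \to E \to \phi_{E}(E)$ and the fact that $\Hom^{\bullet}(E,E) = k \oplus k[-n]$ gives $\phi_{E}(E) \cong E[1-n]$; and on the right orthogonal $E^{\perp} = \{F \mid \Hom^{\bullet}(E,F)=0\}$, the same triangle collapses, so $\phi_{E}\vert_{E^{\perp}} \cong \id$. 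The $n$-spherical hypothesis produces a semi-orthogonal decomposition $\mathcal{D} = \langle E^{\perp}, \langle E\rangle \rangle$, and checking that $\phi_{E}$ preserves both pieces and restricts to an equivalence on each (trivial on $E^{\perp}$, a shift on $\langle E \rangle$) forces $\phi_{E}$ to be a global autoequivalence. Alternatively one can directly check, via the octahedron axiom, that $\phi_{E}\circ\phi_{E}' \cong \id \cong \phi_{E}'\circ\phi_{E}$.

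For the braid group morphism, the key general input is the conjugation formula: for any autoequivalence $\Psi$ of $\mathcal{D}$ and any spherical object $E$, one has $\Psi\, \phi_{E}\, \Psi^{-1} \cong \phi_{\Psi(E)}$, which follows at once from the functorial construction of the twist. The commutation relation for $|i-j|\geq 2$ is then immediate: in an $(A_{m})$-configuration $\Hom^{\bullet}(E_{i},E_{j})=0$ for non-adjacent indices, so the defining triangle of $\phi_{E_{i}}$ applied to $E_{j}$ collapses and $\phi_{E_{i}}(E_{j}) \cong E_{j}$; plugging this into the conjugation formula yields $\phi_{E_{i}}\,\phi_{E_{j}} \cong \phi_{E_{j}}\,\phi_{E_{i}}$.

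The main obstacle is the braid relation $\phi_{E_{i}}\,\phi_{E_{i+1}}\,\phi_{E_{i}} \cong \phi_{E_{i+1}}\,\phi_{E_{i}}\,\phi_{E_{i+1}}$ for adjacent indices. By the conjugation formula this reduces to the isomorphism of spherical objects $\phi_{E_{i}}(E_{i+1}) \cong \phi_{E_{i+1}}^{-1}(E_{i})$ (up to shift). Both sides can be identified explicitly with a shift of the cone of the unique (up to scalar) nonzero morphism between $E_{i}$ and $E_{i+1}$ coming from the one-dimensional $\Hom^{\bullet}(E_{i},E_{i+1})$; the fact that these two cones coincide once the correct duality is tracked is exactly a consequence of the nondegeneracy of the $n$-spherical pairing. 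This computation is where all the geometric content of the spherical hypothesis is used, and once it is in place, iterated application of the conjugation formula delivers the braid relation and hence a well-defined group morphism $\Br_{m+1} \to \Aut(\mathcal{D})$ sending $\sigma_{i} \mapsto \phi_{E_{i}}$.
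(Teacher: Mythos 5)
First, note that the paper does not prove this theorem: it is quoted from [SeTh01], and the only thing the paper isolates is Proposition 1.7 (the commutation and braid relations for a pair of spherical objects with $\Hom^{\bullet}$ of total dimension $0$ or $1$) as the key input. Your second half is a faithful sketch of exactly that argument: the conjugation formula $\Psi\phi_{E}\Psi^{-1}\simeq\phi_{\Psi(E)}$, the collapse of the defining triangle for non-adjacent indices, and the reduction of the braid relation to $\phi_{E_{i}}(E_{i+1})\simeq\phi_{E_{i+1}}^{-1}(E_{i})$ up to shift (which indeed rearranges to the braid relation, and both sides are the cone on the unique nonzero map) --- this is correct and is where the paper's Proposition 1.7 lives.

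The genuine gap is in your first part. The claimed semi-orthogonal decomposition $\mathcal{D}=\langle E^{\perp},\langle E\rangle\rangle$ does not exist: condition (2) in the definition of a spherical object is a Calabi--Yau-type duality which forces $E^{\perp}={}^{\perp}\langle E\rangle$, so such a decomposition would split $\mathcal{D}$ orthogonally, and concretely $\phi_{E}(F)$ does \emph{not} lie in $E^{\perp}$ --- applying $\Hom^{\bullet}(E,-)$ to the defining triangle gives $\Hom^{\bullet}(E,\phi_{E}(F))\simeq\Hom^{\bullet}(E,F)[1-n]\neq 0$ whenever $\Hom^{\bullet}(E,F)\neq 0$. (This is visible already for $E=\mathcal{O}_{X}$ on a K3 surface, whose derived category admits no nontrivial semi-orthogonal decomposition.) Moreover, even granting some generation statement, knowing that $\phi_{E}$ is the identity on $E^{\perp}$ and a shift on $E$ does not control the cross-Hom spaces and so cannot by itself yield fully faithfulness. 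The correct route is the one you relegate to ``alternatively'': showing directly that the dual twist $\phi_{E}'$ is a two-sided inverse. But that is precisely the nontrivial content of the theorem --- one must prove the unit and counit triangles degenerate, and this is where the nondegeneracy of the pairing in condition (2) is used in an essential way --- and in your write-up it remains an assertion rather than an argument. As it stands, the autoequivalence half of the statement is not established.
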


The proof of this theorem relies on the following proposition which allows one to extend this result to more general types of configurations. Indeed for $\Gamma$ a tree, we can define a $\Gamma$-configuration as a collection of spherical objects $(E_{i})_{i\in \Gamma_{E}}$ in $\mathcal{D}$, such that if $i$ and $j$ share a vertex then $\dim_{k}\Hom_{\mathcal{D}}^{\bullet}(E_{i},E_{j})=1$ and is $0$ otherwise.

 \begin{proposition}
Let $E, F\in \mathcal{D}$ be two $n$-spherical objects.\\
If the total dimension of $\Hom^{\bullet}(E,F)$ is zero, then $\phi_{E} \phi_{F}\simeq \phi_{F} \phi_{E}$.\\
If the total dimension of $\Hom^{\bullet}(E,F)$ is one, $\phi_{E} \phi_{F} \phi_{E}\simeq \phi_{F} \phi_{E} \phi_{F}$.
\end{proposition}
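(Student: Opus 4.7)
The plan is to reduce both statements to the equivariance property of spherical twists and then to verify one identity of spherical objects directly.

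First I record a key lemma: for any autoequivalence $\Psi$ of $\mathcal{D}$ and any spherical object $G$, we have $\Psi\,\phi_{G}\,\Psi^{-1} \simeq \phi_{\Psi(G)}$. This is immediate from the definition, because $\Psi$ preserves the $\Hom^{\bullet}$ groups and (in our enhanced setting) commutes with cones: unwinding, $\Psi\,\phi_{G}\,\Psi^{-1}(X)$ is the cone of $\Hom^{\bullet}(G,\Psi^{-1}X)\otimes \Psi(G)\to X$, which rewrites as the cone of $\Hom^{\bullet}(\Psi(G),X)\otimes \Psi(G)\to X$.

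For the commutation case, since $E$ and $F$ are $n$-spherical, Serre duality gives $\Hom^{\bullet}(F,E)\simeq \Hom^{\bullet}(E,F)^{*}[-n]$, so the total dimension of $\Hom^{\bullet}(F,E)$ also vanishes. Applying $\Hom^{\bullet}(E,-)$ to the defining triangle $\Hom^{\bullet}(F,X)\otimes F\to X\to \phi_{F}(X)$ and using the vanishing of $\Hom^{\bullet}(E,F)$ yields a natural isomorphism $\Hom^{\bullet}(E,\phi_{F}(X))\simeq \Hom^{\bullet}(E,X)$. Hence $\phi_{E}\phi_{F}(X)$ is the cone of $\Hom^{\bullet}(E,X)\otimes E\to \phi_{F}(X)$, and using the octahedral axiom together with the functoriality of cones one identifies both $\phi_{E}\phi_{F}(X)$ and $\phi_{F}\phi_{E}(X)$ with the iterated cone
\[
\Cone\bigl(\Hom^{\bullet}(E,X)\otimes E\;\oplus\;\Hom^{\bullet}(F,X)\otimes F\;\longrightarrow\;X\bigr),
\]
giving a natural isomorphism of functors.

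For the braid case, apply the equivariance lemma successively:
\begin{align*}
\phi_{E}\phi_{F}\phi_{E}\simeq \phi_{F}\phi_{E}\phi_{F}
&\iff \phi_{E}\phi_{F}\phi_{E}\phi_{F}^{-1}\simeq \phi_{F}\phi_{E}\\
&\iff \phi_{E}\,\phi_{\phi_{F}(E)}\simeq \phi_{F}\phi_{E}\\
&\iff \phi_{\phi_{E}(\phi_{F}(E))}\simeq \phi_{F}.
\end{align*}
Since spherical twists depend on the defining object only up to shift and isomorphism, it suffices to show $\phi_{E}(\phi_{F}(E))\simeq F[\ell]$ for some integer $\ell$. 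Writing $\Hom^{\bullet}(E,F)\simeq k$ concentrated in degree $d$, Serre duality gives $\Hom^{\bullet}(F,E)\simeq k$ concentrated in degree $n-d$, so $\phi_{F}(E)$ fits into a triangle $F[d-n]\to E\to \phi_{F}(E)$. Applying $\Hom^{\bullet}(E,-)$ to this triangle and analysing the long exact sequence via the $n$-sphericity of $E$ yields $\Hom^{\bullet}(E,\phi_{F}(E))\simeq k$ in degree $0$; consequently $\phi_{E}(\phi_{F}(E))$ is the cone of $E\to \phi_{F}(E)$, which by rotation of the previous triangle is $F[d-n+1]$.

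The main obstacle is purely technical but crucial: in a general triangulated category cones are only unique up to non-canonical isomorphism, so none of the natural transformations above would automatically upgrade to isomorphisms of \emph{functors}. This is why the excerpt specifies that $\mathcal{D}$ is the bounded derived category of an abelian or dg-category — the underlying enhancement makes cones functorial and forces the octahedral comparisons to be canonical, so that all pointwise identifications above lift to genuine isomorphisms of exact functors.
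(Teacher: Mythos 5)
The paper states this proposition without proof---it simply recalls it from [SeTh01]---so there is no internal argument to compare against; your proposal is essentially the Seidel--Thomas proof and it is correct. The conjugation identity $\Psi\,\phi_{G}\,\Psi^{-1}\simeq \phi_{\Psi(G)}$, the reduction of the braid relation to showing $\phi_{E}\phi_{F}(E)\simeq F[\ell]$, and the long-exact-sequence computation giving $\Hom^{\bullet}(E,\phi_{F}(E))\simeq k$ in degree $0$ (whence $\phi_{E}\phi_{F}(E)\simeq\Cone(E\to\phi_{F}(E))\simeq F[d-n+1]$ by rotation) are all sound, and you are right that the functoriality of cones supplied by the enhancement is what upgrades the pointwise identifications to isomorphisms of functors. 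Two small points are worth making explicit. First, the duality $\Hom^{\bullet}(F,E)\simeq \Hom^{\bullet}(E,F)^{*}[-n]$ is not an appeal to an ambient Serre functor: it is precisely condition (2) in the definition of a spherical object, and the same nondegenerate composition pairing is what forces the map $\Hom^{n}(E,F[d-n])\to\Hom^{n}(E,E)$ in your long exact sequence to be an isomorphism---this is the only place where sphericity is used against the test object, so it deserves a sentence. Second, the paper actually applies this proposition to $0$-spherical objects (the projective indecomposables of a Brauer tree algebra), where $\Hom^{\bullet}(E,E)$ is two-dimensional in degree $0$ and the pairing lands in $\Hom^{0}(E,E)/k\cdot\mathrm{id}_{E}$; your computation then needs the one-line adaptation that the composite $E\to F[d]\to E$ is nonzero modulo the identity, so the degree-zero map $k\to k^{2}$ is injective and $\Hom^{\bullet}(E,\phi_{F}(E))$ is still one-dimensional and concentrated in degree $0$. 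With that remark added, the argument covers every case the paper needs.
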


Let $A_{\Gamma}$ be Brauer tree algebra associated to $\Gamma$ a Brauer tree with no exceptional vertex,  and $\{P_{i},\; i\in \Gamma_{V}\}$, the set of isoclasses of projective indecomposables ${A_{\Gamma}}$-modules. Then the $\{P_{i},\; i\in \Gamma_{V}\}$ are $0$-spherical objects and they form a $\Gamma$-configuration. We denote by $\Sph(A_{\Gamma})$ the group generated by their twist functors $\{\phi_{P_{i}},\; i\in \Gamma_{V}\}$. Let $\Br_{\Gamma}$ be the braid group of type $\Gamma^{*}$, the dual graph of $\Gamma$.

\begin{corollary}
We have a surjective group homomorphism $\Br_{\Gamma}\rightarrow \Sph(A_{\Gamma})$.
\end{corollary}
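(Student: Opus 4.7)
The plan is to check that the twist functors $\phi_{P_i}$ satisfy the defining relations of the Artin braid group $\Br_{\Gamma}$ of type $\Gamma^{*}$, and then to conclude via the universal property of a group presentation.

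First I would unpack the two pieces of combinatorics that need to line up. On one hand, $\Br_{\Gamma}$ has one Artin generator $s_i$ per vertex of $\Gamma^{*}$, i.e. per edge of $\Gamma$, with a three-term braid relation $s_i s_j s_i = s_j s_i s_j$ whenever $i$ and $j$ are adjacent in $\Gamma^{*}$, and a commutation relation $s_i s_j = s_j s_i$ otherwise. On the other hand, two edges of $\Gamma$ are adjacent in $\Gamma^{*}$ precisely when they share a vertex in $\Gamma$; combined with the defining property of the $\Gamma$-configuration $\{P_i\}$, this means $\dim_{k} \Hom_{\mathcal{D}}^{\bullet}(P_i, P_j)$ equals $1$ exactly when $s_i$ and $s_j$ are supposed to braid, and equals $0$ exactly when they are supposed to commute.

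Second, I would feed this dictionary into the preceding Proposition: for two spherical objects $E,F$ with total Hom zero the twists $\phi_E$ and $\phi_F$ commute, while for total Hom one they satisfy the three-term braid relation. Applied to the pairs $(P_i, P_j)$, this shows that the family $(\phi_{P_i})_i$ satisfies every defining relation of $\Br_{\Gamma}$. By the universal property of the presentation, the assignment $s_i \mapsto \phi_{P_i}$ therefore extends uniquely to a group homomorphism $\Br_{\Gamma} \rightarrow \Aut(\mathcal{D}^{b}(A_{\Gamma}))$. Its image is, by construction, the subgroup generated by the $\phi_{P_i}$, which is exactly $\Sph(A_{\Gamma})$; hence the corestricted map $\Br_{\Gamma} \rightarrow \Sph(A_{\Gamma})$ is surjective.

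There is no real obstacle in this argument, all of the substantive work having been done in the preceding Proposition. The only points requiring care are the bookkeeping in passing between edges of $\Gamma$ and vertices of $\Gamma^{*}$, and the fact that the $P_i$ are $0$-spherical rather than $n$-spherical with $n\geq 1$, so that one is implicitly invoking the $n=0$ variant of the Seidel--Thomas commutation/braiding result for twist functors.
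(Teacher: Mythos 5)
Your proposal is correct and is exactly the argument the paper intends: the corollary is stated without proof precisely because it follows from the preceding Proposition by checking that the $\Gamma$-configuration condition on $\dim_{k}\Hom^{\bullet}(P_i,P_j)$ matches the adjacency/non-adjacency dichotomy in $\Gamma^{*}$, so the twists satisfy the Artin relations and the map is surjective by the definition of $\Sph(A_{\Gamma})$ as the subgroup they generate. Your remark about needing the $n=0$ variant of the commutation/braiding result is a point the paper glosses over, and it is worth flagging.
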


\subsubsection {Differential graded Brauer tree algebra}
We introduce here a dg-version of the Brauer tree algebra of the line $A_{n}$. Let $Q_{n}$ be the following graded quiver with vertices numbered $1,\cdots, n$ and where each edge is of degree 1: 
$$\xymatrix{
\stackrel{1}{\bullet} \ar@/_1pc/[r]_{1}& \stackrel{2}{\bullet} \ar@/_1pc/[l]_{1} \ar@/_1pc/[r]_{1}& \stackrel{3}{\bullet} \ar@/_1pc/[l]_{1} \ar@/_1pc/[r]& \cdots \ar@/_1pc/[l] \ar@/_1pc/[r] &  \stackrel{n-2}{\bullet} \ar@/_1pc/[r]_{1}\ar@/_1pc/[l]& \stackrel{n-1}{\bullet} \ar@/_1pc/[l]_{1} \ar@/_1pc/[r]_{1}& \stackrel{n}{\bullet} \ar@/_1pc/[l]_{1} }
 $$
Let $J$ be the two-sided ideal of the path algebra $kQ_{n}$ generated by the elements $(i\vert i-1\vert i)-(i\vert i+1\vert i)$, $(i-1\vert i \vert i+1)$ and $(i+1\vert i\vert i-1)$ for all $i=2,\cdots, n-1$. Denote the lazy path at the $i$-th edge by $e_{i}$ so that $1=\sum_{i}e_{i}$. The quotient $kQ_{n}/J$ is again a graded algebra and we denote by $\widehat{A_{n}}$ the corresponding dg-algebra with zero differential.\\
Let $\mathcal{D}(\widehat{A_{n}})$ be the bounded derived category of differential graded modules over $\widehat{A_{n}}$ and denote by $\mathcal{D}_{n}$ the perfect dg-derived category of $\widehat{A_{n}}$, \textit{i.e.} the subcategory of $\mathcal{D}(\widehat{A_{n}})$ generated by the projective modules $P_{i}:=e_{i}\widehat{A_{n}}$. Using Proposition $1.7$, we can deduce the dg-analog of Rouquier-Zimmermann's Theorem $1.2$. Alternatively, this can be found in [SeiTh01].
\begin{theorem}
The projectives $P_{i}$ form an $(A_{n})$-configuration of $2$-spherical objects. Hence, we have the group morphism $\Br_{n+1}\rightarrow \Aut(\mathcal{D}_{n})$, $\sigma_{i}\mapsto \phi_{P_{i}}$.
\end{theorem}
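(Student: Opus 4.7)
\medskip

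The strategy is to reduce everything to Proposition~1.7 by verifying directly two combinatorial/homological facts about the perfect dg-category $\mathcal{D}_n$: first, that each projective $P_i = e_i\widehat{A_n}$ is a $2$-spherical object, and second, that the collection $(P_1,\ldots,P_n)$ is an $(A_n)$-configuration. Once this is done, Proposition~1.7 supplies the braid relation $\phi_{P_i}\phi_{P_{i+1}}\phi_{P_i}\simeq \phi_{P_{i+1}}\phi_{P_i}\phi_{P_{i+1}}$ (since $\dim_k\Hom^{\bullet}(P_i,P_{i+1})=1$) and the commutation $\phi_{P_i}\phi_{P_j}\simeq\phi_{P_j}\phi_{P_i}$ for $|i-j|\geq 2$ (since the total Hom vanishes), and these are exactly the defining relations of $\Br_{n+1}$, so $\sigma_i\mapsto \phi_{P_i}$ extends to a group morphism.

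For the endomorphism computation, I would use that $\End^{\bullet}_{\mathcal{D}_n}(P_i)=e_i\widehat{A_n}e_i$ as a graded vector space, with zero differential. Paths from $i$ to $i$ in $Q_n$ have even length; the relations $(i-1\vert i\vert i+1)=0=(i+1\vert i\vert i-1)$ eliminate every such path of length $\geq 4$ (any attempt to come back to $i$ after leaving it forces a forbidden subpath at the first return), while the relation $(i\vert i-1\vert i)=(i\vert i+1\vert i)$ identifies the two length-$2$ loops at $i$. Hence $e_i\widehat{A_n}e_i$ is one-dimensional in degree $0$, spanned by $e_i$, and one-dimensional in degree $2$, spanned by the common class $c_i:=(i\vert i\pm 1\vert i)$; this gives condition $(1)$ of sphericality. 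At the two endpoints $i=1,n$ only one of the two loops exists, but no identification is needed, and the same conclusion holds.

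For the pairing in condition $(2)$, the natural approach is to show that $\widehat{A_n}$ is a graded symmetric algebra of degree $2$, so that for any $F\in\mathcal{D}_n$ Serre duality furnishes an isomorphism $\Hom^{j}(F,P_i)\simeq \Hom^{2-j}(P_i,F)^{*}$. It suffices to check this on generators: for $F=P_j$ we compute $\Hom^{\bullet}(P_i,P_j)=e_j\widehat{A_n}e_i$. When $|i-j|\geq 2$ there is no path from $i$ to $j$ avoiding the forbidden substrings $(k-1\vert k\vert k+1)$ and $(k+1\vert k\vert k-1)$ at interior vertices, so the space vanishes. When $|i-j|=1$ only the single arrow contributes: longer paths from $i$ to $j$ must include a loop at some vertex, but combining the loop with the next step produces a forbidden or redundant subpath that has already been quotiented out, leaving $\Hom^{\bullet}(P_i,P_{i\pm 1})$ one-dimensional and concentrated in degree~$1$. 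The resulting bases match the expected pairing into degree~$2$, giving an $(A_n)$-configuration and simultaneously checking nondegeneracy on the generating set, which extends to all $F\in\mathcal{D}_n$ by the two-out-of-three property for distinguished triangles.

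The step I expect to be most delicate is the verification of graded-symmetric/Serre duality for $\widehat{A_n}$: one must pin down a precise nondegenerate trace form $\widehat{A_n}\to k[-2]$ compatible with the grading, and check its restriction to each $e_j\widehat{A_n}e_i$ pairs with $e_i\widehat{A_n}e_j$ nondegenerately. Once this is in hand the rest is a bookkeeping exercise, and the conclusion $\Br_{n+1}\to \Aut(\mathcal{D}_n)$, $\sigma_i\mapsto\phi_{P_i}$, follows immediately from Proposition~1.7 applied edge-by-edge along the tree $T_n$.
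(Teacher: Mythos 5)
Your proposal is correct and follows the same route as the paper, which offers no written proof beyond invoking Proposition 1.7 (and citing Seidel--Thomas): one checks that the zigzag relations force $e_i\widehat{A_n}e_i$ to be one-dimensional in degrees $0$ and $2$ and $e_j\widehat{A_n}e_i$ to be one-dimensional in degree $1$ for $|i-j|=1$ and zero otherwise, with the graded symmetric (degree $2$ Calabi--Yau) form supplying condition (2), and then applies Proposition 1.7 edge by edge. The only loose phrase is the claim that a loop at $i$ of length $\geq 4$ contains a forbidden subpath ``at the first return'' --- strictly one must first apply the identification $(j|j-1|j)=(j|j+1|j)$ at a neighbouring vertex to expose a zero pass-through --- but this is routine bookkeeping and the conclusion stands.
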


\begin{remark}
Forgetting the differential and the grading in $\widehat{A_{n}}$, we recover $A_{n}$ the Brauer tree algebra of the line. In particular, the $A_{n}$-modules $P_{i}$ are 0-spherical objects and the twists $\phi_{P_{i}}$ along $P_{i}$ are actually what we previously denoted by $\phi_{i}$.
\end{remark}

In that sense, it is interesting to keep in mind what happens for the space of stability conditions of $\widehat{A_{n}}$ for our study (cf. [Tho06]).

\subsection{Elementary perverse equivalences}
\subsubsection{Definitions and first properties}
We will now present a very nice family of derived equivalences, the so-called \textit{elementary perverse equivalences} initially introduced by Okuyama and Rickard (cf. [Oku98]). The name comes from Chuang and Rouquier's more general theory of perverse equivalences (cf. [ChRo]). Let us recall the definition and the first properties of elementary perverse equivalences for $A$ a symmetric finite-dimensional basic $k$-algebra. We denote by $\Irr(A)$ the set of isomorphism classes of simple $A$-modules. Let $I\subset \Irr(A)$ and $M\in {A}\text{-mod}$. We denote by $M\rightarrow I_{M}$ an injective hull and $\phi_{M}:P_{M}\rightarrow M$ a projective cover of $M$. We denote by $M_{I}$ the largest quotient of $P_{M}$ by a submodule of $\ker\: \phi_{M}$ such that all composition factors of the kernel of the induced map $M_{I}\rightarrow M$ are in I.\\
Finally, $M^{I}$ will denote the largest submodule of $I_{M}$ containing $M$ and such that all composition factors of $M^{I}/M$ are in $I$.\\
Following constructions of Rickard and Okuyama, we can construct a tilting complex for $A$ as follows. For $S\in I$, we let $Q_{S}$ be a projective cover of the kernel of the canonical map $P_{S}\rightarrow S_{I}$. This gives us a map $Q_{S}\stackrel{f_{S}}{\rightarrow} P_{S}$ and we can define the following complex
$$T_{A,S}(I)=0\rightarrow Q_{S}\stackrel{f_{S}}{\rightarrow} P_{S}\rightarrow 0
$$ where $P_{S}$ is in degree 0.
For $S \in \Irr(A)-I$, we put
$$
T_{A,S}(I)=0\rightarrow P_{S}\rightarrow 0
$$ where $P_{S}$ is in degree $-1$.\\
Now we claim that $T_{A}(I)=\bigoplus_{S\in \Irr(\mathcal{A})}T_{A,S}(I)$ is a tilting complex. We let $A'=\End_{\mathcal{D}^{b}({A})}(T_{A}(I))$.

\begin{proposition}
The tilting complex $T_{A}(I)$ induces a derived equivalence $L_{A,I}:\mathcal{D}^{b}({A})\stackrel{\sim}{\rightarrow} \mathcal{D}^{b}({A'})$. 
\end{proposition}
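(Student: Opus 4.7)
The plan is to invoke Rickard's Morita theorem for derived categories: if $T$ is a tilting complex over $A$ -- that is, a perfect complex satisfying $\Hom_{\mathcal{D}^b(A)}(T,T[n])=0$ for all $n\neq 0$ and such that the thick subcategory $\langle T\rangle$ generated by $T$ contains $A$ -- then $A$ and $A':=\End_{\mathcal{D}^b(A)}(T)$ are derived equivalent. So I would verify these three conditions for $T=T_{A}(I)$. Perfectness is immediate, since each summand $T_{A,S}(I)$ is a bounded complex of finitely generated projective $A$-modules concentrated in degrees $-1$ and $0$.

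For the $\Hom$-orthogonality, I would first note that since each $T_{A,S}(I)$ has amplitude at most two, $\Hom^{n}(T_{A}(I),T_{A}(I))=0$ for $|n|\geq 2$ automatically. The substantive step is $n=\pm 1$, which I would handle by decomposing $T_{A}(I)=\bigoplus_{S}T_{A,S}(I)$ and computing $\Hom^{\pm 1}(T_{A,S}(I),T_{A,S'}(I))$ via chain maps, splitting into cases according to whether $S,S'$ lie in $I$. The essential input is the defining property of $f_{S}$: since $Q_{S}\to \ker(P_{S}\to S_{I})$ is a projective \emph{cover}, both $\ker f_{S}\subseteq \rad(Q_{S})$ and $\Im f_{S}\subseteq \rad(P_{S})$ hold, and these radical conditions force any candidate chain map to be null-homotopic. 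The symmetry of $A$ makes the Nakayama functor trivial, so the $n=-1$ case reduces to the $n=+1$ case by the usual duality.

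For the generation, when $S\notin I$ we have $P_{S}=T_{A,S}(I)[-1]\in \langle T_{A}(I)\rangle$ directly. When $S\in I$, the construction provides the distinguished triangle
\[Q_{S}\longrightarrow P_{S}\longrightarrow T_{A,S}(I)[1]\longrightarrow Q_{S}[1].\]
Since all composition factors of $\ker(P_{S}\to S_{I})$ lie in $I$, the top of this kernel is a sum of simples in $I$, so $Q_{S}$ is a direct sum of indecomposable projectives $P_{T}$ with $T\in I$. I would then run a simultaneous extraction argument on the whole family $\{T_{A,S}(I)\}_{S\in I}$, showing collectively that $\{P_{T}:T\in I\}\subseteq \langle T_{A}(I)\rangle$ by combining the triangles above with the already-obtained $\{P_{T}:T\notin I\}$.

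The main obstacle I expect is the $\Hom$-orthogonality at $n=1$ when both $S,S'\in I$: here one must exploit the projective-cover property of $f_{S}$ to construct explicit null-homotopies for arbitrary chain maps between the two-term complexes. This is the computational heart of the Okuyama--Rickard construction, and is really the reason why $Q_{S}$ is taken to be a projective \emph{cover} rather than an arbitrary projective presentation. Once the three tilting conditions are in place, the derived equivalence $L_{A,I}:\mathcal{D}^{b}(A)\xrightarrow{\sim}\mathcal{D}^{b}(A')$ is then formal from Rickard's theorem, with $A'=\End_{\mathcal{D}^{b}(A)}(T_{A}(I))$ as specified.
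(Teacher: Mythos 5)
Your overall strategy --- verify that $T_A(I)$ is a tilting complex (perfect, $\Hom$-orthogonal in nonzero degrees, generating) and then invoke Rickard's Morita theorem --- is the right one, and the paper itself offers no proof, deferring to the Okuyama--Rickard construction; the duality reduction of the $n=-1$ case to the $n=+1$ case via the symmetry of $A$ is also fine. But there is a concrete error at the centre of your argument: you assert that all composition factors of $\ker(P_S\to S_I)$ lie in $I$, hence that $Q_S$ is a direct sum of projectives $P_T$ with $T\in I$. This misreads the definition: it is the kernel of the induced map $S_I\to S$, i.e.\ $\ker(\phi_S)/\ker(P_S\to S_I)$, whose composition factors lie in $I$, not the kernel of $P_S\to S_I$ itself. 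In fact the opposite of your claim holds: since $S_I$ is the \emph{largest} such quotient, $K:=\ker(P_S\to S_I)$ is the \emph{smallest} submodule of $\ker\phi_S$ for which $\ker(\phi_S)/K$ has all composition factors in $I$; if the head of $K$ had a summand $T\in I$, one could pass to a maximal submodule $K'\subset K$ with $K/K'\cong T$ and contradict minimality. Hence the head of $K$ involves only simples outside $I$, and $Q_S$ is a direct sum of $P_T$ with $T\notin I$ (compare Proposition 1.14: for $A_n$ and $I=\llbracket 1,n\rrbracket-\{i\}$ the only projective occurring in degree $-1$ of the two-term summands is $P_i$).

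This reversal is not cosmetic; it is exactly what makes the two substantive steps work, and your version would make them fail. For generation, no ``simultaneous extraction'' is needed or possible: the $P_T$ with $T\notin I$ are already summands of $T_A(I)$ up to shift, so $Q_S\in\langle T_A(I)\rangle$ immediately, and the triangle $Q_S\to P_S\to T_{A,S}(I)\leadsto$ then yields $P_S\in\langle T_A(I)\rangle$ for each $S\in I$; with your claimed structure of $Q_S$ the argument is circular. For the $\Hom$-vanishing in degree one with $S,S'\in I$, the operative point is not the radical conditions you cite but the combination of two facts: every composition factor of $S'_I$ lies in $I$ (including $S'$ itself), while the head of $Q_S$ avoids $I$, so $\Hom(Q_S,S'_I)=0$; therefore the degree $-1$ component $Q_S\to P_{S'}$ of any chain map lands in $\ker(P_{S'}\to S'_I)=\Im(f_{S'})$ and lifts through $f_{S'}$ by projectivity, giving a null-homotopy. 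Were $P_{S'}$ a summand of $Q_S$ with $S'\in I$, the projection $Q_S\to P_{S'}$ would be surjective, hence not of the form $h^{0}f_S+f_{S'}h^{-1}$ (whose image lies in $\rad P_{S'}$), so your version would actually produce a nonzero $\Hom^1$. The mixed case $S\notin I$, $S'\in I$ is handled the same way using $\Hom(P_S,S'_I)=0$.
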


Since the non-isomorphic indecomposable summands of the tilting complex $T_{A}(I)$ are indexed by $\Irr(A)$, this equivalence induces a bijection $\Irr(A)\stackrel{\sim}{\rightarrow} \Irr(A'), S\mapsto S'$. We say that $L_{A,I}$ and $L_{A,I}^{-1}$ are \textit{elementary perverse equivalences}. Also, we will write $L_{I}$ for $L_{A,I}$ when there is no possible confusion. The next result will be of great importance in what follows; it determines the image of the simple modules under an elementary perverse equivalence.

\begin{lemma} Let $S' \in  \Irr(A')$. We have
$$L_{A,I}^{-1}(S')[-1]=\begin{cases}
  S[-1] \;\text{for}\: S\in I.
  \\ S^{I} \;\text{for}\: S\notin I.
  \end{cases}$$
\end{lemma}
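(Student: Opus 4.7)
My plan is to identify each object on the right-hand side as the unique preimage, under $L_{A,I}$, of the simple $A'$-module $S'$. Since $L_{A,I}$ is induced by the tilting complex $T_{A}(I) = \bigoplus_{T} T_{A,T}(I)$, each summand $T_{A,T}(I)$ is sent to the indecomposable projective $A'$-module covering $T'$. Hence an object $X \in \mathcal{D}^{b}(A)$ satisfies $L_{A,I}(X) \cong S'$ placed in cohomological degree zero if and only if
\[
\Hom_{\mathcal{D}^{b}(A)}\bigl(T_{A,T}(I),\, X[n]\bigr) \;\cong\; \begin{cases} k & \text{if } T = S \text{ and } n = 0, \\ 0 & \text{otherwise,} \end{cases}
\]
for every $T \in \Irr(A)$ and every $n \in \mathbb{Z}$. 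It is therefore enough to verify this orthogonality on the two candidates $X = S$ (when $S \in I$) and $X = S^{I}[1]$ (when $S \notin I$).

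When $T \notin I$, the summand $T_{A,T}(I) = P_{T}[1]$ is a shifted indecomposable projective, so the $\Hom$-groups above reduce to ordinary module $\Hom$-groups concentrated in a single cohomological degree. For $X = S \in I$ this vanishes immediately since $T \neq S$; for $X = S^{I}[1]$ with $S \notin I$ only the degree-zero piece survives, equal to the $T$-multiplicity in $S^{I}$, which is $\delta_{T,S}$ because the unique composition factor of $S^{I}$ outside $I$ is $S$ itself. When $T \in I$, the summand $T_{A,T}(I) = (Q_{T} \to P_{T})$ is a genuine two-term complex of projectives in degrees $-1, 0$, and the short exact sequence of complexes $0 \to P_{T} \to T_{A,T}(I) \to Q_{T}[1] \to 0$ expresses the required $\Hom$-groups as the kernel (in degree $0$) and cokernel (in degree $1$) of the map $f_{T}^{\ast}\colon \Hom_{A}(P_{T}, X) \to \Hom_{A}(Q_{T}, X)$, with all other degrees automatically zero. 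For $X = S \in I$ this is clean: the maximality in the definition of $T_{I}$ forces the top of $Q_{T}$ to be supported outside $I$, so $\Hom_{A}(Q_{T}, S) = 0$, while $\Hom_{A}(P_{T}, S) = \delta_{T,S}\, k$ supplies exactly the required $k$ in degree zero.

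The main obstacle is the remaining mixed case $T \in I$, $X = S^{I}[1]$ with $S \notin I$, where one must prove that $f_{T}^{\ast}\colon \Hom_{A}(P_{T}, S^{I}) \to \Hom_{A}(Q_{T}, S^{I})$ is an isomorphism, in order to kill simultaneously the would-be contributions in degrees $-1$ and $0$ coming from its kernel and cokernel. This is really the heart of the argument and encodes a non-trivial matching between two very differently constructed modules: $T_{I}$ arises as a quotient of $P_{T}$ with all composition factors in $I$, while $S^{I}$ arises as a submodule of $I_{S} \cong P_{S}$ (using that $A$ is symmetric) with all composition factors above the socle $S$ in $I$. My plan is to reinterpret $T_{A,T}(I)$ as a projective presentation of $T_{I}$ (the defining property of $Q_{T}$ gives $\operatorname{coker} f_{T} = T_{I}$), and then peel the socle filtration $0 \subset S \subset \cdots \subset S^{I}$ one layer at a time via the short exact sequence $0 \to S \to S^{I} \to S^{I}/S \to 0$, using at each stage that the top of $Q_{T}$ is supported outside $I$ to match the boundary maps on the two sides of $f_{T}^{\ast}$. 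Controlling these boundary terms --- where the interplay between the subset $I$ and the symmetric structure of $A$ is essential --- is what I expect to be the core technical difficulty.
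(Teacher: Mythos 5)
The paper states this lemma without proof (it is imported from the Okuyama--Rickard construction, cf.\ [Oku98], [ChRo]), so there is no in-paper argument to compare against; I assess your proposal on its own terms. Your reduction is the standard and correct one: since $L_{A,I}$ sends $T_{A,T}(I)$ to the indecomposable projective $P'_{T'}$ and $\dim\Hom_{\mathcal{D}^{b}(A')}(P'_{T'},Y[n])$ computes the multiplicity of $T'$ in $H^{n}(Y)$, the orthogonality conditions you write down do characterise the preimage of $S'$. The three cases you actually carry out ($T\notin I$ against both candidates, and $T\in I$ against $X=S$) are correct, including the observation that minimality of $\ker(P_{T}\to T_{I})$ forces the top of $Q_{T}$ to be supported outside $I$.

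The gap is exactly where you say it is, and it is a genuine one: you never prove that $f_{T}^{*}\colon\Hom_{A}(P_{T},S^{I})\to\Hom_{A}(Q_{T},S^{I})$ is an isomorphism for $T\in I$, $S\notin I$, and the plan you sketch (peeling the socle filtration of $S^{I}$) does not by itself contain the idea that closes it. Injectivity is the easy half: $\ker f_{T}^{*}=\Hom_{A}(T_{I},S^{I})$, and this vanishes because every composition factor of $T_{I}$ lies in $I$ while every nonzero submodule of $S^{I}$ contains $\soc(S^{I})=S\notin I$. For surjectivity the two facts you need are: (i) every map $Q_{T}\to S^{I}$ lands in $\soc(S^{I})=S$ (its image is a submodule of $S^{I}$ whose top is supported outside $I$, and the only composition factor of $S^{I}$ outside $I$ is the socle $S$, with multiplicity one), so $\Hom(Q_{T},S^{I})=\Hom(Q_{T},S)=\Hom(K_{T},S)$ where $K_{T}=\Im f_{T}$; and (ii) any map $K_{T}\to S\hookrightarrow S^{I}$ extends to $P_{T}\to S^{I}$, which one gets by first extending to $P_{T}\to I_{S}$ using injectivity of $I_{S}$, and then noting that the image contains $S$ with quotient a quotient of $T_{I}$, hence with all composition factors in $I$, so that it lies inside $S^{I}$ by the \emph{maximality} defining $S^{I}$. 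It is this interplay between injectivity of $I_{S}$ and maximality of $S^{I}$ (not the symmetric structure per se, which only enters through $I_{S}\cong P_{S}$) that kills the cokernel; as written, your proposal stops just short of it.
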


\begin{remark}
We can extend the previous construction to $I=\emptyset$ and $I=\Irr(A)$ as follows. For $I=\emptyset$, we have $L_{A,\emptyset}(A\text{-mod})=(A\text{-mod})[-1]$ while for $I=\Irr(A)$, we have that $L_{A,\Irr(A)}(A\text{-mod})=A\text{-mod}$. 
\end{remark}

\subsubsection{Elementary perverses equivalences on Brauer tree algebras}
\begin{proposition}
Let $A_{n}$ be the basic Brauer tree algebra associated to a line with n edges and no exceptional vertex and $\phi_{i}\in \Aut(\mathcal{D}^{b}(A_{n}))$ the self-derived equivalence previously defined. Then $\phi_{i}^{-1}(A_{n}\text{-mod})=L_{A,\llbracket 1,n \rrbracket-\{i\}}(A_{n}\text{-mod})$.
\end{proposition}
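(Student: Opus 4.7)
The plan is to verify the equality of the two hearts in $\mathcal{D}^{b}(A_{n})$ by matching their simple objects; since both are bounded hearts of finite length with $n$ isoclasses of simples, each is determined by this data.

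First, I would compute the simples $\phi_{i}^{-1}(S_{j})$ of the heart $\phi_{i}^{-1}(A_{n}\text{-mod})$. For $j\neq i$, the identity $\phi_{i}(S_{j})=S_{j}$ gives $\phi_{i}^{-1}(S_{j})=S_{j}$. For $j=i$, the inverse twist is represented by the dual two-term bimodule complex $X_{i}^{\vee}=(A_{n}\to P_{i}\otimes P_{i}^{*})$ with $A_{n}$ in degree $0$; tensoring with $S_{i}$ produces the complex $(S_{i}\to P_{i})$ with $S_{i}$ in degree $0$ and $P_{i}$ in degree $1$, the differential being the essentially unique nonzero map, namely the socle inclusion. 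Being injective with cokernel $P_{i}/S_{i}$, this yields $\phi_{i}^{-1}(S_{i})\simeq (P_{i}/S_{i})[-1]$.

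Second, I would identify the simples of $L_{A,I}(A_{n}\text{-mod})$ for $I=\llbracket 1,n\rrbracket\setminus\{i\}$ using the Okuyama--Rickard tilting complex $T_{A_{n}}(I)$ and Lemma~1.13. Rickard's theorem provides an identification $A'\cong A_{n}$ under which $S'_{j}\mapsto S_{j}$, so that $L_{A,I}(A_{n}\text{-mod})$ may be regarded as a heart in $\mathcal{D}^{b}(A_{n})$ with simples $L_{A,I}(S_{j})$. By Lemma~1.13, $L_{A,I}^{-1}(S_{j})=S_{j}$ for $j\neq i$ and $L_{A,I}^{-1}(S_{i})=S_{i}^{I}[1]=\rad(P_{i})[1]$. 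To compute $L_{A,I}(S_{i})$, I would apply $L_{A,I}$ to the short exact sequence $0\to\rad(P_{i})\to P_{i}\to S_{i}\to 0$. Using $L_{A,I}(\rad(P_{i}))=S_{i}[-1]$ and $L_{A,I}(P_{i})=P_{i}[-1]$ (the latter because the summand $T_{A_{n},S_{i}}(I)=P_{i}[1]$ of the tilting complex corresponds to the $i$-th indecomposable projective of $A'$), the resulting triangle $S_{i}[-1]\to P_{i}[-1]\to L_{A,I}(S_{i})$ has connecting morphism nonzero in $\Hom(S_{i},P_{i})=k$ and thus equal to the socle inclusion up to scalar. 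Its cone is $P_{i}/S_{i}$, and one concludes $L_{A,I}(S_{i})\simeq (P_{i}/S_{i})[-1]$.

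Comparing the two computations, both hearts share the set of simples $\{S_{j}:j\neq i\}\cup\{(P_{i}/S_{i})[-1]\}$, so they coincide. The delicate step is the second one: one must justify the identification $A'\cong A_{n}$ matching simples in the natural way, and then verify that the morphism $S_{i}[-1]\to P_{i}[-1]$ arising after transport through $L_{A,I}$ is nonzero. This uses that $L_{A,I}$ is an equivalence and that $\Hom(S_{i},P_{i})$ is one-dimensional, so any nonzero morphism is a scalar multiple of the socle inclusion, giving cone $P_{i}/S_{i}$ rather than the split extension.
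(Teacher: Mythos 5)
Your proof is correct and is essentially the paper's argument written out in full: the paper's proof of this proposition is the single sentence ``this is a direct application of the previous lemma'', by which it means exactly the matching of simples that you carry out (together with the implicit identification $A'\cong A_{n}$, which the paper also glosses over). The one simplification worth noting is that the comparison is immediate on the inverse side, where the lemma applies verbatim: $L_{A,I}^{-1}(S'_{j})=S_{j}=\phi_{i}(S_{j})$ for $j\neq i$ and $L_{A,I}^{-1}(S'_{i})=S_{i}^{I}[1]=\rad(P_{i})[1]=\Omega(S_{i})[1]=\phi_{i}(S_{i})$, so that $L_{A,I}^{-1}(A'\text{-mod})=\phi_{i}(A_{n}\text{-mod})$ on the nose, which spares you the triangle argument you use to compute $L_{A,I}(S_{i})$ directly.
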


\begin{proof}
This is a direct application of the previous lemma.
\end{proof}

 In what follows, we will write $F_{i}$ for $\phi_{i}^{-1}$. We now try to understand how an elementary perverse equivalences affects the tree attached to a Brauer tree algebra.
\\Let $\Gamma$ be a Brauer tree and $A_{\Gamma}$ the associated Brauer tree algebra. For $I\subseteq \Irr(A_{\Gamma})$, we will now explain how the Brauer tree $\Gamma$ changes upon applying the elementary perverse equivalence $L_{A_{\Gamma},I}$, \textit{i.e.} we determine the Brauer tree associated to the algebra $\End_{A_{\Gamma}}(T_{A_{\Gamma}}(I))$. This can be found in [ChRo] or in [Aih10] in the case of simple tilts, $|I|=1$.\\
We define a Brauer tree $\Gamma'$ as follows. Its set of vertices $\Gamma'_{V}$ is in bijection with $\Gamma_{V}$, $\Gamma_{V}\stackrel{\sim}{\rightarrow} \Gamma'_{V}, x\mapsto x'$. As for the edges, we have a bijection $\phi:\Gamma_{E}\stackrel{\sim}{\rightarrow} \Gamma'_{E}$ given as follows. Let $\{x,y\}\in \Gamma_{E}$ an edge of $\Gamma$. If $\{x,y\}\not\in I$, $\phi(\{x,y\})=\{x',y'\}$. Assume now $\{x,y\}\in I$ and we put $\phi(\{x,y\})=\{a,b\}$ where $a$ and $b$ are described as follows.\\
$\bullet$ If there is an edge with vertex $x$ that is not contained in $I$: let $y=y_{0},\ldots,y_{r}$ be the ordered vertices adjacent to $x$, and let $k=\min\{i\in \llbracket 0,r\rrbracket\;| \; \{x,y_{i}\} \not\in I\}$. We put $a=y_{k}'$. If all edges around $x$ are in $I$, then $a=x'$.\\
$\bullet$ If there is an edge with vertex $y$ that is not contained in $I$, we proceed as above. Let $x=x_{0},\ldots,x_{s}$ be the ordered vertices adjacents to $y$, then let $l=\min\{i\in \llbracket 0,s\rrbracket\;| \; \{x,y_{i}\} \not\in I\}$. We put $b=x_{l}'$. If all edges around $y$ are in $I$, then $b=y'$.\\
Last but not least, we need to describe the cyclic ordering of the vertices adjacent to a vertex $x'$.\\
$\bullet$ If all edges containing $x$ are in $I$ and if $y_{0},\ldots, y_{r}$ are the ordered vertices around $x$, then the ordered vertices around $x'$ are $y'_{0},\ldots, y'_{r}$.\\
$\bullet$ Assume there is a vertex $y_{0}$ adjacent to $x$ such that $\{x,y_{0}\}\not \in I$. Now to give a cyclic ordering of the vertices adjacent to $x$ amounts to the action of an automorphism $l_{x}$. Given an edge $\{x,y\}$, the ordered $N_{x}$ vertices adjacent to $x$ are $y,l_{x}(y),\ldots, l_{x}^{N_{x}}(y)$.  Let $y_{0},\ldots, y_{t}$ be the ordered sequences of vertices adjacent to $x$ such that for all $i\in \llbracket 0,t\rrbracket$, $\{x,y_{i}\}\not \in I$. Let $r_{i}\geq 0$ be the minimal integer such that $\{y_{i},l_{y_{i}}^{-r_{i}-1}(x)\} \not\in I$. Then, the ordered vertices around $x'$ are $y'_{0},l_{y_{0}}^{-1}(x)',\ldots, l_{y_{0}}^{-r_{0}}(x)',y'_{1},l_{y_{1}}^{-1}(x)',\ldots, l_{y_{t}}^{-r_{t}}(x') $.\\
The following proposition is proved in [ChRo] for general tilts and in [Aih10] for simple tilts.

\begin{proposition}
The algebra $\End_{A_{\Gamma}}(T_{A_{\Gamma}}(I))$ is a Brauer tree algebra with tree $\Gamma'$.
\end{proposition}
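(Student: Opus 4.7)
The plan is to reduce the identification of the algebra $A':=\End_{A_{\Gamma}}(T_{A_{\Gamma}}(I))$ to a computation of the $\ext$-quiver of its simples, together with the secondary cyclic data attached to each vertex, by transporting everything back to $\mathcal{D}^{b}(A_{\Gamma})$ via the derived equivalence of Proposition~1.11. First, by Rickard's theorem recalled in Section~1.1, any algebra derived equivalent to the Brauer tree algebra $A_{\Gamma}$ (with no exceptional vertex and $|\Gamma_{E}|$ edges) is itself Morita equivalent to a Brauer tree algebra with the same number of edges and trivial multiplicity. Applied to the derived equivalence $L_{A_{\Gamma},I}$ this shows that $A'$ is Morita equivalent to some Brauer tree algebra $A_{\Gamma''}$, and the content of the proposition is then to show $\Gamma''=\Gamma'$.

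To recover $\Gamma''$ intrinsically from $A'$, I would use the following standard fact: for a basic Brauer tree algebra with no exceptional vertex, the underlying graph coincides with the $\ext^{1}$-quiver of the simples (so $\dim \ext^{1}(S_{i},S_{j})\in\{0,1\}$, with value $1$ iff the edges $i,j$ share a vertex), and the cyclic ordering at a vertex of degree $d$ is read off the uniserial summand of $\rad(P_{S})/\soc(P_{S})$ attached to that vertex, equivalently from the cyclic composition of the $d$ corresponding $\ext^{1}$-generators. It therefore suffices to compute the $\ext$-structure of the simples of $A'$ together with this secondary cyclic data.

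The core computation transports Lemma~1.12: letting $\{S'_{e}\}_{e\in \Gamma_{E}}$ index the simples of $A'$, we have $L^{-1}_{A,I}(S'_{e})=S_{e}[-1]$ for $e\in I$ and $L^{-1}_{A,I}(S'_{e})=S_{e}^{I}$ for $e\notin I$, so that
$$\ext^{k}_{A'}(S'_{e},S'_{f})\simeq \Hom_{\mathcal{D}^{b}(A_{\Gamma})}\bigl(L^{-1}_{A,I}(S'_{e}),L^{-1}_{A,I}(S'_{f})[k]\bigr).$$
A case analysis on whether $e,f$ lie in $I$, combined with the known Loewy structure of $P_{S_{e}}$ in $A_{\Gamma}$-mod (which encodes the adjacencies of $e$ in $\Gamma$) and the description of $S_{e}^{I}$ as an iterated extension of $S_{e}$ by simples in $I$ following the cyclic order around the endpoints of $e$, should yield exactly the adjacency bijection $\phi:\Gamma_{E}\stackrel{\sim}{\rightarrow}\Gamma'_{E}$ of the construction.

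The main obstacle is extracting the cyclic ordering around each $x'\in\Gamma'_{V}$. When every edge at $x$ lies in $I$, the ordering around $x'$ is just the image of the ordering around $x$, but when only some edges at $x$ are in $I$ the construction of $\Gamma'$ interleaves the outgoing orderings around $x$ with the reversed partial orderings $l_{y_{i}}^{-1}(x),\ldots,l_{y_{i}}^{-r_{i}}(x)$ around each $I$-adjacent neighbour $y_{i}$. Recovering this interleaving amounts to tracking, for the uniserial summand of $P_{S'_{e}}$ attached to $x'$, the composition-factor sequence of $S_{e}^{I}$ across the boundary between $I$-edges and non-$I$-edges. This is the most delicate step, and I would organise it by the two bullet cases of the construction, deferring the simple-tilt case $|I|=1$ to [Aih10] and, if useful, reducing the general case by induction on $|I|$ using the compatibility of elementary perverse equivalences with restriction to sub-sets of $I$.
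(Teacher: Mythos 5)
The paper itself offers no proof of this proposition: it is quoted from [ChRo] for general $I$ and from [Aih10] for $|I|=1$, so any complete argument here necessarily does more than the text does. Your overall strategy --- invoke the derived equivalence $L_{A_{\Gamma},I}$ together with the fact that a basic algebra derived equivalent to a Brauer tree algebra is again a Brauer tree algebra with the same invariants, and then pin down the tree by transporting the simples through Lemma 1.12 --- is the right one, and is essentially how [Aih10] proceeds. However, the reduction rests on a false ``standard fact''. For a Brauer tree algebra the $\ext^{1}$-quiver is \emph{not} the adjacency graph of the tree: $\dim\ext^{1}(S_{i},S_{j})=1$ exactly when $j$ is the immediate cyclic successor of $i$ around a common vertex, because $\rad(P_{i})/\rad^{2}(P_{i})$ consists only of the tops of the (at most two) uniserial summands of $\rad(P_{i})/\soc(P_{i})$. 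Already for the star with four edges and cyclic order $1,2,3,4$ one has $\ext^{1}(S_{1},S_{3})=\ext^{1}(S_{3},S_{1})=0$ although the edges $1$ and $3$ share the hub. Since $\Gamma'$ can acquire vertices of arbitrary degree even when $\Gamma=T_{n}$, you cannot read adjacency off nonvanishing of $\ext^{1}$. The fix is to compute the full directed $\ext^{1}$-quiver, which is the disjoint union of the cycles attached to the vertices of the tree and hence determines the adjacencies and the cyclic orderings simultaneously; your separate treatment of the two pieces of data then becomes redundant, but as set up it is unworkable.

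The more serious issue is that the actual content of the proposition --- checking that the combinatorial recipe for $\Gamma'$, in particular the interleaved cyclic order $y'_{0},l_{y_{0}}^{-1}(x)',\ldots$ at $x'$, matches what $\Hom_{\mathcal{D}^{b}(A_{\Gamma})}\bigl(L^{-1}(S'_{e}),L^{-1}(S'_{f})[1]\bigr)$ produces --- is precisely the step you label ``the most delicate'' and propose to defer or to handle by induction on $|I|$. Note also that these computations are not all first extensions between modules with known Loewy series: for $e\in I$ and $f\notin I$ the relevant group is $\ext^{2}_{A_{\Gamma}}(S_{e},S_{f}^{I})$, which requires a piece of the (periodic) minimal projective resolution rather than just $\rad/\soc$ data. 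Finally, the suggested induction on $|I|$ via relations among the $\mathcal{L}_{I}$ risks circularity: Propositions 1.26--1.28, which govern how a tilt at a larger set decomposes into simple tilts, are formulated in terms of the combinatorics of $\Gamma$ and $\Gamma'$ and sit downstream of the present proposition. As written, the proposal is a plausible plan rather than a proof.
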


We will write $\Gamma'=\mathcal{L}_{I}\Gamma$. This notation will become natural after Proposition $1.27$.

\begin{example}
We have the following table of Brauer trees:\\
\begin{center}
    \begin{tabular}{ c | c}
   
   $T_{3}$ & \xymatrix{\bullet \ar@{-}[r]^{{1}} & \bullet \ar@{-}[r]^{{2}}& \bullet \ar@{-}[r]^{{3}}& \bullet} \\[20pt] \hline
    \\[5pt ]$\mathcal{L}_{1}(T_{3})$ & \xymatrix{\bullet \ar@{-}[r]^{{2}} & \bullet \ar@{-}[r]^{{3}}& \bullet\\ & \bullet \ar@{-}[u]_{{1}} }  \\[20pt] \hline
   \\[5pt ] $\mathcal{L}_{2}(T_{3})$ & \xymatrix{\bullet \ar@{-}[r]^{{1}} & \bullet \ar@{-}[r]^{{2}}& \bullet \ar@{-}[r]^{{3}}& \bullet} \\[20pt] \hline
    \\[5pt ]$\mathcal{L}_{3}(T_{3})$ & \xymatrix{ & \bullet \ar@{-}[d]^{{3}} \\ \bullet \ar@{-}[r]^{{1}} & \bullet \ar@{-}[r]^{{2}}& \bullet} 
    \end{tabular}
\end{center}
\end{example}

\begin{proposition}
Let $(\Gamma,v,m)$ and $(\Gamma',v,m)$ be two Brauer trees with $n$ edges. Then there exists a derived equivalence $F:\mathcal{D}^{b}(A_{\Gamma})\stackrel{\sim}{\rightarrow} \mathcal{D}^{b}(A_{\Gamma'})$ such that $F$ is the composition of elementary perverse equivalences.
\end{proposition}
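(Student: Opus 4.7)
The key reduction is that it suffices to prove the following stronger statement: for any Brauer tree $\Gamma$ with $n$ edges (and multiplicity $m$), there exists a composition $F_\Gamma$ of elementary perverse equivalences realizing an equivalence $\mathcal{D}^b(A_\Gamma) \xrightarrow{\sim} \mathcal{D}^b(A_{T_n})$, where $T_n$ is the line. Granted this, for two trees $\Gamma, \Gamma'$ with the same parameters the composite $F_{\Gamma'}^{-1} \circ F_\Gamma$ yields the desired equivalence $\mathcal{D}^b(A_\Gamma) \xrightarrow{\sim} \mathcal{D}^b(A_{\Gamma'})$, and the inverse of an elementary perverse equivalence is again elementary perverse by the definition recalled in $\S 1.4.1$.

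For the reduction to $T_n$, I would proceed by induction on the number of edges $n$, using the combinatorial description of the operation $\mathcal{L}_I$ on Brauer trees given just before the proposition, together with Proposition $1.21$ which identifies $\End_{A_\Gamma}(T_{A_\Gamma}(I))$ as the Brauer tree algebra of $\mathcal{L}_I \Gamma$. The base cases $n = 1, 2$ are immediate since the line is the unique Brauer tree (up to planar isomorphism) with at most two edges.

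For the inductive step, introduce the complexity $c(\Gamma) := \sum_{v \in \Gamma_V}\max(\deg(v)-2,0)$, which counts the total branching at interior vertices and vanishes precisely when $\Gamma$ is a line. The core combinatorial claim I would establish is: if $c(\Gamma) > 0$, then there is a choice of $I \subseteq \Irr(A_\Gamma)$ with $c(\mathcal{L}_I \Gamma) < c(\Gamma)$. To exhibit such an $I$, I would pick an \emph{extremal} branching vertex $x$, meaning a vertex of degree $\geq 3$ all but (at most) one of whose incident edges leads into a leaf-path; the existence of such a vertex in any finite non-linear tree follows from walking inward from a leaf. Then, following the explicit recipe before the proposition, I would choose $I$ to consist of a carefully selected subset of the edges at $x$ corresponding to branches that can be "swept" onto an adjacent vertex, so that in $\mathcal{L}_I \Gamma$ the image vertex $x'$ has strictly smaller degree and no new branching is created elsewhere.

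The main obstacle is the combinatorial verification of this simplification step. The example $\mathcal{L}_1(T_3)$ in Example $1.22$ shows that ill-chosen simple tilts can in fact \emph{increase} the complexity (turning a line into a star), so the crux is pinpointing the correct $I$ and tracking precisely how the cyclic orders around $x$ and its neighbors are rearranged by the rules listed in the excerpt. I expect a case analysis split according to how many edges at $x$ lie in $I$, and according to the relative cyclic positions of the non-$I$ edges at $x$'s neighbors, should verify that the proposed $I$ strictly decreases $c$. Iterating this reduction yields a sequence of elementary perverse equivalences terminating at $T_n$, which completes the induction and hence the proposition.
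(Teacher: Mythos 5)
Your overall architecture (pick a normal form among Brauer trees with fixed invariants, connect every tree to it by elementary perverse equivalences, then compose one chain with the inverse of the other) is exactly the shape of the paper's argument, and your observation that inverses of elementary perverse equivalences are again elementary perverse by the convention of Section 1.4.1 is correct. The difference is in how the reduction to the normal form is achieved: the paper simply invokes Rickard's theorem that every Brauer tree algebra is derived equivalent to the star algebra $A_{\St_{n}}$ with the same numerical invariants, together with the fact (from [ChRo]) that Rickard's equivalence decomposes as a product of elementary perverse equivalences, so that $\mathcal{D}^{b}(A_{\Gamma})\stackrel{\sim}{\rightarrow}\mathcal{D}^{b}(A_{\St_{n}})\stackrel{\sim}{\leftarrow}\mathcal{D}^{b}(A_{\Gamma'})$. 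You instead propose a self-contained combinatorial induction toward the line $T_{n}$. That would be a genuinely different and arguably more informative proof --- but only if the inductive step is actually established.

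That step is where the gap lies, and it is not a cosmetic one. Your complexity $c(\Gamma)=\sum_{v}\max(\deg(v)-2,0)$ is insensitive to \emph{where} branching occurs, and the mutation rule recalled before the proposition only reattaches an edge $\{x,y\}\in I$ to a vertex $y_{k}$ \emph{adjacent} to $x$; if $y_{k}$ already has degree $\geq 2$, sweeping a branch off $x$ raises $\deg(y_{k})$ by the same amount, so the naive count need not drop (consider a spider with three legs of length $2$: moving one leg from the centre to a neighbouring vertex just relocates the degree-$3$ vertex). You acknowledge that ill-chosen tilts can increase $c$ --- indeed Example 1.15 in the paper shows $\mathcal{L}_{1}$ sends the line $T_{3}$ to the star --- but the existence of a \emph{strictly decreasing} choice of $I$ for every non-linear $\Gamma$ is precisely the content of the lemma you would need, and you only assert that you ``expect a case analysis'' to confirm it. As written, the core of the proof is a conjecture. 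Either carry out that case analysis (which likely requires a finer invariant than $c$, e.g.\ one weighted by distance to a fixed leaf, or an appeal to Aihara's connectedness of the tilting quiver for Brauer tree algebras), or do what the paper does and quote Rickard's star theorem together with its perverse decomposition. A further small point: for $m>1$ the natural normal form is the star with the exceptional vertex at the centre (or the line with the exceptional vertex at an end), so your target $T_{n}$ should be specified accordingly.
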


\begin{proof}
In [Ric89], Rickard proved that any Brauer tree algebra is derived equivalent to the star-tree algebra with same numerical invariant and where all the edges are adjacent to the exceptional vertex. The derived equivalence that is constructed decomposes into a product of elementary perverse equivalences according to [ChRo]. That way, if $\St_{n} $ denotes the star tree with $n$ edges, we have $\mathcal{D}^{b}(A_{\Gamma})\stackrel{\sim}{\rightarrow} \mathcal{D}^{b}(A_{\St_{n}}) \stackrel{\sim}{\leftarrow}  \mathcal{D}^{b}(A_{\Gamma'})$. 
\end{proof}

%-3

\subsection{$t$-structures and tilting}
\subsubsection{$t$-structures and hearts}
We introduce here the notion of $t$-structure on a triangulated category. 

\begin{definition}
A $t$-structure on a triangulated category $\mathcal{C}$ is the data for $i\in \mathbb{Z}$ of full subcategories $\mathcal{T}^{\leq i}$ and $\mathcal{T}^{\geq i}$ with
\begin{itemize}
\item $\mathcal{T}^{\leq i+1}[1]=\mathcal{T}^{\leq i}$ and $\mathcal{T}^{\geq i+1}[1]=\mathcal{T}^{\geq i}$,
\item $\mathcal{T}^{\leq 0} \subseteq \mathcal{T}^{\leq 1}$ and $\mathcal{T}^{\geq 1} \subseteq \mathcal{T}^{\geq 0}$,
\item $\Hom(\mathcal{T}^{\leq 0}, \mathcal{T}^{\geq 1})=0$,
\item given $X\in \mathcal{T}$, there is a distinguished triangle $Y\rightarrow X\rightarrow Z\leadsto$ with $Y \in \mathcal{T}^{\leq 0}$ and $Z \in \mathcal{T}^{\geq 1}$.
\end{itemize}
Moreover the heart of a $t$-structure is defined as the intersection $\mathcal{T}^{\leq 0}\cap \mathcal{T}^{\geq 0}$, and this is an abelian category (cf. [BDD82]). 
\end{definition}

We will refer to $\mathcal{T}^{\leq 0}$ (resp. $\mathcal{T}^{\geq 0}$) as the \textit{left aisle} (resp. \textit{right aisle}) of the $t$-structure. The standard $t$-structure on the bounded derived category of an abelian category $\mathcal{A}$, $\mathcal{D}^{b}(\mathcal{A})$ consists of the left aisle $\mathcal{T}^{\leq0}=\{X\in \mathcal{D}^{b}(\mathcal{A})\:|\: H^{i}(X)=0\;\text{for}\; i> 0  \}$ and the right aisle $\mathcal{T}^{\geq0}=\{X\in \mathcal{D}^{b}(\mathcal{A})\:|\: H^{i}(X)=0\;\text{for}\; i< 0  \}$. The corresponding standard heart is then $\mathcal{A}$.

\subsubsection{Torsion theories and tilting}
From now on, $\mathcal{C}$ denotes an essentially small triangulated category that is, $\mathcal{C}$ is equivalent to a category in which the class of objects is a set.
\begin{definition}
A \textit{torsion theory} in an abelian category $\mathcal{A}$ is a subcategory $\mathcal{T}$ such that every $a\in \mathcal{A}$ fits into a short exact sequence $$0\rightarrow t\rightarrow a \rightarrow f\rightarrow 0$$
 for some $t \in \mathcal{T}$ and $f\in \mathcal{T}^{\perp}:=\mathcal{F}$. We refer to the objects of $\mathcal{T}$ as torsion objects while objects of $\mathcal{F}$ are known as torsion-free objects.
\end{definition}

This terminology is directly inspired by the subcategories of torsion and torsion-free abelian groups inside the category of abelian groups. Assume now that $\mathcal{A}$ is a finite length category, that is $\mathcal{A}$ is both artinian and noetherian so that every object in $\mathcal{A}$ has a finite composition series.
Then we have that torsion theories in $\mathcal{A}$ form a poset for inclusion. For instance, each simple object $s\in \mathcal{A}$ determines two torsion theories, $\langle s\rangle_{\mathcal{C}}$ and $^{\perp}\langle s\rangle_{\mathcal{C}}$ which are respectively minimal and maximal for the partial order. In the following, we will say that $\langle s\rangle_{\mathcal{C}}$ is the torsion theory generated by the simple $s$.\\
Works of Happel, Reiten, Smal\o \;\;([HaReSm96]) and Beligiannis and Reiten ([BeRe07]) lead to the following proposition which makes the connection between torsion theories and $t$-structures. As a $t$-structure is entirely determined by its left (or right) aisle, we will here allow ourselves to identify a $t$-structure with its left aisle.

\begin{proposition}
Let $\mathcal{D}$ be the left aisle of a $t$-structure in $\mathcal{C}$ with heart $\mathcal{H}$. There is an isomorphism between the poset of torsion theories in $\mathcal{H}$ and the interval in the poset of t-structures consisting of $t$-structures with left aisle $\mathcal{E}$ such that $\mathcal{D}\subset \mathcal{E} \subset \mathcal{D}[-1]$.
\end{proposition}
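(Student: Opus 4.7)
The plan is to exhibit the bijection in both directions and then to verify that the two constructions are mutually inverse. Throughout, I will denote by $H^{i}_{\mathcal{H}}$ the cohomology functors associated with the $t$-structure $\mathcal{D}$, and I will use the fact that any object in $\mathcal{D}[-1]$ has at most two non-zero cohomologies, in degrees $-1$ and $0$, both lying in $\mathcal{H}$.

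From a torsion theory $(\mathcal{T},\mathcal{F})$ in $\mathcal{H}$ I would build the candidate left aisle
$$\mathcal{E}_{\mathcal{T}} := \{ X \in \mathcal{D}[-1] \;|\; H^{0}_{\mathcal{H}}(X) \in \mathcal{T} \},$$
which visibly satisfies $\mathcal{D} \subseteq \mathcal{E}_{\mathcal{T}} \subseteq \mathcal{D}[-1]$, together with the companion right part $\mathcal{E}_{\mathcal{T}}^{\geq 1} := \{ Y \in \mathcal{D}^{\geq 0} \;|\; H^{0}_{\mathcal{H}}(Y) \in \mathcal{F}\}$. I would then verify the four axioms of Definition $1.21$. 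The shift axiom and the inclusion $\mathcal{E}_{\mathcal{T}} \subseteq \mathcal{E}_{\mathcal{T}}[-1]$ are immediate from the definition. For the $\Hom$-vanishing axiom, given $X \in \mathcal{E}_{\mathcal{T}}$ and $Y \in \mathcal{E}_{\mathcal{T}}^{\geq 1}$ I would examine a morphism $f : X \to Y$ through the long exact sequence of cohomologies and use both $\Hom(\mathcal{D},\mathcal{D}^{\geq 1})=0$ and $\Hom_{\mathcal{H}}(\mathcal{T},\mathcal{F})=0$; the only surviving component reduces to a morphism in $\mathcal{H}$ from a torsion to a torsion-free object, which is zero.

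For the truncation triangle, given $X \in \mathcal{C}$ I would first apply the original truncation to obtain the triangle $\tau^{\leq 0} X \to X \to \tau^{\geq 1}X \leadsto$, and then refine it using the torsion decomposition $0 \to t \to H^{0}_{\mathcal{H}}(X) \to f \to 0$ inside $\mathcal{H}$. More precisely, I would use the octahedral axiom applied to the composition $\tau^{\leq 0} X \to X \to \tau^{\geq 0} X = H^{0}_{\mathcal{H}}(X)[0]$ followed by $H^{0}_{\mathcal{H}}(X)[0] \to f[0]$, obtaining the desired triangle with the left-hand term in $\mathcal{E}_{\mathcal{T}}$ and the right-hand term in $\mathcal{E}_{\mathcal{T}}^{\geq 1}$. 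This octahedral diagram chase is the step I expect to be the technical heart of the proof.

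Conversely, given a $t$-structure with left aisle $\mathcal{E}$ satisfying $\mathcal{D}\subset \mathcal{E} \subset \mathcal{D}[-1]$, I would set $\mathcal{T} := \mathcal{E} \cap \mathcal{H}$ and $\mathcal{F} := \mathcal{E}^{\geq 1} \cap \mathcal{H}$, where $\mathcal{E}^{\geq 1}$ is the right aisle of $\mathcal{E}$. For any $a \in \mathcal{H}$, the triangle $t \to a \to f \leadsto$ coming from the truncation with respect to $\mathcal{E}$ has $t \in \mathcal{E}$ and $f \in \mathcal{E}^{\geq 1}[-1]\subseteq \mathcal{D}[-1]$; taking long exact cohomology with respect to $\mathcal{H}$ and using $a \in \mathcal{H}$ and the sandwich hypothesis, I would show $t,f$ both lie in $\mathcal{H}$, so the triangle is a short exact sequence in $\mathcal{H}$ and $\mathcal{T}$ is a torsion class with torsion-free part $\mathcal{F}$. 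The $\Hom$-vanishing $\Hom_{\mathcal{H}}(\mathcal{T},\mathcal{F})=0$ follows from $\Hom(\mathcal{E},\mathcal{E}^{\geq 1})=0$.

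Finally, I would check that the two constructions are mutually inverse. Starting from $(\mathcal{T},\mathcal{F})$, the objects of $\mathcal{E}_{\mathcal{T}} \cap \mathcal{H}$ are precisely the $X \in \mathcal{H}$ with $H^{0}_{\mathcal{H}}(X) = X \in \mathcal{T}$, recovering $\mathcal{T}$; conversely, starting from an aisle $\mathcal{E}$ in the interval, I would reconstruct $\mathcal{E}$ from $\mathcal{T} = \mathcal{E}\cap \mathcal{H}$ by noting that any $X \in \mathcal{D}[-1]$ fits into $H^{-1}_{\mathcal{H}}(X)[1] \to X \to H^{0}_{\mathcal{H}}(X) \leadsto$ with the first term always in $\mathcal{D}\subseteq\mathcal{E}$, so that $X \in \mathcal{E}$ if and only if $H^{0}_{\mathcal{H}}(X) \in \mathcal{T}$. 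Both assignments obviously respect the inclusion order, giving the claimed isomorphism of posets.
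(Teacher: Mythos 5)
The paper does not actually prove this proposition; it is quoted from [HaReSm96] and [BeRe07], so there is no internal proof to compare against. Your overall strategy -- write down the aisle attached to a torsion pair, verify the four axioms with the truncation triangle produced by an octahedron spliced with the torsion decomposition, then recover the torsion pair from a sandwiched aisle and check the two constructions are inverse -- is exactly the standard argument of those references. However, as written your two constructions are not mutually inverse, because of a degree shift that is not merely cosmetic. First, the preliminary claim that every object of $\mathcal{D}[-1]$ has at most two non-zero cohomologies is false: $\mathcal{D}[-1]$ contains all of $\mathcal{D}$, whose objects can have cohomology in every degree $\leq 0$. The two-cohomology statement is true only for objects of the tilted \emph{heart}, not of the aisle. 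Second, with the heart sitting in degree $0$ (which you use in your last paragraph when you write $H^{0}_{\mathcal{H}}(X)=X$ for $X\in\mathcal{H}$), the aisle $\mathcal{E}_{\mathcal{T}}=\{X\in\mathcal{D}[-1]\mid H^{0}_{\mathcal{H}}(X)\in\mathcal{T}\}$ does \emph{not} contain $\mathcal{D}$: any non-zero $X\in\mathcal{F}\subset\mathcal{H}\subset\mathcal{D}$ has $H^{0}_{\mathcal{H}}(X)=X\notin\mathcal{T}$. The correct condition for the interval $\mathcal{D}\subset\mathcal{E}\subset\mathcal{D}[-1]$ is on the \emph{top} cohomology $H^{1}_{\mathcal{H}}$, matching the paper's description $\mathcal{L}_{\mathcal{T}}\mathcal{H}=\langle\mathcal{F},\mathcal{T}[-1]\rangle$.

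The same shift breaks the converse direction more seriously. Since every aisle $\mathcal{E}$ in the interval satisfies $\mathcal{H}\subset\mathcal{D}\subset\mathcal{E}$, one always has $\mathcal{E}\cap\mathcal{H}=\mathcal{H}$, and the $\mathcal{E}$-truncation triangle of any $a\in\mathcal{H}$ is $a\to a\to 0$; so your recipe $\mathcal{T}:=\mathcal{E}\cap\mathcal{H}$ returns the trivial torsion pair $(\mathcal{H},0)$ for every $\mathcal{E}$, and the long-exact-sequence argument you sketch in fact forces $f=0$. The recipe $\mathcal{T}=\mathcal{E}\cap\mathcal{H}$ belongs to the other normalisation of the interval, $\mathcal{D}[1]\subset\mathcal{E}\subset\mathcal{D}$; for the interval as stated in the proposition you must instead set $\mathcal{T}:=\{X\in\mathcal{H}\mid X[-1]\in\mathcal{E}\}=\mathcal{E}[1]\cap\mathcal{H}$ and apply the $\mathcal{E}$-truncation to $a[-1]$ rather than to $a$. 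With these indices corrected throughout (aisle cut out by $H^{1}_{\mathcal{H}}\in\mathcal{T}$, coaisle by $H^{1}_{\mathcal{H}}\in\mathcal{F}$ inside $\mathcal{D}^{\geq 1}$, recovery via $\mathcal{E}[1]\cap\mathcal{H}$), your outline, including the $\Hom$-vanishing check and the octahedron step, becomes the standard correct proof.
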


In this way a torsion theory $\mathcal{T}$ in an abelian category $\mathcal{H}$ determines a new heart in $\mathcal{C}$, denoted by $\mathcal{L}_{\mathcal{T}}\mathcal{H}$. We say this heart is obtained from \textit{left tilting at} $\mathcal{T}$. This \textit{tilted} heart can be explicitly written as :
$$\mathcal{L}_{\mathcal{T}}\mathcal{H}=\{c \in \mathcal{C}|\: H^{0}(c) \in \mathcal{F},  H^{1}(c) \in \mathcal{T}\: \text{and}\: H^{i}(c)=0\: \text{for} \: i\neq 0,1  \}.$$
In other words, $\mathcal{L}_{\mathcal{T}}\mathcal{H}=\langle \mathcal{F}, \mathcal{T}[-1] \rangle_{\mathcal{C}}$ where $\langle - \rangle_{\mathcal{C}}$ denotes the closure under extension in $\mathcal{C}$. In an analogous way, a torsion theory also determines another $t$-structure $\langle \mathcal{D}^{\perp}, \mathcal{F}\rangle_{\mathcal{C}}^{\perp}$ whose heart $\mathcal{R}_{\mathcal{T}}\mathcal{H}$ is the right tilt at $\mathcal{T}$ .\\ 
Let us note here that $\mathcal{R}_{\mathcal{T}}\mathcal{H}=\mathcal{L}_{\mathcal{T}}\mathcal{H}[1]$. Also $\mathcal{F}$ is a torsion theory in $\mathcal{L}_{\mathcal{T}}\mathcal{H}$ and $\mathcal{R}_{\mathcal{F}}\mathcal{L}_{\mathcal{T}}\mathcal{H}=\mathcal{H}$. For $s$ a simple object in $\mathcal{H}$, we will write $\mathcal{L}_{s}=\mathcal{L}_{\langle s \rangle_{\mathcal{C}}}$ and $\mathcal{R}_{s}=\mathcal{R}_{^{\perp}\langle s \rangle_{\mathcal{C}} }$ and we shall refer to those as the \textit{simple tilts} of $\mathcal{H}$. We now give a description of the simple objects in the tilted heart $\mathcal{L}_{\mathcal{T}}\mathcal{H}$. Also, we will write $\mathcal{L}^{+}_{\mathcal{T}}\mathcal{H}=\mathcal{L}_{\mathcal{T}}\mathcal{H}$ and $\mathcal{L}^{-}_{\mathcal{T}}\mathcal{H}=\mathcal{R}_{\mathcal{T}}\mathcal{H}$.

\begin{lemma}{[Wo10, Lemma 2.4]}
Let $\mathcal{T}$ be a torsion theory in $\mathcal{H}$. Then any simple object in $\mathcal{L}_{\mathcal{T}}\mathcal{H}$ lies either in $\mathcal{F}$ or in  $\mathcal{T}[-1]$. Moreover,

\begin{enumerate}
\item $f\in \mathcal{F}$ is simple in $\mathcal{L}_{\mathcal{T}}\mathcal{H}$ if and only if there are no exact triangles
$$f'\rightarrow f \rightarrow f''\rightarrow f'[1]\:\text{or}\; t'[-1]\rightarrow f \rightarrow f'\rightarrow t'
$$ where $f',f''\in \mathcal{F}$ and $t'\in \mathcal{T}$ are all non-zero.
\item $t[-1] \in \mathcal{T}[-1]$ is simple in $\mathcal{L}_{\mathcal{T}}\mathcal{H}$ if and only if there are no exact triangles
$$t'\rightarrow t \rightarrow t''\rightarrow t'[1]\:\text{or}\; t'[-1]\rightarrow t[-1] \rightarrow f'\rightarrow t'
$$ where $t',t''\in \mathcal{T}$ and $f'\in \mathcal{F}$ are all non-zero.

\end{enumerate}
\end{lemma}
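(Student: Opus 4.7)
The plan is to exploit the torsion pair $(\mathcal{F}, \mathcal{T}[-1])$ in the tilted heart $\mathcal{L}_{\mathcal{T}}\mathcal{H}$ noted immediately before the lemma, together with the long exact sequence of $\mathcal{H}$-cohomology applied to short exact sequences in $\mathcal{L}_{\mathcal{T}}\mathcal{H}$. For the first assertion, pick a simple $X\in \mathcal{L}_{\mathcal{T}}\mathcal{H}$ and apply the torsion sequence to obtain $0\to f_X\to X\to t_X[-1]\to 0$ with $f_X\in \mathcal{F}$ and $t_X[-1]\in \mathcal{T}[-1]$; simplicity forces one of the two outer terms to vanish, placing $X$ in $\mathcal{F}$ or in $\mathcal{T}[-1]$. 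The description of $\mathcal{L}_{\mathcal{T}}\mathcal{H}$ in the paper gives us that every $W\in \mathcal{L}_{\mathcal{T}}\mathcal{H}$ has $H^{i}(W)=0$ for $i\neq 0,1$, with $H^{0}(W)\in \mathcal{F}$ and $H^{1}(W)\in \mathcal{T}$, and this will be used repeatedly.

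For part (1), fix $f\in \mathcal{F}$ and consider any short exact sequence $0\to Y\to f\to Z\to 0$ in $\mathcal{L}_{\mathcal{T}}\mathcal{H}$, viewed as a triangle in $\mathcal{C}$. The $\mathcal{H}$-cohomology long exact sequence gives
$$0\to H^{0}(Y)\to f\to H^{0}(Z)\to H^{1}(Y)\to 0\to H^{1}(Z)\to 0,$$
so $H^{1}(Z)=0$ and $Z\in \mathcal{F}$. Now split on $H^{1}(Y)$: if $H^{1}(Y)=0$ then $Y\in \mathcal{F}$ and the original triangle is of the first advertised type $f'\to f\to f''$, with both outer terms nonzero iff $Y,Z$ are both nonzero. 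If $H^{1}(Y)=t'\neq 0$, look at the canonical triangle $H^{0}(Y)\to Y\to H^{1}(Y)[-1]\to H^{0}(Y)[1]$ in $\mathcal{L}_{\mathcal{T}}\mathcal{H}$. The subcase $H^{0}(Y)=f$ is ruled out because it would force $H^{0}(Z)=t'\in \mathcal{T}\cap \mathcal{F}=0$; if $0\neq H^{0}(Y)\subsetneq f$ we again fall into the first type via the subobject $H^{0}(Y)\hookrightarrow f$ with quotient in $\mathcal{F}$; finally, if $H^{0}(Y)=0$ then $Y=t'[-1]\hookrightarrow f$ and the induced triangle $t'[-1]\to f\to Z\to t'$ is of the second advertised type with $t'$ and $Z\in \mathcal{F}$ both nonzero. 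Conversely, either of the two triangles has all three terms in $\mathcal{L}_{\mathcal{T}}\mathcal{H}$ (since $\mathcal{F}\subset \mathcal{L}_{\mathcal{T}}\mathcal{H}$ and $\mathcal{T}[-1]\subset \mathcal{L}_{\mathcal{T}}\mathcal{H}$), hence is a genuine short exact sequence exhibiting a proper nontrivial subobject of $f$.

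Part (2) is entirely parallel, carried out for $t[-1]\in \mathcal{T}[-1]$. A short exact sequence $0\to Y\to t[-1]\to Z\to 0$ in $\mathcal{L}_{\mathcal{T}}\mathcal{H}$ gives the long exact sequence
$$0\to H^{0}(Y)\to 0\to H^{0}(Z)\to H^{1}(Y)\to t\to H^{1}(Z)\to 0,$$
so $H^{0}(Y)=0$, i.e.\ $Y\in \mathcal{T}[-1]$, say $Y=t'[-1]$ with $t'\in \mathcal{T}$. Then $Z\in \mathcal{L}_{\mathcal{T}}\mathcal{H}$ has $H^{0}(Z)\in \mathcal{F}$ fitting into $0\to H^{0}(Z)\to t'\to t\to H^{1}(Z)\to 0$. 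The intersection $\mathcal{T}\cap \mathcal{F}=0$ forces $H^{0}(Z)=0$, so $Z=H^{1}(Z)[-1]=t''[-1]$ with $t''\in \mathcal{T}$, yielding the first type $t'\to t\to t''$; or, running the argument with the subobject now being pure $\mathcal{F}$, the triangle $t'[-1]\to t[-1]\to f'\to t'$ arises exactly when the canonical torsion sequence of $Z$ in $\mathcal{L}_{\mathcal{T}}\mathcal{H}$ has nontrivial $\mathcal{F}$-part. The converse is again immediate from the fact that each such triangle has all three vertices in $\mathcal{L}_{\mathcal{T}}\mathcal{H}$.

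The only delicate point in the argument is the case-analysis in part (1) and the use of $\mathcal{T}\cap \mathcal{F}=0$ to rule out the degenerate configurations; everything else is a mechanical unravelling of the cohomology long exact sequence against the very definition of $\mathcal{L}_{\mathcal{T}}\mathcal{H}$. I expect no serious obstacle beyond keeping track of which piece of $Y$ or $Z$ lies in $\mathcal{F}$ versus $\mathcal{T}[-1]$, and noting that a triangle with all vertices in a heart is automatically a short exact sequence in that heart.
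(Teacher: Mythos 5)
The paper does not prove this statement; it is quoted verbatim from [Wo10, Lemma 2.4], so your attempt can only be measured against the standard argument. Your preliminary claim (simples lie in $\mathcal{F}$ or $\mathcal{T}[-1]$, via the torsion sequence for the pair $(\mathcal{F},\mathcal{T}[-1])$ in $\mathcal{L}_{\mathcal{T}}\mathcal{H}$) and your part (1) are correct and complete: the long exact sequence does force $H^{1}(Z)=0$, and your three-way split on $H^{0}(Y)$ correctly reduces every nontrivial subobject of $f$ to one of the two forbidden triangles.

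Part (2), however, contains a genuine error. From the long exact sequence you correctly get $H^{0}(Y)=0$, but your next claim --- that $\mathcal{T}\cap\mathcal{F}=0$ forces $H^{0}(Z)=0$ --- is false. The exact sequence only exhibits $H^{0}(Z)$ as a \emph{subobject} of $t'\in\mathcal{T}$, and torsion classes are closed under quotients and extensions, not under subobjects; equivalently, $\mathcal{T}[-1]$ is the torsion-\emph{free} class of the induced pair in $\mathcal{L}_{\mathcal{T}}\mathcal{H}$, so quotients of $t[-1]$ need not stay in $\mathcal{T}[-1]$. Concretely, for $A=k[2\to 1]$ with $\mathcal{T}$ the torsion class generated by $P_{2}$ (so $\mathcal{F}=\mathrm{add}(S_{1})$), the sequence $0\to S_{1}\to P_{2}\to S_{2}\to 0$ gives a quotient $S_{2}[-1]\twoheadrightarrow S_{1}$ in the tilted heart with $H^{0}\neq 0$ --- this is exactly the situation the second forbidden triangle of part (2) is designed to detect, so it cannot be ruled out. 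Your trailing ``or, running the argument\ldots'' clause gestures at the right dichotomy but does not repair the proof: the mixed case $H^{0}(Z)\neq 0$ \emph{and} $H^{1}(Z)\neq 0$ is never handled. The correct dual of your part (1) is to split on the torsion sequence of the quotient $Z$ and, in the mixed case, pass to the further quotient $t[-1]\twoheadrightarrow H^{1}(Z)[-1]$, whose kernel lies in $\mathcal{T}[-1]$ by the same $H^{0}=0$ computation, producing a first-type triangle $t'\to t\to t''$ with both outer terms nonzero. A minor additional point: the first-type triangle in part (2) has vertices in $\mathcal{T}\subset\mathcal{H}$, not in $\mathcal{L}_{\mathcal{T}}\mathcal{H}$, so for the converse you must shift it by $[-1]$ before invoking the fact that a triangle with all vertices in a heart is a short exact sequence there.
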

Moreover for a simple torsion theory, following [Wo10] and using the lemma, we can explicitely determine the simple $\mathcal{L}_{\mathcal{T}}\mathcal{H}$-modules. We now recall two very useful results that can also be found in [Wo10].

\begin{lemma}{[Wo10, Lemma 2.7]}
Any left tilt at a torsion theory containing finitely many isoclasses of indecomposables is a composition of simple left tilts.
\end{lemma}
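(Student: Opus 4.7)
The plan is to proceed by induction on the number $n\geq 0$ of isomorphism classes of indecomposable objects in the torsion theory $\mathcal{T}$. The case $n=0$ is trivial ($\mathcal{T}=0$ and $\mathcal{L}_{\mathcal{T}}\mathcal{H}=\mathcal{H}$). For $n=1$, the unique indecomposable $s$ of $\mathcal{T}$ can have no proper non-zero quotient in $\mathcal{H}$, since such a quotient would be an indecomposable in $\mathcal{T}$ distinct from $s$; hence $s$ is simple in $\mathcal{H}$ and $\mathcal{L}_{\mathcal{T}}\mathcal{H}=\mathcal{L}_{s}\mathcal{H}$ is itself a simple left tilt.

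For the inductive step, I would begin by extracting a simple object $s\in \mathcal{T}$: pick any indecomposable $t\in \mathcal{T}$, take a simple quotient $t\twoheadrightarrow s$ in $\mathcal{H}$ (which exists by the finite-length character of $\mathcal{H}$ implicit in the hypothesis), and observe that $s\in \mathcal{T}$ since $\mathcal{T}$ is closed under quotients. Let $\mathcal{T}_{s}=\langle s\rangle_{\mathcal{C}}\subseteq \mathcal{T}$ denote the torsion theory generated by $s$, and form the corresponding simple left tilt $\mathcal{L}_{s}\mathcal{H}=\mathcal{L}_{\mathcal{T}_{s}}\mathcal{H}$. The inclusion $\mathcal{T}_{s}\subseteq \mathcal{T}$ translates, under Proposition~1.17, into an inclusion of the associated left aisles $\mathcal{E}_{s}\subseteq \mathcal{E}_{\mathcal{T}}$ in $\mathcal{C}$. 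Since both aisles lie between the aisle $\mathcal{D}$ of $\mathcal{H}$ and $\mathcal{D}[-1]$, and since $\mathcal{D}\subseteq \mathcal{E}_{s}$ forces $\mathcal{E}_{\mathcal{T}}\subseteq \mathcal{D}[-1]\subseteq \mathcal{E}_{s}[-1]$, Proposition~1.17 applied to the heart $\mathcal{L}_{s}\mathcal{H}$ produces a unique torsion theory $\mathcal{T}''$ in $\mathcal{L}_{s}\mathcal{H}$ satisfying $\mathcal{L}_{\mathcal{T}''}\mathcal{L}_{s}\mathcal{H}=\mathcal{L}_{\mathcal{T}}\mathcal{H}$.

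A short cohomological computation identifies $\mathcal{T}''=\mathcal{L}_{s}\mathcal{H}\cap \mathcal{L}_{\mathcal{T}}\mathcal{H}[1]$ with $\mathcal{T}\cap \mathcal{F}_{s}$, where $\mathcal{F}_{s}=\mathcal{T}_{s}^{\perp}$ denotes the torsion-free part of $\mathcal{T}_{s}$ in $\mathcal{H}$; crucially, this subcategory sits inside $\mathcal{F}_{s}\subseteq \mathcal{L}_{s}\mathcal{H}$, and the endomorphism rings of its objects coincide whether computed in $\mathcal{H}$, in $\mathcal{L}_{s}\mathcal{H}$, or in $\mathcal{C}$, so indecomposability is preserved. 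Consequently, the indecomposable isoclasses of $\mathcal{T}''$ form a subset of those of $\mathcal{T}$ which strictly excludes $s$ (because $\Hom(s,s)\neq 0$), and the induction hypothesis applied inside $\mathcal{L}_{s}\mathcal{H}$ factors $\mathcal{L}_{\mathcal{T}''}$ as a composition of simple left tilts; pre-composing with $\mathcal{L}_{s}$ yields the desired factorisation of $\mathcal{L}_{\mathcal{T}}$. The main obstacle is the identification of $\mathcal{T}''$ together with the verification that $\mathcal{L}_{s}\mathcal{H}$ sits correctly as an intermediate heart between $\mathcal{H}$ and $\mathcal{L}_{\mathcal{T}}\mathcal{H}$; both reduce to a careful book-keeping of cohomological truncations and to a single invocation of the aisle-bijection of Proposition~1.17.
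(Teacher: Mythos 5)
The paper gives no proof of this lemma: it is imported verbatim as [Wo10, Lemma 2.7], so there is no in-house argument to compare yours against. Your induction on the number of isoclasses of indecomposables in $\mathcal{T}$ is, in substance, Woolf's own argument, and its core is sound: a torsion class is closed under quotients and direct summands, so it contains a simple object $s$ of $\mathcal{H}$ and hence the torsion class $\langle s\rangle_{\mathcal{C}}$; the poset isomorphism between torsion theories and intermediate aisles (Proposition 1.19 in the paper's numbering, not 1.17), applied twice, sandwiches the aisle of $\mathcal{L}_{\mathcal{T}}\mathcal{H}$ between that of $\mathcal{L}_{s}\mathcal{H}$ and its shift, producing a torsion theory $\mathcal{T}''=\mathcal{L}_{s}\mathcal{H}\cap \mathcal{L}_{\mathcal{T}}\mathcal{H}[1]=\mathcal{T}\cap\mathcal{F}_{s}$ in $\mathcal{L}_{s}\mathcal{H}$ with $\mathcal{L}_{\mathcal{T}''}\mathcal{L}_{s}\mathcal{H}=\mathcal{L}_{\mathcal{T}}\mathcal{H}$; and since $\mathcal{T}''$ is a full subcategory of $\mathcal{T}$ closed under summands and not containing $s$, it has strictly fewer indecomposables.

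The one step you should not wave through is the re-application of the inductive hypothesis inside $\mathcal{L}_{s}\mathcal{H}$. To restart the argument there you must again extract a simple object of the \emph{new} heart lying in $\mathcal{T}''$, and you must know that its extension closure is again a torsion class; both require $\mathcal{L}_{s}\mathcal{H}$ to be noetherian of finite length, and this is not a formal consequence of $\mathcal{T}''$ having finitely many indecomposables. The finite-length hypothesis on the heart therefore has to be carried along as part of the induction (as it implicitly is in [Wo10]). In every situation where the paper invokes the lemma this is automatic: by Proposition 1.24 a simple tilt of $A\text{-mod}$, for $A$ a finite-dimensional symmetric algebra, is realised by a shifted elementary perverse equivalence, so each intermediate heart is again a module category over such an algebra. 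With that caveat recorded, your proof is complete and agrees with the cited source.
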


\begin{lemma}
Let $\Phi: \mathcal{C}\rightarrow \mathcal{C}$ a triangulated equivalence, $\mathcal{H}$ a heart of $\mathcal{C}$ and $s$ a simple object in $\mathcal{H}$. Then $\Phi(\mathcal{L}_{s}\mathcal{H})=\mathcal{L}_{\Phi(s)}\Phi(\mathcal{H})$.
In other words, the actions of automorphisms of $\mathcal{C}$ commute with simple tilts. 
\end{lemma}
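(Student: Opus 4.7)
The proof is essentially a formal verification that every ingredient used to define $\mathcal{L}_s\mathcal{H}$ is preserved by a triangulated equivalence. My plan is to unpack the definition of $\mathcal{L}_s\mathcal{H}$ step by step and transport each piece across $\Phi$.

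First I would observe that $\Phi$ sends $t$-structures to $t$-structures and hearts to hearts: since $\Phi$ commutes (up to natural isomorphism) with $[1]$, preserves distinguished triangles, and induces bijections on $\Hom$ spaces, the image of the left aisle of $\mathcal{H}$ under $\Phi$ is again a left aisle and $\Phi(\mathcal{H})$ is its heart. In particular, $\Phi(s)$ is a simple object of $\Phi(\mathcal{H})$, so the right-hand side $\mathcal{L}_{\Phi(s)}\Phi(\mathcal{H})$ is defined.

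Next I would check that $\Phi$ intertwines the two constructions on torsion theories. Write $\mathcal{T}=\langle s\rangle_{\mathcal{C}}$ and $\mathcal{F}=\mathcal{T}^{\perp}\cap\mathcal{H}$. Because $\Phi$ is fully faithful on $\mathcal{C}$ and preserves extensions, $\Phi(\langle s\rangle_{\mathcal{C}})=\langle \Phi(s)\rangle_{\mathcal{C}}$, and from $\Hom(X,Y)\cong\Hom(\Phi X,\Phi Y)$ one gets $\Phi(\mathcal{T}^{\perp})=\Phi(\mathcal{T})^{\perp}$; intersecting with $\Phi(\mathcal{H})$ yields $\Phi(\mathcal{F})=\Phi(\mathcal{T})^{\perp}\cap\Phi(\mathcal{H})$. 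Thus $\Phi$ sends the torsion pair $(\mathcal{T},\mathcal{F})$ in $\mathcal{H}$ to the torsion pair generated by $\Phi(s)$ in $\Phi(\mathcal{H})$.

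Finally I would apply the explicit description
\[
\mathcal{L}_{s}\mathcal{H}=\langle \mathcal{F},\mathcal{T}[-1]\rangle_{\mathcal{C}}
\]
recalled just before the lemma. Since $\Phi$ is triangulated, it preserves the shift and the closure under extension, so
\[
\Phi(\mathcal{L}_{s}\mathcal{H})=\Phi(\langle \mathcal{F},\mathcal{T}[-1]\rangle_{\mathcal{C}})=\langle \Phi(\mathcal{F}),\Phi(\mathcal{T})[-1]\rangle_{\mathcal{C}}=\mathcal{L}_{\Phi(s)}\Phi(\mathcal{H}),
\]
which is the desired equality. There is no serious obstacle here; the only thing to be careful about is that $\Phi$ genuinely preserves the extension closure and the orthogonal — but both are immediate from $\Phi$ being a triangulated equivalence, so the argument is essentially a bookkeeping check.
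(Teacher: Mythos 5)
Your proof is correct, and in fact the paper states this lemma without any proof at all, treating it as a standard fact. Your verification --- transporting the torsion pair $(\mathcal{T},\mathcal{F})$ across $\Phi$ and applying the explicit description $\mathcal{L}_{s}\mathcal{H}=\langle \mathcal{F},\mathcal{T}[-1]\rangle_{\mathcal{C}}$ --- is exactly the bookkeeping argument the author implicitly has in mind.
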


\subsubsection{Ordered hearts and tilting}
We now introduce ordered hearts, \textit{i.e.} hearts together with a labelling of their simples. Assume $\mathcal{H}$ has a finite number of isoclasses of simples objects and consider a fixed labelling of its simples $\phi:\llbracket 1,n \rrbracket \stackrel{\sim}{\rightarrow} \Irr(\mathcal{H})$. This ordered heart will be denoted by $\mathcal{H}^{\phi}$. That way for $I\subset \Irr(\mathcal{H})$ we will write $\mathcal{L}_{I}\mathcal{H^{\phi}}=\mathcal{L}_{\phi^{-1}(I)}\mathcal{H}$ and $\Irr(\mathcal{L}_{I}\mathcal{H})$ inherits the labelling from $\Irr(\mathcal{H})$. Indeed, they both have the same number of simple objects, since the classes of those form a basis for the Grothendieck group of $\mathcal{C}$. 

\begin{example}
We consider $A_{n}$ the Brauer tree algebra associated to the line, so that the $A_{n}$-simple are labelled by $\phi:\llbracket 1,n \rrbracket \stackrel{\sim}{\rightarrow} \Irr(A_{n}\text{-mod})$ as follows $\xymatrix{\bullet \ar@{-}[r]^{S_{1}} & \bullet \ar@{-}[r]^{S_{2}}& \bullet \cdots \bullet \ar@{-}[r]^{S_{n}}& \bullet}$. The abelian category $A_{n}\text{-mod}$ is the canonical heart of $\mathcal{D}^{b}(A_{n})$. We can then left tilt $A_{n}\text{-mod}$  at $S_{1}$ and the new simple objects in $\mathcal{L}_{S_{1}}(A_{n}\text{-mod})$ are $T_{1}=S_{1}[-1],  T_{2}=V,$ the unique (up to isomorphism) nonsplit extension of $S_{1}$ by $S_{2}$ and $T_{3}=S_{3},\ldots, T_{n}=S_{n}$.\\
That way, we can write $\mathcal{L}_{S_{2}}\mathcal{L}_{V}\mathcal{L}_{S_{1}}(A_{n}\text{-mod})=\mathcal{L}_{1}\mathcal{L}_{2}\mathcal{L}_{1}({A_{n}\text{-mod}}^{\phi})$. Moreover after a straightforward computation, the following braid relation appears $\mathcal{L}_{1}\mathcal{L}_{2}\mathcal{L}_{1}(A_{n}\text{-mod})=\mathcal{L}_{2}\mathcal{L}_{1}\mathcal{L}_{2}(A_{n}\text{-mod})$.
\end{example}

Let us consider an ordered heart $\mathcal{H}^{\phi}$ with $\phi:\llbracket 1,n \rrbracket \stackrel{\sim}{\rightarrow} \Irr(\mathcal{H})$. We have an action of the symmetric group $\mathfrak{S}_{n}$ on the set of ordered hearts with the same underlying heart, by relabelling the simples. If $\mathcal{T}_{1},\mathcal{T}_{2}, \ldots, \mathcal{T}_{j}$ and $\mathcal{T}'_{1},\mathcal{T}'_{2}, \ldots, \mathcal{T}'_{j'}$ are two sequences of torsion theories generated by simples in $\mathcal{H}$ such that $ \mathcal{L}_{\mathcal{T}_{1}} \mathcal{L}_{\mathcal{T}_{2}}\ldots  \mathcal{L}_{\mathcal{T}_{j}}\mathcal{H}=\mathcal{L}_{\mathcal{T}'_{1}}\mathcal{L}_{\mathcal{T}'_{2}}\ldots \mathcal{L}_{\mathcal{T}'_{j'}} \mathcal{H}$ and $\{T_{1},\ldots,T_{n}\}$ (resp. $\{T'_{1},\ldots,T'_{n}\}$) is the set of ordered simples of $\mathcal{L}_{\mathcal{T}_{1}} \mathcal{L}_{\mathcal{T}_{2}}\ldots  \mathcal{L}_{\mathcal{T}_{j}}\mathcal{H}$ (resp. $\mathcal{L}_{\mathcal{T}'_{1}}\mathcal{L}_{\mathcal{T}'_{2}}\ldots \mathcal{L}_{\mathcal{T}'_{j'}} \mathcal{H}$) by keeping track of the labelling induced by $\phi$. Then there is $\sigma \in \mathfrak{S}_{n}$ such that $T'_{i}=T_{\sigma(i)}$ for all $i \in \llbracket 1,n \rrbracket$ and we write  $ \mathcal{L}_{\mathcal{T}_{1}} \mathcal{L}_{\mathcal{T}_{2}}\ldots  \mathcal{L}_{\mathcal{T}_{j}}\mathcal{H}^{\phi}=\sigma\mathcal{L}_{\mathcal{T}'_{1}}\mathcal{L}_{\mathcal{T}'_{2}}\ldots \mathcal{L}_{\mathcal{T}'_{j'}} \mathcal{H}^{\phi}$.
\\Also for $I\subseteq \llbracket 1,n \rrbracket$ and $\sigma \in \mathfrak{S}_{n}$, we have $\sigma.\mathcal{L}_{I}(\mathcal{H}^{\phi})=\mathcal{L}_{\sigma^{-1}(I)}(\mathcal{H}^{\sigma.\phi})$ for any heart $\mathcal{H}^{\phi}$. We want to point out here that remembering the extra information of the permutation $\sigma$ will turn out to be very important later on.

\begin{example}
Pursuing the previous example, we find that the ordered simples of $\mathcal{L}_{1}\mathcal{L}_{2}\mathcal{L}_{1}(A_{n}\text{-mod}^{\phi})$ are $S_{2}[-1], S_{1}[-1], V, S_{4}, \ldots, S_{n}$ where $V$ is the unique (up to isomorphism) non split extension of $S_{3}$ by $S_{2}$. On the other hand, the ordered simples of $\mathcal{L}_{\{1,2\}}(A_{n}\text{-mod}^{\phi})$ are $S_{1}[-1], S_{2}[-1], V,$ $S_{4}, \ldots, S_{n}$. Hence we have $\mathcal{L}_{1}\mathcal{L}_{2}\mathcal{L}_{1}(A_{n}\text{-mod}^{\phi})=(12)\mathcal{L}_{\{1,2\}}(A_{n}\text{-mod}^{\phi})$.
\end{example}

Consider again the settings of the section 1.4.1. Let $A$ be a finite dimensional symmetric $k$-algebra and $\mathcal{C}=\mathcal{D}^{b}(A\text{-mod})$. For $\phi:\llbracket 1,n \rrbracket \stackrel{\sim}{\rightarrow} \Irr({A}\text{-mod})$ a labelling of the simples of $A\text{-mod}$, write $\Tilt(A\text{-mod}^{\phi})$ for the set of hearts in $\mathcal{C}$ that are obtained from $A\text{-mod}^{\phi}$ by a finite sequence of tilts. Let $\mathcal{P}(\llbracket 1,n \rrbracket)$ denote the set of subsets of $\llbracket 1,n \rrbracket$.\\
We have an action of $\TrPic(A)$ on $\mathcal{C}$ which induces a left action of $\TrPic(A)$ on $\Tilt(A\text{-mod}^{\phi})$. On the other hand, there is a right action of $\Free(\mathcal{P}(\llbracket 1,n \rrbracket))\rtimes \mathfrak{S}_{n}$ on $\Tilt(A\text{-mod}^{\phi})$. The action of $\mathfrak{S}_{n}$ was explained above and is given by permutation of indices, while for $I\in \mathcal{P}(\llbracket 1,n \rrbracket)$ and $\mathcal{H}\in \Tilt(A\text{-mod}^{\phi})$ we have $I.\mathcal{H}=\mathcal{L}_{I}(\mathcal{H})$. Also we have $\emptyset.\mathcal{H}=\mathcal{H}$ and $\llbracket 1,n \rrbracket.\mathcal{H}=\mathcal{H}[-1]$ .
\\Now as a counterpart of Lemma 1.22, those two actions commute. For $\varphi\in \TrPic(A)$, $I\subseteq \llbracket 1,n \rrbracket$ and $\mathcal{H}\in \Tilt(A\text{-mod}^{\phi})$, we have $\varphi(\mathcal{L}_{I}(\mathcal{H}))=\mathcal{L}_{I}(\varphi.\mathcal{H})$.\\
%$\mathcal{T}_{I}$ torsion theory in $\mathcal{H}$, $\varphi(S_{I})=S'_{I}$ simples in $\varphi.\mathcal{H}$, so $\varphi.\mathcal{T}_{I}=\mathcal{T}'_{I}$.

We would now very much like to `realise' those tilted hearts in a functorial way; this is the aim of the following section.  

\subsection{Derived equivalences and tilted hearts}
Even if two hearts of a derived category are in general not derived equivalent, we will show that for $A$ a finite-dimensional symmetric $k$-algebra, and for $\mathcal{T}$ a torsion theory generated by simple objects in ${A}\text{-mod}$, the natural heart ${A}\text{-mod}$ of $\mathcal{D}^{b}(A)$ is derived equivalent to $\mathcal{L}_{\mathcal{T}}(A\text{-mod})$.
\\To obtain such a result, one could naturally relies on the powerful Rickard's theorem on derived equivalences for symmetric algebras (cf. [Ric02, Theorem 5.1]). However, we try here not to use this `bazooka' result, whose underlying machinery is way too sophisticated for what we really need. Instead we will use the characterization of the image of simple modules under an elementary perverse equivalence provided by the Lemma 1.12. Also following [Wo10], we can determine the simple objects in $\mathcal{L}_{S}(A\text{-mod})$ for $S$ a simple $A$-module and this shall reinforce our intuition.
\\Let $T\subset \Irr(A)$ a subset of the the set of isoclasses of simple $A$-modules. We denote by $\mathcal{T}$ the torsion theory generated by $T$. The following definition clarifies the relation between the torsion theory  $\mathcal{T}$ and the derived equivalence $L_{T}$.  From now on, we will denote by $\mathcal{A}:=$${A}$\text{-mod}. 

\begin{definition}
Let $A$ and $B$ be finite dimensional symmetric $k$-algebras and let $\mathcal{T}$ be a torsion theory in $\mathcal{A}$. We say that $F:\mathcal{D}^{b}(A\text{-mod})\stackrel{\sim}{\rightarrow} \mathcal{D}^{b}(B\text{-mod})$ realises $\mathcal{T}$ if $F(\mathcal{L}_{\mathcal{T}}(A\text{-mod}))=B\text{-mod}$. 
\end{definition}

Here, let us stress again that one really wants an equality of hearts and not just an equivalence. We are now ready to establish the connection between torsion theories generated by simple modules and elementary perverse equivalences.

\begin{proposition}
The shifted elementary perverse equivalence $L_{A,T}[1]$ realises the torsion theory $\mathcal{T}$ in $\mathcal{A}$.
\end{proposition}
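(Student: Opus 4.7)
My strategy is to prove the equivalent identity
$$L_{A,T}^{-1}(B\text{-mod})[-1]=\mathcal{L}_{\mathcal{T}}\mathcal{A},$$
where $B=A'$. Since $L_{A,T}^{-1}$ is a triangulated equivalence, the left-hand side is a heart in $\mathcal{D}^{b}(\mathcal{A})$; the right-hand side is a heart by the tilting construction of Section 1.5.2. Both are finite-length abelian categories, and any such heart in $\mathcal{D}^{b}(\mathcal{A})$ is the extension closure of its finite set of simples. So it suffices to check that the two sets of simples coincide.

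On the left, Lemma 1.12 reads directly: as $S'$ runs over $\Irr(B)$, the objects $L_{A,T}^{-1}(S')[-1]$ are $S[-1]$ for $S\in T$ and $S^{T}$ for $S\notin T$. On the right, I identify $\Irr(\mathcal{L}_{\mathcal{T}}\mathcal{A})$ directly. The torsion class $\mathcal{T}$ generated by $T$ is the full subcategory of modules whose composition factors all lie in $T$, and $\mathcal{F}=\mathcal{T}^{\perp}$ consists of modules whose socle has no composition factor in $T$. By Lemma 1.23, each simple of $\mathcal{L}_{\mathcal{T}}\mathcal{A}$ lies in $\mathcal{F}\cup\mathcal{T}[-1]$. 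For $S\in T$, the two forbidden triangles of Lemma 1.23(2) are ruled out by simplicity of $S$ together with $\mathcal{T}\cap\mathcal{F}=0$ (any $\mathcal{F}$-subobject of an object in $\mathcal{T}$ vanishes), so $S[-1]$ is simple. For $S\notin T$, the module $S^{T}\subseteq I_{S}$ has socle $S$, hence lies in $\mathcal{F}$, and I verify simplicity in $\mathcal{L}_{\mathcal{T}}\mathcal{A}$ via Lemma 1.23(1) using the maximality of $S^{T}$ inside $I_{S}$. A rank count in $K_{0}$ forces these objects to exhaust $\Irr(\mathcal{L}_{\mathcal{T}}\mathcal{A})$, matching the list from Lemma 1.12.

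The main technical obstacle is the simplicity check for $S^{T}$, and specifically ruling out a short exact sequence $0\to S^{T}\to f'\to t'\to 0$ with $f'\in\mathcal{F}$ and $t'\in\mathcal{T}$ nonzero. The key move is to extend the inclusion $S^{T}\hookrightarrow I_{S}$ to a morphism $g\colon f'\to I_{S}$ using the injectivity of $I_{S}$. Its image $g(f')$ is a submodule of $I_{S}$ containing $S$, with $g(f')/S$ an extension (of a quotient of $t'\in\mathcal{T}$ by $S^{T}/S\in\mathcal{T}$) that lies in $\mathcal{T}$; maximality of $S^{T}$ then forces $g(f')=S^{T}$, so $g$ retracts $f'\twoheadrightarrow S^{T}$. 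Consequently $t'$ appears as a direct summand of $f'$, hence lies in $\mathcal{F}\cap\mathcal{T}=0$, a contradiction. The other forbidden triangle of Lemma 1.23(1) is handled directly using the maximality of $S^{T}$ and $\soc S^{T}=S\notin T$.
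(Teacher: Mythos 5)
Your argument is correct and follows essentially the same route as the paper's proof: identify the objects $L_{A,T}^{-1}(S')[-1]$ via Lemma 1.12, check they are simple in $\mathcal{L}_{\mathcal{T}}\mathcal{A}$ using Woolf's criterion (Lemma 1.20, i.e.\ [Wo10, Lemma 2.4] --- not Lemma 1.23, which is the definition of ``realises''), and conclude by the $K_{0}$ rank count; the only difference is that you supply the verification the paper dismisses as ``straightforward'', and your injective-hull retraction argument showing that a short exact sequence $0\to S^{T}\to f'\to t'\to 0$ splits is a correct way to do it.
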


\begin{proof}
Recall that the elementary perverse equivalence associated to $T$, $L_{T}:\mathcal{D}^{b}(A)\stackrel{\sim}{\rightarrow} \mathcal{D}^{b}(A')$ is such that for $S'\in  \Irr({A'})$:
$$L_{T}^{-1}(S')[-1]=\begin{cases}
  S[-1] \;\text{for}\: S\in T.
  \\ S^{T} \;\text{for}\: S\notin T.
  \end{cases}$$
We have to see that those are exactly the $\mathcal{L}_{ \mathcal{T}}\mathcal{A}$-simples. Using the description of $\mathcal{L}_{ \mathcal{T}}\mathcal{A}$ and Lemma 1.21, it is straighforward to check they are indeed $\mathcal{L}_{ \mathcal{T}}\mathcal{A}$-simple. Finally as for any heart $\heartsuit$ of a bounded $t$-structure of $\mathcal{D}^{b}(A)$, we have an isomorphism $K_{0}(\mathcal{D}^{b}(A))\simeq K_{0}(\heartsuit)$, we have found all the $\mathcal{L}_{ \mathcal{T}}\mathcal{A}$-simples.
\end{proof}

Hence it makes sense to refer to $L_{i}$ as a \textit{simple} perverse equivalence as it realises the simple torsion theory generated by the simple object $S_{i}$. Note that the composition of simple tilts is realised by the corresponding composition of the shifted (by [1]) simple perverse equivalences.\\
From now on, we will consider again the case where $A_{n}$ is the Brauer algebra corresponding to the tree $T_{n}$ and the $F_{i}$'s are those previously defined. A direct application of the above gives the following result (this is a restatement of Proposition 1.14).

%Here relation this with Proposition 1.31 -actually it's not the same decomposition -pas de $L^{-1}$!
\begin{corollary}
For $i\in  \llbracket 1,n \rrbracket$, the elementary perverse equivalence $F_{i}[1]$ realises the torsion theory $\mathcal{T}_{i}$ defined by $ \llbracket 1,n \rrbracket-\{i\}$.
\end{corollary}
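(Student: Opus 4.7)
The plan is to derive the corollary by combining Propositions 1.14 and 1.27. Applying Proposition 1.27 to the set $T = \llbracket 1, n \rrbracket - \{i\}$ (whose generated torsion theory is precisely $\mathcal{T}_i$) gives
\[
L_{A_n, T}[1]\bigl(\mathcal{L}_{\mathcal{T}_i}(A_n\text{-mod})\bigr) = A_n\text{-mod}
\]
(identifying the target algebra $A' = \End_{A_n}(T_{A_n}(T))$ with $A_n$ via the natural equivalence, as both are Brauer tree algebras of the line with $n$ edges). So the shifted elementary perverse equivalence $L_{A_n, T}[1]$ realises $\mathcal{T}_i$, and it remains to transfer this realisation property from $L_{A_n, T}[1]$ to $F_i[1] = \phi_i^{-1}[1]$.

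For this, Proposition 1.14 asserts $\phi_i^{-1}(A_n\text{-mod}) = L_{A_n, T}(A_n\text{-mod})$ as hearts inside $\mathcal{D}^b(A_n)$. Thus $L_{A_n, T}^{-1} F_i$ preserves the natural heart and is induced by an autoequivalence of the abelian category $A_n\text{-mod}$, which acts on simples by a permutation. A direct check shows this permutation acts trivially on the generators $\{S_j\}_{j \neq i}$ of $\mathcal{T}_i$: by construction $\phi_i^{-1}(S_j) = S_j$ for $j \neq i$, and by Lemma 1.12 one has $L_{A_n, T}^{-1}(S_j') = S_j$ for $j \in T$. Consequently the simple tilts at $\mathcal{T}_i$ agree, so $F_i$ and $L_{A_n, T}$ send $\mathcal{L}_{\mathcal{T}_i}(A_n\text{-mod})$ to the same heart, yielding $F_i[1]\bigl(\mathcal{L}_{\mathcal{T}_i}(A_n\text{-mod})\bigr) = A_n\text{-mod}$, which is the desired statement.

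The main obstacle is the logical gap between Proposition 1.14, which controls the two derived equivalences only on the natural heart, and the corollary, which concerns their behaviour on a tilted heart. In principle two functors that agree on $A_n\text{-mod}$ may differ on other hearts, but the permutation-of-simples check above resolves this. As a sanity check, one can alternatively compute the simples of $\mathcal{L}_{\mathcal{T}_i}(A_n\text{-mod})$ using Lemmas 1.21 and 1.12 — they are $S_j[-1]$ for $j \neq i$ together with $S_i^T$ — and match them with the simples $\phi_i(S_j)[-1]$ of $\phi_i(A_n\text{-mod}[-1])$, namely $S_j[-1]$ for $j \neq i$ and $\Omega(S_i)$; the equality $S_i^T = \Omega(S_i) = \rad(P_i)$ follows directly from the Loewy structure of $P_i$, closing the argument.
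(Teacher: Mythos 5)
Your argument is correct and takes essentially the same route as the paper, which dispatches this corollary as a direct consequence of Proposition 1.14 together with the realisation statement for $L_{A,T}[1]$ (Proposition 1.24 in the paper's numbering, not 1.27); your closing verification that the simples of $\mathcal{L}_{\mathcal{T}_i}(A_n\text{-mod})$ are $S_j[-1]$ for $j\neq i$ together with $S_i^{T}=\Omega(S_i)=\rad(P_i)$, matching the simples of $\phi_i(A_n\text{-mod})[-1]$, is exactly the check the paper leaves implicit. The extra step you insert to pass from agreement of $F_i$ and $L_{A_n,T}$ on the standard heart to agreement on the tilted heart is a legitimate (and welcome) tightening rather than a departure from the paper's proof.
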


%-2

\subsection{Looking for relations}
In [ChRo, Proposition 5.71], it is shown that for $I\subset J\subset S$, we have $\mathcal{L}_{I}\mathcal{L}_{J}(\mathcal{H}^{\phi})=\mathcal{L}_{J}\mathcal{L}_{I}(\mathcal{H}^{\phi})$ for any ordered heart $\mathcal{H}^{\phi}$ in $\mathcal{D}^{b}(A\text{-mod})$.\\
Pursuing this philosophy leads us to relate properties of the algebra $A$ with relations between the elementary perverse equivalences. %This yields to the [Propostion 5.77 and 5.78 of ChRo].
Their results can be restated in a more combinatorial way for Brauer tree algebras as we are about to see.
\\We will need the following notations. For a tree $\Gamma$ and $i,j \in \Gamma_{E}$, we will write $i_{\wedge} j=1$ if $\tau(i)\cap \tau(j)=\emptyset$ (\textit{i.e.} if $i$ and $j$ do not share a vertex). Also if $i,j \in \Gamma_{E}$ share a vertex, \textit{i.e.} $i_{\wedge} j\neq1$, and if $j$ is the successor of $i$ in the cyclic ordering, we will write $i<j$. The following propositions can also be found in [Aih10] for simple tilts.

\begin{proposition}
Let $\Gamma$ be a Brauer tree and $I$ and $J$ subsets of $\Gamma_{E}$. Suppose that for every $i\in I\cap (\Gamma_{E}-J)$ and $j\in J\cap (\Gamma_{E}-I)$ with $i_{\wedge} j\neq1$, there exists $k\not\in (I\cup J)$ such that $j<k<i$. Then $L_{I}L_{J}$ is a perverse equivalence and $\mathcal{L}_{J}\mathcal{L}_{I}(A_{\Gamma}\text{-mod})=\mathcal{L}_{I\cup J}\mathcal{L}_{I\cap J}(A_{\Gamma}\text{-mod}).$
\end{proposition}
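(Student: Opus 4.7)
The plan is to establish the heart identity by matching simple objects on both sides, then derive the perverse-equivalence claim for $L_I L_J$ afterwards. Since a bounded, finite-length heart in $\mathcal{D}^b(A_\Gamma)$ is determined by its isoclasses of simples, showing that $\mathcal{L}_J\mathcal{L}_I(A_\Gamma\text{-mod})$ and $\mathcal{L}_{I \cup J}\mathcal{L}_{I \cap J}(A_\Gamma\text{-mod})$ have the same simples suffices for the heart identity; Proposition 1.27 then forces $L_I L_J$ to realise this common heart, and the composition inherits the perverse filtration of $L_{I \cup J}\circ L_{I \cap J}$.

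Writing $\mathcal{A} := A_\Gamma\text{-mod}$, the first step is to apply the Chuang--Rouquier nested-commutativity recalled at the start of Section 1.7 to $I \cap J \subset I \cup J$, rewriting the right-hand side as $\mathcal{L}_{I \cap J}\mathcal{L}_{I \cup J}\mathcal{A}$. By Lemma 1.12 together with Proposition 1.27, the simples of $\mathcal{L}_I\mathcal{A}$ are $\{S[-1] : S \in I\} \cup \{S^I : S \notin I\}$, and analogously for $\mathcal{L}_{I \cup J}\mathcal{A}$. Iterating this description once more, while tracking the intermediate Brauer trees $\mathcal{L}_I\Gamma$ and $\mathcal{L}_{I \cup J}\Gamma$ via Proposition 1.17, gives explicit expressions for the simples of both $\mathcal{L}_J\mathcal{L}_I\mathcal{A}$ and $\mathcal{L}_{I \cap J}\mathcal{L}_{I \cup J}\mathcal{A}$ as iterated extensions. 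For labels in $I \cap J$ both families produce $S_k[-2]$, and for labels outside $I \cup J$ both produce $S_k^{I \cup J}$; the decisive case is labels in the symmetric difference $I \triangle J$. For $k \in I \setminus J$ the LHS simple is the largest extension of $S_k[-1]$ inside $\mathcal{L}_I\mathcal{A}$ by simples labelled in $J$, the RHS simple is the largest extension of $S_k[-1]$ inside $\mathcal{L}_{I \cup J}\mathcal{A}$ by simples labelled in $I \cap J$, and the hypothesis on the separating edge $l \notin I \cup J$ with $j < l < k$ forbids any non-trivial extension by a simple labelled in $J \setminus I$, so the two extensions coincide; a symmetric argument handles $k \in J \setminus I$.

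The main obstacle will be the combinatorial bookkeeping around vertices of $\Gamma$ incident to edges of several of the four types $I \cap J$, $I \setminus J$, $J \setminus I$, and $\Gamma_E - (I \cup J)$: one must carry out Proposition 1.17 on both intermediate trees and verify that the separating edge $l$ provided by the hypothesis really interrupts every relevant extension in $\mathcal{L}_I\Gamma$. A cleaner fallback, should that direct comparison become unwieldy, is to induct on $|I \triangle J|$: decompose each tilt into simple tilts via Lemma 1.23, absorb the common part using $\mathcal{L}_I\mathcal{L}_{I \cap J} = \mathcal{L}_{I \cap J}\mathcal{L}_I$, and reduce to Aihara's simple-tilt commutation criterion from [Aih10].
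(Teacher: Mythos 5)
The paper does not actually prove this proposition: it is stated as a combinatorial restatement of results of Chuang--Rouquier, with the remark that the simple-tilt case appears in [Aih10], so there is no in-paper argument to compare yours against. Judged on its own terms, your overall strategy is the natural one and is consistent with how the paper handles such questions elsewhere (cf.\ the proof of Proposition 1.24): since every heart involved is a finite-length module category, it suffices to match simple objects, and Lemma 1.12 together with Proposition 1.24 (not Proposition 1.27; likewise the tree mutation is Proposition 1.15, not 1.17) gives the required description of the simples after each tilt.

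That said, the decisive step is asserted rather than proved, and one intermediate claim is too quick. First, for $k\notin I\cup J$ it is not true on the nose that both sides produce $S_k^{I\cup J}$: after the second tilt the simple labelled $k$ is the further extension $(S_k^{I\cup J})^{(I\cap J)'}$ computed in $\mathcal{L}_{I\cup J}\mathcal{A}$ on one side and $(S_k^{I})^{J'}$ computed in $\mathcal{L}_{I}\mathcal{A}$ on the other, and these involve extensions by the degree-shifted simples $S_j[-1]$; showing they coincide is part of the same combinatorial problem you defer. Second, the core assertion --- that the separating edge $k\notin I\cup J$ with $j<k<i$ kills every relevant extension between the $(J\setminus I)$-labelled and $(I\setminus J)$-labelled simples in the intermediate hearts --- is exactly the content of the proposition; you flag it as ``the main obstacle'' without carrying it out, so as written this locates where the hypothesis must enter rather than verifying that it suffices. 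Finally, the claim that $L_{I}L_{J}$ is a perverse equivalence does not follow formally from the equality of hearts: perversity is a condition on filtrations of the simples, compositions of perverse equivalences need not be perverse, and ``inherits the perverse filtration of $L_{I\cup J}\circ L_{I\cap J}$'' needs an argument. Your fallback (induction on $|I\triangle J|$, absorbing the common part by nested commutativity and reducing to Aihara's simple-tilt criterion) is reasonable and is essentially the route the paper implicitly relies on by citing [ChRo] and [Aih10], but in the form given the proposal is a plan rather than a proof.
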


\begin{proposition}
Let $\Gamma$ be a Brauer tree and $I$ and $J$ subsets of $\Gamma_{E}$. Suppose there exists an involution $\sigma: \Gamma_{E}\stackrel{\sim}{\rightarrow} \Gamma_{E}$, fixing $(I\cap J)\cup (\Gamma_{E}-(I\cup J))$ and inducing a bijection $I\cap (\Gamma_{E}-J)\stackrel{\sim}{\rightarrow} J\cap (\Gamma_{E}-I)$, such that for $i\in I\cap (\Gamma_{E}-J)$ there is no $k\in \Gamma_{E}$ with $\sigma(i)<k<i$. 
\\Then $L_{J}L_{I}L_{J}=L_{I}L_{J}L_{I}$ is a perverse equivalence. Moreover we have in terms of left tilts, $\mathcal{L}_{J}\mathcal{L}_{I}\mathcal{L}_{J}(A_{\Gamma}\text{-mod})=\sigma\mathcal{L}_{I\cup J}\mathcal{L}_{I\cap J}^{2}(A_{\Gamma}\text{-mod})$.
\end{proposition}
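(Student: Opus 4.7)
My approach is to reduce the statement about derived equivalences to a statement about tilted hearts, then verify the latter by explicitly computing the simple objects on each side. By Corollary 1.28 and the remark that follows, the shifted elementary perverse equivalence $L_K[1]$ is characterised by the fact that it realises the torsion theory generated by the simples indexed by $K$; consequently, any two compositions of such equivalences agree (as functors) as soon as the resulting tilted hearts agree as \emph{ordered} hearts. So the real content is the identity $\mathcal{L}_{J}\mathcal{L}_{I}\mathcal{L}_{J}(A_{\Gamma}\text{-mod})=\sigma\mathcal{L}_{I\cup J}\mathcal{L}_{I\cap J}^{2}(A_{\Gamma}\text{-mod})$, together with the symmetric identity obtained by swapping $I$ and $J$ (note the hypotheses on $\sigma$ are symmetric in $(I,J)$ because $\sigma$ is an involution swapping $I\setminus J$ and $J\setminus I$, and the adjacency condition on consecutive edges around a shared vertex is preserved under the involution).

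Given that, the plan is to apply Lemma 1.21 iteratively, using the combinatorial description of Brauer tree mutations from Proposition 1.16 and Example 1.17 at each step. First I would partition $\Gamma_E$ into the four pieces $I\cap J$, $I\setminus J$, $J\setminus I$ and $\Gamma_E\setminus(I\cup J)$, and track what happens to a simple $S_e$ for each type of edge $e$. For $e\in I\cap J$, the simple is shifted to $S_e[-2]$ on both sides (tilted twice in succession on the right, and caught by both outer and inner $J$-tilts on the left). For $e\notin I\cup J$, the simple stays in degree $0$ but is replaced by an iterated extension with neighbours whose indices lie in $I\cup J$; the combinatorics of $\mathcal{L}_{I\cup J}$ on the right and of $\mathcal{L}_{J}\mathcal{L}_{I}\mathcal{L}_{J}$ on the left must then be shown to produce isomorphic modules, which is where the hypothesis "no $k$ with $\sigma(i)<k<i$" is used: it guarantees that, around any shared vertex, consecutive edges belonging to $I\setminus J$ and $J\setminus I$ are $\sigma$-paired and no intruder edge interferes with the iterated extension data dictated by Lemma 1.21.

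The last and most delicate case is $e\in I\setminus J$ (and its mirror in $J\setminus I$). Here the first $\mathcal{L}_{J}$ leaves $S_e$ untouched, the middle $\mathcal{L}_{I}$ shifts $S_e$ to $S_e[-1]$, and the final $\mathcal{L}_{J}$ builds a new simple by extending $S_e[-1]$ by the relevant $\mathcal{L}_{J}$-torsion class. On the right-hand side, $\mathcal{L}_{I\cap J}^{2}$ acts trivially on $e$, and $\mathcal{L}_{I\cup J}$ shifts $S_e$ to $S_e[-1]$ and pairs it with the $\mathcal{L}_{I\cup J}$-partner along the tree. The claim is that these two simples agree after permuting via $\sigma$, precisely because the adjacency hypothesis forces the partner of $e$ on the right to be $\sigma(e)$, matching the outcome of the left-hand side. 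The position of $\sigma$ as a relabelling (rather than an equality of hearts on the nose) comes from this pairing.

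The main obstacle will be the explicit bookkeeping of the simples produced by $\mathcal{L}_{J}\mathcal{L}_{I}\mathcal{L}_{J}$ at a vertex where several edges of $I\setminus J$ and $J\setminus I$ alternate with edges of $I\cap J$ and $\Gamma_E\setminus(I\cup J)$: one must check that each successive left tilt really adds exactly one layer of extension and never creates an unwanted short exact sequence obstructing simplicity in the sense of Lemma 1.21. Once this local analysis is done vertex by vertex using the cyclic ordering, the global identification of ordered hearts follows, and then the braid relation $L_J L_I L_J = L_I L_J L_I$ is immediate from the characterization of elementary perverse equivalences by the torsion theory they realise.
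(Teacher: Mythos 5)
The paper does not actually prove this proposition: it is quoted from Chuang--Rouquier [ChRo] and, for simple tilts, from Aihara [Aih10], so there is no in-paper argument to compare yours against. Judged on its own terms, your proposal is a sensible plan with two genuine gaps. First, the reduction of the functor-level identity $L_{J}L_{I}L_{J}=L_{I}L_{J}L_{I}$ to an identity of \emph{ordered} tilted hearts does not work as stated: two standard equivalences $\mathcal{D}^{b}(A)\rightarrow\mathcal{D}^{b}(B)$ sending $A\text{-mod}$ to $B\text{-mod}$ and matching the simples one by one still differ by a two-sided tilting complex of the form ${}_{1}B_{\varphi}$ with $\varphi$ an automorphism of $B$ fixing the isomorphism classes of all simples, and such a $\varphi$ need not be inner. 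The paper is itself careful about precisely this point elsewhere (the proof of Proposition 2.2 must invoke the injectivity of $\Out(B)\rightarrow\StPic(B)$ together with Proposition 1.1 to kill such ambiguities), and Lemma 1.12 pins down $L_{A,I}$ only on objects, not as a functor. The heart-level identity is all the paper ever uses downstream, but as a proof of the stated proposition this step is incomplete.

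Second, and more seriously, the one step carrying all the content --- that for $e\in I\cap(\Gamma_{E}-J)$ the simple produced by the final $\mathcal{L}_{J}$ out of $S_{e}[-1]$ coincides with the simple labelled $\sigma(e)$ produced by $\mathcal{L}_{I\cup J}$, with the hypothesis that there is no $k$ with $\sigma(i)<k<i$ ruling out any interfering extension --- is exactly the part you defer as ``the main obstacle.'' Identifying where the hypothesis must enter is not the same as using it; without the local analysis around each shared vertex, which needs the explicit uniserial structure of the projectives (i.e.\ the computation of $S^{T}$ from Lemma 1.12, not merely the formal simplicity criteria of Lemma 1.21), nothing has been established. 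A small but symptomatic slip: for $e\in I\cap J$ the simple lands at $S_{e}[-3]$ on both sides (three shifts: $\mathcal{L}_{J},\mathcal{L}_{I},\mathcal{L}_{J}$ on the left; $\mathcal{L}_{I\cap J}$ twice plus $\mathcal{L}_{I\cup J}$ on the right), not $S_{e}[-2]$ --- the error is consistent so the match survives, but it indicates that the bookkeeping the whole argument rests on has not actually been carried out.
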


We need here to be careful as the permutation $\sigma$ that relabels simples does not lift to a functor and so this only makes sense for tilted hearts.\\
Using the above two propositions, we can decompose any tilt at a torsion theory generated by simple as a composition of simples tilts. This is a combinatorial version of Lemma 1.21.\\ 
Let $\Gamma$ a Brauer tree with $n$ edges and $\phi:\llbracket 1,n \rrbracket \stackrel{\sim}{\rightarrow} \Gamma_{E}$ a labelling of its edges. This gives a labelling on the simples of $A_{\Gamma}\text{-mod}$.

\begin{proposition}
Let $(\Gamma,\phi)$ be a labelled Brauer tree with $n$ edges and $J\subset \llbracket 1,n \rrbracket$. Then there exists $j_{1},\ldots,j_{k} \in \llbracket 1,n \rrbracket$, $\sigma\in \mathfrak{S}_{n}$ and $i\in \mathbb{Z}$ such that $\mathcal{L}_{J}(A_{\Gamma}\text{-mod}^{\phi})=\sigma\mathcal{L}_{j_{k}}^{\pm}\mathcal{L}_{j_{k-1}}^{\pm}\ldots \mathcal{L}_{j_{1}}^{\pm} [i](A_{\Gamma}\text{-mod}^{\phi})$.
\end{proposition}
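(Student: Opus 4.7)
My plan is to prove this by induction on $|J|$, using the two combinatorial identities (Propositions 1.30 and 1.31) to peel off simple tilts one at a time and recurse on smaller subsets of $\llbracket 1, n \rrbracket$.

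For the base cases, $J = \emptyset$ gives $\mathcal{L}_J = \mathrm{id}$; $|J| = 1$ is itself a simple tilt; and $J = \llbracket 1, n \rrbracket$ gives $\mathcal{L}_J(A_\Gamma\text{-mod}) = A_\Gamma\text{-mod}[-1]$, which is realised by $\sigma = \mathrm{id}$, $i = -1$, and no simple tilts at all. For the inductive step, assume $1 < |J| < n$ and that the statement holds for all proper subsets of smaller cardinality. The strategy is to find an edge $j_0 \in J$ and set $J' := J \setminus \{j_0\}$, and then apply Proposition 1.30 with $I = \{j_0\}$: here $I \cap J' = \emptyset$ and $I \cup J' = J$, so the identity reduces to $\mathcal{L}_{J'} \mathcal{L}_{\{j_0\}}(A_\Gamma\text{-mod}^\phi) = \mathcal{L}_{J}(A_\Gamma\text{-mod}^\phi)$, provided that for every $j' \in J'$ sharing a vertex with $j_0$ there exists an edge $k \notin J$ with $j' < k < j_0$ in the cyclic ordering at the shared vertex. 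If such a $j_0$ exists, the inductive hypothesis applied to $J'$ decomposes $\mathcal{L}_{J'}$ as a sequence of simple tilts (modulo a permutation and a shift), and composing with the single simple tilt $\mathcal{L}_{j_0}$ delivers the claimed form.

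The main obstacle is therefore the combinatorial lemma that, for any proper subset $J \subsetneq \llbracket 1, n \rrbracket$, a suitable $j_0$ exists, or that one can otherwise fall back on Proposition 1.31. Since $\Gamma_E - J$ is nonempty and $\Gamma$ is a tree, one can start from an edge $k_0 \notin J$ and walk along the tree until hitting an edge $j_0 \in J$. Exploiting the cyclic ordering, an appropriately chosen boundary edge of $J$ at a vertex meeting $k_0$ satisfies the hypothesis of Proposition 1.30 on that side; one has to check that this choice can be made consistently at every vertex of $j_0$ shared with $J$. In the line case $T_n$ this is automatic because every interior vertex has valence two, so picking any endpoint of a maximal run of $J$-edges adjacent to a non-$J$ edge works. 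For a general Brauer tree, high-valence vertices can obstruct this direct choice; in that case the three-term braid relation of Proposition 1.31 applied to a pair $(I, J')$ with $I \cup J' = J$ and $|I|, |J'| < |J|$ decomposes $\mathcal{L}_J$ up to an involution $\sigma$ and squared tilts at $I \cap J'$, both of which are handled by the inductive hypothesis.

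The permutation $\sigma \in \mathfrak{S}_n$ and shift $[i] \in \mathbb{Z}$ in the conclusion are bookkeeping: the inherited labelling of simples on $\mathcal{L}_J(A_\Gamma\text{-mod}^\phi)$ need not agree on the nose with the labelling obtained by tracking inherited labels through a composition of simple tilts, so they differ by some $\sigma$, and the allowance $\mathcal{L}_{j_i}^\pm$ together with the global shift lets us freely trade left and right tilts using $\mathcal{R}_\mathcal{T} = \mathcal{L}_\mathcal{T}[1]$ whenever that is convenient. As a cross-check, Lemma 1.22 already guarantees that $\mathcal{L}_J$ is some composition of simple left tilts; the present proposition refines this by producing an explicit decomposition in terms of the fixed labelling $\phi$, which is what the inductive argument via Propositions 1.30 and 1.31 delivers.
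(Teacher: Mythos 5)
Your overall architecture -- induction on $|J|$ driven by the commutation relation $\mathcal{L}_{J}\mathcal{L}_{I}=\mathcal{L}_{I\cup J}\mathcal{L}_{I\cap J}$ and the braid relation -- is the same as the paper's, and your base cases and bookkeeping remarks about $\sigma$ and the shift are fine. But the combinatorial heart of the inductive step is exactly where your argument has a gap. Your decomposition peels off a single edge, $I=\{j_{0}\}$ and $J'=K-\{j_{0}\}$, which forces you to verify the hypothesis of the commutation relation against \emph{every} edge of $J'$ sharing a vertex with $j_{0}$. This genuinely fails for every choice of $j_{0}$ whenever $K$ contains the full edge-star $\Gamma_{E}^{s}$ of some vertex $s$ (with $K\neq \Gamma_{E}$): the cyclic predecessor of $j_{0}$ at $s$ then lies in $K$, so no admissible $k$ exists. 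Your fallback is the braid relation, but with $I=\{j_{0}\}$ and $J'=K-\{j_{0}\}$ its hypothesis requires a bijection $I\cap(\Gamma_{E}-J')\stackrel{\sim}{\rightarrow}J'\cap(\Gamma_{E}-I)$, i.e.\ $|I-J'|=|J'-I|$, which is impossible for $|K|>2$; so the fallback cannot reuse your decomposition, and you never exhibit a pair $(I,J')$ that actually satisfies the braid relation's rather rigid hypothesis (an involution swapping cyclically adjacent edges). The sentence ``one has to check that this choice can be made consistently at every vertex of $j_{0}$'' is precisely the unproved claim.

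The paper avoids this by always taking $I$ and $J$ to be the two subsets $K-\{j_{k+1}\}$ and $K-\{j_{1}\}$ of size $|K|-1$, so that $I-J$ and $J-I$ are singletons and the hypotheses of both propositions concern only the single pair $(j_{1},j_{k+1})$. It then splits into three cases: if $K$ has no common vertex, choose $j_{1},j_{k+1}$ vertex-disjoint and the commutation hypothesis is vacuous; if $K\subsetneq\Gamma_{E}^{s}$, choose them on either side of a gap; if $K=\Gamma_{E}^{s}$, choose them cyclically adjacent and apply the braid relation. All three sets $I$, $J$, $I\cap J$ have cardinality $<|K|$, so the induction closes. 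If you want to salvage your peel-one-edge strategy, the correct dichotomy is: either no vertex carries two cyclically adjacent $K$-edges, in which case the immediate predecessor of any $j_{0}$ at any shared vertex lies outside $K$ and peeling any $j_{0}$ works; or some vertex carries cyclically adjacent $K$-edges $j<j'$, in which case one must switch to the paper-style pair $I=K-\{j\}$, $J'=K-\{j'\}$ and the braid relation. As written, your proof does not establish either alternative, so the inductive step is incomplete.
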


\begin{proof}
We naturally proceed by induction on the size of $J$ and we shall use a combination of Proposition 1.26 and Proposition 1.27. The case $|J|=1$ is clear.\\ 
Firstly, let us treat the case $J=|2|$ and $J=\{i,j\}$. If $\phi(i), \phi(j)\in \Gamma_{E}$ with $\phi(i)_{\wedge} \phi(j) =1$, then $\mathcal{L}_{\{i,j\}}(A_{\Gamma}\text{-mod}^{\phi})=\mathcal{L}_{i}\mathcal{L}_{j}(A_{\Gamma}\text{-mod}^{\phi})$. If $\phi(i)$ and $\phi(j)$ share a vertex and there is $k\in \Gamma_{E}$ such that $\phi(j)<k<\phi(i)$ then $\mathcal{L}_{\{i,j\}}(A_{\Gamma}\text{-mod}^{\phi})=\mathcal{L}_{j}\mathcal{L}_{i}(A_{\Gamma}\text{-mod}^{\phi})$. If there is $k\in \Gamma_{E}$ such that $\phi(i)<k<\phi(j)$ then $\mathcal{L}_{\{i,j\}}(A_{\Gamma}\text{-mod}^{\phi})=\mathcal{L}_{i}\mathcal{L}_{j}(A_{\Gamma}\text{-mod}^{\phi})$. Finally if there is no such edge $k$, then we found thanks to the proposition above that $\mathcal{L}_{\{i,j\}}(A_{\Gamma}\text{-mod}^{\phi})=(ij)\mathcal{L}_{i}\mathcal{L}_{j}\mathcal{L}_{i}[2](A_{\Gamma}\text{-mod}^{\phi})$.\\
We can then proceed to the induction step and assume that any left tilt $\mathcal{L}_{J}(A_{\Gamma}\text{-mod}^{\phi})$ with $|J|\leq k$ is a product of simple left tilts. Consider now $\mathcal{L}_{K}(A_{\Gamma}\text{-mod}^{\phi})$ where $K\subset \llbracket 1,n \rrbracket$ and $|K|=k+1$. We write $K=\{j_{1},\ldots,j_{k+1}\}$.
\\We start by supposing that there is no vertex adjacent to all $\phi(j)\in K$ so we can assume $\tau(\phi(j_{1}))\cap \tau(\phi(j_{k+1}))=\emptyset$. We take $I=\{j_{1},\ldots,j_{k}\}$ and $J=\{j_{2},\ldots, j_{k+1}\}$, which according to Proposition $1.26$ leads to $\mathcal{L}_{I}\mathcal{L}_{J}(A_{\Gamma}\text{-mod}^{\phi})=\mathcal{L}_{J}\mathcal{L}_{I}(A_{\Gamma}\text{-mod}^{\phi})=\mathcal{L}_{I\cap J}\mathcal{L}_{K}(A_{\Gamma}\text{-mod}^{\phi})$. Hence $\mathcal{L}_{K}(A_{\Gamma}\text{-mod}^{\phi})=\mathcal{L}^{-1}_{I\cap J}\mathcal{L}_{I}\mathcal{L}_{J}(A_{\Gamma}\text{-mod}^{\phi})$ and we can apply the induction.\\
Now, suppose there is a vertex $s$ adjacent to all $\phi(j)$ for $j\in K$ and that the ordered vertices of $\phi(K)$ are $\phi(j_{1}),\ldots,\phi(j_{k+1})$. If $\phi(K)\varsubsetneq \Gamma_{E}^{s}$, we can assume there is $l\in \Gamma_{E}^{s}-\phi(K)$ with $\phi(j_{k+1})<l<\phi(j_{1})$. That way we can take $I=\{j_{1},\ldots,j_{k}\}$ and $J=\{j_{2},\ldots, j_{k+1}\}$, so that $\mathcal{L}_{J}\mathcal{L}_{I}(A_{\Gamma}\text{-mod}^{\phi})=\mathcal{L}_{K}\mathcal{L}_{I\cap J}(A_{\Gamma}\text{-mod}^{\phi})$. On the other hand if $\phi(K)=\Gamma_{E}^{s}$, taking the same $I$ and $J$ as above there is thanks to Proposition 1.27, $\sigma \in \mathfrak{S}_{n}$ such that $\mathcal{L}_{J}\mathcal{L}_{I}\mathcal{L}_{J}(A_{\Gamma}\text{-mod}^{\phi})=\sigma\mathcal{L}_{K}\mathcal{L}_{I\cap J}^{2}(A_{\Gamma}\text{-mod}^{\phi})$. This proves the induction step, and hence we can conclude that any left tilt $\mathcal{L}_{I}(A_{\Gamma}\text{-mod}^{\phi})$ is a product of simple left tilts.
\end{proof}

Consider now the labelled Brauer tree algebra $A_{3}$ with standard labelling $\phi$ of its simples (cf. Example 2). Recall that we have a right action of $G_{3}=\Free(\mathcal{P}(\llbracket 1,3 \rrbracket))\rtimes \mathfrak{S}_{3}$ on $\Tilt(A_{3}\text{-mod}^{\phi})$. Also, the left action of $\TrPic(A_{3})$ on $\Tilt(A_{3}\text{-mod}^{\phi})$ restricts to an action of $\Sph(A_{3})$. For $\mathcal{K} \in \Tilt(A_{3}\text{-mod}^{\phi})$, we write $\Tilt(A_{3}\text{-mod}^{\phi},\mathcal{K})=\{\mathcal{H}\in \Tilt(A_{3}\text{-mod}^{\phi})|\; \mathcal{H}\simeq \mathcal{K}\}$. The previous two propositions lead us to:

\begin{proposition}
We have $\Tilt(A_{3}\text{-mod}^{\phi}, A_{3}\text{-mod}).G_{3}=\Sph(A_{3}).\Tilt(A_{3}\text{-mod}^{\phi})$.
\end{proposition}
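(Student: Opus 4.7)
The strategy is to show that both sides of the claimed identity coincide with $\Tilt(A_{3}\text{-mod}^{\phi})$, from which the equality is immediate. The central tool is Corollary 1.25, which converts the action of each generator of $\Sph(A_{3})$ into a simple tilt up to a shift.

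For the left-hand side, the right $G_{3}$-action manifestly preserves $\Tilt(A_{3}\text{-mod}^{\phi})$, since it is built out of tilts $\mathcal{L}_{I}$ and permutations of the labelling. Moreover $A_{3}\text{-mod}\in \Tilt(A_{3}\text{-mod}^{\phi},A_{3}\text{-mod})$, and by Proposition 1.29 every heart of $\Tilt(A_{3}\text{-mod}^{\phi})$ can be written as $A_{3}\text{-mod}\cdot g$ for some $g\in G_{3}$ (a product of simple left tilts, shifts and permutations). The chain
$$\Tilt(A_{3}\text{-mod}^{\phi})=A_{3}\text{-mod}\cdot G_{3}\subseteq \Tilt(A_{3}\text{-mod}^{\phi},A_{3}\text{-mod})\cdot G_{3}\subseteq \Tilt(A_{3}\text{-mod}^{\phi})$$
is therefore forced to be a chain of equalities.

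For the right-hand side, the inclusion $\Tilt(A_{3}\text{-mod}^{\phi})\subseteq \Sph(A_{3})\cdot \Tilt(A_{3}\text{-mod}^{\phi})$ comes from taking the identity in $\Sph(A_{3})$, and the reverse inclusion is the $\Sph(A_{3})$-invariance of $\Tilt$. By the commutation $\varphi(\mathcal{L}_{I}\mathcal{H})=\mathcal{L}_{I}(\varphi.\mathcal{H})$ recalled just after Example 3, this invariance reduces to checking $\phi_{i}^{\pm 1}.A_{3}\text{-mod}\in \Tilt(A_{3}\text{-mod}^{\phi})$ for each generator $\phi_{i}$ of $\Sph(A_{3})$. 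Corollary 1.25 states that $F_{i}[1]=\phi_{i}^{-1}[1]$ realises the torsion theory $\mathcal{T}_{i}$ generated by the simples labelled by $\llbracket 1,3\rrbracket-\{i\}$; after identifying the target heart appearing in the realisation with the standard heart $A_{3}\text{-mod}\subset \mathcal{D}^{b}(A_{3})$, this rearranges to $\phi_{i}.A_{3}\text{-mod}=\mathcal{L}_{\mathcal{T}_{i}}(A_{3}\text{-mod})[1]$, which is evidently in $\Tilt(A_{3}\text{-mod}^{\phi})$; a symmetric argument handles $\phi_{i}^{-1}$.

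The main technical point is the identification made in the last step: that the target algebra $A'$ of the elementary perverse equivalence $L_{A_{3},\mathcal{T}_{i}}$ is Morita equivalent to $A_{3}$, so that Corollary 1.25 may be read as an identity of hearts inside $\mathcal{D}^{b}(A_{3})$ rather than an equivalence between hearts of different derived categories. This is implicit in Proposition 1.14 and can be checked directly via the tree-evolution rules of Section 1.4.2: for the line $T_{3}$, each $\mathcal{T}_{i}$-tilt produces a Brauer tree again of type $T_{3}$. Without this identification, the realisation would only relate $\phi_{i}.A_{3}\text{-mod}$ to the standard heart of some other derived-equivalent algebra, and the argument above would break down.
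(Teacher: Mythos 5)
Your argument establishes only a vacuous reading of the displayed equation, and in doing so misses the content of the proposition. It is true that if one parses both sides as subsets of the set of hearts, each is stable under the relevant action and contains $A_{3}\text{-mod}$, so both collapse to $\Tilt(A_{3}\text{-mod}^{\phi})$ and there is nothing left to prove; but a statement with that reading could not do the work it is put to later. Proposition 1.29 is invoked in Proposition 2.4 to obtain, ``for free,'' the transitivity of $\langle\Br_{4},[1]\rangle$ on the tiles $U(\mathcal{H})$ with $\mathcal{H}\simeq A_{3}\text{-mod}$, and the author's own gloss after the proof (``the set of hearts equivalent to the standard heart obtained by tilts at the standard heart is equal to the set of hearts obtained from the standard heart upon applying the spherical twists'') makes the intended content explicit: a tilted heart that is abstractly equivalent to $A_{3}\text{-mod}$ lies in the orbit of the standard heart under $\Sph(A_{3})$ combined with shifts and relabellings, and conversely. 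Your proof uses the hypothesis ``$\mathcal{H}\simeq A_{3}\text{-mod}$'' nowhere, which is a sign that the left-hand side has been read away rather than analysed. (Incidentally, your appeal to ``Proposition 1.29'' for the claim that every tilted heart is $A_{3}\text{-mod}\cdot g$ is circular --- that is the proposition under proof --- though the claim itself is essentially definitional and what you really need there is Lemma 1.21/Proposition 1.28.)

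The genuinely hard direction, entirely absent from your argument, is: if $\mathcal{W}$ is a composition of simple left tilts with $\mathcal{W}(A_{3}\text{-mod}^{\phi})\simeq A_{3}\text{-mod}$, then $\mathcal{W}(A_{3}\text{-mod}^{\phi})=F_{\tau}(A_{3}\text{-mod}^{\phi})$ for some $\tau$ in the extended braid group. The paper proves this by induction on the number of tilts: up to the symmetry $(1\,3)$ of the tree $T_{3}$, the minimal returning sequences are $\mathcal{L}_{2}$, $\mathcal{L}_{3}\mathcal{L}_{1}$, $\mathcal{L}_{2}\mathcal{L}_{1}$ and $\mathcal{L}_{1}\mathcal{L}_{1}$, and each is identified with an explicit element of $\Sph(A_{3})$ times a shift and a permutation via the relations of Propositions 1.26 and 1.27, e.g.\ $\mathcal{L}_{3}\mathcal{L}_{1}(A_{3}\text{-mod}^{\phi})=F_{2}[-1](A_{3}\text{-mod}^{\phi})$ and $\mathcal{L}_{1}\mathcal{L}_{1}(A_{3}\text{-mod}^{\phi})=(23)F_{2}F_{3}F_{2}(A_{3}\text{-mod}^{\phi})$; the inductive step splits off such a minimal block and pushes the remaining tilts past the resulting functor using Lemma 1.22. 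The converse inclusion is the easier statement that every $F_{v}(A_{3}\text{-mod})$ is a tilted heart, proved by induction on the length of $v$ via Corollary 1.25. Your remarks on Corollary 1.25 and on the Morita type of the target algebra of $L_{A_{3},\mathcal{T}_{i}}$ are correct and pertain to that easier direction, but without the classification of returning tilt sequences the proposition, in the form the paper actually needs, is not proved.
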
 

\begin{proof}
We will proceed by induction for both directions. Consider $F_{v}\in \Sph(A_{3})$ with $v \in \mathfrak{S}_{3}$ such that $l(v)>1$. There exists $s\in \mathfrak{S}_{3}$ a simple reflection such that $v=v's$ with $l(v)=l(v')+l(s)$ so that $F_{v}=F_{v'}F_{s}$. Now according to Corollary 1.25, the heart $F_{s}(A_{3}\text{-mod})$ is a tilted heart at a torsion theory generated by simples, so that $F_{s}(A_{3}\text{-mod})=\mathcal{L}_{I}(A_{3}\text{-mod})$ for some $I\subset \llbracket 1,3 \rrbracket$. Now $F_{v}(A_{3}\text{-mod})=F_{v'}F_{s}(A_{3}\text{-mod})=F_{v'}(\mathcal{L}_{I}(A_{3}\text{-mod}))$. But $F_{v'}(\mathcal{L}_{I}(A_{3}\text{-mod}))=\mathcal{L}_{v'(I)}(F_{v'}(A_{3}\text{-mod}))$. Then by induction on the length of $v$, we have that $F_{v}(A_{3}\text{-mod})=\mathcal{L}_{v'(I)}\mathcal{L}_{J}(A_{3}\text{-mod})$ for some $J\subseteq \llbracket 1,3 \rrbracket$. That is to say that for every $F_{v}\in \Sph(A_{3})$, we have $F_{v}(A_{3}\text{-mod})\in \Tilt(A_{3}\text{-mod}^{\phi}, A_{3}\text{-mod}^{\phi}).G_{3}$.\\
For the other direction, we proceed by induction on the numbers of tilts. Hence, we first start by showing that any minimal composition of simple left tilts from $A_{3}\text{-mod}$ to itself does belong to the right-hand side term. 
\\First, we notice that tilting at the simple $S_{2}$ takes us back to a heart which is equivalent to the Brauer tree of the line and the Proposition 1.26 gives us $F_{1}F_{3}[1](A_{3}\text{-mod}^{\phi})=\mathcal{L}_{2}(A_{3}\text{-mod}^{\phi})$ which gives $\mathcal{L}_{2}(A_{3}\text{-mod}^{\phi}) \in \Sph(A_{3}).\Tilt(A_{3}\text{-mod}^{\phi})$.\\
The permutation $(1 3)$ leaves the tree $T_{3}$ invariant (up to isomorphism of tree) so left tilting at the simple $S_{1}$ or at the simple $S_{3}$ play symmetric roles and it suffices to consider starting with tilting at $S_{1}$. 
So when tilting at $S_{1}$, $\mathcal{L}_{1}(A_{3}\text{-mod}^{\phi})$ is equivalent to the Brauer tree algebra of the star but then tilting at any simple will take us back to a heart which is equivalent to the Brauer tree of the line. So in order to conclude, we only have to prove that the following compositions $\mathcal{L}_{3}\mathcal{L}_{1}(A_{3}\text{-mod}^{\phi})$, $\mathcal{L}_{2}\mathcal{L}_{1}(A_{3}\text{-mod}^{\phi})$ and $\mathcal{L}_{1}\mathcal{L}_{1}(A_{3}\text{-mod}^{\phi})$ belong to the right-hand side term. Now applying Proposition 1.26 and Proposition 1.27 gives us the following relations
$$\mathcal{L}_{3}\mathcal{L}_{1}(A_{3}\text{-mod}^{\phi})=F_{2}[-1](A_{3}\text{-mod}^{\phi}),
$$
$$\mathcal{L}_{2}\mathcal{L}_{1}(A_{3}\text{-mod}^{\phi})=(12)F_{3}\mathcal{L}_{2}^{-1}(A_{3}\text{-mod}^{\phi})=(12)F^{-1}_{1}[-1](A_{3}\text{-mod}^{\phi}),
$$
$$\mathcal{L}_{1}\mathcal{L}_{1}(A_{3}\text{-mod}^{\phi})=(23)F_{2}F_{3}F_{2}(A_{3}\text{-mod}^{\phi}).
$$
Hence we have found as claimed that $\mathcal{L}_{3}\mathcal{L}_{1}(A_{3}\text{-mod}^{\phi})$, $\mathcal{L}_{2}\mathcal{L}_{1}(A_{3}\text{-mod}^{\phi})$ and $\mathcal{L}_{1}\mathcal{L}_{1}(A_{3}\text{-mod}^{\phi})$  all belongs to $\Sph(A_{3}).\Tilt(A_{3}\text{-mod}^{\phi})$.\\
Now let $\mathcal{W}$ be a composition of $n$ simple left tilts such that $\mathcal{W}(A_{3}\text{-mod}^{\phi})$ is equivalent to $(A_{3}\text{-mod}^{\phi})$. We have a decomposition $\mathcal{W}(A_{3}\text{-mod}^{\phi})=\mathcal{W}_{1}\mathcal{W}_{2}(A_{3}\text{-mod}^{\phi})$ where $\mathcal{W}_{1}(A_{3}\text{-mod}^{\phi})$ is a composition of $k$ simple left tilts with $k<n$ and $\mathcal{W}_{2}(A_{3}\text{-mod}^{\phi})$ is (up to the permutation $(1 3)$) either $\mathcal{L}_{2}(A_{3}\text{-mod}^{\phi})$ or $\mathcal{L}_{3}\mathcal{L}_{1}(A_{3}\text{-mod}^{\phi})$, $\mathcal{L}_{2}\mathcal{L}_{1}(A_{3}\text{-mod}^{\phi})$ or $\mathcal{L}_{1}\mathcal{L}_{1}(A_{3}\text{-mod}^{\phi})$.\\
Then there is $\sigma \in \mathfrak{S}_{3}$ such that $\mathcal{W}(A_{3}\text{-mod}^{\phi})=\mathcal{W}_{1}F_{\sigma}(A_{3}\text{-mod}^{\phi})$. But thanks to Lemma 1.21, we have $\mathcal{W}_{1}F_{\sigma}(A_{3}\text{-mod}^{\phi})=F_{\sigma}\mathcal{W}'_{1}(A_{3}\text{-mod}^{\phi})$, where $\mathcal{W}'_{1}(A_{3}\text{-mod}^{\phi})$ is a composition of $k$ simple left tilts. By induction, there is $\tau \in \mathfrak{S}_{3}$ such that $\mathcal{W}(A_{3}\text{-mod}^{\phi})=F_{\tau}(A_{3}\text{-mod}^{\phi})$ and this completes our proof.
\end{proof}

In other words, the set of hearts equivalent to the standard heart obtained by tilts at the standard heart is equal to the set of hearts obtained from the standard heart upon applying the spherical twists.

\subsection{Stability conditions}
The space of stability conditions on a triangulated category was introduced by Bridgeland. This set of stability conditions comes equipped with a natural topology and hence yields a new interesting invariant of triangulated categories. Here we follow closely the presentation of the fundamental article [Bri07].

\begin{definition}
A stability function on an abelian category $\mathcal{A}$ is a group homomorphism $Z: K(\mathcal{A})\rightarrow \mathbb{C}$ such that for $0\neq E$, $Z(E) \in \mathbb{H}=\{r\;\exp(i\pi\varphi)\vert \: r\in \mathbb{R}_{>0},\; 0<\varphi\leq 1\}$. We say that $\varphi$ is the phase of $E$.
\\Also, an object $0\neq E \in \mathcal{A}$ is said to be semistable if every subobject $0\neq A\subset E$ satisfies $\phi(A)\leq \phi(E)$.
\end{definition}

Let $Z: K(\mathcal{A})\rightarrow \mathbb{C}$ be a stability function. A \textit{Harder-Narasimhan filtration}, with respect to $Z$, of an object $0\neq E \in \mathcal{A}$ is a finite chain of subobjects
$$0=E_{0}\subseteq E_{1}\subseteq \ldots E_{n-1}\subseteq E_{n}=E
$$ 
whose factors $F_{j}=E_{j}/E_{j-1}$ are semistable objects of $\mathcal{A}$ with 
$$
\phi(F_{1})> \phi(F_{2})> \ldots >\phi(F_{n}).
$$
The stability function $Z$ is said to have the \textit{Harder-Narasimhan property} if every nonzero object of $\mathcal{A}$ has a Harder-Narasimhan filtration.
 
\begin{definition}
A stability condition $(Z, \mathcal{P})$ on a triangulated category $\mathcal{C}$ is the data of a group homomorphism $Z : K( \mathcal{C})\rightarrow\mathbb{C}$ called the central charge, and full additive subcategories $\mathcal{P}(\phi)\subset \mathcal{C}$ for each $\phi \in \mathbb{R}$, satisfying the following axioms: 
\begin{enumerate}
\item if $c \in \mathcal{P}(\phi)$ then $Z (c) = m(c)\exp(i\pi\phi)$ where $m(c)\in \mathbb{R}_{>0}$.

\item $\mathcal{P}(\phi+1)=\mathcal{P}(\phi)[1]$ for all
 $\phi \in \mathbb{R}$.

\item if $c \in \mathcal{P}(\phi)$ and $c' \in \mathcal{P}(\phi')$ with $\phi> \phi'$, then $\Hom(c,c')=0$.

\item for each object $0\neq c\in \mathcal{C}$ there is a finite collection of triangles
$$\xymatrix{ 0=c_{0}\ar[rr] && c_{1}\ar[dl] \ar[r] & \cdots \ar[r] & c_{n-1}\ar[rr] && c_{n}=c \ar[dl]
\\                   & b_{1} \ar@{-->}[ul] &&                  &                    &      b_{n}\ar@{-->}[ul] }$$

with $b_{i}\in \mathcal{P}(\phi_{i})$ where $\phi^{+}(c):=\phi_{1}>\ldots>\phi_{n}=:\phi^{-}(c)$.
\end{enumerate}
\end{definition} 

The collection of the $(\mathcal{P}(\phi))_{\phi\in \mathbb{R}}$ as in the above definition is called a \textit{slicing}. 

The set of stability conditions on $\mathcal{C}$ is denoted by $\Stab(\mathcal{C})$.\\
For $I\subset \mathbb{R}$, define $\mathcal{P}(I)$ to be the extension-closed subcategory of $\mathcal{C}$ generated by the subcategories $\mathcal{P}(\phi)$ for $\phi \in I$. For instance, the objects of $\mathcal{P}((a,b))$ are $0\neq c\in \mathcal{C}$ which satisfy $a<\phi^{-}(c)\leq \phi^{+}(c)<b$ and the zero object of $\mathcal{C}$.\\
Moreover, a slicing $\mathcal{P}$ of a triangulated category $\mathcal{C}$ defines $t$-structures define for every $\phi \in \mathbb{R}$ as $(\mathcal{P}(>\phi),\mathcal{P}(\leq \phi))$ whose corresponding heart is the subcategory $\mathcal{P}((\phi,\phi+1])$. We will follow Bridgeland's convention by defining the heart of the slicing $\mathcal{P}$ as $\mathcal{P}((0,1])$.
\\Should the reader feel overwhelmed by the above definition, the following proposition (cf. [Bri07]) establishes the relation between $t$-structures and stability conditions. That way, we hope this will set the mind of the anxious reader at rest.

\begin{proposition}
To give a stability condition on a triangulated category $\mathcal{C}$ is equivalent to giving a bounded $t$-structure and a stability function on its heart with the Harder-Narasimhan property.
\end{proposition}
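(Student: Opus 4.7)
The plan is to prove the equivalence in both directions.

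For the forward direction, starting from a stability condition $(Z, \mathcal{P})$, I define $\mathcal{T}^{\leq 0} = \mathcal{P}((0,+\infty))$ and $\mathcal{T}^{\geq 1} = \mathcal{P}((-\infty,0])$, whose heart is $\mathcal{A} = \mathcal{P}((0,1])$. Axiom (2) of the slicing yields the shift compatibility; axiom (3) gives the required Hom vanishing $\Hom(\mathcal{T}^{\leq 0}, \mathcal{T}^{\geq 1}) = 0$; and splitting the tower of axiom (4) for any object $c$ at the index $k$ with $\phi_{k} > 0 \geq \phi_{k+1}$ produces the truncation triangle characteristic of a $t$-structure. Boundedness of the resulting $t$-structure follows from the finiteness of the slicing filtration. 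The restriction of $Z$ to $K(\mathcal{A})$ is a stability function: for $0 \neq c \in \mathcal{A}$, each factor $b_{i}$ of the tower in (4) lies in $\mathcal{P}(\phi_{i})$ with $\phi_{i} \in (0,1]$, so $Z(b_{i}) \in \mathbb{H}$, a property preserved by extensions. The very same tower then serves as a Harder--Narasimhan filtration of $c$ with respect to $Z$.

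For the backward direction, given a bounded $t$-structure with heart $\mathcal{A}$ and a stability function $Z$ on $\mathcal{A}$ with the Harder--Narasimhan property, I define $\mathcal{P}(\phi)$ for $\phi \in (0,1]$ as the full subcategory consisting of semistable objects of $\mathcal{A}$ of phase $\phi$, together with $0$, and I extend by $\mathcal{P}(\phi + n) = \mathcal{P}(\phi)[n]$ for $n \in \mathbb{Z}$. Axioms (1) and (2) are then immediate. For axiom (3), take $c \in \mathcal{P}(\alpha)$ and $c' \in \mathcal{P}(\beta)$ with $\alpha > \beta$; writing $\alpha = \phi + n$ and $\beta = \phi' + m$ with $\phi, \phi' \in (0,1]$, the inequality forces $n \geq m$. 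If $n > m$, then $\Hom(c,c') = 0$ follows from the standard vanishing $\Hom(\mathcal{A}[n], \mathcal{A}[m]) = 0$ for $n > m$ in a $t$-structure; if $n = m$, the inequality $\phi > \phi'$ yields vanishing by the classical kernel/image argument for semistable objects in an abelian category.

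The main obstacle is verifying axiom (4) in the backward direction. Given $0 \neq c \in \mathcal{C}$, boundedness of the $t$-structure provides a finite tower whose successive cones are the shifted cohomology objects $H^{i}(c)[-i]$, arranged in order of increasing $i$. Refining each $H^{i}(c) \in \mathcal{A}$ by its HN filtration inserts semistable factors of strictly decreasing phases in $(0,1]$; after the shift $[-i]$, these factors belong to $\mathcal{P}((-i, -i+1])$. Since the intervals $(-i, -i+1]$ are strictly decreasing as $i$ increases and match the order of appearance in the tower, the concatenated sequence of phases is strictly decreasing, as required. I would then check that the two constructions are mutually inverse, relying on uniqueness of HN decompositions (itself a consequence of axiom (3)) to identify the slicing reconstructed from the heart-plus-stability-function data with the original one.
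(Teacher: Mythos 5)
Your proof is correct: it is precisely the standard argument of Bridgeland ([Bri07, Proposition 5.3]), which is what the paper itself invokes, since the paper offers no proof of this proposition beyond the citation ``(cf.\ [Bri07])''. The only points you gloss over --- that objects of $\mathcal{P}(\phi)$ for $\phi\in(0,1]$ are semistable in the abelian sense (needed so the tower of axiom (4) really is an HN filtration), and that $K(\mathcal{A})\simeq K(\mathcal{C})$ for a bounded $t$-structure (needed to extend $Z$ to a central charge) --- are routine and do not affect the validity of the argument.
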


For $\mathcal{A}$ a noetherian and artinian abelian category, a stability function on $\mathcal{A}$ always satisfies the Harder-Narasimhan property (cf. [Bri07, Proposition 2.4]). In his original paper, Bridgeland endows $\Stab^{\fin}(\mathcal{C})$, the set of \textit{locally-finite} stability conditions, with a natural topology. We here need not to worry about this technical local-finiteness conditions, as it is always satisfied for the derived category of a finite-dimensional algebra, which is our only concern here. We now state the main result of [Bri07].

\begin{theorem}
For each connected component $\Sigma\subset \Stab(\mathcal{C})$ there is a linear subspace $V(\Sigma)\subset \Hom_{\mathbb{Z}}(K(\mathcal{C}),\mathbb{C})$, with a well-defined linear topology such that $\mathcal{Z}:\Sigma\rightarrow V(\Sigma)$ which maps a stability condition $(Z,\mathcal{P})$ to its central charge $Z$ is a local homeomorphism.
\end{theorem}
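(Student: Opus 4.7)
The plan is to follow Bridgeland's original deformation argument from [Bri07]. I would first equip $\Stab(\mathcal{C})$ with the generalised metric
\[
d(\sigma_{1},\sigma_{2}) = \sup_{0\neq E\in \mathcal{C}}\max\bigl\{\,|\phi^{-}_{1}(E)-\phi^{-}_{2}(E)|,\; |\phi^{+}_{1}(E)-\phi^{+}_{2}(E)|,\; |\log(m_{1}(E)/m_{2}(E))|\,\bigr\},
\]
inducing the topology on $\Stab(\mathcal{C})$. The target $V(\Sigma)$ is then defined as the vector space of $W\in \Hom_{\mathbb{Z}}(K(\mathcal{C}),\mathbb{C})$ satisfying $|W(E)|\leq C\, m_{\sigma}(E)$ for every $\sigma$-semistable $E$ and some constant $C$, normed by the smallest such $C$. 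One checks this space and its topology are independent of the chosen $\sigma\in \Sigma$, and that $\mathcal{Z}$ is continuous: a small perturbation of the slicing data forces only a small change in the central charges of semistables.

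Next I would establish local injectivity of $\mathcal{Z}$. If $\sigma=(Z,\mathcal{P})$ and $\tau=(Z,\mathcal{Q})$ lie in a ball of radius $\epsilon<\tfrac{1}{2}$, then the Harder--Narasimhan factors of any nonzero object with respect to $\mathcal{P}$ and $\mathcal{Q}$ have phases within $\epsilon$ of each other, and axiom (3) forces the HN filtrations to coincide, hence $\mathcal{P}=\mathcal{Q}$.

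The substantive step is local surjectivity. Given $\sigma=(Z,\mathcal{P})\in \Sigma$ with heart $\mathcal{A}=\mathcal{P}((0,1])$, I would show that any $W\in V(\Sigma)$ sufficiently close to $Z$ extends to a stability condition $\tau$ with central charge $W$. Using Proposition 1.33, it suffices to construct a bounded heart on which $W$ is a stability function with the HN property. The construction goes as follows: for small enough $W-Z$, the images $W(E)$ of $\sigma$-semistable objects of phase in $(0,1]$ remain confined to a slight rotation of the upper half-plane, with only finitely many semistables (in the locally-finite setting, automatic for $\mathcal{D}^{b}$ of a finite-dimensional algebra) whose phases cross the real axis. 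One tilts $\mathcal{A}$ at the torsion pair generated by those ``crossing'' semistables to produce a new heart $\mathcal{A}'$, so that $W$ takes values in the strict upper half-plane on $\mathcal{A}'$. The HN property for $(W,\mathcal{A}')$ is then inherited from that of $(Z,\mathcal{A})$ by tracking semistables through the tilt.

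The main obstacle will be precisely this last step: controlling the HN property under perturbation. One must quantify how small $W-Z$ must be so that the set of $\sigma$-semistables whose $W$-phase leaves $(0,1]$ is finite (and forms a genuine torsion-class), verify that the tilted heart exists and is bounded, and check that the resulting assignment $W\mapsto \tau$ provides a continuous inverse to $\mathcal{Z}$ in a neighbourhood of $Z$. Once this is done, $\mathcal{Z}$ is a local homeomorphism onto its image in $V(\Sigma)$.
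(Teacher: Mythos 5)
The paper does not prove this statement: it is quoted verbatim as the main theorem of [Bri07], so the only proof to compare against is Bridgeland's original one. Your proposal correctly reproduces the overall architecture of that proof --- the generalised metric on $\Stab(\mathcal{C})$, the definition of $V(\Sigma)$ via the norm $\|W\|_{\sigma}=\sup\{|W(E)|/m_{\sigma}(E)\colon E\ \sigma\text{-semistable}\}$ together with the (nontrivial) check that nearby stability conditions give equivalent norms, and the split into local injectivity plus local surjectivity. The injectivity step is essentially Bridgeland's Lemma 6.4 and your sketch of it is fine in outline. Where you genuinely diverge is the surjectivity step: Bridgeland does \emph{not} tilt a heart. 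He proves a deformation theorem (his Theorem 7.1) by working with the thin quasi-abelian subcategories $\mathcal{P}((a,b))$ for $b-a<1-2\epsilon$, showing that the perturbed charge $W$ is a skewed stability function on each of them with the Harder--Narasimhan property, and assembling the new slicing $\mathcal{Q}(\psi)$ from the resulting semistable objects. This works for an arbitrary locally finite stability condition on an arbitrary triangulated category, which is the generality in which the theorem is stated.

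Your tilting-based replacement has a genuine gap at exactly the point you identify as "the main obstacle". The assertion that only finitely many $\sigma$-semistables have $W$-image leaving the upper half-plane is false in general (already for a finite-dimensional algebra of infinite representation type there may be infinitely many semistables with phases accumulating at $0$ or $1$, and for K3 surfaces or $\mathcal{D}_{n}$ the set is typically infinite), so the torsion class you wish to tilt at is not "generated by finitely many crossing semistables"; you would instead need to prove that the full subcategory of objects of $\mathcal{A}$ all of whose HN factors have $W$-phase outside $(0,1]$ is a torsion class, that the tilt is again a bounded heart of finite length, and that $W$ is a genuine stability function on it (values in $\mathbb{H}$, not merely "a slight rotation" of it --- this is where the quantitative bound $\epsilon<1/8$ and the $\sin(\pi\epsilon)$ estimate enter in Bridgeland's argument). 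None of these is supplied. In the restricted setting this paper actually needs --- hearts equivalent to $A_{\Gamma}\text{-mod}$ for Brauer tree algebras, which are of finite representation type and of finite length, so that the HN property is automatic by [Bri07, Proposition 2.4] --- your approach can be completed and is in fact close in spirit to the wall-crossing picture of Proposition 1.34 and [Wo10] used later in the paper; but as a proof of the theorem as stated, for an arbitrary connected component $\Sigma\subset\Stab(\mathcal{C})$, the surjectivity step does not go through without reverting to Bridgeland's quasi-abelian argument.
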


There are two groups naturally acting on $\Stab(\mathcal{C})$: the group of automorphisms $\Aut(\mathcal{C})$ and $\mathbb{C}$.
The group of automorphisms $\Aut(\mathcal{C})$ acts continuously on the space $\Stab(\mathcal{C})$ with $\Phi\in \Aut(\mathcal{C})$ acting by $(Z,\mathcal{P})\mapsto (Z\circ \Phi^{-1},\Phi(\mathcal{P}))$.\\ Also there is a $\mathbb{C}$-action on $\Stab(\mathcal{C})$: for $\lambda \in \mathbb{C}$ we have $\lambda.(Z,\mathcal{P}(\phi))=(e^{i\lambda\pi}Z,\mathcal{P}(\phi+\Re(\lambda)))$. If $\Im(\lambda)=0$, this corresponds to rotating the stability condition by $\Re(\lambda)\pi$. In particular, note that the action of the shift $[1]$ coincides with the action of $1\in \mathbb{C}$, and so we can think of the action of the shift functor as rotating the stability condition by $\pi$. Note that the $\mathbb{C}$-action commutes with the action coming from automorphisms of the triangulated category. Lastly, $i\mathbb{R}\subset \mathbb{C}$ acts by rescaling the central charges.

\subsection{Tiling $\Stab(\mathcal{C})$ by tilting}
There is an obvious partition of the space of stability conditions into $(U(\mathcal{H}))_{\mathcal{H}}$, where $U(\mathcal{H})$ is the subset of stability conditions with fixed heart $\mathcal{H}$. Also if $\mathcal{H}$ is a finite length category and has $n$ simple objects, then the subspace $U(\mathcal{H})$ is homeomorphic to $\mathbb{H}^{n}$ where $\mathbb{H}=\{r\;\exp(i\pi\varphi)\vert \: r\in \mathbb{R}_{>0},\; 0<\varphi\leq 1\}$. Indeed, the Harder-Narasimhan property is satisfied for any stability condition in $U(\mathcal{H})$. If we now fix a labelling $\llbracket 1, n \rrbracket\stackrel{\sim}{\rightarrow} \Irr(\mathcal{H})$ the symmetric group $\mathfrak{S}_{n}$ acts on $U(\mathcal{H})$, for $\sigma \in \mathfrak{S}_{n}$ $(Z,\mathcal{P})\mapsto (Z\circ \sigma^{-1},\mathcal{P})$.\\
The strategy initiated by Bridgeland is to think of $U(\mathcal{H})$ for such hearts $\mathcal{H}$ as tiles covering part of the space of stability conditions.\\
We fix a heart $\mathcal{A}$ in $\mathcal{C}$ of finite length with finitely many simple objects.

\begin{proposition}{[Bri05, Lemma 5.5]} Let $\mathcal{A}$ and $\mathcal{B}$ be two hearts of bounded t-structures of $\mathcal{C}$, and let $s$ be a simple object in $\mathcal{A}$. Let $S$ be the (real) codimension one subspace of $\Stab(\mathcal{C})$ consisting of stability conditions for which the simple $s$ has phase $1$ and all other simples have phases in $(0,1)$. Then  $U(\mathcal{A})\cap \overline{U(\mathcal{B})}=S$ if and only if $\mathcal{B}=\mathcal{L}_{s}\mathcal{A}$.
\end{proposition}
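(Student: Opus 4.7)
Both directions of the argument rely on Bridgeland's theorem that the central-charge map $\mathcal{Z}\colon\Sigma \to V(\Sigma)$ is a local homeomorphism, so perturbations of $Z$ lift uniquely to nearby stability conditions.

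For the ($\Leftarrow$) direction, suppose $\mathcal{B}=\mathcal{L}_s\mathcal{A}$. Label the simples of $\mathcal{A}$ by $s=s_1,s_2,\ldots,s_n$. By the description of simples in a simple tilt ([Wo10, Lemma 2.4]), the simples of $\mathcal{B}$ are $s[-1]$ together with objects $b_2,\ldots,b_n\in \mathcal{A}$ satisfying $[b_i]=[s_i]+k_i[s]$ in $K(\mathcal{C})$ for integers $k_i\geq 0$. To prove $S\subseteq \overline{U(\mathcal{B})}$, given $\sigma=(Z,\mathcal{P})\in S$, rotate $Z(s)$ into the lower half-plane by setting $Z_\epsilon(s)=e^{i\epsilon\pi}Z(s)$ while keeping $Z_\epsilon(s_i)=Z(s_i)$ for $i\neq 1$. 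For $\epsilon>0$ sufficiently small, $Z_\epsilon(s[-1])=-Z_\epsilon(s)$ has phase in $(0,1)$, while $Z_\epsilon(b_i)=Z(s_i)+k_i Z_\epsilon(s)$ remains in the strict upper half-plane, since $Z(s_i)$ has strictly positive imaginary part and the perturbation of $Z(s)$ is small. The stability condition $\sigma_\epsilon$ lifting $Z_\epsilon$ then lies in $U(\mathcal{B})$ and converges to $\sigma$.

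Conversely, given $\sigma\in U(\mathcal{A})\cap \overline{U(\mathcal{B})}$ with an approximating sequence $\sigma_n\in U(\mathcal{B})$, the fact that $s[-1]$ is a simple of $\mathcal{B}$ yields $-Z_n(s)=Z_n(s[-1])\in \mathbb{H}$, so in the limit $-Z(s)\in \overline{\mathbb{H}}$. Combined with $Z(s)\in \mathbb{H}$ (since $s$ is simple in the heart $\mathcal{A}$ of $\sigma$), this forces $Z(s)$ to be negative real, i.e.\ the phase of $s$ in $\sigma$ equals $1$, placing $\sigma$ in $S$.

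For the ($\Rightarrow$) direction, assume $U(\mathcal{A})\cap \overline{U(\mathcal{B})}=S$. Since $S$ is nonempty, fix $\sigma\in S$ and $\sigma_n\in U(\mathcal{B})$ with $\sigma_n\to \sigma$. By continuity of phases, every $b\in \mathcal{B}$ has phase in $[0,1]$ in $\sigma$, so $\mathcal{B}\subseteq \mathcal{P}_\sigma([0,1])$. By the earlier proposition relating torsion theories on $\mathcal{A}$ to hearts of $t$-structures lying between $\mathcal{A}$ and $\mathcal{A}[-1]$, we have $\mathcal{B}=\mathcal{L}_\mathcal{T}\mathcal{A}$ for some torsion theory $\mathcal{T}$ in $\mathcal{A}$ with $\mathcal{T}\subseteq \mathcal{P}_\sigma(1)$. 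But in $\sigma\in S$, the slice $\mathcal{P}_\sigma(1)$ is generated by $s$ alone (only $s$ has phase $1$), so $\mathcal{T}\in\{0,\langle s\rangle_\mathcal{A}\}$. The case $\mathcal{T}=0$ would give $\mathcal{B}=\mathcal{A}$ and a full intersection $U(\mathcal{A})$, contradicting the codimension-one hypothesis; hence $\mathcal{T}=\langle s\rangle_\mathcal{A}$ and $\mathcal{B}=\mathcal{L}_s\mathcal{A}$. The principal delicacy is this last direction: one must translate the topological datum of overlapping tile closures into the algebraic identification of $\mathcal{B}$, and the torsion-theory/heart correspondence is exactly what converts the phase bound $\mathcal{B}\subseteq \mathcal{P}_\sigma([0,1])$ into a unique torsion theory, once $\mathcal{P}_\sigma(1)$ is understood.
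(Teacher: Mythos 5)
The paper itself offers no proof of this statement---it is imported wholesale from [Bri05, Lemma 5.5]---so your argument must be judged on its own. Its architecture is the standard one, and the ($\Rightarrow$) direction is essentially complete: from $\mathcal{B}\subseteq\mathcal{P}_{\sigma}([0,1])$ you correctly deduce $\mathcal{B}=\mathcal{L}_{\mathcal{T}}\mathcal{A}$ with $\mathcal{T}\subseteq\mathcal{P}_{\sigma}(1)$, and the identification $\mathcal{P}_{\sigma}(1)=\langle s\rangle_{\mathcal{C}}$ is right because for $\sigma\in S$ no class $\sum_{i}m_{i}[s_{i}]$ with some $m_{i}>0$, $i\neq 1$, can have real central charge. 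One minor point in the ($\Leftarrow$) direction: rather than lifting $Z_{\epsilon}$ through the local homeomorphism and then asserting that the lift lies in $U(\mathcal{B})$, it is cleaner to define $\sigma_{\epsilon}:=(Z_{\epsilon},\mathcal{B})$ directly---a stability function on a finite-length heart automatically has the Harder--Narasimhan property---so that $\sigma_{\epsilon}\in U(\mathcal{B})$ holds by construction, and what remains is to verify $d(\sigma_{\epsilon},\sigma)\to 0$ in Bridgeland's metric; identifying the heart of an abstract local lift is strictly harder than building the lift with the heart you want.

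The substantive gap is the last sentence of your reverse inclusion. From $\sigma_{n}\to\sigma$ you obtain only that $Z(s)$ is negative real, i.e.\ $\phi_{\sigma}(s)=1$; membership in $S$ also requires every \emph{other} simple of $\mathcal{A}$ to have phase in the open interval $(0,1)$, and you never verify this. Indeed it cannot be verified: $U(\mathcal{A})\cap\overline{U(\mathcal{B})}$ genuinely contains points at which a second simple also has phase $1$. Take $\sigma\in U(\mathcal{A})$ with $Z(s)=Z(s_{2})=-1$ and all remaining $Z(s_{i})$ in the open upper half-plane; setting $Z_{\delta}(s)=-1-i\delta$ and $Z_{\delta}(s_{2})=-1+i(k_{2}+1)\delta$ keeps $Z_{\delta}(s[-1])$ and all $Z_{\delta}(b_{i})=Z_{\delta}(s_{i})+k_{i}Z_{\delta}(s)$ inside $\mathbb{H}$, and your own convergence argument for $S\subseteq\overline{U(\mathcal{B})}$ applies verbatim to show that the resulting points of $U(\mathcal{B})$ converge to $\sigma$, even though $\sigma\notin S$. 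The correct assertion is therefore $U(\mathcal{A})\cap\overline{U(\mathcal{B})}=\{\sigma\in U(\mathcal{A})\mid \phi_{\sigma}(s)=1\}$, the closure of $S$ inside $U(\mathcal{A})$. Your two inclusions prove exactly this corrected equality; you should either state the result in that form or flag that the proposition as written is off by a closure, rather than claiming the final containment in $S$ itself.
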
  

We say that such a subset $S$ is a \textit{codimensional one wall} of $U(\mathcal{A})$. We can extend this definition as follows. A \textit{k-codimensional wall} of $U(\mathcal{A})$ is a $k$-codimensional subspace $S$ of $\Stab(\mathcal{C})$ such that there exists a heart $\mathcal{B}$ with $U(\mathcal{A})\cap \overline{U(\mathcal{B})}=S$. For instance, if $s_{1},\ldots, s_{k}$ are simple objects in $\mathcal{A}$ and $S_{\{1,\ldots,k\}}$ is the subset for which the simples $s_{1},\ldots,s_{k}$ all have phase $1$ and all other simples have phases in $(0,1)$. Then $U(\mathcal{A})\cap \overline{U(\mathcal{L}_{s_{1},\ldots,s_{k}}\mathcal{A})}=S_{\{1,\ldots,k\}}$.%\textcolor{red}{to be checked!}
\\Remember that we have denoted  $\Tilt(\mathcal{A})=\{\mathcal{A}_{i}\vert \; i\in I\}$.

\begin{assumption}
The hearts $\mathcal{A}$ and  $\{\mathcal{A}_{i}\vert \; i\in I\}$ the set of all hearts obtained from it by a finite sequence of simple tilts have finite length and only finitely many indecomposables.
\end{assumption}

Under this assumption, Woolf gave a description of the connected component of $\mathcal{A}$ in $\mathcal{C}$ in [Wo10]. Namely, we have the following result:

\begin{theorem}{[Wo10,Theorem 2.18]}
Under the assumption above, the union $\coprod\limits_{i\in I} U(\mathcal{A}_{i})$ is a connected component of the space of stability conditions.
\end{theorem}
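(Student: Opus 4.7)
The plan is to show that $\Sigma := \coprod_{i \in I} U(\mathcal{A}_i)$ is a nonempty, open, closed, and connected subset of $\Stab(\mathcal{C})$, which will force it to be a connected component. I would split the argument into three steps: connectedness, openness, and closedness in $\Stab(\mathcal{C})$.

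First, for connectedness, each tile $U(\mathcal{A}_i)$ is connected since it is homeomorphic to $\mathbb{H}^{n_i}$ via the assignment of phases to its simples (as stated just before the assumption). By the definition of $I$, any two hearts $\mathcal{A}_i$ and $\mathcal{A}_j$ are linked by a finite chain of simple tilts $\mathcal{A}_i = \mathcal{B}_0, \mathcal{B}_1, \dots, \mathcal{B}_N = \mathcal{A}_j$. By Proposition~1.37 each consecutive pair $U(\mathcal{B}_k), U(\mathcal{B}_{k+1})$ shares a codimension-one wall lying in $\overline{U(\mathcal{B}_k)} \cap \overline{U(\mathcal{B}_{k+1})}$, so one can choose a path from the interior of $U(\mathcal{B}_k)$ across this wall into $U(\mathcal{B}_{k+1})$ that remains in $\Sigma$. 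Concatenating such paths yields path-connectedness of $\Sigma$.

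For openness, given $\sigma \in U(\mathcal{A}_i)$, the phase parameterisation identifies a small neighbourhood of $\sigma$ in $\Stab(\mathcal{C})$ with an open set in $\mathbb{H}^{n_i}$, enlarged by tiles on the other side of the walls where a phase crosses $0$ or $1$. These neighbouring tiles are, by the multi-simple generalisation of Proposition~1.37 stated just after it, the tiles $U(\mathcal{L}_J \mathcal{A}_i)$ and $U(\mathcal{R}_J \mathcal{A}_i) = U(\mathcal{L}_J \mathcal{A}_i [1])$ for subsets $J \subseteq \Irr(\mathcal{A}_i)$ whose phases have crossed. Since $\mathcal{A}_i$ has only finitely many simples, a small enough neighbourhood of $\sigma$ is covered by a finite union of such adjacent tiles, all of which lie in $\Sigma$ by the closure hypothesis on $I$.

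Closedness is the main obstacle. Suppose $\sigma_n \to \sigma$ in $\Stab(\mathcal{C})$ with $\sigma_n \in U(\mathcal{A}_{i_n})$. I would invoke the local description of $\Stab(\mathcal{C})$ underlying Bridgeland's Theorem~1.35: for $n$ large enough, $\sigma_n$ lies in a Bridgeland neighbourhood of $\sigma$ in which every stability condition has heart obtained from the heart $\mathcal{A}_\sigma$ of $\sigma$ by a finite multi-simple tilt. Hence $\mathcal{A}_{i_n}$ is such a multi-tilt of $\mathcal{A}_\sigma$; reversing the chain, $\mathcal{A}_\sigma$ is obtained from $\mathcal{A}_{i_n} \in I$ by a finite sequence of simple tilts (using Lemma~1.22 to decompose a multi-tilt into simple ones), so the closure property defining $I$ gives $\mathcal{A}_\sigma \in I$ and $\sigma \in U(\mathcal{A}_\sigma) \subset \Sigma$. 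The delicate point here is controlling which hearts arise in a Bridgeland neighbourhood of $\sigma$: the standing assumption that every $\mathcal{A}_i$ is of finite length with finitely many indecomposables is exactly what is needed to guarantee that the local combinatorics of tilts remain finite and that only simple multi-tilts occur, so that one never escapes the collection $\{\mathcal{A}_i\}$.
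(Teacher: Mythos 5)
The paper does not actually prove this statement: Theorem 1.36 is quoted from [Wo10, Theorem 2.18] without proof, so there is nothing internal to compare against. Your open--closed--connected strategy is exactly the one Woolf uses, and the overall architecture of your sketch is sound. Two remarks, one substantive.

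The substantive point is in your openness step. You assert that the tiles adjacent to $U(\mathcal{A}_i)$ across the locus where the simples indexed by $J$ reach phase $1$ are the $U(\mathcal{L}_J\mathcal{A}_i)$ and $U(\mathcal{R}_J\mathcal{A}_i)$ for subsets $J\subseteq\Irr(\mathcal{A}_i)$. This is not true as stated: a small perturbation of a stability condition in which several simples sit at phase $1$ produces a heart of the form $\mathcal{L}_{\mathcal{T}}\mathcal{A}_i$ where $\mathcal{T}$ is the torsion theory of objects all of whose new Harder--Narasimhan factors have phase greater than $1$, and this $\mathcal{T}$ need not be generated by simples. (For instance, with $s_1,s_2$ both at phase $1$ and $e$ a nonsplit extension of $s_2$ by $s_1$, one can perturb so that $s_2$ and $e$ cross the axis but $s_1$ does not; the resulting torsion class contains $e$ but is neither $\langle s_2\rangle$ nor $\langle s_1,s_2\rangle$.) The repair is exactly the tool you deploy only in your closedness step: since $\mathcal{A}_i$ has finitely many indecomposables, so does $\mathcal{T}$, and then the lemma quoted from [Wo10, Lemma 2.7] (Lemma 1.21 in this paper) decomposes $\mathcal{L}_{\mathcal{T}}$ into simple tilts, so the neighbouring heart still lies in $I$. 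That same finiteness is also what guarantees that only finitely many walls meet a small neighbourhood, which you assert but should tie explicitly to the assumption. With this correction the openness and closedness arguments are both complete, and the connectedness argument via wall-crossing paths is standard.

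The minor point: your internal references are off. The wall-crossing statement you need is Proposition 1.34 (quoted from [Bri05, Lemma 5.5]), not Proposition 1.37 (which comes after the theorem and depends on it); Bridgeland's deformation theorem is Theorem 1.33, not 1.35 (which is the Assumption); and the decomposition of a tilt at a torsion theory into simple tilts is Lemma 1.21, not Lemma 1.22 (which is the compatibility of tilts with autoequivalences).
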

Thankfully the assumption $1.35$ will be automatically satisfied for the derived category of the Brauer tree algebra $A_{n}$. From now on, we will denote by $\Stab^{0}(A)$, the connected component of the canonical heart $\mathcal{A}$. This yields a nice description of all hearts in $\Stab^{0}(A_{n})$.

\begin{proposition}
We have the decomposition $\Stab^{0}(A_{n})=\coprod\limits_{i\in I} U(\mathcal{A}_{i})$ where each $\mathcal{A}_{i}$ is equivalent to ${A_{\Gamma}}\text{-mod}$ for $\Gamma$ a Brauer tree with the same numerical invariant as $T_{n}$. 
\end{proposition}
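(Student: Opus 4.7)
The plan is to apply Theorem 1.36 to the canonical heart $\mathcal{A}=A_{n}\text{-mod}$. Two things must be established: first, that Assumption 1.35 holds for $\mathcal{A}$; second, that each heart $\mathcal{A}_{i}$ obtained by a finite sequence of simple tilts is equivalent to $A_{\Gamma}\text{-mod}$ for some Brauer tree $\Gamma$ with $n$ edges and trivial multiplicity. Both will fall out of a single induction on the length of a sequence of simple tilts producing $\mathcal{A}_{i}$ from $\mathcal{A}$.

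The base case is immediate: $A_{n}$ is representation-finite, so $\mathcal{A}_{0}=\mathcal{A}$ has finite length and finitely many isoclasses of indecomposable objects. For the inductive step, assume $\mathcal{A}_{i}=\Phi_{i}(A_{\Gamma_{i}}\text{-mod})$ for some triangulated equivalence $\Phi_{i}:\mathcal{D}^{b}(A_{\Gamma_{i}})\stackrel{\sim}{\rightarrow}\mathcal{D}^{b}(A_{n})$ and some Brauer tree $\Gamma_{i}$ with $n$ edges and no exceptional vertex. Let $s$ be a simple object of $\mathcal{A}_{i}$. By Lemma 1.22 we have $\mathcal{L}_{s}\mathcal{A}_{i}=\Phi_{i}(\mathcal{L}_{\Phi_{i}^{-1}(s)}(A_{\Gamma_{i}}\text{-mod}))$, so it suffices to describe the simple tilt of $A_{\Gamma_{i}}\text{-mod}$ at $\Phi_{i}^{-1}(s)$.

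For this, I would apply Proposition 1.24 to the torsion theory generated by $T=\{\Phi_{i}^{-1}(s)\}$: the shifted elementary perverse equivalence $L_{A_{\Gamma_{i}},T}[1]$ induces an equivalence from $\mathcal{L}_{\Phi_{i}^{-1}(s)}(A_{\Gamma_{i}}\text{-mod})$ onto $A'\text{-mod}$, where $A'=\End_{A_{\Gamma_{i}}}(T_{A_{\Gamma_{i}}}(T))$. Proposition 1.20 then identifies $A'$ with the Brauer tree algebra attached to $\Gamma_{i+1}:=\mathcal{L}_{T}\Gamma_{i}$. Since the rank of the Grothendieck group and the multiplicity of the exceptional vertex are both invariants of the derived category, $\Gamma_{i+1}$ again has $n$ edges and trivial multiplicity. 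Setting $\Phi_{i+1}:=\Phi_{i}\circ(L_{A_{\Gamma_{i}},T}[1])^{-1}$ completes the inductive step.

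Every $\mathcal{A}_{i}$ is therefore equivalent to the module category of a representation-finite Brauer tree algebra, which settles Assumption 1.35, and Theorem 1.36 then delivers the decomposition $\Stab^{0}(A_{n})=\coprod_{i\in I}U(\mathcal{A}_{i})$ with the claimed description of the hearts. The only substantial ingredient is Proposition 1.20, guaranteeing stability of the class of Brauer tree algebras (with fixed numerical invariants) under simple perverse tilts; I do not anticipate any genuine obstacle, since the bookkeeping is combinatorial and all the necessary derived equivalences have been constructed in Sections 1.4 through 1.6.
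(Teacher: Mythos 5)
Your proof is correct and follows essentially the same route as the paper: both verify Assumption 1.35 via the finite representation type of Brauer tree algebras, identify each tilted heart with the module category of a Brauer tree algebra with the same numerical invariants by realising simple tilts through shifted elementary perverse equivalences (Proposition 1.24 combined with Proposition 1.20), and then invoke Theorem 1.36. Your explicit induction on the length of the tilting sequence merely makes precise the bookkeeping that the paper's terser argument leaves implicit.
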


\begin{proof}
Indeed, the assumption $1.35$ is satisfied as Brauer tree algebras are of finite representation type, \textit{i.e.} they have only finitely many indecomposable representations up to isomorphism. Each $\mathcal{A}_{i}$ is obtained from the canonical heart $\mathcal{A}$ by a finite sequence of simple tilts and we have seen that any simple tilt at $\mathcal{A}$ is derived equivalent to $\mathcal{A}$ through an elementary perverse equivalence. Hence each $\mathcal{A}_{i}$ is derived equivalent to $\mathcal{A}$ and moreover there exists a Brauer tree $\Gamma$ with same numerical invariants such that $\mathcal{A}_{i}\stackrel{\sim}\rightarrow {A_{\Gamma}}\text{-mod}$. Moreover the Harder-Narasimhan property is satisfied for any stability conditions in $\Stab^{0}(A_{n})$ as any heart is a module category over a finite-dimensional algebra, and so they are in particular finite length categories.\end{proof}

\section{Towards a description of $\Stab^{0}(A_{n})$}
\subsection{The original hope}
Works of Thomas ([Tho06]), Bridgeland ([Bri09]) and Brav-Thomas ([BrTh11]) give a full description of a connected component of the space of stability conditions of a Kleinian singularity in many cases. Indeed, they show it is the universal cover of some nice space. More precisely, Thomas studied stability conditions of $\mathcal{D}_{n}$, the perfect derived category of $\widehat{A}_{n}$ and showed the following: 

\begin{theorem}
There is a connected component of the space of stability conditions $\Stab(\mathcal{D}_{n})$ which is the universal cover of the configuration space $C_{n+1}$. Moreover, the deck transformations are given by the braid group action of Theorem 1.9.
\end{theorem}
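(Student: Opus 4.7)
The strategy is to construct a local homeomorphism from a suitably chosen connected component $\Sigma \subset \Stab(\mathcal{D}_n)$ to the configuration space $C_{n+1}$, upgrade it to a covering map, and then prove $\Sigma$ is simply connected.

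First, I would take $\Sigma$ to be the connected component containing the stability conditions supported on the standard heart of $\widehat{A}_n$-modules, and organise it via the tiling by cells $U(\mathcal{H})$ for $\mathcal{H}$ running through all hearts obtained from the canonical one by finite sequences of simple tilts. In analogy with Theorem 1.36 (which applies to the undg setting of $A_n$), each cell is homeomorphic to $\mathbb{H}^n$ and glued to its neighbours along codimension-one walls corresponding to simple tilts. Since the $P_i$ form an $(A_n)$-configuration of $2$-spherical objects, the induction producing tilted hearts is controlled entirely by the spherical twists $\phi_{P_i}$ and their braid relations from Proposition 1.7.

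Next, I would construct the map $\pi:\Sigma\to C_{n+1}$. By Bridgeland's local homeomorphism of Theorem 1.32, the central charge $\mathcal{Z}: \Sigma\to \Hom_{\mathbb{Z}}(K(\mathcal{D}_n),\mathbb{C})\simeq \mathbb{C}^n$ is a local homeomorphism, and Thomas's construction sends a stability condition to the unordered set of partial sums $\bigl\{0,\, Z(P_1),\, Z(P_1)+Z(P_2),\, \ldots,\, \sum_{i=1}^{n} Z(P_i)\bigr\}$ in $\mathbb{C}$. The semistability of the $P_i$ in consecutive cells forces each successive increment $Z(P_i)\in \mathbb{H}$ to be nonzero and nonparallel to the next whenever the adjacent spherical $\Hom$'s are nontrivial, so these $n+1$ points are pairwise distinct and $\pi$ factors through $C_{n+1}$. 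Composition with $\mathcal{Z}$ yields a local homeomorphism.

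To upgrade $\pi$ to a covering I would carry out a path-lifting argument, cell by cell. Interior to a cell $U(\mathcal{H})$ the lifting is provided by $\mathcal{Z}$; at a wall two consecutive partial sums of $Z$ become collinear, corresponding to two simples with equal phase, and crossing the wall in $C_{n+1}$ by a small half-turn is realised on $\Sigma$ by the appropriate simple tilt $\mathcal{L}_s\mathcal{H}$. The braid and commutation relations in the monodromy generators $\sigma_i$ of $\pi_1(C_{n+1})$ are then matched on the $\Stab$ side by the local relations among simple tilts (the dg analogues of Propositions 1.26 and 1.27), so the lift is well-defined along any path. For simple connectivity of $\Sigma$, I would combine the contractibility of each $U(\mathcal{H})$ with a combinatorial analysis of the tilt graph, showing that any loop factors as a word in simple tilts whose image in $\pi_1(C_{n+1})$ is trivial, and then use that the only relations among wall-crossings are the braid relations — which are already honest equalities of hearts, not merely homotopies, inside $\Sigma$.

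With $\pi:\Sigma\to C_{n+1}$ a covering and $\Sigma$ simply connected, $\Sigma$ is the universal cover and its deck group is $\pi_1(C_{n+1})\simeq \Br_{n+1}$. To identify it with the action of Theorem 1.9, one checks directly that the spherical twist $\phi_{P_i}$ acts on $\Sigma$ by the elementary half-twist in the $i$-th pair of adjacent partial sums, which is exactly $\sigma_i$; faithfulness of the resulting representation is guaranteed by Theorem 1.3. The step I expect to be the main obstacle is the simple connectivity of $\Sigma$: although the cell decomposition is explicit, ruling out any hidden loop requires a delicate control on how the tilt graph closes up, and this is precisely where the faithfulness of the braid group action on $\mathcal{D}_n$ enters in an essential way.
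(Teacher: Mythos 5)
First, a caveat: the paper does not prove this statement. It is Theorem 2.1, quoted from Thomas [Tho06] purely as motivation for the study of $\Stab^{0}(A_{n})$, so there is no in-paper proof to compare against. Your sketch does follow the broad architecture of Thomas's (and Bridgeland's [Bri09]) argument: tile the component by hearts, send central charges to configurations of points, show this gives a covering, and use faithfulness of the braid action. However, as written it has genuine gaps at the two places where the real work happens.

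First, the pairwise distinctness of the $n+1$ partial sums does not follow from consecutive increments being ``nonzero and nonparallel'': distinctness of $p_{i}$ and $p_{j}$ for $|i-j|\geq 2$ is the statement that $Z([P_{i+1}])+\cdots+Z([P_{j}])\neq 0$, i.e.\ that $Z$ vanishes on no positive root of the $A_{n}$ root system. Establishing this requires showing that every root class is the class of a semistable (spherical) object for every stability condition in $\Sigma$ -- a genuine lemma in [Tho06], not a formal consequence of the cell structure. Second, your route to simple connectivity is essentially circular: the assertion that ``the only relations among wall-crossings are the braid relations'' is equivalent to what is being proved. The standard repair is structural: once $\pi:\Sigma\rightarrow C_{n+1}$ is a normal covering on which $\Br_{n+1}$ acts by deck transformations transitively on fibres, the exact sequence $1\rightarrow\pi_{1}(\Sigma)\rightarrow\pi_{1}(C_{n+1})\rightarrow \Br_{n+1}/K\rightarrow 1$, with $K$ the kernel of $\Br_{n+1}\rightarrow\Aut(\mathcal{D}_{n})$, identifies $\pi_{1}(\Sigma)$ with $K$, and faithfulness gives $K=1$. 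Note that the relevant faithfulness is for the dg category $\mathcal{D}_{n}$ (the graded Khovanov--Seidel/Seidel--Thomas theorem), not Theorem 1.3 of the paper, which concerns the ungraded Brauer tree algebra $A_{n}$. Finally, upgrading the local homeomorphism to a covering is more than ``path lifting cell by cell'': one must rule out lifts escaping the component in finite time, which in [Tho06] rests on the local finiteness of the wall-and-chamber structure near any point; this deserves to be made explicit rather than absorbed into the phrase ``the lifting is provided by $\mathcal{Z}$''.
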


With all that being said, one might hope to obtain similar results for the space of stability conditions of the (classical) derived category of $A_{n}\text{-mod}$.\\
Firstly, we will study the braid group action on the space of stability conditions $\Stab^{0}(A_{n})$, then we will compute the fundamental group of $\Stab^{0}(A_{3})$, and last but not least we will find determine the image of $\mathcal{Z}$ in the case $n=3$.

\subsection{Braid group action on $\Stab^{0}(A_{n})$}

Recall that the subgroup of $\Aut(\mathcal{D}^{b}(A_{n}))$ generated by $F_{1},\ldots, F_{n}$ gives an action of $\Br_{n+1}$ on $\Stab(A_{n})$. This yields at the level of Grothendieck group an action of the symmetric group $\mathfrak{S}_{n+1}$ on $K_{0}(A_{n})^{*}\otimes_{\mathbb{Z}}\mathbb{C}$ through the Bridgeland local homeomorphism. It is natural to wonder how nice this action is. Firstly, we shall make explicit the action of this braid group on the connected component $\Stab^{0}(A_{n})$. We will then discuss its faithfulness before investigating the transitivity of the action.

\begin{proposition}
The Artin braid group $\Br_{n+1}$ acts faithfully on $\{U(\mathcal{H}),\; \mathcal{H}\in \Tilt(A_{n}\text{-mod})\}$.
\end{proposition}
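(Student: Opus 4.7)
The plan is to combine two results already stated: Rouquier--Zimmermann's Proposition 1.1 (which says $\Sph(A_{n}) \cap \Out(A_{n}) = 1$) and Khovanov--Seidel's Theorem 1.3 (the injectivity of $\Br_{n+1} \to \Aut(\mathcal{D}^{b}(A_{n}))$). Let $w \in \Br_{n+1}$ act trivially on the set $\{U(\mathcal{H}) \mid \mathcal{H} \in \Tilt(A_{n}\text{-mod})\}$, and denote by $\phi_{w} \in \Sph(A_{n})$ its image under $s_{i} \mapsto \phi_{i}$. I want to show $\phi_{w}$ is trivial in $\TrPic(A_{n})$ and then invoke Theorem 1.3.

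First I would observe that for any autoequivalence $\Phi$ one has $\Phi \cdot U(\mathcal{H}) = U(\Phi(\mathcal{H}))$, by the very definition of the action $(Z,\mathcal{P}) \mapsto (Z \circ \Phi^{-1}, \Phi(\mathcal{P}))$, and that distinct hearts of bounded $t$-structures determine distinct tiles (each $U(\mathcal{H})$ records the heart of the stability conditions it contains). Moreover $\Sph(A_{n})$ preserves $\Stab^{0}(A_{n}) = \coprod_{i} U(\mathcal{A}_{i})$ by Proposition 1.37, so it permutes $\Tilt(A_{n}\text{-mod})$. The hypothesis that $w$ acts trivially on the set of tiles therefore forces $\phi_{w}(\mathcal{H}) = \mathcal{H}$ for every $\mathcal{H} \in \Tilt(A_{n}\text{-mod})$; in particular $\phi_{w}$ preserves the canonical heart $A_{n}\text{-mod}$.

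To conclude, I would identify such a $\phi_{w}$ with an element of $\Out(A_{n})$. A derived autoequivalence of $\mathcal{D}^{b}(A_{n})$ preserving the standard $t$-structure restricts to a Morita self-equivalence of the heart $A_{n}\text{-mod}$, and since $A_{n}$ is basic, Morita theory shows that every such self-equivalence is realised, up to natural isomorphism, by an outer automorphism of $A_{n}$; that is, $\phi_{w}$ lies in the subgroup $\Out(A_{n}) \subset \TrPic(A_{n})$. Proposition 1.1 then forces $\phi_{w} = 1$ in $\TrPic(A_{n})$, and Theorem 1.3 of Khovanov--Seidel lifts this to $w = 1$ in $\Br_{n+1}$. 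The main point requiring care is the identification of the stabilizer of $A_{n}\text{-mod}$ inside $\TrPic(A_{n})$ with $\Out(A_{n})$; this is standard Morita theory for basic algebras and is the only substantive input beyond the two cited faithfulness results.
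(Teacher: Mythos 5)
Your proof is correct and follows essentially the same route as the paper: reduce faithfulness to the statement that an element of $\Sph(A_{n})$ fixing a heart lies in (and hence is trivial in) the relevant $\Out$ subgroup of $\TrPic(A_{n})$, then lift triviality as a standard equivalence to triviality in $\Br_{n+1}$ via Khovanov--Seidel. The only difference is that you specialize to the canonical heart and quote Proposition 1.1 directly, whereas the paper establishes $\Sph(A_{n})\cap\Out(B)=1$ for an arbitrary tilted heart $B\text{-mod}$ via the stable Picard group argument; your shortcut is legitimate, since an element acting trivially on all tiles in particular fixes $U(A_{n}\text{-mod})$, and the identification of the stabiliser of the standard $t$-structure with $\Out(A_{n})$ for a basic algebra is indeed standard.
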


\begin{proof}
We make the braid group $\Br_{n+1}$ act on $\Stab^{0}(A_{n})$ through the assignment $s_{i}\mapsto F_{i}$. First of all, it is clear that there is an action on the set of tiles $\{U(\mathcal{H})$ for $\mathcal{H}\in \Tilt(A_{n})\}$. Now, according to Proposition 1.14 we have that $F_{i}[1](A_{n})$ is the tilted heart at the torsion theory defined by ${T}_{i}=\llbracket 1,n \rrbracket -\{i\}$. We can then apply Lemma 1.21 to decompose this torsion theory as a composition of simple left tilts so that $U(F_{i}[1](A_{n}))\in \Stab^{0}(A_{n})$. But now the connected component $\Stab^{0}(A_{n})$ is invariant under shift. Indeed, we have a continuous action of $\mathbb{R}$ on $\Stab(A_{n})$ that restricts to $\Stab^{0}(A_{n})$ and the action of $1\in \mathbb{R}$ coincides with the shift automorphism. Now any other heart $\mathcal{H}$ in $\Stab^{0}(A_{n})$ is obtained as a composition of simple tilts of $A_{n}\text{-mod}$, that is $\mathcal{H}=\mathcal{L}^{\pm}_{S_{k}}\mathcal{L}^{\pm}_{S_{k-1}}\ldots \mathcal{L}^{\pm}_{S_{1}}(A_{n}\text{-mod})$ where $S_{1}$ is a simple object in $A_{n}\text{-mod}$ and $S_{i}$ is a simple object in $\mathcal{L}^{\pm}_{S_{i-1}}\ldots\mathcal{L}^{\pm}_{S_{1}}(A_{n}\text{-mod})$ for every $i\in \llbracket 2,k \rrbracket$. Then for $\sigma \in \mathfrak{S}_{n+1}$, we have $F_{\sigma}(\mathcal{H})=F_{\sigma}\mathcal{L}^{\pm}_{S_{k}}\mathcal{L}^{\pm}_{S_{k-1}}\ldots \mathcal{L}^{\pm}_{S_{1}}(A_{n}\text{-mod})) =\mathcal{L}^{\pm}_{F_{\sigma}(S_{k})}\mathcal{L}^{\pm}_{F_{\sigma}(S_{k-1})}\ldots\mathcal{L}^{\pm}_{F_{\sigma}(S_{1})}(F_{\sigma}(A_{n}\text{-mod}))$. Hence, we have a braid group action on $\Stab^{0}(A_{n})$. 
\\As for the faithfulness, this turns out to be only a restatement of the (proof of) Proposition 1.1. Indeed, any heart $\mathcal{H}$ in $\Stab^{0}(A_{n})$ can be reached by a sequence of left tilts, and each of these are realisable as a standard equivalence. Moreover, Proposition $1.37$ gives us that $\mathcal{H}=B\text{-mod}$ for $B$ a Brauer tree algebra and consequently we have that $\TrPic(A_{n})\simeq\TrPic(B)$. Now the natural map $\Out(B) \rightarrow \StPic(B)$  is injective . This is true since if $\varphi$ is a non-inner automorphism of $B$, then the twisted bimodule $B_{\varphi}$ is non-isomorphic to $B$ in the stable category of bimodules. On the other hand, the image of $\Sph(A_{n})$ in $\StPic(A_{n})$ is trivial since each generator induces the identity automorphism on $A_{n}\text{-stmod}$. In the end, we see that $\Sph(A_{n})\cap \Out(B)=1$. In other words, the heart $\mathcal{H}$ has trivial stabiliser under the action of $\Sph(A_{n})$ on the set of hearts. But now by faithfulness of the action of $\Sph(A_{n})$ on $\mathcal{D}^{b}(A_{n})$ (cf. Theorem $1.3$), if there is $\sigma \in \mathfrak{S}_{n+1}$ such that $F_{\sigma}(\mathcal{H})=\mathcal{H}$ then $F_{\sigma}=\Id$ and hence $\sigma=1$.
\end{proof}

Note that we could also write the tiling given in Proposition 1.37 as indexed over the isoclasses of planar trees $\Gamma$ with $n$ edges. Indeed, let us denote $V_{\Gamma}=\{U(\mathcal{H}),\; \mathcal{H}\in \Tilt(A_{n}\text{-mod},A_{\Gamma}\text{-mod})\}$ so that we have the decomposition $\Stab^{0}(A_{n})=\bigcup\limits_{\text{isoclasses of trees}\;\Gamma}V_{\Gamma}$.\\
If we denote by $\tau$ the functor induced by the diagram automorphism of $A_{n}$-mod of order two, we have from [RoZi, Theorem 4.5] that $F_{\omega_{0}}[-n]=\tau$ and so to express the $V_{\Gamma}$'s as $G$-torsor for some group $G$ we need to take $G$ a subgroup of $\langle \Br_{n+1},[1] \rangle$ not containing $\tau$. If $n$ is odd we can take $G=\langle \Br_{n+1},[2]\rangle$ for then $[1]\not\in G$, $\tau \not\in G$ but $\tau[1]\in G$.

\begin{proposition}
For $n$ odd, the group $\langle \Br_{n+1},[2]\rangle$ acts faithfully on $\{U(\mathcal{H}),\; \mathcal{H}\in \Tilt(A_{n}\text{-mod})\}$.
\end{proposition}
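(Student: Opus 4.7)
The plan is to extend the argument of Proposition 2.3 by allowing the shift functor $[2]$ into the generating set. Suppose $g \in G = \langle \Br_{n+1}, [2]\rangle$ acts trivially on the set of tiles. Since the shift is central in $\TrPic(A_n)$, I may write $g = F_\sigma[2k]$ for some $\sigma \in \Br_{n+1}$ and $k \in \mathbb{Z}$. The relation $F_{\omega_0^2} = [2n]$ from Theorem 1.2 shows that $G$ is the quotient of $\Br_{n+1}\times \mathbb{Z}$ by the subgroup generated by $(\omega_0^2, -n)$, so proving $g = 1$ in $G$ amounts to producing $\ell\in \mathbb{Z}$ with $\sigma = \omega_0^{2\ell}$ and $k = -n\ell$.

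The first step is to apply $g$ to the canonical tile: $g(A_n\text{-mod}) = A_n\text{-mod}$ forces $g$ to be induced by an algebra automorphism $\alpha \in \Out(A_n)$. Next, I would use that $g$ also fixes $U(\mathcal{L}_i(A_n\text{-mod}))$ for every simple index $i\in \llbracket 1,n \rrbracket$. By Lemma 1.22 applied to $\alpha$, $\alpha\bigl(\mathcal{L}_i(A_n\text{-mod})\bigr) = \mathcal{L}_{\alpha(i)}(A_n\text{-mod})$, and since distinct simple tilts of $A_n\text{-mod}$ yield distinct hearts (one checks that $S_i[-1]$ is a simple of $\mathcal{L}_i(A_n\text{-mod})$ but not of $\mathcal{L}_j(A_n\text{-mod})$ for $j\neq i$), this forces $\alpha(i) = i$ for every $i$. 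Any algebra automorphism of the basic Brauer tree algebra $A_n$ fixing every isomorphism class of simples is inner, so $\alpha$ is trivial in $\Out(A_n)$ and $g$ equals the identity functor on $\mathcal{D}^b(A_n)$.

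It then remains to upgrade the identity $F_\sigma = [-2k]$ in $\TrPic(A_n)$ to the desired algebraic relation. This is where Khovanov--Seidel's faithfulness (Theorem 1.3) combines with the relation of Theorem 1.2: the image of $\Br_{n+1}$ inside $\Aut(\mathcal{D}^b(A_n))$ intersects the subgroup of shifts precisely in $\langle F_{\omega_0^2}\rangle = \langle [2n]\rangle$, so the only braids mapping to a shift are the even powers of $\omega_0$. Thus $\sigma = \omega_0^{2\ell}$ and $-2k = 2n\ell$ for some $\ell \in \mathbb{Z}$, $(\sigma, k)$ lies in $\langle (\omega_0^2, -n)\rangle$, and $g = 1$ in $G$. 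The parity hypothesis enters through its consequence that $\tau = F_{\omega_0}[-n]$ does not belong to $G$, which is the condition identified just before the statement that makes $G$ an honest candidate for the structure group acting on the tiling.

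The main obstacle I anticipate is the rigidity claim that an algebra automorphism of $A_n$ fixing every simple up to isomorphism is inner. The cleanest approach invokes the known description of $\Out(A_n)$ as cyclic of order two, generated by the reflection swapping $S_i$ and $S_{n+1-i}$, which manifestly does not fix the simples individually. Alternatively one reasons directly from the quiver-with-relations presentation of the Brauer tree algebra, reducing after an inner twist to an automorphism fixing every primitive idempotent and acting by scalars on the arrows, and such an automorphism is itself inner.
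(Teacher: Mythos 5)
There is a genuine gap, and it sits exactly at the step you yourself flag as the main obstacle: the claim that an algebra automorphism of the basic algebra $A_{n}$ fixing every isomorphism class of simple modules is inner is false, and the description of $\Out(A_{n})$ as cyclic of order two is also false. The group $\Out(A_{n})$ has a non-trivial identity component isomorphic to $k^{\times}$. Already for $A_{2}$, with arrows $a\colon 1\to 2$ and $b\colon 2\to 1$, the automorphism fixing the idempotents and sending $a\mapsto ca$, $b\mapsto b$ preserves the relations and fixes both simples, while conjugation by any unit $u=t_{1}e_{1}+t_{2}e_{2}+(\text{radical terms})$ can only rescale $a$ and $b$ by mutually inverse scalars; so this automorphism is outer whenever $c\neq 1$ (this torus is part of Rouquier--Zimmermann's description of $\TrPic(A_{n})$). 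Such an automorphism fixes every simple up to isomorphism, hence fixes every torsion theory generated by simples, hence fixes every tile $U(\mathcal{H})$ for $\mathcal{H}\in\Tilt(A_{n}\text{-mod})$. Your argument would apply verbatim to it and conclude that it is the identity, a contradiction. Equivalently: your decisive step never uses the hypothesis that $g$ lies in $\langle \Br_{n+1},[2]\rangle$ rather than in all of $\Out(A_{n})$, and some such use is unavoidable, because $\Out(A_{n})$ genuinely fails to act faithfully on the set of tiles.

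The missing ingredient, which is what the paper's proof of Proposition 2.3 (adapted in Proposition 2.4) supplies, is the passage to the stable module category: for a symmetric algebra $B$ the map $\Out(B)\rightarrow\StPic(B)$ is injective, whereas each generator $F_{i}$ of $\Sph(A_{n})$ has trivial image in $\StPic(A_{n})$ because the projective bimodule term of $X_{i}$ vanishes stably. An element of $\Sph(A_{n})$ preserving a heart $B\text{-mod}$ therefore lies in $\Sph(A_{n})\cap\Out(B)=1$ and is the identity functor; for the present proposition one must in addition control the stable image $\Omega^{-2k}$ of the even shift, which is exactly where the appeal to [RoZi03, Proposition 4.4] and the parity of $n$ enter, giving $\langle\Sph(A_{n}),[2]\rangle\cap\Out(B)=1$. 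Your reduction of the shift ambiguity via Khovanov--Seidel faithfulness together with $F_{\omega_{0}^{2}}=[2n]$ is correct and would indeed finish the proof, but it is only reached after the invalid rigidity claim.
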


\begin{proof}
This is again a consequence of [RoZi03, Proposition 4.4] from which we have that the image of $\langle \Br_{n+1},[2]\rangle$ in $\Out(A_{n})$ is also trivial and we can adapt the proof of the previous proposition.
\end{proof}

The next natural question is whether the braid group action defined above is transitive on each subset $V_{\Gamma}$ of $\Stab^{0}(A_{n})$. Firstly, one might wants to extend the group action by adjoining the action of the shift functor, \textit{i.e.} to $\langle \Br_{n+1},[1] \rangle$.

Next, we determine what happens for our main case of interest, the Brauer tree algebra $A_{3}$.
\begin{proposition}
The extended braid group $\langle{\Br_{4}},[1]\rangle$ acts  transitively on the sets $\{U(\mathcal{H}),\; \mathcal{H}\in \Tilt(A_{3}\text{-mod})\;\text{such that}\; \mathcal{H}\simeq A_{3}\text{-mod} \}$ and  $\{U(\mathcal{H}),\; \mathcal{H}\in \Tilt(A_{3}\text{-mod})\;\text{such that}\; \mathcal{H}\simeq A_{\St_{3}}\text{-mod} \}$ where $\St_{3}=\mathcal{L}_{1}(A_{3}\text{-mod})$.
\end{proposition}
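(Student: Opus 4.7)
The plan is to prove both transitivity statements simultaneously by strong induction on the number of simple tilts needed to reach a heart $\mathcal{H} \in \Tilt(A_3\text{-mod})$ from $A_3\text{-mod}$. The inductive claim will be that $U(\mathcal{H}) = h \cdot U(\mathcal{K})$ for some $h \in \langle \Br_4, [1] \rangle$ and some reference heart $\mathcal{K} \in \{A_3\text{-mod},\, \mathcal{L}_1(A_3\text{-mod})\}$, with $\mathcal{K}$ matching the equivalence class (line or star) of $\mathcal{H}$; the proposition then follows at once.

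For the induction step, suppose $\mathcal{H} = \mathcal{L}_S(\mathcal{H}')$ for a predecessor $\mathcal{H}'$ and a simple $S$ of $\mathcal{H}'$; the right-tilt case reduces to this one modulo the shift $[1]$ since $\mathcal{R}_S = [1] \circ \mathcal{L}_S$. By induction $U(\mathcal{H}') = g \cdot U(\mathcal{K}')$ with $g \in \langle \Br_4, [1] \rangle$ and $\mathcal{K}' \in \{A_3\text{-mod},\, \mathcal{L}_1(A_3\text{-mod})\}$, and Lemma~1.22 rewrites $U(\mathcal{L}_S(\mathcal{H}')) = g \cdot U(\mathcal{L}_{g^{-1}(S)}(\mathcal{K}'))$. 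The whole problem thereby reduces to the local check that each of the six tiles $U(\mathcal{L}_i(\mathcal{K}'))$ (for $i \in \{1,2,3\}$ and $\mathcal{K}' \in \{A_3\text{-mod},\, \mathcal{L}_1(A_3\text{-mod})\}$) can be written in the target form. The three tilts out of $A_3\text{-mod}$ yield $\mathcal{L}_2(A_3\text{-mod}) = F_1F_3[1](A_3\text{-mod})$ (a line tile in the braid/shift orbit of $A_3\text{-mod}$), $\mathcal{L}_1(A_3\text{-mod})$ itself (the base star tile), and $\mathcal{L}_3(A_3\text{-mod}) = \tau\bigl(\mathcal{L}_1(A_3\text{-mod})\bigr)$, where $\tau = F_{\omega_0}[-3] \in \langle \Br_4, [1] \rangle$ is the diagram automorphism identified in the discussion preceding Proposition~2.3 via Theorem~1.2. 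The three tilts out of $\mathcal{L}_1(A_3\text{-mod})$ all land in the line class, and are precisely the identities
\begin{align*}
\mathcal{L}_1\mathcal{L}_1(A_3\text{-mod}^\phi) &= (23)\, F_2 F_3 F_2(A_3\text{-mod}^\phi), \\
\mathcal{L}_2\mathcal{L}_1(A_3\text{-mod}^\phi) &= (12)\, F_1^{-1}[-1](A_3\text{-mod}^\phi), \\
\mathcal{L}_3\mathcal{L}_1(A_3\text{-mod}^\phi) &= F_2[-1](A_3\text{-mod}^\phi)
\end{align*}
already extracted in the proof of Proposition~1.29, whose right-hand-side permutations $\sigma \in \mathfrak{S}_3$ disappear at the level of tiles.

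The main obstacle I anticipate is the careful bookkeeping of label permutations and stray shifts throughout. One has to verify systematically that every $\sigma \in \mathfrak{S}_3$ appearing in the local identities is invisible at the level of tiles (since $U(\mathcal{H}) \subset \Stab(A_3)$ depends only on $\mathcal{H}$ as an abelian subcategory, not on a choice of labelling of its simples) and that every shift produced when converting between left and right tilts is absorbed by the $[1]$-factor of $\langle \Br_4, [1] \rangle$. Once this accounting is cleared, the induction goes through mechanically and delivers transitivity on both the line-type and the star-type tiles.
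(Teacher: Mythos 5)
Your induction scheme, the use of Lemma 1.22 to push a simple tilt through $g$, and the six local identities are all sound, and your treatment of $\mathcal{L}_{3}(A_{3}\text{-mod})$ via the diagram automorphism $\tau=F_{\omega_{0}}[-3]\in\langle \Br_{4},[1]\rangle$ is actually cleaner than the paper's, which instead reaches the same conclusion by a contradiction argument through the auxiliary heart $\mathcal{R}_{1}\mathcal{L}_{3}(A_{3}\text{-mod})$. For the line-type tiles the paper simply cites Proposition 1.29, whose proof is exactly your induction restricted to that case; for the star-type tiles it writes an arbitrary star heart as $\mathcal{L}_{T}\mathcal{H}$ with $\mathcal{H}$ line-type and does a case analysis on $F_{g}^{-1}(T)$. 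So the forward half of your argument faithfully reproduces, and partly streamlines, the paper's.

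The gap is the sentence disposing of right tilts. With the paper's conventions the simple right tilt is $\mathcal{R}_{s}=\mathcal{R}_{^{\perp}\langle s\rangle}$, the right tilt at the \emph{maximal} torsion theory avoiding $s$, so the identity $\mathcal{R}_{\mathcal{T}}\mathcal{H}=\mathcal{L}_{\mathcal{T}}\mathcal{H}[1]$ yields $\mathcal{R}_{s}\mathcal{H}=\mathcal{L}_{^{\perp}\langle s\rangle}\mathcal{H}[1]$, which is not $\mathcal{L}_{s}\mathcal{H}[1]$: for instance $\mathcal{R}_{S_{1}}(A_{3}\text{-mod})$ contains $S_{2}$ and $S_{3}$ in degree zero while $\mathcal{L}_{S_{1}}(A_{3}\text{-mod})[1]$ does not. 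Hence ``$\mathcal{R}_{S}=[1]\circ\mathcal{L}_{S}$'' is false and the right-tilt case of your induction is simply not covered. Nor can it be skipped: a composition of left tilts always has aisle containing the original one (Proposition 1.20), so hearts such as $A_{3}\text{-mod}[1]$, which do lie in $\Tilt(A_{3}\text{-mod})$, are unreachable by left tilts alone. The repair is to use that a simple right tilt inverts a simple left tilt, $\mathcal{L}_{S[1]}\mathcal{R}_{S}\mathcal{H}'=\mathcal{H}'$; after conjugating by $g$ this reduces the right-tilt step to six further local checks, namely that $\mathcal{R}_{i}(A_{3}\text{-mod})$ and $\mathcal{R}_{i}(\St_{3})$ for $i=1,2,3$ lie in the appropriate orbits. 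These follow from identities the paper records elsewhere, e.g.\ $\mathcal{R}_{2}(A_{3}\text{-mod})=\mathcal{L}_{\{1,3\}}^{-1}(A_{3}\text{-mod})=\phi_{2}^{-1}[1](A_{3}\text{-mod})$ in Section 2.4.1, together with $\tau$ and the inverted forms of your three star identities; but they must be stated, since they are not consequences of the forward checks you list.
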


\begin{proof}
Thanks to Proposition 1.29, the first assertion comes for free. As for the second assertion, we use Lemma 1.21 as follows. Let $\mathcal{K}$ be a heart such that $\mathcal{K}\simeq \St_{3}$, so that there is a heart $\mathcal{H}\simeq A_{3}$ and a simple object $T$ of $\mathcal{H}$ such that $\mathcal{L}_{T}\mathcal{H}=\mathcal{K}$. There now exists $g\in <{\Br_{4}},[1]>$ such that $F_{g}(A_{3}\text{-mod})=\mathcal{H}$ and according to Lemma 1.21, we have $F_{g}(\mathcal{L}_{F_{g}^{-1}(T)}A_{3}\text{-mod})=\mathcal{L}_{T}F_{g}(A_{3}\text{-mod})$, hence we deduce that $\mathcal{K}=F_{g}(\mathcal{L}_{F_{g}^{-1}(T)}A_{3}\text{-mod})$.\\
We cannot have $F_{g}^{-1}(T)=S_{2}$ as $\mathcal{L}_{S_{2}}(A_{3}\text{-mod})\simeq A_{3}\text{-mod}$ and then $\mathcal{K}\simeq F_{g}(A_{3}\text{-mod})$ which is clearly a contradiction. Consequently we either have that $F_{g}^{-1}(T)=S_{1}$ or $S_{3}$. Assume first that $F_{g}^{-1}(T)=S_{1}$, we then obtain $\mathcal{K}=F_{g}(\mathcal{L}_{1}A_{3}\text{-mod})=F_{g}(A_{\St_{3}}\text{-mod})$. We are now left with the last case where $F_{g}^{-1}(T)=S_{3}$. Consider the heart $\mathcal{H}'=\mathcal{R}_{1}\mathcal{L}_{3}(A_{3}\text{-mod})$, then $\mathcal{H}'\simeq A_{3}\text{-mod}$ and so $\mathcal{L}_{3}(A_{3}\text{-mod})=\mathcal{L}_{1}\mathcal{H}'$. Hence there exists $h\in <{\Br_{4}},[1]>$, such that $F_{h}(A_{3}\text{-mod})=\mathcal{H}'$. This leads to $\mathcal{L}_{3}(A_{3}\text{-mod})=F_{h}\mathcal{L}_{F_{h}^{-1}(S_{1})}(A_{3}\text{-mod})$. Now we cannot have $F_{h}^{-1}(S_{1})=S_{2}$ for the same reason as above. If $F_{h}^{-1}(S_{1})=S_{3}$,  this implies that $\mathcal{L}_{3}(A_{3}\text{-mod})=F_{h}\mathcal{L}_{3}(A_{3}\text{-mod})$ and by faithfulness of the action this gives $h=1$, so we found that  $\mathcal{L}_{3}A_{3}\text{-mod}=\mathcal{L}_{1}(A_{3}\text{-mod})$ which is a contradiction. In the end, we found that $F_{h}^{-1}(S_{1})=S_{1}$ and $\mathcal{K}=F_{g}(\mathcal{L}_{1}(A_{3}\text{-mod}))=F_{g}(A_{\St_{3}}\text{-mod})$.
\end{proof}
 
\subsection{An interesting short exact sequence}
We start by recalling a classical result in algebraic topology (cf. [Ha02, Proposition 1.40]) which will be our tool to obtain an exact sequence relating the fundamental group of $\Stab^{0}(A_{n})$ and the braid group $\Br_{n+1}$.
\begin{definition}
A group $G$ acts properly discontinuously on a space $X$ if each $x\in X$ has a neighborhood $U_{x}$ such that all the $gU_{x}$ for $g\in G$ are disjoint, i.e. $gU_{x}\cap hU_{x}\neq \emptyset$ implies $g=h$.
\end{definition}

\begin{proposition}
Let $G$ be a group acting properly discontinuously on a topological space $X$, then
\begin{itemize}
\item The quotient map $p:X\rightarrow X/G$ is a covering map. 
\item If $X$ is path-connected, $G$ is the group of deck transformation of this covering.
\item If moreover $X$ is path-connected and locally path-connected, then the following short sequence is exact:
\begin{eqnarray}
\xymatrix{1\ar[r] & \pi_{1}(X)\ar[r]^{p^{*}} & \pi_{1}(X/G )\ar[r] & G \ar[r] & 1.}
\end{eqnarray}
\end{itemize}
\end{proposition}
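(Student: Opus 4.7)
The plan is to follow the classical argument from covering space theory, in three steps matching the three assertions of the proposition.

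First I would establish that $p:X\to X/G$ is a covering map. Fix $y\in X/G$ and a preimage $x\in X$. Properly discontinuity gives a neighborhood $U_{x}$ of $x$ such that the translates $gU_{x}$ are pairwise disjoint. The quotient map $p$ is open, since $p^{-1}(p(V))=\bigcup_{g\in G}gV$ is open whenever $V$ is open, so $V:=p(U_{x})$ is an open neighborhood of $y$. Moreover $p^{-1}(V)=\bigsqcup_{g\in G}gU_{x}$, and $p$ restricts on each sheet $gU_{x}$ to a continuous open bijection onto $V$, hence a homeomorphism. This exhibits $V$ as an evenly covered neighborhood.

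Next I would identify the deck group as $G$. Every $g\in G$ induces a deck transformation by definition of the $G$-action on $X$. For the converse, pick a basepoint $x_{0}\in X$ and let $\phi$ be any deck transformation. Since $\phi(x_{0})$ lies in the same fiber of $p$ as $x_{0}$, and fibers of $p$ are exactly $G$-orbits, $\phi(x_{0})=gx_{0}$ for a unique $g\in G$. Then $\phi$ and $g:X\to X$ are two lifts of $p$ through $p$ itself, and they agree at $x_{0}$; since $X$ is path-connected, the unique lifting property forces $\phi=g$.

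Finally I would derive the short exact sequence. Injectivity of $p_{*}:\pi_{1}(X,x_{0})\to\pi_{1}(X/G,p(x_{0}))$ is the standard fact valid for any covering. To construct the surjection onto $G$, send the class of a loop $\gamma$ based at $p(x_{0})$ to the unique element $g\in G$ such that the lift $\tilde\gamma$ of $\gamma$ starting at $x_{0}$ terminates at $gx_{0}$; the path- and homotopy-lifting properties ensure this is well-defined on homotopy classes, and concatenation of loops lifts to concatenation of lifted paths, so the map is a group homomorphism. Surjectivity follows because $X$ is path-connected: any path from $x_{0}$ to $gx_{0}$ projects to a loop in $X/G$ which maps to $g$. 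The kernel consists exactly of classes whose lifts starting at $x_{0}$ are loops, which is precisely the image of $p_{*}$.

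The main technical subtlety is ensuring the map $\pi_{1}(X/G)\to G$ is well-defined and a homomorphism: this is where the hypothesis that $X$ is locally path-connected enters, since it guarantees that $X/G$ inherits local path-connectedness (using openness of $p$ established in step one), which in turn validates the path- and homotopy-lifting lemmas needed. Beyond that, the argument is routine and can be cited directly from Hatcher's Proposition 1.40.
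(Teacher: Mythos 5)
Your proof is correct: the paper itself gives no proof of this proposition, citing it directly as [Ha02, Proposition 1.40], and your argument is precisely the standard one found there (evenly covered neighborhoods from the disjoint translates $gU_{x}$, identification of deck transformations via unique lifting, and the monodromy homomorphism $\pi_{1}(X/G)\rightarrow G$ with kernel $p_{*}\pi_{1}(X)$). Nothing further is needed.
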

We now wish to obtain this short exact sequence for $X=\Stab^{0}(A_{n})$ and a well chosen group $G$. In fact, we will show that the Artin braid group $\Br_{n+1}$ acts properly discontinuously on $\Stab^{0}(A_{n})$. 

\begin{proposition}
The Artin braid group $\Br_{n+1}$ acts properly discontinuously on $\Stab^{0}(A_{n})$. Moreover for $n$ odd, the extended braid group $\langle \Br_{n+1},[2]\rangle$ acts properly discontinuously on $\Stab^{0}(A_{n})$. 
\end{proposition}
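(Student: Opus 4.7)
The plan is to leverage the tile decomposition $\Stab^{0}(A_{n})=\coprod_{i\in I}U(\mathcal{A}_{i})$ from Proposition 1.37 together with the freeness of the action on tiles established in Propositions 2.2 and 2.3. Writing $G$ for either $\Br_{n+1}$ (general $n$) or $\langle \Br_{n+1},[2]\rangle$ (for $n$ odd), the goal is, for every $\sigma \in \Stab^{0}(A_{n})$, to exhibit a neighborhood $N_{\sigma}$ such that $\{g\in G\,|\,gN_{\sigma}\cap N_{\sigma}\neq \emptyset\}=\{e\}$.

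The first step is to promote freeness on the set of tiles to freeness on individual stability conditions: if $g\sigma=\sigma$ for $\sigma=(Z,\mathcal{P})$, then $g$ preserves the slicing $\mathcal{P}$ and hence the heart $\mathcal{H}_{\sigma}=\mathcal{P}((0,1])$; Propositions 2.2 and 2.3 then force $g=e$. In particular, for any nontrivial $g\in G$ we have $g\sigma \neq \sigma$.

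The second step is a local analysis around $\sigma$. Using Bridgeland's local homeomorphism $\mathcal{Z}$ (Theorem 1.32) together with Proposition 1.34 and the higher-codimension extension discussed after it, I would show that a small enough open $N_{\sigma}$ meets only the finitely many tiles $U(\mathcal{H}_{1}),\ldots,U(\mathcal{H}_{k})$ whose heart is obtained from $\mathcal{H}_{\sigma}$ by tilting at a subset of the (finitely many) simples of $\mathcal{H}_{\sigma}$ of phase exactly $1$ or of phase approaching $0$. If $g\in G$ satisfies $gN_{\sigma}\cap N_{\sigma}\neq \emptyset$, then uniqueness of the heart of a stability condition forces $g\mathcal{H}_{i}=\mathcal{H}_{j}$ for some $i,j\in\{1,\ldots,k\}$; by freeness of the action on hearts, $g$ is uniquely determined by this pair, so the set $S=\{g\in G\,|\,gN_{\sigma}\cap N_{\sigma}\neq \emptyset\}$ is finite with $|S|\leq k^{2}$.

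Finally, for each nontrivial $g\in S$ we have $g\sigma \neq \sigma$ by the first step; since $\Stab^{0}(A_{n})$ is Hausdorff (being locally homeomorphic to a complex vector space via $\mathcal{Z}$), I would shrink $N_{\sigma}$ finitely many times so that $\overline{N_{\sigma}}$ and $g\overline{N_{\sigma}}$ become disjoint for every $g\in S\setminus\{e\}$, yielding the required $N_{\sigma}$. The main obstacle is the local tile-count argument: describing precisely which tiles meet a small neighborhood of a possibly boundary-type stability condition, where several walls of the form considered in Proposition 1.34 cross at $\sigma$. Once this is settled via Proposition 1.34 and the extension of the wall-description to arbitrary subsets of simples at phase $1$, the rest reduces to a clean finiteness-plus-Hausdorffness argument, applying uniformly to both $\Br_{n+1}$ and, for odd $n$, the extended group $\langle \Br_{n+1},[2]\rangle$.
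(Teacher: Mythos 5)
Your strategy is viable but genuinely different from the paper's, and it contains a gap at exactly the point you flag. Steps (a) freeness on hearts and (c) Hausdorff separation are fine: freeness on every heart of $\Stab^{0}(A_{n})$ is what the paper extracts from $\Sph(A_{n})\cap\Out(B)=1$ in the proof of Proposition 2.2, and replacing $N_{\sigma}$ by $N_{\sigma}\cap V\cap g^{-1}W$ for disjoint opens $V\ni\sigma$, $W\ni g\sigma$ handles each of the finitely many offending $g$. The paper, however, never needs a local tile count. It argues as follows: if $x$ lies in the interior of its tile $U(\mathcal{H})$ (equivalently, the maximal phase $M_{x}$ of a stable object is $<1$), then any neighbourhood $V_{x}$ contained in that open tile works, since $F_{g}(V_{x})$ lies in the tile of the \emph{different} heart $F_{g}(\mathcal{H})$ and distinct tiles are disjoint; if $x$ lies on the boundary ($M_{x}=1$), one rotates by $-m_{x}/2$ (half the minimal phase) to reach an interior point $x'$ of the \emph{same} tile, and transports the neighbourhood of $x'$ back using the fact that the $\mathbb{C}$-action commutes with autoequivalences. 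This rotation trick makes the entire wall analysis disappear.

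The gap in your version is the assertion that a small neighbourhood of $\sigma$ meets only the tiles $U(\mathcal{L}_{I}\mathcal{H}_{\sigma})$ for $I$ a subset of the simples of phase exactly $1$. First, Proposition 1.34 only identifies the intersections $U(\mathcal{A})\cap\overline{U(\mathcal{B})}$, i.e.\ which tiles are adjacent along walls; it does not bound which tiles an open set meets, and the finiteness you need really rests on Bridgeland's deformation theorem. Second, the enumeration itself is incomplete: if two linked simples $S_{1},S_{2}$ both have phase $1$ at $\sigma$, then (as the proof of Lemma 2.11 makes explicit) a nearby stability condition can have heart $\mathcal{L}_{S_{2}'}\mathcal{L}_{S_{1}}\mathcal{H}_{\sigma}$, where $S_{2}'$ is the extension of $S_{1}$ by $S_{2}$ created by the first tilt; this intermediate heart sits strictly between $\mathcal{L}_{S_{1}}\mathcal{H}_{\sigma}$ and $\mathcal{L}_{\{S_{1},S_{2}\}}\mathcal{H}_{\sigma}$ and is not of the form $\mathcal{L}_{I}\mathcal{H}_{\sigma}$ for $I$ a subset of the phase-one simples. (The clause about simples ``of phase approaching $0$'' is also spurious: for a fixed $\sigma$ every simple has a definite positive phase, so sufficiently small perturbations cannot push it out at the bottom.) The set of nearby tiles is still finite, so your argument is repairable, but as written the key lemma is neither correctly stated nor proved; the paper's rotation argument is the cleaner route.
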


\begin{proof}
We will directly go on proving the second part of the statement, as the first part is an immediate consequence of the second stronger one (for $n$ odd). Clearly, we have that $U(\mathcal{H})\cap U(\mathcal{H'})= \emptyset$ for two differents hearts $\mathcal{H}$ and $\mathcal{H'}$. Moreover for our case of interest, we can translate Rouquier-Zimmermann's Proposition $1.1$ into $g.U(\mathcal{H})\cap U(\mathcal{H})=\emptyset$, $\forall g \in \langle \Br_{n+1}, [2]\rangle$ and for any heart $\mathcal{H}\in \Stab^{0}(A_{n})$. This tells us that $\langle \Br_{n+1}, [2]\rangle$ acts faithfully on the set of tiles $\{U(\mathcal{H})\}_{\mathcal{H}}$ (cf. Proposition 2.3), \textit{i.e.} we have the properness of the action. If this can be considered as a global phenomenon on the tiles $U(\mathcal{H})$, we need here a local version of this property. \\
Let $x\in \Stab^{0}(A_{n})$ with heart $\mathcal{H}$. Consider $P_{x}$ the set of phase of objects of $\mathcal{H}$. As $\mathcal{H}$ has finite representation type, then $P_{x}$ has well defined minimum and maximum. We then denote by $m_{x}$ (resp. $M_{x}$) the minimum (resp. the maximum) of phases of stable objects of $\mathcal{H}$ and we have $0<m_{x}\leq M_{x} \leq 1$.
\\Now $x \in \accentset{\circ}{U(\mathcal{H})}$ if and only if $M_{x}<1$. In this case we can find an open neighborhood $V_{x}\subset \accentset{\circ}{U(\mathcal{H})}$, and we can use the global property above to conclude that for any $g\in \langle \Br_{n+1}, [2]\rangle$,  $F_{g}(V_{x})$ and $V_{x}$ are contained in tiles with different hearts.\\
We are left to consider the case where $M_{x}=1$, \textit{i.e.} $x$ lies in the boundary of the tile $U(\mathcal{H})$. By rotating this stability condition by $\frac{-m_{x}}{2}$, we get a new stability condition $x'$ with same heart $\mathcal{H}$ such that  $m_{x'}=\frac{m}{2}$ and $M_{x'}=M_{x}-\frac{m}{2}<1$, so $x'\in \accentset{\circ}{U(\mathcal{H})}$. Finally, remember that we have noticed earlier that the action of phase shifting commutes with the action coming from automorphisms of the triangulated category. Hence we have that the properly discontinuity property translates from $x'$ to $x$.\end{proof}

We have now came to a stage of our work where we have to restrict ourselves to the case of $A_{3}$, the Brauer tree algebra of the line with three edges and no exceptional vertex. Remember that we have labelled the simples $A_{3}$-modules as follows: $$ \xymatrix{\bullet \ar@{-}[r]^{S_{1}} & \bullet \ar@{-}[r]^{S_{2}}&  \bullet \ar@{-}[r]^{S_{3}}& \bullet}$$ 
Let $\Br_{4}$ be the braid group of type $A_{3}$ and let $\widetilde{\Br_{4}}$ be its extension defined as $\widetilde{\Br_{4}}=\langle s_{1},s_{2},s_{3},z\; | s_{i}s_{j}s_{i}=s_{j}s_{i}s_{j}\: \text{for}\: |i-j|=1,\: s_{i}s_{j}=s_{j}s_{i},\: zs_{1}^{-1}z=s_{3},\; zs_{2}^{-1}z=s_{2},\;  zs_{3}^{-1}z=s_{1},\; z^{3}=s_{1}s_{2}s_{3}s_{1}s_{2}s_{1} \rangle$. Then $\widetilde{\Br_{4}}$ acts on $\mathcal{D}^{b}(A_{3})$, where $s_{i}$ acts as $F_{i}$ and $z$ acts as $[1]\circ \tau$ where $\tau$ is the automorphism of $\mathcal{D}^{b}(A_{3})$ induced by the diagram automorphism of $A_{3}$-mod: $\tau(S_{1})=S_{3}$, $\tau(S_{2})=S_{2}$, $\tau(S_{3})=S_{1}$. 

\begin{corollary}
The group $\widetilde{\Br_{4}}$ acts properly discontinuously on $\Stab^{0}(A_{3})$.
\end{corollary}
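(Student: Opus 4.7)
The plan is to extend the argument of Proposition 2.6 to the full group $\widetilde{\Br_4}$. A first key observation is that
\[
z^2 = ([1]\tau)^2 = [2]\tau^2 = [2],
\]
using that $[1]$ commutes with $\tau$ and $\tau$ is an involution. Consequently $\langle \Br_4, [2]\rangle$ sits inside $\widetilde{\Br_4}$ as a subgroup of index two, and we have the coset decomposition $\widetilde{\Br_4} = \langle \Br_4, [2]\rangle \sqcup z\langle \Br_4, [2]\rangle$. Elements in the non-trivial coset act on $\mathcal{D}^b(A_3)$ as $F_h \circ [1] \circ \tau$ for some $h \in \langle \Br_4, [2]\rangle$.

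For elements in $\langle \Br_4, [2]\rangle$ the required property is already Proposition 2.6 (applied with $n = 3$, which is odd). It therefore suffices to show that for each $g = hz$ in the non-trivial coset the action of $g$ has no fixed points on $\Stab^0(A_3)$. If $g(Z,\mathcal{P}) = (Z,\mathcal{P})$ then $g$ preserves the heart $\mathcal{H} = \mathcal{P}((0,1])$, which via $g = F_h [1]\tau$ becomes $F_h(\tau\mathcal{H}) = \mathcal{H}[-1]$. The cleanest way I see to rule this out is a parity argument at the level of $K_0$: the induced action of $g$ on $K_0(\mathcal{D}^b(A_3))$ is $-\tau_* \circ (F_h)_*$ because $[1]$ acts as $-\mathrm{id}$, so $Z = Z \circ g_*^{-1}$ yields $Z(x) + Z(\tau F_{h^{-1}}(x)) = 0$ for every $x \in K_0$. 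Applied to the class of a simple of $\mathcal{H}$, the first term lies in the open upper half-plane, forcing the second to lie in the open lower half-plane, which is incompatible with the positivity constraints defining a stability condition.

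With fixed-point-freeness in hand, the local-to-global argument of Proposition 2.6 extends verbatim. For $x$ in the interior of its tile $U(\mathcal{H})$, a sufficiently small neighbourhood $V_x \subset \accentset{\circ}{U(\mathcal{H})}$ is disjoint from every non-trivial $\widetilde{\Br_4}$-translate of itself: the $\langle \Br_4, [2]\rangle$-translates are handled by Proposition 2.6, while the $z$-coset translates are handled by the fixed-point-freeness just established together with the global disjointness of distinct tiles. For $x$ on the boundary of $U(\mathcal{H})$ (some phase equal to $1$), we rotate by $-m_x/2$ using the $\mathbb{C}$-action, which commutes with the $\widetilde{\Br_4}$-action, apply the interior argument to the rotated stability condition, and rotate back.

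The main obstacle will be the fixed-point analysis for the $z$-coset: one needs to rigorously verify the sign/positivity contradiction sketched above, in the spirit of Rouquier--Zimmermann's Proposition 1.1 which underlies the analogous step in Proposition 2.6. Once this is in place, the topological argument is a transcription of the earlier proof.
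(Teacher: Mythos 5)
There is a genuine error at the heart of your argument: the coset decomposition $\widetilde{\Br_{4}} = \langle \Br_{4},[2]\rangle \sqcup z\langle \Br_{4},[2]\rangle$ does not hold. Your computation $z^{2}=[2]$ is correct, but the defining relation $z^{3}=s_{1}s_{2}s_{3}s_{1}s_{2}s_{1}$ then gives $z = (s_{1}s_{2}s_{3}s_{1}s_{2}s_{1})\,z^{-2}$, so $z$ already lies in the subgroup generated by $\Br_{4}$ and $z^{2}$. Concretely, at the level of the action one has $\tau = F_{\omega_{0}}[-3]$ (the remark preceding Proposition 2.3, from [RoZi03, Theorem 4.5]), hence $z$ acts as $[1]\tau = F_{\omega_{0}}[-2] \in \langle \Br_{4},[2]\rangle$. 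The image of $\widetilde{\Br_{4}}$ in $\Aut(\mathcal{D}^{b}(A_{3}))$ therefore coincides with that of $\langle \Br_{4},[2]\rangle$, there is no nontrivial coset, and the corollary is an immediate consequence of Proposition 2.6 applied with $n=3$ --- which is why the paper states it without further proof. This also explains why your ``fixed-point-freeness of the $z$-coset'' cannot be true as stated: taking $h\in\langle \Br_{4},[2]\rangle$ with $F_{h}=[2]F_{\omega_{0}}^{-1}$ gives $F_{h}\circ([1]\tau)=\mathrm{id}$, so the purported coset contains the identity.

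The $K_{0}$ parity argument you sketch to rule out fixed points also does not produce a contradiction. If $g=F_{h}[1]\tau$ fixes $(Z,\mathcal{P})$, then $g$ preserves the heart $\mathcal{H}=\mathcal{P}((0,1])$, and for $S$ simple in $\mathcal{H}$ the object $F_{h}\tau(S)=g(S)[-1]$ lies in $\mathcal{H}[-1]$; its central charge therefore lies in $-\mathbb{H}$, which is exactly what the equation $Z([S])+Z([F_{h}\tau(S)])=0$ predicts. The class $[F_{h}\tau(S)]$ is not the class of an object of $\mathcal{H}$, so no positivity constraint is violated. The correct route is simply the reduction $[1]\tau=F_{\omega_{0}}[-2]$ together with Proposition 2.6 (and, implicitly, the faithfulness statements of Propositions 2.2 and 2.3 guaranteeing that $\widetilde{\Br_{4}}$ injects into $\Aut(\mathcal{D}^{b}(A_{3}))$, without which no action could be properly discontinuous).
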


Hence according to Proposition $2.8$, we have the following result. 

\begin{proposition}
We have the desired short exact sequence
$$\xymatrix{1\ar[r] & \pi_{1}(\Stab^{0}(A_{3}))\ar[r]^{p^{*}} & \pi_{1}(\Stab^{0}(A_{3})/\widetilde{\Br_{4}} )\ar[r] & \widetilde{\Br_{4}}\ar[r] & 1}.$$
\end{proposition}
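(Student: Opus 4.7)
The plan is to apply Proposition 2.8 directly to $X=\Stab^{0}(A_{3})$ with the group $G=\widetilde{\Br_{4}}$. The proper discontinuity hypothesis is precisely the content of Corollary 2.10, so the only points left to verify are that $\Stab^{0}(A_{3})$ is path-connected and locally path-connected.

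For path-connectedness I would argue as follows. By Bridgeland's Theorem 1.32, $\Stab^{0}(A_{3})$ is a connected component of a complex manifold: the map $\mathcal{Z}$ to $\Hom_{\mathbb{Z}}(K(\mathcal{C}),\mathbb{C})$ is a local homeomorphism onto a finite-dimensional complex vector space, and this endows $\Stab^{0}(A_{3})$ with the structure of a (second-countable, Hausdorff) topological manifold. For manifolds, connectedness and path-connectedness coincide, so by definition of a connected component $\Stab^{0}(A_{3})$ is path-connected. Local path-connectedness is immediate from the same manifold structure: every point admits a neighbourhood homeomorphic to an open subset of $\Hom_{\mathbb{Z}}(K_{0}(A_{3}),\mathbb{C})\simeq \mathbb{C}^{3}$, which is locally path-connected.

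With these two topological properties in hand, I would then invoke Proposition 2.8 verbatim. Its third bullet gives exactly the desired short exact sequence
\[
1\longrightarrow \pi_{1}(\Stab^{0}(A_{3}))\xrightarrow{p^{*}} \pi_{1}\bigl(\Stab^{0}(A_{3})/\widetilde{\Br_{4}}\bigr)\longrightarrow \widetilde{\Br_{4}} \longrightarrow 1,
\]
where $p:\Stab^{0}(A_{3})\rightarrow \Stab^{0}(A_{3})/\widetilde{\Br_{4}}$ is the quotient map, which is a covering by the first bullet, and $\widetilde{\Br_{4}}$ is identified with the group of deck transformations by the second bullet together with the faithfulness of the $\widetilde{\Br_{4}}$-action (a slight strengthening of Proposition 2.3, which was established already by combining Proposition 1.1 and the faithfulness Theorem 1.3).

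There is no real obstacle here; the statement is essentially a formal consequence of the previous two results (Corollary 2.10 supplying the group-theoretic input, Bridgeland's Theorem 1.32 supplying the manifold structure needed for the topological hypotheses of Proposition 2.8). The one delicate point worth flagging in the writeup is to confirm that quotienting $\Stab^{0}(A_{3})$ by the braid group action genuinely respects base-points, so that $p^{*}$ is well-defined on $\pi_{1}$; this is automatic once one fixes a base-point in $\Stab^{0}(A_{3})$ and its image under $p$.
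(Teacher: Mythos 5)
Your proposal is correct and follows exactly the route the paper takes: the paper deduces the proposition directly from Proposition 2.8 applied with $G=\widetilde{\Br_{4}}$, the proper discontinuity being supplied by Corollary 2.10. Your additional verification of path-connectedness and local path-connectedness via the manifold structure coming from Bridgeland's Theorem 1.32 is a welcome (and correct) elaboration of hypotheses the paper leaves implicit.
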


\subsection{Fundamental group of $\Stab^{0}(A_{3})$}
We will see that $\Stab^{0}(A_{3})$ is simply connected as expected but to our biggest surprise the Bridgeland local homomorphism is not a covering map !\\
To show simply connectedness, we will exhibit a surjective section $\lambda: \widetilde{\Br_{4}} \longrightarrow \pi_{1}(\Stab^{0}(A_{3})/\widetilde{\Br_{4}})$. 

\subsubsection{Definition of $\lambda$}
Firstly, we fix a basepoint $x\in \Stab^{0}(A_{3})$ with heart $A_{3}$-mod. Let $\hat{\alpha_{1}}$ be a path in $\Stab^{0}(A_{3})$ starting at $x$, crossing a wall of codimension 1 via left tilt at $S_{1}$, crossing another wall via left tilt at $ S_{2}'=\left(\begin{matrix}
   S_{1}  \\
   S_{3}
\end{matrix}\right)$ (the new simple in the new heart) and ending at the point $\phi^{-1}_{1}(x)$.
\\Such a path is well-defined up to homotopy, and we are using the fact that $(12)\mathcal{L}_{2}\mathcal{L}_{1}(A_{3}\text{-mod})=\phi_{1}^{-1}(A_{3}\text{-mod})$ (cf. Proposition $1.26$ and Proposition $1.27$). The path $\hat{\alpha_{1}}$ descends to a loop in $\Stab^{0}(A_{3})/\widetilde{\Br_{4}}$ defining an element $\alpha_{1} \in  \pi_{1}(\Stab^{0}(A_{3})/\widetilde{\Br_{4}})$.
\\A path $\hat{\alpha_{3}}$ is defined exactly in the same way, replacing all the one's by three's, it starts at $x$ and ends at $\phi_{3}^{-1}(x)$. This gives a loop $\alpha_{3}\in  \pi_{1}(\Stab^{0}(A_{3})/\widetilde{\Br_{4}})$.\\
Next, define a path $\widehat{\alpha_{2}}'$ in $\Stab^{0}(A_{3})$ starting at $x$, crossing a codimension-one wall by right tilt at $S_{2}$ and ending at $\phi_{2}^{-1}[1]\tau(x)$. Indeed, we have that $\mathcal{R}_{2}(A_{3}\text{-mod})=\mathcal{L}_{\{1,3\}}^{-1}(A_{3}\text{-mod})$ and the left tilt $\mathcal{L}_{\{1,3\}}(A_{3}\text{-mod})$ is precisely realised by $\phi_{2}$ as $\mathcal{L}_{\{1,3\}}(A_{3}\text{-mod})=\phi_{2}(A_{3}\text{-mod})[-1]$. This induces an element $\alpha_{2}'\in  \pi_{1}(\Stab^{0}(A_{3})/\widetilde{\Br_{4}})$.\\
Finally, let $\hat{\xi}$ be the path in $\Stab^{0}(A_{3})$ starting at $x$ and ending at $[-1]\tau(x)$ defined by first proceeding from $x$ to $[-1]x$ by shift of slope, followed by a path from $[-1]x$ to $[-1]\tau(x)$ lying in the tile $U(A_{3}\text{-mod}[-1])$. The well-defined image in $\pi_{1}(\Stab^{0}(A_{3})/\widetilde{\Br_{4}})$ is denoted by $\xi$. We can then define the map $\lambda$ as follows.

\begin{proposition}
 The map $\lambda: \widetilde{\Br_{4}} \longrightarrow \pi_{1}(\Stab^{0}(A_{3})/\widetilde{\Br_{4}})$ defined by $\sigma_{1}\mapsto \alpha_{1}$, $\sigma_{2}\mapsto \alpha_{2}:=\alpha_{2}'\xi$, $\sigma_{3}\mapsto \alpha_{3}$ and $z\mapsto \xi$, is well defined.
 \end{proposition}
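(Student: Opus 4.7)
The goal is to check that the images $\alpha_1, \alpha_2, \alpha_3, \xi$ satisfy in $\pi_1(\Stab^{0}(A_3)/\widetilde{\Br_4})$ every defining relation of $\widetilde{\Br_4}$. By Corollary $2.9$ the quotient map $p:\Stab^{0}(A_3)\to \Stab^{0}(A_3)/\widetilde{\Br_4}$ is a covering, so for any relation $w(\sigma_1,\sigma_2,\sigma_3,z)=1$ the lift of $\lambda(w)$ based at $x$ automatically ends at $w\cdot x = x$ and is therefore a loop in $\Stab^{0}(A_3)$; it remains only to produce a null-homotopy. The main tool will be the tile decomposition $\Stab^{0}(A_3)=\coprod_{\mathcal{H}\in \Tilt}U(\mathcal{H})$ of Proposition $1.37$, in which each $U(\mathcal{H})$ is homeomorphic to $\mathbb{H}^{3}$ and hence contractible, while adjacent tiles glue along contractible walls as in Proposition $1.34$.

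For the braid-type relations among the $\sigma_i$ the key inputs are Propositions $1.26$ and $1.27$. The disjoint commutation $\sigma_1\sigma_3=\sigma_3\sigma_1$ reduces to $\mathcal{L}_1\mathcal{L}_3(A_3\text{-mod})=\mathcal{L}_3\mathcal{L}_1(A_3\text{-mod})=\mathcal{L}_{\{1,3\}}(A_3\text{-mod})$, whose four tiles share a codimension-two wall: the lifts of $\alpha_1\alpha_3$ and $\alpha_3\alpha_1$ traverse opposite halves of that configuration, and a deformation through this wall produces the homotopy. The braid relations $\sigma_i\sigma_{i+1}\sigma_i=\sigma_{i+1}\sigma_i\sigma_{i+1}$ are handled analogously via Proposition $1.27$ with $\sigma=(i,i+1)$: the six tiles involved assemble into a hexagonal neighbourhood of a codimension-two wall, and each side of the relation traces three consecutive faces of the hexagon in opposite cyclic orders.

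For the conjugation relations $z\sigma_i^{-1}z=\sigma_{\tau(i)}$ with $\tau=(1,3)\in\mathfrak{S}_3$, I would lift both sides and compare. The composite lift of $\xi\cdot\alpha_i^{-1}\cdot\xi$ starts with $\hat{\xi}$ into the shifted tile $U(A_3\text{-mod}[-1])$, then a $[-1]\tau$-translate of a wall crossing, then another $\hat{\xi}$. Because $[1]\tau$ permutes the tile system and carries the wall between $U(A_3\text{-mod})$ and $U(\mathcal{L}_i A_3\text{-mod})$ to the one between $U(A_3\text{-mod})$ and $U(\mathcal{L}_{\tau(i)} A_3\text{-mod})$, the composite can be deformed, inside the union of the relevant shifted tiles, onto the single wall-crossing path $\hat{\alpha}_{\tau(i)}$.

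The main obstacle is the final relation $z^{3}=\sigma_1\sigma_2\sigma_3\sigma_1\sigma_2\sigma_1$. Both lifts end at the same point by Theorem $1.2$: the right-hand side lifts to $F_1F_2F_3F_1F_2F_1(x)$, and $\omega_0^{2}\mapsto[6]$ forces this to equal $[3]\tau(x)$, which is exactly $z^{3}\cdot x=([1]\tau)^{3}(x)$. Geometrically, the right-hand side path visits six consecutive tiles of $\Stab^{0}(A_3)$ while the left-hand side $\hat{\xi}^{3}$ stays inside the shift-translates of $U(A_3\text{-mod})$, so the required homotopy must retract the union of the six simple-tilt tiles onto this shift strip. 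I would carry this out by using Proposition $1.29$ to rewrite the six-letter tilt word combinatorially, matching its successive hearts with the three shift-and-$\tau$ arcs making up $\hat{\xi}^{3}$, and then patching contractibility of each tile together with contractibility of each shared wall. This last bookkeeping step is where I expect the real work of the proposition to lie.
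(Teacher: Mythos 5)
Your proposal proves the wrong statement --- or rather, the \emph{next} one. In the paper, ``well defined'' is the modest claim that each generator is sent to a genuine, unambiguous element of $\pi_{1}(\Stab^{0}(A_{3})/\widetilde{\Br_{4}})$. What has to be checked is: (i) that each prescribed sequence of wall crossings really terminates in the tile containing the prescribed endpoint --- for $\hat{\alpha_{1}}$ this is the identity $(12)\mathcal{L}_{2}\mathcal{L}_{1}(A_{3}\text{-mod})=\phi_{1}^{-1}(A_{3}\text{-mod})$ from Propositions $1.26$--$1.27$, and for $\hat{\alpha_{2}}'$ the identities $\mathcal{R}_{2}(A_{3}\text{-mod})=\mathcal{L}_{\{1,3\}}^{-1}(A_{3}\text{-mod})$ and $\mathcal{L}_{\{1,3\}}(A_{3}\text{-mod})=\phi_{2}(A_{3}\text{-mod})[-1]$; (ii) that any two paths realising such a prescription are homotopic, using that the tiles $U(\mathcal{H})\simeq\mathbb{H}^{3}$ and the walls between them are contractible; and (iii) that the endpoints $\phi_{1}^{-1}(x)$, $\phi_{3}^{-1}(x)$, $\phi_{2}^{-1}[1]\tau(x)$ and $[-1]\tau(x)$ lie in the $\widetilde{\Br_{4}}$-orbit of $x$, so that the paths close up to loops in the quotient. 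Your write-up addresses none of these points and instead verifies the defining relations of $\widetilde{\Br_{4}}$, which is the content of the subsequent proposition (``$\lambda$ is a group homomorphism'') and is proved there by an explicit chain of tilt identities resting on Lemma $2.11$, Lemma $2.13$ and Propositions $1.26$--$1.27$.

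Even read as a proof of that later proposition, the argument has genuine gaps. Your local pictures treat $\alpha_{1}$ and $\alpha_{3}$ as single simple tilts, but each of $\hat{\alpha_{1}},\hat{\alpha_{3}}$ crosses \emph{two} codimension-one walls and $\alpha_{2}=\alpha_{2}'\xi$ incorporates a rotation by $\pi$; consequently the ``four tiles around a codimension-two wall'' and ``hexagon'' configurations do not describe the lifts of $\alpha_{1}\alpha_{3}$ or $\alpha_{2}\alpha_{1}\alpha_{2}$, and the actual reductions are much longer (the paper needs roughly ten successive applications of Propositions $1.26$--$1.27$ and Lemma $2.13$ just for the braid relation). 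More seriously, for $z^{3}=\sigma_{1}\sigma_{2}\sigma_{3}\sigma_{1}\sigma_{2}\sigma_{1}$ you propose to obtain the null-homotopy by ``patching contractibility of each tile together with contractibility of each shared wall''. That is not a valid principle: a union of contractible tiles glued along contractible walls can perfectly well have nontrivial fundamental group --- determining whether it does is precisely what this section of the paper is computing, so the argument is circular. The homotopy must be exhibited by explicit passages through codimension-two walls as in Lemma $2.11$, i.e.\ by reducing the relation to identities between composites of tilts such as those of Lemma $2.13$, and that is exactly the step you defer as ``bookkeeping''.
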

 
\subsubsection{Properties of $\lambda$} 

We now present a result that relates contractible loops in $\Stab^{0}(A_{3})$ with relations amongst left tilts of $A_{3}\text{-mod}$. This will be our key tool to study the fundamental group of the space of stability conditions.

\begin{lemma}
\begin{enumerate}
\item Let $\beta$ be the path in $\Stab^{0}(A_{3})$ starting at $x$, crossing the codimension-one wall via left tilt at $S_{1}$, crossing another wall via left tilt at $S_{2}'$ and finally crossing another wall via left tilt at $S_{1}''=S_{2}$, and ends at $y$. Let $\beta'$ be the path in $\Stab^{0}(A_{3})$ starting at $x$, crossing the codimension-two wall by left tilt at the torsion theory generated by $S_{1}$ and $S_{2}$. Then $\beta$ and $\beta'$ end in the same tile and we take the target of $\beta'$ to be $y$. Moreover we have an homotopy of paths $\beta \sim \beta'$.
\item Let $\beta$ be the path in $\Stab^{0}(A_{3})$ starting at $x$, that first crosses the codimension-one wall via left tilt at $S_{1}$, and then an another wall via left tilt at $S_{3}$ and ends at $y$. Let $\beta'$ be the path in $\Stab^{0}(A_{3})$ starting at $x$, crossing the codimension-two wall by left tilt at the torsion theory generated by $S_{1}$ and $S_{3}$. Then $\beta$ and $\beta'$ end in the same tile and we take the target of $\beta'$ to be $y$. Moreover we have an homotopy of paths $\beta \sim \beta'$.

\end{enumerate}
\end{lemma}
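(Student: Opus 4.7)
The plan is to first verify that the two paths in each part end in the same tile by invoking the combinatorics of tilts developed in Section 1.7, and then to produce the desired homotopy using the local structure of $\Stab^0(A_3)$ near codimension-two walls. Throughout, I will exploit the fact that each $U(\mathcal{H})$ is homeomorphic to $\mathbb{H}^3$ (so in particular contractible) and that neighbourhoods of boundary points are explicitly described by the phase-labelling.

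For part (1), I first compute the endpoint tile of $\beta$. By Example~2 (applied with $n=3$) we have the key identity
\[
\mathcal{L}_{1}\mathcal{L}_{2}\mathcal{L}_{1}(A_{3}\text{-mod}^{\phi})=(12)\,\mathcal{L}_{\{1,2\}}(A_{3}\text{-mod}^{\phi}),
\]
so as unordered hearts $\mathcal{L}_{1}\mathcal{L}_{2}\mathcal{L}_{1}(A_{3}\text{-mod})=\mathcal{L}_{\{1,2\}}(A_{3}\text{-mod})$, and hence $U(\mathcal{L}_{1}\mathcal{L}_{2}\mathcal{L}_{1}(A_{3}\text{-mod}))=U(\mathcal{L}_{\{1,2\}}(A_{3}\text{-mod}))$. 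This is the tile in which both $\beta$ and $\beta'$ terminate. For part (2), since the edges labelled $S_{1}$ and $S_{3}$ in $T_{3}$ do not share a vertex, Proposition~1.26 (with $I=\{1\}$, $J=\{3\}$) gives $\mathcal{L}_{3}\mathcal{L}_{1}(A_{3}\text{-mod})=\mathcal{L}_{\{1,3\}}(A_{3}\text{-mod})$, so the endpoint tiles again agree.

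For the homotopy, I would exhibit a smooth deformation of $\beta$ onto $\beta'$ that never leaves the union of the closures of the tiles traversed. Pick a stability condition $\sigma$ lying on the codimension-two wall $S_{\{1,2\}}$ (resp.\ $S_{\{1,3\}}$) of Proposition~1.34 extended to codimension two, so that the two relevant simples have phase $1$ and all others have phase in $(0,1)$. By the explicit local description $U(\mathcal{H})\cong \mathbb{H}^{3}$ via central charges on simples, a small open neighbourhood $V$ of $\sigma$ in $\Stab^{0}(A_{3})$ is parametrised by $(z_{1},z_{2},z_{3})\in\mathbb{C}^{3}$ where the two coordinates $z_{i}$ corresponding to the two simples in the torsion theory lie near the negative real axis and the third is bounded away from it. The partition of $V$ into tiles is then explicitly determined by the signs of $\Im(z_{i})$ for the two critical coordinates, and via Proposition~1.34 and Woolf's description one checks these tiles are exactly the ones traversed by $\beta$ and by $\beta'$. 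Within this local model, the path $\beta$ (three successive codimension-one wall-crossings) and the path $\beta'$ (one codimension-two crossing) can both be straightened to correspond to rotating the pair $(z_{1},z_{2})$ from the upper half-plane to the lower one, and a linear interpolation in the parameter space yields the required homotopy rel endpoints.

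The principal technical difficulty is to rigorously justify that in each local chart the constellation of tiles arising around the codimension-two wall is exactly the one predicted by the tilt combinatorics, so that no extraneous tile obstructs the homotopy. For part (2) this is easy: $S_{1}$ and $S_{3}$ commute as tilts, and the four tiles meeting the wall are $U(A_{3})$, $U(\mathcal{L}_{1}A_{3})$, $U(\mathcal{L}_{3}A_{3})$, $U(\mathcal{L}_{\{1,3\}}A_{3})$, matching the four quadrants of $(z_{1},z_{3})\in(\mathbb{H}\cup\overline{\mathbb{H}})^{2}$. For part (1) the situation is subtler because the braid-type identity involves a nontrivial relabelling by $(12)$, so one must check that the three codimension-one walls crossed by $\beta$ all lie in $\overline{V}$ and that the six tiles arising around the codimension-two wall organise into the expected hexagonal fan; once this local picture is established, the homotopy is produced exactly as above.
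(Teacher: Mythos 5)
Your verification that the two paths land in the same tile is correct and coincides with what the paper does: part (1) rests on $\mathcal{L}_{1}\mathcal{L}_{2}\mathcal{L}_{1}(A_{3}\text{-mod})=(12)\mathcal{L}_{\{1,2\}}(A_{3}\text{-mod})$ and part (2) on $\mathcal{L}_{3}\mathcal{L}_{1}=\mathcal{L}_{\{1,3\}}$ via Proposition~1.26. For the homotopy itself, however, you take a genuinely different route from the paper, and your route is left incomplete at precisely the step you flag as ``the principal technical difficulty.'' Your plan requires (a) a rigorous description of the constellation of tiles in a neighbourhood of a generic point of the codimension-two stratum (the hexagonal fan in part (1)), and (b) a justification that the global paths $\beta$ and $\beta'$, which start at $x$ deep inside $U(A_{3}\text{-mod})$, can be deformed rel endpoints into that small neighbourhood before the linear interpolation is performed. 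Neither is carried out; in particular (b) is not automatic from contractibility of individual tiles, since what you need is simple connectedness of the \emph{union} of the closures of the six tiles around the stratum, which is essentially the statement being proved.

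The paper avoids the local analysis entirely by using the continuous $\mathbb{C}$-action. One chooses two stability conditions in the interior of the starting tile: one, say $x_{0}$, with $Z_{x_{0}}(S_{1})=Z_{x_{0}}(S_{2})$, and another, $y_{0}$, with generic distinct phases $\varphi_{y_{0}}(S_{1})<\varphi_{y_{0}}(S_{2})<\varphi_{y_{0}}(S_{3})$; a path $\gamma$ joins them inside the tile. The explicit map $H(s,t)=(t\varepsilon).\gamma(s)$ then bounds a square whose vertical edges are the rotation paths $\theta_{x_{0},\varepsilon}$ and $\theta_{y_{0},\varepsilon}$: the first crosses the codimension-two wall once (both charges cross the real axis simultaneously), the second crosses three codimension-one walls in the order $S_{1}$, $S_{2}'$, $S_{2}$ dictated by the phase ordering of the semistable objects. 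The heart identity then guarantees both rotation paths terminate in $U(\mathcal{L}_{\{1,2\}}(A_{3}\text{-mod}))$, and the square is the required homotopy. I would encourage you to replace your local-model argument by this rotation argument, or else to supply the missing local verification: as written, the proof of the homotopy is a plan rather than a proof.
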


\begin{proof}
The $\mathbb{R}$-action on $\Stab^{0}(A_{n})$ being continuous, we have for every $x\in \Stab^{0}(A_{n})$ and $\varepsilon\in \mathbb{R}$ a path
$$\theta_{x,\varepsilon}:
    \begin{array}{l}
[0,1] \rightarrow \Stab^{0}(A_{n})    \\
t\longmapsto (t\varepsilon).x.   
      \end{array}
$$
Note that for instance if $\varepsilon=1$, this is a path from $x$ to $x[1]$. We can think of the path $\theta_{x,\varepsilon}$ as rotating our stability condition $x$ by $\varepsilon\pi$. Now given any path $\gamma$ in $\Stab^{0}(A_{n})$ starting at a stability condition $x$ with heart $A_{n}$-mod and ending at a stability condition $y$. The following bounds a square
$$\xymatrix{x\ar@{~)}[r]^{\theta_{x,\varepsilon}}\ar@{~)}[d]_{\gamma} & \varepsilon.x  \ar@{~)}[d]^{\varepsilon.\gamma}\\
y\ar@{~)}[r]^{\theta_{y,\varepsilon}} & \varepsilon.y }
$$
through the homotopy $$H:
    \begin{array}{l}
[0,1]\times [0,1] \rightarrow \Stab^{0}(A_{n})    \\
(s,t) \longmapsto t.\varepsilon(\gamma(s)).   
      \end{array}
$$
with $H(s,0)=\gamma(s)$, $H(s,1)=\varepsilon.\gamma(s)$ and $H(0,t)=(t\varepsilon).x$, $H(1,t)=(t\varepsilon).y$.\\
Now this allows us to change our paradigm as we can now think of crossing walls as rotations: either we make the central charges of some simples in a stability condition cross the real axis or equivalently, we rotate the stability condition so that those simples cross the real axis. However, as soon as a simple crosses the real axis, which corresponds to tilting at that simple, we then land in the corresponding tilted heart with new simples. 
\\We now apply this for the case of the Brauer tree algebra $A_{3}$.\\
Let $x\in \Stab^{0}(A_{3})$ with standard heart such that $Z_{x}(S_{1})=Z_{x}(S_{2})$ and $\varphi_{x}(S_{3})>\varphi_{x}(S_{1})$ and $y\in \Stab^{0}(A_{3})$ lying in the same tile such that $\varphi_{y}(S_{1})< \varphi_{y}(S_{2})< \varphi_{y}(S_{3})$. As $x$ and $y$ belong to the same tile, there exists a path $\gamma$ joining $x$ to $y$. We can rotate $x$ and $y$ by some $\varepsilon$ so that we make $S_{1}$ and $S_{2}$ cross the real axis in both situation.\\The path $\theta_{x,\varepsilon}$ corresponds to crossing one wall of codimension two corresponding to $\{S_{1},S_{2}\}$. The path $\theta_{y,\varepsilon}$ corresponds to crossing three walls of codimension one: we first make $S_{1}$ cross, then $ S_{2}'=\left(\begin{matrix}
   S_{1}  \\
   S_{2}
\end{matrix}\right)$ and finally $S_{2}$. Indeed, when $Z_{y}(S_{1})$ just crosses the axis, we land in $U(\mathcal{L}_{1}A)$ and $S_{2}'$ is then a simple object with $\varphi_{y}(S_{2})< \varphi_{y}(S'_{2})$, so this needs to cross the real axis before $S_{2}$ does. Now, once $S'_{2}$ crosses the real axis, $S_{2}$ becomes a simple object again.\\
Now, we have the equality of tilted hearts $\mathcal{L}_{\{S_{1},S_{2}\}}(A_{3}\text{-mod})=\mathcal{L}_{S_{2}}\mathcal{L}_{S_{2}'}\mathcal{L}_{S_{1}}(A_{3}\text{-mod})$, so both $\varepsilon.x$ and $\varepsilon.y$ are in the same tile $U(\mathcal{L}_{\{S_{1},S_{2}\}}(A_{3}\text{-mod}))$. This is giving us the first statement of the lemma as we have that the two paths $\theta_{x,\varepsilon}$ and $\theta_{y,\varepsilon}$ are homotopic.
\\As for the second part, one proceeds, of course, similarly and the corresponding algebraic relation is then $\mathcal{L}_{S_{3}}\mathcal{L}_{S_{1}}(A_{3}\text{-mod})=\mathcal{L}_{S_{1},S_{3}}(A_{3}\text{-mod})$.
\end{proof}

\begin{remark}
Although we have stated the previous lemma only for the algebra $A_{3}$, it is straightforward to write an analogous result for any Brauer tree algebra. The disjonction of cases will then correspond to whether two simples share a vertex or not, or equivalently if there are (or not) non-trivial extensions between the simples.
\end{remark}

The underlying slogan of the lemma, that rotating stability conditions corresponds to the composition of tilting at the semi-stable objects that cross the real axis, gives the following major consequences. The proof of the above lemma has very strong algebraic consequences as it will give us relations between tilted hearts. For instance, we can give different ways of expressing the shifted heart $(A_{3}\text{-mod})[1]$ in terms of compositions of left tilts at simples objects.  In what follows, we will abbreviate $\mathcal{L}_{\mathcal{T}}(\mathcal{A}_{3})$ for the tilted heart $\mathcal{L}_{\mathcal{T}}(A_{3}\text{-mod})$.

\begin{lemma}
We have the following relations between tilted hearts, $$(123)\mathcal{L}_{1}\mathcal{L}_{3}\mathcal{L}_{1}\mathcal{L}_{2}\mathcal{L}_{3}(\mathcal{A}_{3})=(1 3 2)\mathcal{L}_{3}\mathcal{L}_{1}\mathcal{L}_{2}\mathcal{L}_{3}\mathcal{L}_{2}(\mathcal{A}_{3})=(13)\mathcal{L}_{2}\mathcal{L}_{\{1,3\}}\mathcal{L}_{2}\mathcal{L}_{\{1,3\}}(\mathcal{A}_{3})=\mathcal{A}_{3}[-1],$$
$$(132 )\mathcal{L}_{3}\mathcal{L}_{1}\mathcal{L}_{3}\mathcal{L}_{2}\mathcal{L}_{3}(\mathcal{A}_{3})= (123) \mathcal{L}_{3}\mathcal{L}_{1}\mathcal{L}_{3}\mathcal{L}_{2}\mathcal{L}_{1}(\mathcal{A}_{3})=\mathcal{L}_{\{1,3\}}\mathcal{L}_{2}\mathcal{L}_{\{1,3\}}\mathcal{L}_{2}(\mathcal{A}_{3})=\mathcal{A}_{3}[-1].$$
\end{lemma}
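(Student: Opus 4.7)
The plan is to exploit the geometric philosophy of Lemma 2.13: rotating a stability condition by $\pi$ traces a path in $\Stab^0(A_3)$ whose wall-crossings correspond to successive left tilts at the semistable objects reaching phase $1$ (the ``boundary'' of the current heart). Concretely, for $x \in U(\mathcal{A}_3)$, the path $\theta_{x,-1}$ induced by the $\mathbb{C}$-action with $\lambda = -1$ runs from $x$ to $x[-1] \in U(\mathcal{A}_3[-1])$. As $\lambda$ decreases from $0$ to $-1$, phases in the current slicing increase by $|\lambda|$, and simples reach phase $1$ in order from highest original phase to lowest; at each crossing the heart changes by the corresponding left tilt. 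The composition of tilts read off along the path therefore equals $\mathcal{A}_3[-1]$, up to a permutation $\sigma \in \mathfrak{S}_3$ that identifies the successively-labelled simples with the natural simples $S_1[-1], S_2[-1], S_3[-1]$ of $\mathcal{A}_3[-1]$.

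For each of the six equalities, the plan is to exhibit a starting stability condition $x \in U(\mathcal{A}_3)$ whose simple phases realize the desired sequence. For the four five-tilt decompositions (the first two of each row), pick generic distinct phases $\varphi(S_1), \varphi(S_2), \varphi(S_3) \in (0,1]$ with a specific linear ordering determining which of $S_1, S_2, S_3$ is the first, second and third original simple to cross phase~$1$; the remaining two tilts then come from the extensions $V_{12}$ and $V_{23}$ (the nonsplit extensions between adjacent simples in $T_3$), which are simple in the intermediate tilted hearts by Lemma 1.21, and whose central charges $Z(S_i) + Z(S_{i+1})$ place their phases strictly between those of the adjacent original simples, hence force them to cross phase~$1$ in well-defined positions within the sequence. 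For the two four-tilt decompositions involving $\mathcal{L}_{\{1,3\}}$, pick phases with $\varphi(S_1) = \varphi(S_3)$: since $S_1, S_3$ share no vertex in $T_3$ and hence $\Ext^1(S_1,S_3) = 0$, they cross phase~$1$ simultaneously along a codimension-two wall corresponding to the tilt $\mathcal{L}_{\{1,3\}}$, and this simultaneous crossing repeats after $S_2$ has crossed, giving the alternating pattern.

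The permutation $\sigma$ in each case is then computed directly from the rotation picture: after the full sequence of tilts, the ``final'' simples are (up to isomorphism) $S_1[-1], S_2[-1], S_3[-1]$, but the label $i$ in the tilted heart originally tracked some intermediate simple; comparing produces the $3$-cycles $(123), (132)$ or the transposition $(13)$ as indicated. An entirely equivalent and more combinatorial route, bypassing the explicit rotation, is to apply Propositions 1.26 and 1.27 iteratively to the left-hand sides: for instance, $\mathcal{L}_{\{1,3\}}$ can be rewritten as $\mathcal{L}_1\mathcal{L}_3 = \mathcal{L}_3\mathcal{L}_1$ via the commutation relation, and the braid relation at the interior vertex $S_2$ then reduces each composition to $\mathcal{A}_3[-1]$ up to $\sigma$.

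The main obstacle is purely the bookkeeping: at each intermediate stage one must use Lemma 1.21 to list the simples of the tilted heart, track their phases in the rotated stability condition, and verify that the next object to reach phase~$1$ is exactly the one prescribed (and, crucially, that no unexpected extension-class $V_{ij}$ sneaks across first). Once the six phase-configurations are chosen as above, each verification is routine but repetitive, and the six equalities follow uniformly.
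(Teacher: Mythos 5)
Your proposal is correct and follows essentially the same route as the paper: the author likewise fixes, for each relation, a stability condition on the standard heart with a prescribed ordering (or coincidence, for the $\mathcal{L}_{\{1,3\}}$ cases) of the phases of the semistable objects $S_1,S_2,S_3$ and the extensions between adjacent simples, rotates by $-1$ into $U(\mathcal{A}_3[-1])$, reads off the tilts from the order in which the semistables cross phase $1$, and tracks the labelling to obtain the permutation. The only cosmetic difference is that you also sketch the purely combinatorial alternative via Propositions 1.26 and 1.27, which the paper reserves for later arguments rather than for this lemma.
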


\begin{proof}
Let $x\in \Stab^{0}(A_{3})$ with standard heart $\mathcal{A}_{3}$ such that $\phi_{x}(S_{3})>\phi_{x}(\left(\begin{matrix}
   S_{3}  \\
   S_{2}
\end{matrix}\right))>\phi_{x}(S_{1})>\phi_{x}(S_{1})$. The semi-stable objects are exactly, ordered by decreasing phase, $S_{3},\left(\begin{matrix}
   S_{3}  \\
   S_{2}
\end{matrix}\right), S_{1},\left(\begin{matrix}
   S_{1}  \\
   S_{2}
\end{matrix}\right),S_{2}$. We want to rotate the stability conditions so that all simples are in the lower half-plane, that is to say that we will then be in the tile $U(\mathcal{A}_{3}[-1])$. Hence, we need to make all the semi-stable objects cross the real axis. By keeping track of the labelling we have that $(123)\mathcal{L}_{1}\mathcal{L}_{3}\mathcal{L}_{1}\mathcal{L}_{2}\mathcal{L}_{3}(\mathcal{A}_{3})=\mathcal{A}_{3}[-1]$.\\
Let now $x\in \Stab^{0}(A_{3})$ with standard heart $\mathcal{A}_{3}$ such that $\phi_{x}(S_{2})>\phi_{x}(S_{3})>\phi_{x}(\left(\begin{matrix}
   S_{2}  \\
   S_{1}
\end{matrix}\right))>\phi_{x}(S_{1})$. The semi-stable objects are, ordered by decreasing phases, $S_{2},\left(\begin{matrix}
   S_{2}  \\
   S_{3}
\end{matrix}\right), S_{3},\left(\begin{matrix}
   S_{2}  \\
   S_{1}
\end{matrix}\right),S_{1}$. This gives us the second relation $(1 3 2)\mathcal{L}_{3}\mathcal{L}_{1}\mathcal{L}_{2}\mathcal{L}_{3}\mathcal{L}_{2}(\mathcal{A}_{3})=\mathcal{A}_{3}[-1]$.\\
Consider a stability condition $x\in \Stab^{0}(A_{3})$ with standard heart $A_{3}$ such that $\phi_{x}(S_{1})=\phi_{x}(S_{3})>\phi_{x}(S_{2})$. So that we have for the semi-stable objects $\phi_{x}(\left(\begin{matrix}
   S_{1} S_{3}  \\
   S_{2}
\end{matrix}\right))>\phi_{x}(\left(\begin{matrix}
   S_{1}  \\
   S_{2}
\end{matrix}\right))=\phi_{x}(\left(\begin{matrix}
   S_{3}  \\
   S_{2}
\end{matrix}\right))$. We then have $(13)\mathcal{L}_{2}\mathcal{L}_{13}\mathcal{L}_{2}\mathcal{L}_{13}(\mathcal{A}_{3})=\mathcal{A}_{3}[-1]$.\\
We proceed similarly for all the other relations.
\end{proof}

We can now come back to our map $\lambda$ and state the following two propositions, whose proofs rely on our two previous results.

\begin{proposition}
The map $\lambda: \widetilde{\Br_{4}} \longrightarrow \pi_{1}(\Stab^{0}(A_{3})/\widetilde{\Br_{4}})$ is a group homomorphism.
\end{proposition}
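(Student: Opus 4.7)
The plan is to verify that each defining relation of $\widetilde{\Br_{4}}$ is sent under $\lambda$ to a null-homotopic loop in $\pi_{1}(\Stab^{0}(A_{3})/\widetilde{\Br_{4}})$, which simultaneously yields well-definedness and the homomorphism property. The defining relations split into three groups: (a) the braid-type relations $\alpha_{1}\alpha_{3}=\alpha_{3}\alpha_{1}$, $\alpha_{1}\alpha_{2}\alpha_{1}=\alpha_{2}\alpha_{1}\alpha_{2}$, $\alpha_{2}\alpha_{3}\alpha_{2}=\alpha_{3}\alpha_{2}\alpha_{3}$; (b) the three conjugation relations $\xi\alpha_{i}^{-1}\xi=\alpha_{\tau(i)}$, where $\tau$ is the diagram automorphism of $T_{3}$; (c) the cube relation $\xi^{3}=\alpha_{1}\alpha_{2}\alpha_{3}\alpha_{1}\alpha_{2}\alpha_{1}$.

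The central technical tool is the rotation-as-tilting paradigm of Lemma 2.13: rotating a stability condition $x\in \Stab^{0}(A_{3})$ by $\pi$ via the $\mathbb{R}$-action traces out a path whose successive codimension-one wall crossings are dictated by the ordering of the phases of the semistable objects in its heart, while coincident phases produce factorisations through higher codimension walls. Two orderings whose resulting compositions of simple tilts yield the same tilted heart therefore produce homotopic lifts of the $\pi$-rotation. Lemma 2.15 is precisely an instance of this principle, realising several distinct sequences of simple tilts as homotopic paths from $x$ to $x[-1]$.

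For (a), the commutation $\alpha_{1}\alpha_{3}=\alpha_{3}\alpha_{1}$ follows from Proposition 1.26 applied to $I=\{1\}$, $J=\{3\}$: the edges $S_{1}$ and $S_{3}$ share no vertex in $T_{3}$, so $\mathcal{L}_{1}\mathcal{L}_{3}=\mathcal{L}_{\{1,3\}}=\mathcal{L}_{3}\mathcal{L}_{1}$, and a stability condition with $\phi_{x}(S_{1})=\phi_{x}(S_{3})$ factorises both concatenated paths through a common codimension-two wall. The two braid relations $\alpha_{i}\alpha_{i+1}\alpha_{i}\sim \alpha_{i+1}\alpha_{i}\alpha_{i+1}$ come from Proposition 1.27 in the same spirit: starting from a stability condition with $\phi_{x}(S_{i})=\phi_{x}(S_{i+1})$ and rotating, the two orderings trace out the two sides, and the transposition of labels which appears is absorbed in the quotient by $\widetilde{\Br_{4}}$. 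For (b), the path $\hat{\xi}$ terminates at the image of $x$ under an autoequivalence involving $\tau$ and a shift; using the identity $\tau\phi_{i}\tau=\phi_{\tau(i)}$ (since $\tau$ fixes $S_{2}$ and exchanges $S_{1}$ with $S_{3}$) together with the fact that the shift functor commutes with every $\phi_{i}$, the algebraic relation $zs_{i}^{-1}z=s_{\tau(i)}$ translates into the desired homotopy by concatenating $\hat{\xi}$, a lift of the reverse of $\hat{\alpha}_{i}$, and a second copy of $\hat{\xi}$, and comparing with $\hat{\alpha}_{\tau(i)}$ inside an appropriately shifted copy of the standard tile.

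The main obstacle will be relation (c). At the level of endpoints, Theorem 1.2 gives $(s_{1}s_{2}s_{3}s_{1}s_{2}s_{1})^{2}\mapsto [6]$ in $\TrPic(A_{3})$, matching $z^{6}=([1]\tau)^{6}=[6]$ via $\tau^{2}=\id$; but producing a homotopy for the cube root itself requires tracking paths and not just endpoints. My plan is to start from a stability condition with $\phi_{x}(S_{1})=\phi_{x}(S_{2})=\phi_{x}(S_{3})$ and rotate by $3\pi$, applying Lemma 2.13 twice: first decomposing the full rotation as a sequence of six simple tilts realising $\alpha_{1}\alpha_{2}\alpha_{3}\alpha_{1}\alpha_{2}\alpha_{1}$ through the concrete relations recorded in Lemma 2.15, and second decomposing it as the threefold concatenation of the half-turn paths $\hat{\xi}$, invoking Lemma 1.22 to commute $\tau$ past the intermediate simple tilts. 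The labelled-heart bookkeeping of Section 1.5.3, used to track the permutations $\sigma\in \mathfrak{S}_{3}$ introduced at each intermediate rotation, is the most delicate combinatorial step, and is where the argument is substantially heavier than the analogous verification of Lemma 2.15.
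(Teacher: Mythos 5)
Your overall strategy coincides with the paper's: check each defining relation of $\widetilde{\Br_{4}}$ by lifting the relevant loops to $\Stab^{0}(A_{3})$ and using the rotation-as-tilting paradigm (Lemma 2.13) together with the tilted-heart identities (Lemma 2.15 and Propositions 1.26, 1.27) to produce the homotopies. However, there is a concrete gap in your treatment of the relations involving $\alpha_{2}$, which is where almost all of the actual work lies. You implicitly treat each $\alpha_{i}$ as the loop of a single simple left tilt at $S_{i}$, so that Proposition 1.27 applied to $I=\{i\}$, $J=\{i+1\}$ would directly yield $\alpha_{i}\alpha_{i+1}\alpha_{i}\sim\alpha_{i+1}\alpha_{i}\alpha_{i+1}$. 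It is not: $\alpha_{1}$ and $\alpha_{3}$ each cross \emph{two} codimension-one walls, ending at $\phi_{i}^{-1}(x)$ (so $\alpha_{1}$ is realised by $(12)\mathcal{L}_{2}\mathcal{L}_{1}$), while $\alpha_{2}$ is by definition $\alpha_{2}'\xi$, where $\alpha_{2}'$ crosses a wall by a \emph{right} tilt at $S_{2}$. Before any of the heart identities can be applied, one must first realise $\xi$, and hence $\alpha_{2}$, as an explicit composition of left tilts --- this uses the relation $\mathcal{L}_{2}\mathcal{L}_{\{1,3\}}\mathcal{L}_{2}\mathcal{L}_{\{1,3\}}(\mathcal{A}_{3})=(13)\mathcal{A}_{3}[-1]$ of Lemma 2.15 to write $\alpha_{2}$ as $(13)\mathcal{L}_{2}\mathcal{L}_{\{1,3\}}\mathcal{L}_{2}$ --- and only then does the braid relation reduce, after a chain of roughly ten applications of Propositions 1.26 and 1.27 passing through the star heart $\mathcal{B}_{3}=\mathcal{L}_{1}\mathcal{L}_{2}(\mathcal{A}_{3})$. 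A single phase coincidence $\varphi_{x}(S_{1})=\varphi_{x}(S_{2})$ does not produce this homotopy. The same remark applies to the conjugation relations: in the paper these are again verified by explicit tilt computations, not as formal consequences of $\tau\phi_{i}\tau=\phi_{\tau(i)}$.

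Two smaller points. First, your claim that the relabelling transposition ``is absorbed in the quotient by $\widetilde{\Br_{4}}$'' is misleading: the permutations $\sigma\in\mathfrak{S}_{3}$ appearing in Proposition 1.27 do not lift to autoequivalences (the paper warns of exactly this after Proposition 1.27); they are bookkeeping for ordered hearts, and what matters is only that the underlying unordered hearts, hence the tiles $U(\mathcal{H})$, agree. Second, you single out the cube relation $\xi^{3}=\alpha_{1}\alpha_{2}\alpha_{3}\alpha_{1}\alpha_{2}\alpha_{1}$ as the main obstacle and sketch a $3\pi$-rotation argument for it; that plan is reasonable (the paper in fact leaves this relation to the reader), but the genuinely laborious step in the published proof is the braid relation $\alpha_{2}\alpha_{1}\alpha_{2}\sim\alpha_{1}\alpha_{2}\alpha_{1}$, which your proposal substantially underestimates.
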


\begin{proof}
We need to check that the relations coming from $ \widetilde{\Br_{4}}$ are satisfied in $\pi_{1}(\Stab^{0}(A_{3})/\widetilde{\Br_{4}})$. This will boil down to relations amongst the left tilts applied to $A_{3}$-mod as this translates into contractible loops according to the previous lemma. Indeed, for two loops starting at the same heart, we will have to show that they end in the same heart by only using relations coming from the two results above.
\\Let us start by checking that we have the homotopy $\alpha_{1}\alpha_{3}\sim \alpha_{3}\alpha_{1}$. What we need is then to see that the relation $((23)\mathcal{L}_{2} \mathcal{L}_{3})((12) \mathcal{L}_{2} \mathcal{L}_{1})(\mathcal{A}_{3})=((12) \mathcal{L}_{2} \mathcal{L}_{1})((23) \mathcal{L}_{2} \mathcal{L}_{3})(\mathcal{A}_{3})$ can be obtained from the relations of Lemma 2.11 and Lemma 2.13.\\
However we have $$((23)\mathcal{L}_{2} \mathcal{L}_{3})((12) \mathcal{L}_{2} \mathcal{L}_{1})(\mathcal{A}_{3})=(23)(12)\mathcal{L}_{1}\mathcal{L}_{3}\mathcal{L}_{2}\mathcal{L}_{1}(\mathcal{A}_{3})$$
$$=(23)(12)\mathcal{L}_{1}(23)\mathcal{L}_{3}\mathcal{L}_{2}\mathcal{L}_{3}\mathcal{L}_{1}(\mathcal{A}_{3})=(23)(12)(23)\mathcal{L}_{1}\mathcal{L}_{3}\mathcal{L}_{2}\mathcal{L}_{3}\mathcal{L}_{1}(\mathcal{A}_{3}).$$
Similarly for the right hand side, $$((12) \mathcal{L}_{2} \mathcal{L}_{1})((23) \mathcal{L}_{3} \mathcal{L}_{3})(\mathcal{A}_{3})=(12)(23)(12)\mathcal{L}_{3}\mathcal{L}_{1}\mathcal{L}_{2}\mathcal{L}_{1}\mathcal{L}_{3}(\mathcal{A}_{3}).$$
But $\mathcal{L}_{1}\mathcal{L}_{3}(\mathcal{A}_{3})=\mathcal{L}_{3}\mathcal{L}_{1}(\mathcal{A}_{3})$ and hence we have $\alpha_{1}\alpha_{3}\sim \alpha_{3}\alpha_{1}$.\\
We now turn to the braid relation $\alpha_{2}\alpha_{1}\alpha_{2}\sim\alpha_{1}\alpha_{2}\alpha_{1}$. First off, we need to write $\alpha_{2}$ in terms of left tilts but for that we can use that $\mathcal{L}_{2}\mathcal{L}_{13}\mathcal{L}_{2}\mathcal{L}_{13}(\mathcal{A}_{3})=(1 3)\mathcal{A}_{3}[-1]$ (see the lemma above). Hence $\xi$ is given by the composition of left tilts $\mathcal{L}_{2}\mathcal{L}_{13}\mathcal{L}_{2}\mathcal{L}_{13}(\mathcal{A}_{3})$, and as $\xi^{-1}\alpha_{2}$ is given by $(1 3) \mathcal{L}_{1}^{-1}\mathcal{L}_{3}^{-1}(\mathcal{A}_{3})$, we find that the path $\alpha_{2}$ is given by $(13) \mathcal{L}_{2}\mathcal{L}_{13}\mathcal{L}_{2}(\mathcal{A}_{3})$. That way we need to check that the following relation can be obtained only using the relation of Lemma 2.11 and Lemma 2.13.
$$(12)\mathcal{L}_{2}\mathcal{L}_{1}(13)\mathcal{L}_{2}\mathcal{L}_{13}\mathcal{L}_{2}(12)\mathcal{L}_{2}\mathcal{L}_{1}(\mathcal{A}_{3})=(13)\mathcal{L}_{2}\mathcal{L}_{13}\mathcal{L}_{2}(12)\mathcal{L}_{2}\mathcal{L}_{1}(13)\mathcal{L}_{2}\mathcal{L}_{13}\mathcal{L}_{2}(\mathcal{A}_{3})
$$
$$\Longleftrightarrow \mathcal{L}_{1}\mathcal{L}_{3}\mathcal{L}_{1}\mathcal{L}_{23}\mathcal{L}_{1}\mathcal{L}_{2}\mathcal{L}_{1}(\mathcal{A}_{3})=
\mathcal{L}_{3}\mathcal{L}_{12}\mathcal{L}_{3}\mathcal{L}_{2}\mathcal{L}_{3}\mathcal{L}_{2}\mathcal{L}_{13}\mathcal{L}_{2}(\mathcal{A}_{3}).$$
But we can now use the relations of Proposition 1.26 and Proposition 1.27, so that we have on the right hand side
$$\Longleftrightarrow \mathcal{L}_{1}\mathcal{L}_{3}\mathcal{L}_{1}\mathcal{L}_{23}\mathcal{L}_{2}\mathcal{L}_{1}\mathcal{L}_{2}(\mathcal{A}_{3})=
\mathcal{L}_{3}\mathcal{L}_{12}\mathcal{L}_{3}\mathcal{L}_{2}\mathcal{L}_{3}\mathcal{L}_{2}\mathcal{L}_{3}\mathcal{L}_{1}\mathcal{L}_{2}(\mathcal{A}_{3})$$
$$\Longleftrightarrow \mathcal{L}_{1}\mathcal{L}_{3}\mathcal{L}_{1}\mathcal{L}_{23}\mathcal{L}_{2}(\mathcal{B}_{3})=
\mathcal{L}_{3}\mathcal{L}_{12}\mathcal{L}_{3}\mathcal{L}_{2}\mathcal{L}_{3}\mathcal{L}_{2}\mathcal{L}_{3}(\mathcal{B}_{3})$$
where $\mathcal{B}_{3}=\mathcal{L}_{1}\mathcal{L}_{2}(\mathcal{A}_{3})$ is a heart equivalent to the Brauer tree of the star and whose edges have cyclic ordering $1,3,2$. Now the previous relation amounts to
$$\Longleftrightarrow \mathcal{L}_{3}\mathcal{L}_{1}\mathcal{L}_{3}\mathcal{L}_{23}\mathcal{L}_{2}(\mathcal{B}_{3})=
\mathcal{L}_{3}\mathcal{L}_{12}\mathcal{L}_{3}\mathcal{L}_{2}\mathcal{L}_{3}\mathcal{L}_{2}\mathcal{L}_{3}(\mathcal{B}_{3})$$
$$\Longleftrightarrow \mathcal{L}_{1}\mathcal{L}_{3}\mathcal{L}_{23}\mathcal{L}_{2}(\mathcal{B}_{3})=
\mathcal{L}_{12}\mathcal{L}_{3}\mathcal{L}_{2}\mathcal{L}_{3}\mathcal{L}_{2}\mathcal{L}_{3}(\mathcal{B}_{3})$$
$$\Longleftrightarrow \mathcal{L}_{1}\mathcal{L}_{3}\mathcal{L}_{2}\mathcal{L}_{3}\mathcal{L}_{2}(\mathcal{B}_{3})=
\mathcal{L}_{1}\mathcal{L}_{2}\mathcal{L}_{3}\mathcal{L}_{2}\mathcal{L}_{3}\mathcal{L}_{2}\mathcal{L}_{3}(\mathcal{B}_{3})$$
$$\Longleftrightarrow \mathcal{L}_{3}\mathcal{L}_{2}\mathcal{L}_{3}\mathcal{L}_{2}(\mathcal{B}_{3})=
\mathcal{L}_{2}\mathcal{L}_{3}\mathcal{L}_{2}\mathcal{L}_{3}\mathcal{L}_{2}\mathcal{L}_{3}(\mathcal{B}_{3})$$
$$\Longleftrightarrow \mathcal{L}_{3}\mathcal{L}_{2}\mathcal{L}_{3}\mathcal{L}_{2}(\mathcal{B}_{3})=
\mathcal{L}_{2}\mathcal{L}_{3}\mathcal{L}_{2}(23)\mathcal{L}_{3}\mathcal{L}_{2}(\mathcal{B}_{3})$$
(as we have according to Proposition 1.26 and Proposition 1.27, $\mathcal{L}_{3}\mathcal{L}_{2}\mathcal{L}_{3}(\mathcal{B}_{3})=(23)\mathcal{L}_{3}\mathcal{L}_{2}(\mathcal{B}_{3})$)
$$\Longleftrightarrow \mathcal{L}_{3}\mathcal{L}_{2}(\mathcal{B}_{3})=
\mathcal{L}_{2}\mathcal{L}_{3}\mathcal{L}_{2}(23)(\mathcal{B}_{3})$$
$$\Longleftrightarrow \mathcal{L}_{3}\mathcal{L}_{2}(\mathcal{B}_{3})=(23)
\mathcal{L}_{3}\mathcal{L}_{2}\mathcal{L}_{3}(\mathcal{B}_{3})$$
Hence, we have as claimed that $\alpha_{2}\alpha_{1}\alpha_{2}\sim\alpha_{1}\alpha_{2}\alpha_{1}$. Now let us show that $\alpha_{1}\xi\sim \xi \alpha_{3}$. From the second relation of the previous corollary, we see that $\xi$ is given by the composition of left tilts $(12)\mathcal{L}_{3}\mathcal{L}_{1}\mathcal{L}_{2}\mathcal{L}_{3}\mathcal{L}_{2}$ apply to the heart $\mathcal{A}_{3}$ and so we need to check that $$(12)\mathcal{L}_{2}\mathcal{L}_{1}(12)\mathcal{L}_{3}\mathcal{L}_{1}\mathcal{L}_{2}\mathcal{L}_{3}\mathcal{L}_{2}(\mathcal{A}_{3})
=(12)\mathcal{L}_{3}\mathcal{L}_{1}\mathcal{L}_{2}\mathcal{L}_{3}\mathcal{L}_{2}(23)\mathcal{L}_{2}\mathcal{L}_{3}(\mathcal{A}_{3})$$
$$\Longleftrightarrow \mathcal{L}_{1}\mathcal{L}_{2}\mathcal{L}_{3}\mathcal{L}_{1}((23)\mathcal{A}_{3})=(132)\mathcal{L}_{2}\mathcal{L}_{1}\mathcal{L}_{3}\mathcal{L}_{2}((23)\mathcal{A}_{3})
$$
$$\Longleftrightarrow \mathcal{L}_{1}(23)\mathcal{L}_{2}\mathcal{L}_{3}\mathcal{L}_{2}(\mathcal{L}_{1}((23)\mathcal{A}_{3}))=(23)\mathcal{L}_{2}\mathcal{L}_{1}\mathcal{L}_{3}\mathcal{L}_{2}(\mathcal{L}_{1}((23)\mathcal{A}_{3})).
$$
which only boils down to the relation $\mathcal{L}_{1}\mathcal{L}_{2}(23)\mathcal{A}_{3}=\mathcal{L}_{2}\mathcal{L}_{1}(23)\mathcal{A}_{3}$. We know need to verify that we have the relation $\alpha_{2}\xi \sim \xi \alpha_{2}$. This amounts to check that $$
(13)\mathcal{L}_{2}\mathcal{L}_{13}\mathcal{L}_{2}\mathcal{L}_{2}\mathcal{L}_{13}\mathcal{L}_{2}\mathcal{L}_{13}(\mathcal{A}_{3})=\mathcal{L}_{2}\mathcal{L}_{13}\mathcal{L}_{2}\mathcal{L}_{13}(13)\mathcal{L}_{2}\mathcal{L}_{13}\mathcal{L}_{2}(\mathcal{A}_{3})$$
$$\Longleftrightarrow \mathcal{L}_{2}\mathcal{L}_{13}\mathcal{L}_{2}\mathcal{L}_{13}(\mathcal{A}_{3})= \mathcal{L}_{13}\mathcal{L}_{2}\mathcal{L}_{13}\mathcal{L}_{2}(\mathcal{A}_{3}),$$
and this is exactly one of the relation of the Lemma 2.13. Finally we let the reader carefully check as an exercise that $\xi^{3}\sim \alpha_{1}\alpha_{2}\alpha_{3}\alpha_{1}\alpha_{2}\alpha_{1}$.\end{proof}

\begin{proposition}
The map $\lambda: \widetilde{\Br_{4}} \longrightarrow \pi_{1}(\Stab^{0}(A_{3})/\widetilde{\Br_{4}})$ is a surjective homomorphism.
\end{proposition}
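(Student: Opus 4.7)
The plan is to establish surjectivity by showing that any class $\eta \in \pi_1(\Stab^0(A_3)/\widetilde{\Br_4}, p(x))$ equals $\lambda(w)$ for a suitable word $w$ in $\widetilde{\Br_4}$. The overall strategy is to lift $\eta$ to a path in the tiled cover $\Stab^0(A_3)$ and to decompose it combinatorially as a concatenation of the generating paths $\hat{\alpha}_1, \hat{\alpha}_2, \hat{\alpha}_3, \hat{\xi}$ translated by the $\widetilde{\Br_4}$-action.

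First, using the covering property of $p$, I would lift a representative of $\eta$ to a path $\hat{\gamma}$ in $\Stab^0(A_3)$ from $x$ to $g\cdot x$ for the unique $g \in \widetilde{\Br_4}$ determined by $\eta$. Since each tile $U(\mathcal{H})$ is homeomorphic to $\mathbb{H}^3$ and hence contractible, a transversality homotopy rel endpoints puts $\hat{\gamma}$ in the form of a tile-by-tile path crossing walls only at isolated times. A finite recursion using Lemma 2.13 further reduces to the case where every crossing is codimension one, i.e.\ a simple tilt. The sequence of tiles traversed then records a composition $\mathcal{L}_{s_N}\cdots\mathcal{L}_{s_1}(A_3\text{-mod}) = g\cdot(A_3\text{-mod})$, and by Proposition 1.29 (together with its extension to $\widetilde{\Br_4}$ as in Proposition 2.7) this composition can be rewritten as a word in $F_1, F_2, F_3$ and $[1]\tau$, up to an action of $\mathfrak{S}_3$ on the labelling.

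Second, factor $g = g_1\cdots g_r$ as a word in the generators $\sigma_1, \sigma_2, \sigma_3, z$ of $\widetilde{\Br_4}$ and form the concatenation $\hat{\gamma}' = \hat{\alpha}_{g_1} \cdot (g_1\cdot \hat{\alpha}_{g_2}) \cdots (g_1\cdots g_{r-1}\cdot \hat{\alpha}_{g_r})$ of translated generating paths, with the convention $\hat{\alpha}_{\sigma_i}:=\hat{\alpha}_i$ and $\hat{\alpha}_z:=\hat{\xi}$. This $\hat{\gamma}'$ is another path from $x$ to $g\cdot x$ whose descent to the quotient represents $\lambda(g_1\cdots g_r)$. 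It then remains to exhibit a homotopy $\hat{\gamma} \simeq \hat{\gamma}'$ rel endpoints in $\Stab^0(A_3)$, which will conclude $\eta = \lambda(g_1\cdots g_r)$. I would proceed by induction on the length of the tilt sequence of $\hat{\gamma}$: at each step, the algebraic identities of Propositions 1.26 and 1.27 normalise the initial segment into a canonical form that matches the first generating path, and the homotopies of Lemma 2.13 realise this normalisation inside $\Stab^0(A_3)$. Peeling off the corresponding $g_k$ and the matching translated $\hat{\alpha}_{g_k}$ from both sides leaves a strictly shorter sequence to process.

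The main obstacle is the \emph{normalisation} step in the induction: one must show that any prefix of a sequence of simple tilts can be homotopically rearranged (using only the codimension-two relations of Lemma 2.13 together with the algebraic identities of Propositions 1.26 and 1.27) into a canonical form matching one of the generating paths. The combinatorial book-keeping of the permutations in $\mathfrak{S}_3$ introduced by Proposition 1.27, and tracking their compatibility with the $\widetilde{\Br_4}$-action on the tiling so that translated generators are correctly peeled off at each induction step, is the most delicate aspect of the argument.
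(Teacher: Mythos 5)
Your overall strategy is the same as the paper's: put the loop in general position with respect to the tiling of $\Stab^{0}(A_{3})$ by the contractible tiles $U(\mathcal{H})$, read off the resulting sequence of wall crossings as a composition of simple tilts, and use the homotopies of Lemma 2.13 together with the algebraic relations of Propositions 1.26 and 1.27 to identify the class with a word in $\alpha_{1},\alpha_{2},\alpha_{3},\xi$. However, the step you single out as ``the main obstacle'' --- the normalisation of an arbitrary prefix of simple tilts into a canonical form matching a translated generating path, realised by homotopies rel endpoints --- is precisely where all the content of the proposition lies, and your proposal describes it as a plan rather than carrying it out. There is also a genuine subtlety buried there: an equality of tilted hearts obtained from Propositions 1.26 and 1.27 is a purely algebraic identity and does not by itself produce a homotopy of paths in $\Stab^{0}(A_{3})$; only those rewritings that are realised geometrically by the rotation argument of Lemma 2.13 (collapsing a chain of codimension-one crossings onto a single codimension-two crossing) are known to be homotopy-realisable. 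An induction that freely invokes Proposition 1.29 to rewrite tilt sequences would therefore need to check, at each step, that the rewriting used is one of the geometrically realised ones.

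The paper avoids an open-ended induction by making a sharper reduction: since the tiles and their boundary strata are contractible, a loop in general position in the quotient is a product of conjugates of the finitely many ``elementary'' loops corresponding to the minimal compositions of simple tilts returning to the $\widetilde{\Br_{4}}$-orbit of the standard heart (these were classified in the proof of Proposition 1.29: up to the symmetry $(13)$, they are $\mathcal{L}_{2}$, $\mathcal{L}_{3}\mathcal{L}_{1}$, $\mathcal{L}_{2}\mathcal{L}_{1}$, $\mathcal{L}_{1}\mathcal{L}_{1}$ and their inverses). Surjectivity then reduces to exhibiting each of these finitely many classes in the image of $\lambda$, which is done by explicit relations such as $\beta\sim\alpha_{1}^{-1}\alpha_{3}^{-1}\xi$ for the loop arising from $\mathcal{L}_{2}$, using the decompositions of the shift from Lemma 2.13. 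To complete your argument you would either need to carry out this finite verification (at which point your induction becomes unnecessary) or supply the normalisation lemma you postponed; as written, the proof is not complete.
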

\begin{proof}
Take an arbitrary loop in $\Stab^{0}(A_{3})/\widetilde{\Br_{4}}$. It is homotopic to a loop in general position relative to the tiling of $\Stab^{0}(A_{3})$. It follows that one only needs to consider the loops coming from paths in $\Stab^{0}(A_{3})$ starting at $x$ and ending in $L_{2}L_{1}(A_{3}\text{-mod})$, $L_{2}L_{3}(A_{3}\text{-mod})$, $L_{3}L_{1}(A_{3}\text{-mod})$, $L^{-1}_{2}L_{1}(A_{3}\text{-mod})$, $L^{-1}_{2}L_{3}(A_{3}\text{-mod})$, $L_{3}L_{1}^{-1}(A_{3}\text{-mod})$, $L_{2}(A_{3}\text{-mod})$ and show those are in the image of $\lambda$. The strategy is entirely similar to the one used in the proof of the previous lemma (not to say exactly the same) and so we will only show how to argue for the loop $\beta\in \pi_{1}(\Stab^{0}(A_{3})/\widetilde{\Br_{4}})$ arising from the path $x\mapsto L_{2}(x)$. Indeed, the first three loops are direct consequences of the definitions of the loops $\alpha_{1},\alpha_{2},\alpha_{3}$, and for instance to treat the case of the loop $\gamma$ coming from the path starting at $x$ and ending at $L_{3}L_{1}^{-1}(x)$, one should show that $\gamma\alpha_{1} \sim \alpha_{2}$. Back to our loop $\beta$, we know that $\mathcal{L}_{2}(A_{3})=\phi_{3}\phi_{1}[1](A_{3})$, so all we need to show is that $\beta \sim \alpha_{1}^{-1}\alpha_{3}^{-1}\xi$. In other words, we need $$((12)\mathcal{L}_{2}\mathcal{L}_{1})^{-1}((23)\mathcal{L}_{2}\mathcal{L}_{3})^{-1}((12)\mathcal{L}_{3}\mathcal{L}_{1}\mathcal{L}_{2}\mathcal{L}_{3}\mathcal{L}_{2})(\mathcal{A}_{3})=\mathcal{L}_{2}(\mathcal{A}_{3})
$$
$$\Longleftrightarrow \mathcal{L}_{1}^{-1}\mathcal{L}_{2}^{-1}\mathcal{L}_{3}^{-1}\mathcal{L}_{1}^{-1}\mathcal{L}_{1}\mathcal{L}_{3}\mathcal{L}_{2}\mathcal{L}_{1}\mathcal{L}_{2}(\mathcal{A}_{3})=\mathcal{L}_{2}(\mathcal{A}_{3}).
$$
Note that we have here used the second decomposition of the shift [1] given in Lemma $2.13$ when giving $\xi$ as being realised by the composition $(12)\mathcal{L}_{3}\mathcal{L}_{1}\mathcal{L}_{2}\mathcal{L}_{3}\mathcal{L}_{2}(\mathcal{A}_{3})$.\end{proof}

\begin{theorem}
The principal component $\Stab^{0}(A_{3})$ is simply connected.
\end{theorem}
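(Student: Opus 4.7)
The plan is to combine the short exact sequence from Proposition 2.10 with the surjective homomorphism $\lambda$ constructed in Proposition 2.17, showing that $\lambda$ is in fact a set-theoretic section of the quotient map onto $\widetilde{\Br_4}$. Simple connectedness will then follow by a purely group-theoretic squeeze.

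First I would verify that $\lambda: \widetilde{\Br_{4}} \longrightarrow \pi_{1}(\Stab^{0}(A_{3})/\widetilde{\Br_{4}})$ is a splitting of the sequence
$$\xymatrix{1\ar[r] & \pi_{1}(\Stab^{0}(A_{3}))\ar[r]^{p^{*}} & \pi_{1}(\Stab^{0}(A_{3})/\widetilde{\Br_{4}} )\ar[r]^-{q} & \widetilde{\Br_{4}}\ar[r] & 1},$$
where the right-hand map $q$ comes from the standard construction: lift a loop $\gamma$ in $\Stab^{0}(A_{3})/\widetilde{\Br_{4}}$ to a path $\hat\gamma$ in $\Stab^{0}(A_{3})$ starting at $x$, and read off the unique $g\in\widetilde{\Br_{4}}$ with $g\cdot x = \hat\gamma(1)$. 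By construction, each generator $\sigma_i$ (respectively $z$) of $\widetilde{\Br_{4}}$ is sent by $\lambda$ to the homotopy class of a loop whose chosen lift $\hat\alpha_i$ (respectively $\hat\xi$) ends at $F_{\sigma_i}(x)$ (respectively at $[-1]\tau(x) = z\cdot x$). So $q\circ\lambda = \id_{\widetilde{\Br_{4}}}$ on generators, and hence on all of $\widetilde{\Br_{4}}$.

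Next, combine this with the surjectivity of $\lambda$ established in Proposition 2.17. If $\gamma\in \pi_{1}(\Stab^{0}(A_{3})/\widetilde{\Br_{4}})$ is any loop, surjectivity gives $g\in \widetilde{\Br_{4}}$ with $\lambda(g) = \gamma$. Applied to a loop $\gamma = p^{*}(\delta)$ in the image of $p^{*}$, we have $q(\gamma) = 1$, and therefore $g = q(\lambda(g)) = q(\gamma) = 1$, so $\gamma = \lambda(1) = 1$ in $\pi_{1}(\Stab^{0}(A_{3})/\widetilde{\Br_{4}})$. By injectivity of $p^{*}$, this forces $\delta = 1$ in $\pi_{1}(\Stab^{0}(A_{3}))$.

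This gives $\pi_{1}(\Stab^{0}(A_{3})) = 1$. The only real content hides in the two preceding propositions; the main obstacle has already been surmounted there, namely verifying that $\lambda$ respects all the defining relations of $\widetilde{\Br_{4}}$ (homotopies built from the Lemma 2.11/Lemma 2.13 dictionary between rotations of a stability condition and compositions of simple left tilts) and that every homotopy class of loop can be represented by a word in the $\alpha_i$ and $\xi$. Once those are in hand, the present theorem is just the short group-theoretic argument above.
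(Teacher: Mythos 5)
Your argument is correct and is essentially the paper's own proof: the paper likewise concludes simple connectedness by observing that $\lambda$ is a surjective section of the short exact sequence, so that the kernel of the quotient map, i.e.\ the image of $p^{*}$, must be trivial. You have merely written out explicitly the verification that $q\circ\lambda=\id$ on generators and the resulting group-theoretic squeeze, which the paper leaves implicit.
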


\begin{proof}
Combining the two previous lemmas, we have built a surjective section of the short exact exact sequence of Proposition $2.9$. Hence, this gives us $\pi_{1}(\Stab^{0}(A_{3}))=1$. 
\end{proof}

\subsection{What is not a covering map}
We have said nothing about covering maps of $\Stab^{0}(A_{3})$ so far. Even though in all other known examples (at least to the knowledge of the author), the Bridgeland local homomorphism actually produces a covering map, we will show that this is not the case here. First of all, we need to have a better understanding of the image $\Im(\mathcal{Z})$. We first consider the general situation of the Brauer tree algebra $A_{n}$ before again restricting ourselves to $A_{3}$. Let $P_{n+1}=ker(\Br_{n+1}\rightarrow \mathfrak{S}_{n+1})$ be the pure braid group on $n$ strings.
\\We have the following commutative diagram:
$$\xymatrix{ \Stab^{0}(A_{n}) \ar[rr]^{\mathcal{Z}} \ar[rd]^p && K_{0}(A_{n})^{*}\otimes_{\mathbb{Z}}\mathbb{C} \\ & \Stab^{0}(A_{n})/P_{n+1} \ar[ur]^{\bar{\mathcal{Z}}} }$$
According to Proposition $2.6$ and Proposition $2.9$, we have that $p:\Stab^{0}(A_{n})\rightarrow  \Stab^{0}(A_{n})/P_{n+1}$ is a covering map. Moreover thanks to Theorem $2.16$, this is a universal cover. \\A natural question is now whether the map $\bar{\mathcal{Z}}$ is an isomorphism onto its image, \textit{i.e.} do we have $\Stab^{0}(A_{n})/P_{n+1}\stackrel{\sim}{\rightarrow} \Im(\bar{\mathcal{Z}})$ ?\\
We know that the Bridgeland homeomorphism is locally injective; the question becomes now whether the map $\bar{\mathcal{Z}}$ is injective. We shall see that for $n\geq 3$ this is never injective. Even worse for $n=4$, we will show that there exist stability conditions with non-isomorphic hearts but the same central charge. Then, we will show that $\Stab^{0}(A_{n})\rightarrow \Im(\bar{\mathcal{Z}})$ is not a covering map.

\subsubsection{Combinatorial decomposition maps}
To determine the image $\Im(\mathcal{Z})$, we first `untwist' the action of $\mathfrak{S}_{n+1}$ on the dualized Grothendieck group, so that we have a classical action by permutation of the coordinates.\\
In order to do so, we define a map $d_{A_{n}}$ inspired from the decomposition map of the symmetric group $\mathfrak{S}_{n+1}$ as follows:
 $d_{A_{n}}:\begin{cases}
  \stackrel[i=1]{n+1}{\bigoplus}\mathbb{Z}v_{i} \longrightarrow K_{0}(A_{n})
  \\ v_{1}\longmapsto [S_{1}],
  \\v_{i} \longmapsto (-1)^{i-1}([S_{i-1}]+[S_{i}]),\; \text{for } i\neq 1, n+1,
  \\v_{n+1} \longmapsto (-1)^{n} [S_{n}].
  \end{cases}$ 
  
\begin{remark}  
If we denote the Heller translate functor by $\Omega$, then we have an homological interpretation of the previous map as $d_{A_{n}}(v_{i})=(-1)^{i-1}[\Omega^{i-1}(S_{1})]$. 
\end{remark}

The action of $\mathfrak{S}_{n+1}$ on $K_{0}(A_{n})$ is given by $\phi_{i}$'s, the spherical twists at the projective indecomposable ${A_{n}}$-module $P_{i}$, while $\mathfrak{S}_{n+1}$ acts on $ \stackrel[i=1]{n+1}{\bigoplus}\mathbb{Z}v_{i}$ by permuting coordinates.

\begin{lemma}
The map $d_{A_{n}}$ is a $\mathbb{Z}\mathfrak{S}_{n+1}$-morphism.
\end{lemma}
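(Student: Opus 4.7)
The strategy is to reduce to a check on generators. The symmetric group $\mathfrak{S}_{n+1}$ is generated by the simple transpositions $s_{i}=(i,i+1)$, $1\leq i\leq n$, and on $K_{0}(A_{n})$ the image of $s_{i}$ is the class of the spherical twist $\phi_{i}$ (via the surjection $\Br_{n+1}\twoheadrightarrow\Sph(A_{n})\twoheadrightarrow\mathfrak{S}_{n+1}$ of Theorem 1.2). On $\bigoplus_{i=1}^{n+1}\mathbb{Z}v_{i}$, the element $s_{i}$ simply swaps $v_{i}$ and $v_{i+1}$ and fixes the remaining basis vectors. So it is enough to verify
\[
\phi_{i}\bigl(d_{A_{n}}(v_{j})\bigr)=d_{A_{n}}(s_{i}\cdot v_{j})
\]
for every $i\in\{1,\dots,n\}$ and every $j\in\{1,\dots,n+1\}$.

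First I would compute $\phi_{i}$ on the simples in $K_{0}(A_{n})$. Since $\phi_{i}(S_{j})=S_{j}$ for $j\neq i$ and $\phi_{i}(S_{i})=\Omega(S_{i})[1]$, the short exact sequence $0\to\Omega(S_{i})\to P_{i}\to S_{i}\to 0$ gives $[\phi_{i}(S_{i})]=[S_{i}]-[P_{i}]$ in $K_{0}$. Reading off the Loewy structure of the indecomposable projectives recalled in Section~1.2, one obtains $[P_{1}]=2[S_{1}]+[S_{2}]$, $[P_{n}]=2[S_{n}]+[S_{n-1}]$ and $[P_{i}]=2[S_{i}]+[S_{i-1}]+[S_{i+1}]$ for $1<i<n$. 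Hence $\phi_{i}$ fixes $[S_{j}]$ for $j\neq i$ and sends $[S_{i}]$ to $-[S_{i-1}]-[S_{i}]-[S_{i+1}]$ (with the boundary terms absent when $i=1$ or $i=n$).

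Next I would run through the cases. When $j\notin\{i,i+1\}$, the vector $d_{A_{n}}(v_{j})$ is a $\mathbb{Z}$-combination of $[S_{j-1}]$ and $[S_{j}]$ (with obvious conventions at the ends), neither of which equals $[S_{i}]$, so $\phi_{i}$ fixes it, matching $s_{i}\cdot v_{j}=v_{j}$. When $j\in\{i,i+1\}$ one has to verify the swap. A typical interior computation reads
\[
\phi_{i}\bigl(d_{A_{n}}(v_{i})\bigr)=(-1)^{i-1}\bigl([S_{i-1}]+\phi_{i}[S_{i}]\bigr)=(-1)^{i-1}(-[S_{i}]-[S_{i+1}])=d_{A_{n}}(v_{i+1}),
\]
and symmetrically $\phi_{i}(d_{A_{n}}(v_{i+1}))=d_{A_{n}}(v_{i})$; the signs $(-1)^{i-1}$ in the definition of $d_{A_{n}}$ are precisely what is needed for this cancellation. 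The boundary cases $i=1$ and $i=n$ are analogous using $[P_{1}]$ and $[P_{n}]$ respectively, and behave well because the missing composition factor corresponds to the ``missing'' half of $d_{A_{n}}(v_{1})=[S_{1}]$ and $d_{A_{n}}(v_{n+1})=(-1)^{n}[S_{n}]$.

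There is no genuine obstacle here: the assertion is a bookkeeping verification, and the only mild subtlety is making sure the signs and the two endpoints are handled consistently. The remark following the statement, which interprets $d_{A_{n}}(v_{i})=(-1)^{i-1}[\Omega^{i-1}(S_{1})]$, also provides a conceptual sanity check: the $\phi_{i}$-invariance of $[\Omega^{j}(S_{1})]$ for the relevant indices, and the way $\phi_{i}$ exchanges consecutive Heller translates at the pivot, are exactly what produces the transposition action after dualising.
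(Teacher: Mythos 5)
Your proposal is correct and follows essentially the same route as the paper: reduce to the simple transpositions, use $\phi_{i}([S_{i}])=-([S_{i-1}]+[S_{i}]+[S_{i+1}])$ (which you justify slightly more carefully via $[\Omega(S_{i})]=[P_{i}]-[S_{i}]$ and the Loewy series), and observe that the alternating signs in $d_{A_{n}}$ make $\phi_{i}$ swap $d_{A_{n}}(v_{i})$ and $d_{A_{n}}(v_{i+1})$ while fixing the others. The paper's proof is the same computation, stated more tersely and explicitly omitting the boundary cases that you sketch.
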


\begin{proof}
This should only be a matter of careful checking and we will not (even) treat the extreme cases. Recall the action of $\phi_{i}$ on the simple modules: if $j\neq i$, $[S_{j}]$ is left invariant and $\phi_{i}([S_{i}])=-([S_{i-1}]+[S_{i}]+[S_{i+1}])$. We notice that $(-1)^{i-1}([S_{i-1}]-([S_{i-1}]+[S_{i}]+[S_{i+1}]))=(-1)^{i}([S_{i}]+[S_{i+1}])$. Hence for $x\in \mathbb{Z}^{n}$, we have $d_{A_{n}}((i\: i+1).x)=\phi_{i}.d_{A_{n}}(x)$.
\end{proof}

We now dualize $d_{A_{n}}\otimes \mathbb{C}$ and precompose it with Bridgeland's homomorphism so that:
$$  \Stab(\mathcal{D}^{b}(A_{n}))\stackrel{\mathcal{Z}}{\rightarrow} K_{0}(\mathcal{D}^{b}(A_{n}))^{*}\otimes_{\mathbb{Z}}\mathbb{C} \stackrel{d_{A_{n}}^{*}}{\hookrightarrow} \mathbb{C}^{n+1}
$$ and $(d_{A_{n}}^{*}\circ\mathcal{Z})(Z,\mathcal{P})=(z_{1},-(z_{1}+z_{2}),...,(-1)^{n-1}z_{n})$, where $z_{i}=Z([S_{i}])$.\\
\\For a tree $\Gamma$, recall that $\Gamma_{E}$ and $\Gamma_{V}$ denote the set of its edges and of its vertices. More generally, to any tree we can attach a bipartite labelling by assigning a sign $n(v)=\pm1$ to each vertex $v$ so that any two adjacents vertices have opposite signs. There are clearly only two labellings for any given tree. Once we have fixed a labelling, we can define $d_{\Gamma}$ the twisted combinatorial decomposition map of $\Gamma$ as: 
 $\begin{cases}
  \mathbb{Z}^{\Gamma_{V}} \longrightarrow \mathbb{Z}^{\Gamma_{E}} 
    \\ v \longmapsto n(v)\sum\limits_{v\in e}e.
  \end{cases}$ 
The following lemma makes the connection between the $d_{\Gamma}$'s for different trees.

\begin{lemma} Let $\Gamma$ and $\Gamma'$ be two Brauer trees with n edges and no exceptional vertex, and $A_{\Gamma}$, $A_{\Gamma'}$ their associated Brauer tree algebras. Let $F:\mathcal{D}^{b}(A_{\Gamma}) \stackrel{\sim}{\rightarrow} \mathcal{D}^{b}(A_{\Gamma})$ be a composition of elementary perverse equivalences. Then the following diagram commutes up to sign:
$$\xymatrix{ K_{0}(A_{\Gamma})^{*}\otimes_{\mathbb{Z}}\mathbb{C} \ar[rr]^{d_{\Gamma}^{*}} \ar[rd]_{[F]^{*}}^{\sim} && \mathbb{C}^{n+1} \\ &K_{0}(A_{\Gamma'})^{*}\otimes_{\mathbb{Z}}\mathbb{C} \ar[ur]^{d_{\Gamma'}^{*}}}.$$
\end{lemma}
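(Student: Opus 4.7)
The plan is to reduce the statement to the case of a single simple elementary perverse equivalence $L_i$ and verify the identity there by an explicit computation on $K_0$. First I would observe that if $F = F_k\circ\cdots\circ F_1$ is a composition of elementary perverse equivalences, then the two outer triangles for $F$ paste from those for each $F_j$, so it suffices to treat a single $F = L_{A_\Gamma, T}$. Then, combining Lemma 1.21 with Proposition 1.27, I would decompose the left tilt at the torsion theory associated to $T$ as a composition of simple left tilts; the shifts $[1]$ that arise act as $-1$ on $K_0$ and only contribute to the global sign. Thus it is enough to prove the lemma when $F = L_i$ and $\Gamma' = \mathcal{L}_i\Gamma$ for a single edge $i$ of $\Gamma$.

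For such a simple tilt, Lemma 1.12 computes the $K_0$-action explicitly: $[L_i]^{-1}[S'_i] = [S_i]$ while $[L_i]^{-1}[S'_j] = -[S_j^{\{i\}}]$ for $j\neq i$. Since the Brauer tree has no exceptional vertex, $S_j^{\{i\}}$ is either $S_j$ itself or the unique non-split length-two extension of $S_j$ by $S_i$, according to whether the edges $i$ and $j$ share a vertex, giving $[S_j^{\{i\}}] = [S_j] + a_{ij}[S_i]$ with $a_{ij}\in\{0,1\}$. On the tree side, the description preceding Proposition 1.17 (specialised to $|I|=1$) provides a precise combinatorial recipe for $\Gamma' = \mathcal{L}_i\Gamma$: every vertex is relabelled via $x\mapsto x'$, each edge other than $i$ is kept attached to the same pair of vertices, and the edge $i$ is relocated according to the cyclic orderings at its endpoints.

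With these formulas in hand I would verify vertex by vertex that
\[
[L_i]^{-1}\bigl(d_{\Gamma'}(v')\bigr) \;=\; \varepsilon\, d_\Gamma(v),
\]
for a single sign $\varepsilon\in\{\pm 1\}$ common to all $v\in\Gamma_V$. Expanding $d_{\Gamma'}(v') = n'(v')\sum_{e'\ni v'}[S'_{e'}]$ and applying $[L_i]^{-1}$, the cross-terms $a_{ij}[S_i]$ coming from the neighbours of $v$ that also border the edge $i$ combine to restore the missing $[S_i]$ contribution, so that the expression collapses to $\pm\, n'(v')\sum_{e\ni v}[S_e]$. The verification splits naturally according to whether $v$ is an endpoint of $i$ only, of $\phi(i)$ only, of both, or of neither, and in each case the $[S_i]$-contributions cancel pairwise.

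The main obstacle is the bookkeeping of the global sign $\varepsilon$: bipartite labellings are defined only up to an overall flip, so the ratio $n'(v')/n(v)$ could a priori depend on $v$. The content of the lemma is exactly that this ratio is uniform; this is confirmed by checking that the combinatorial recipe for $\mathcal{L}_i\Gamma$ moves the edge $i$ in a way compatible with some choice of bipartite labelling on $\Gamma'$ (possibly after flipping $n'$ globally). Once this uniformity is verified in each of the four local cases above, the lemma follows.
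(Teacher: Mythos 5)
Your overall strategy coincides with the paper's: reduce to a single simple perverse equivalence $L_{i}$, read off its action on $K_{0}$ from Lemma 1.12, verify $[L_{i}]\circ d_{\Gamma}=\pm d_{\Gamma'}$ vertex by vertex using the combinatorial description of $\mathcal{L}_{i}\Gamma$, and dualize. (The paper further reduces to $\Gamma=T_{n}$, whereas you keep $\Gamma$ arbitrary; your choice is actually the more defensible one, since the intermediate trees occurring in a composition of elementary perverse equivalences are not lines.) However, the formula on which your whole vertex-by-vertex verification rests is wrong for general trees. You assert that $[S_{j}^{\{i\}}]=[S_{j}]+a_{ij}[S_{i}]$ with $a_{ij}=1$ exactly when the edges $i$ and $j$ share a vertex. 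This is correct only when every vertex has valency at most $2$, i.e.\ for the line. In general $S_{j}^{\{i\}}\neq S_{j}$ if and only if $S_{i}$ embeds into $P_{j}/\soc(P_{j})$, which happens if and only if $i$ is the \emph{cyclic predecessor} of $j$ at their common vertex (equivalently $\ext^{1}(S_{i},S_{j})\neq 0$); adjacency alone is not enough. Concretely, for the star $\St_{3}$ with cyclic order $1,2,3$ at the centre $c$, the heart of $P_{2}$ is the uniserial module with top $S_{3}$ and socle $S_{1}$, so $[S_{2}^{\{1\}}]=[S_{1}]+[S_{2}]$ but $S_{2}^{\{3\}}=S_{2}$ even though edges $2$ and $3$ are adjacent. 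With your coefficients, applying $L_{1}$ gives $[L_{1}](e_{1}+e_{2}+e_{3})=-(e_{1}'+e_{2}'+e_{3}')$, whereas $c'$ has valency $2$ in $\mathcal{L}_{1}\St_{3}$, so $d_{\Gamma'}(c')=\pm(e_{2}'+e_{3}')$ and the diagram does not close. With the corrected coefficient one gets $[L_{1}](e_{1}+e_{2}+e_{3})=-(e_{2}'+e_{3}')$ and the uniform sign $-1$ does come out, so the error is repairable, but as written your verification fails at every vertex of valency at least $3$.

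A secondary point: your reduction of a general $L_{A,T}$ to simple perverse equivalences via Lemma 1.21 and Proposition 1.27 invokes statements about tilted \emph{hearts}, decorated by a permutation $\sigma$ of the labels. At the level of functors the two realisations differ by a Morita self-equivalence, whose effect on $K_{0}$ is a permutation of the classes of simples, not merely a sign; this cannot be absorbed into ``the global sign'' as you do. It is harmless because such a permutation comes from a graph automorphism of $\Gamma'$, under which $d_{\Gamma'}$ is equivariant up to sign, but this needs to be said.
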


\begin{proof}
We start by proceeding to two reductions. Indeed, we only have to consider the case where $\Gamma$ and $\Gamma'$ are linked by a single elementary perverse equivalence and $\Gamma$ is the line tree $T_{n}$.\\
We label the edges of $T_{n}$ as usual so that we can take $F=L_{i}:\mathcal{D}^{b}(A_{n}) \stackrel{\sim}{\rightarrow} \mathcal{D}^{b}(A_{\Gamma'})$. Also we endow the vertices of $T_{n}$ with the bipartite labelling yielding $d_{A_{n}}$. This provides $\Gamma'$ with an induced labelling  of its edges (as explained earlier) as well as a bipartite labelling of its vertices.
\\At the level of the Grothendieck groups $K_{0}(A_{n})=\mathbb{Z}^{\Gamma_{E}}:=\stackrel[i=1]{n+1}{\bigoplus}\mathbb{Z}e_{i} $, we have $[L_{i}](e_{i\pm1})=-(e_{i}'+e_{\pm 1}')$, $[L_{i}](e_{i})=e_{i}'$ and $[L_{i}](e_{j})=-e_{j}'$ for $j\neq i, i\pm1 $.\\
We have $[L_{i}]\circ d_{A_{n}}(v)=-d_{\Gamma'}(v)$ for each $v\in \Gamma_{V}=\Gamma'_{V}$ and hence:
$$\xymatrix{ \mathbb{C}^{n+1} \ar[rr]^{d_{A_{n}}} \ar[rd]^{d_{\Gamma'}} &&K_{0}(A_{n})^{*}\otimes_{\mathbb{Z}}\mathbb{C}\ar[ld]_{\sim}^{-[L_{i}]} \\ &K_{0}(A_{\Gamma'})^{*}\otimes_{\mathbb{Z}}\mathbb{C}}.$$
We now only have to dualize this diagram to complete the proof. 
\end{proof}

\subsubsection{}Recall that we can identify the Grothendieck group of the triangulated category $\mathcal{D}^{b}(A_{n})$ with the Grothendieck group of any of its bounded hearts. Then, using the previous lemma it becomes easy to compute the image of a stability condition of a given heart in $\mathbb{C}^{n+1}$.\\
For instance for the three-simples case, consider $\sigma$ a stability condition with heart $A_{\St_{3}}$, the star tree with simples $S_{i}$, for $i=1,\ldots,3$. If we denote by $z_{i}:=Z(S_{i})$ for $i=1,\ldots, 3$, then $(d_{A}^{*}\circ\mathcal{Z})(\sigma)=(z_{1},z_{2},-z_{1}-z_{2}-z_{3},z_{3})\in \mathbb{C}^{4}$.\\

\begin{proposition}
For $n\geq 3$, the map $\bar{\mathcal{Z}}:\Stab^{0}(A_{n})/\P_{n+1} \rightarrow \Im(\mathcal{Z})$ is not injective.
\end{proposition}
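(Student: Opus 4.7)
The plan is to exhibit, for every $n\geq 2$, two stability conditions lying in different $P_{n+1}$-orbits yet sharing the same central charge; the natural candidate is the pair $(\sigma,[2]\sigma)$ where $\sigma$ has heart $A_{n}\text{-mod}$. Both lie in $\Stab^{0}(A_{n})$ by the $\mathbb{R}$-stability of the connected component, and since the shift $[1]$ acts on $K_{0}(A_{n})\otimes_{\mathbb{Z}}\mathbb{C}$ as multiplication by $-1$, the shift $[2]$ acts trivially, yielding $\mathcal{Z}([2]\sigma)=\mathcal{Z}(\sigma)$. What remains is to verify that $\sigma$ and $[2]\sigma$ are not related by a pure braid.

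Suppose for contradiction that $[2]\sigma=F_{b}\sigma$ for some $b\in P_{n+1}$; matching the slicings on each side forces $F_{b}(A_{n}\text{-mod})=A_{n}\text{-mod}[2]$. Hence $\psi:=[-2]\circ F_{b}$ preserves the canonical $t$-structure, so by standard Morita theory it corresponds to an outer algebra automorphism $\alpha\in \Out(A_{n})\subset \TrPic(A_{n})$, whence $F_{b}=[2]\circ \alpha$. Comparing actions on $K_{0}(A_{n})$, both the pure braid $F_{b}$ and the shift $[2]$ act trivially, so $\alpha$ fixes every isoclass of simple $A_{n}$-module. For $n\geq 2$ the only non-trivial automorphism of the planar tree $T_{n}$ is the flip exchanging $S_{i}$ with $S_{n+1-i}$, which does not fix the simples; we therefore conclude $\alpha=\id$ and $F_{b}=[2]$, that is, $[2]\in \Sph(A_{n})$.

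The final step is a centre computation. The shift $[2]$ is central in $\Aut(\mathcal{D}^{b}(A_{n}))$ and hence central in the subgroup $\Sph(A_{n})$. By Khovanov--Seidel (Theorem 1.3) the morphism $\Br_{n+1}\to \Sph(A_{n})$ is an isomorphism, so the centre of $\Sph(A_{n})$ is the image of $Z(\Br_{n+1})=\langle \omega_{0}^{2}\rangle$; Rouquier--Zimmermann's Theorem 1.2 identifies this image with $\langle [2n]\rangle$. Writing $[2]=[2n]^{k}$ for some $k\in\mathbb{Z}$ then forces $nk=1$, which is impossible for $n\geq 2$, giving the desired contradiction. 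The most delicate point of the argument is the Morita-theoretic reduction $\psi\in \Out(A_{n})$, combined with the combinatorial rigidity of the line tree $T_{n}$; once these are granted, the elementary centre calculation closes the proof.
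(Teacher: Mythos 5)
Your strategy is genuinely different from the paper's. The paper's proof is two lines: choose $\sigma$ with standard heart whose central charge satisfies the extra linear relation making it invariant under the transposition $(1\,3)\in\mathfrak{S}_{n+1}$, and act by $g=\phi_{1}\phi_{2}\phi_{1}\in\Br_{n+1}\setminus P_{n+1}$; then $g(\sigma)$ and $\sigma$ have the same central charge but lie in different $P_{n+1}$-orbits. Your idea of using the pair $(\sigma,[2]\sigma)$ is natural, and the reductions up to the identity $F_{b}=[2]\circ\alpha$ with $\alpha\in\Out(A_{n})$ acting trivially on $K_{0}(A_{n})$ are sound (that pure braids act trivially on $K_{0}(A_{n})$ follows from the $\mathfrak{S}_{n+1}$-equivariance and surjectivity of $d_{A_{n}}$, Lemma 2.19). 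The final centre computation ($[2]\in\Sph(A_{n})$ would force $[2]\in\langle[2n]\rangle$, impossible for $n\geq 2$) is also correct.

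The gap is the step ``$\alpha$ fixes every isoclass of simple module, hence $\alpha=\id$ in $\Out(A_{n})$''. This is false: $\Out(A_{n})$ is not the automorphism group of the planar tree $T_{n}$, and its identity component is a nontrivial torus all of whose elements fix the simples. Concretely, rescale the arrows of the quiver of $A_{n}$ by scalars whose product around each length-two cycle equals a fixed $\lambda\in k^{\times}$ (the relations $(i|i-1|i)=(i|i+1|i)$ force all these products to agree); this is an algebra automorphism fixing every simple, and $\lambda$ is an invariant of its class in $\Out(A_{n})$, so it is outer whenever $\lambda\neq 1$. Hence from your hypotheses you may only conclude $F_{b}=[2]\alpha$ with $\alpha$ a possibly nontrivial element of this kernel, and your centre argument no longer applies, since $[2]\alpha$ need not be central in $\Sph(A_{n})$. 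Excluding this case requires genuinely more input: either the stable Picard group mechanism of Propositions 2.2 and 2.6 (the image of $\Sph(A_{n})$ in $\StPic(A_{n})$ is trivial, $\Out(A_{n})$ injects into $\StPic(A_{n})$, and one must check that $\Omega^{\pm 2}$ is not induced by an algebra automorphism), or else raising $F_{b}=[2]\alpha$ to the $n$-th power to get $\alpha^{n}\in\Sph(A_{n})\cap\Out(A_{n})=1$ and $b^{n}=\omega_{0}^{2}$, followed by an appeal to the classification of roots of the full twist in $\Br_{n+1}$ to see that no pure braid is an $n$-th root of $\omega_{0}^{2}$ when $n\geq 2$. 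Either repair is substantially heavier than the paper's direct construction, which you may prefer to adopt.
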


\begin{proof}
Let $\sigma \in \Stab^{0}(A_{n})$ be a stability condition with standard heart $A_{n}\text{-mod}$ and such that $z_{1}=z_{3}$ (this is indeed possible since $[S_{1}]$ and $[S_{2}]+[S_{3}]$ are linearly independent classes). Let $g=\phi_{1}\phi_{2}\phi_{1}\in \Br_{n+1}\backslash P_{n+1}$, the image of $g$ in $\mathfrak{S}_{n+1}$ is the transposition $(13)$, so the choice of $\sigma$ means that $g(\sigma)$ and $\sigma$ have the same central charge, \textit{i.e.} same image by $\mathcal{Z}$. On the other hand we have $g\not\in P_{n+1}$. Then the two stability conditions are distinct in $\Stab^{0}(A_{n})/\P_{n+1} $.
\end{proof}

Using the combinatorial decomposition maps $d_{\Gamma}$'s, we are able to distinguish, in the three-simples case, stability functions with non-isomorphic hearts. Indeed, in the three-simples case, we only have two trees to deal with, namely the line and the star. But we clearly have $\Im(d_{A_{3}}^{*})\cap \Im(d_{\St_{3}}^{*})=\emptyset$ as in the image of $d_{A_{3}}^{*}$, there are two coordinates lying in the upper half-plane and two in the lower half-plane while for $d_{\St_{3}}^{*}$, there are always three coordinates in the same half-plane. However, the following proposition tells us that this no longer holds for the four-simples case.

\begin{proposition}
There exists two stability conditions $\sigma$ and $\tau$ in $\Stab^{0}(A_{4})$ with non-isomorphic hearts but same central charges.
\end{proposition}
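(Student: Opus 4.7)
The strategy extends the combinatorial decomposition maps $d_\Gamma^*$ analysis used in the preceding paragraph to preclude non-isomorphic-heart coincidences in the three-simples case. For $n=4$, the four-edge Brauer trees with no exceptional vertex fall into three isoclasses: the line $T_{4}$, the star $\St_{4}$, and the Y-tree $Y$ (one degree-three vertex carrying one arm of length two). All three appear as Brauer trees of hearts in $\Stab^{0}(A_{4})$ by Proposition 1.37. Unlike the three-edge case, where the images of $d_{A_{3}}^{*}$ and $d_{\St_{3}}^{*}$ restricted to stability conditions are disjoint because the sign patterns of the imaginary parts (two positive/two negative for the line versus three of one sign/one of the other for the star) are incompatible, the line $T_{4}$ and the Y-tree $Y$ both admit sign pattern $(3+,2-)$ under a suitable bipartite labelling, so the sign-theoretic obstruction that killed the $n=3$ case vanishes.

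My approach is to produce an explicit pair $(\sigma,\tau)$. Take $\sigma$ with standard heart $A_{4}\text{-mod}$ and central charges $z_{1},z_{2},z_{3},z_{4}\in\mathbb{H}$ on the simples, so that $(d_{A_{4}}^{*}\circ\mathcal{Z})(\sigma)=(z_{1},-(z_{1}+z_{2}),z_{2}+z_{3},-(z_{3}+z_{4}),z_{4})$. I then look for a Y-tree heart $\mathcal{H}_{Y}$ in $\Stab^{0}(A_{4})$ whose simple classes $[T_{1}],\ldots,[T_{4}]$ in $K_{0}(A_{4})$ allow the equation $Z_{\sigma}=Z_{\tau}$ to be solved with all $w_{k}:=Z_{\tau}(T_{k})\in\mathbb{H}$. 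A natural target is the Y-tree heart with $[T_{1}]=[S_{2}]+[S_{3}]$, $[T_{2}]=[S_{4}]$, $[T_{3}]=[S_{1}]-[S_{3}]-[S_{4}]$, $[T_{4}]=[S_{3}]+[S_{4}]$: the matrix of coefficients has determinant $\pm1$, and solving $Z_{\sigma}=Z_{\tau}$ yields $w_{1}=z_{2}+z_{3}$, $w_{2}=z_{4}$, $w_{3}=z_{1}-z_{3}-z_{4}$, $w_{4}=z_{3}+z_{4}$, all lying in $\mathbb{H}$ exactly when $\Im(z_{1})>\Im(z_{3})+\Im(z_{4})$. The concrete choice $z_{1}=3i$ and $z_{2}=z_{3}=z_{4}=i$ gives $w_{1}=2i$, $w_{2}=i$, $w_{3}=i$, $w_{4}=2i$, producing a stability condition $\tau$ with the prescribed heart whose central charge coincides with $Z_{\sigma}$.

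The step I expect to be the main obstacle is realising $\mathcal{H}_{Y}$ as an actual heart in $\Stab^{0}(A_{4})$. Single tilts $\mathcal{L}_{S_{2}}(A_{4}\text{-mod})$ and $\mathcal{L}_{S_{3}}(A_{4}\text{-mod})$ do produce Y-tree hearts, but their simples include a shifted module $S_{j}[-1]$ whose class has the opposite sign to $z_{j}$ and so is incompatible with the half-plane constraint. One therefore has to reach $\mathcal{H}_{Y}$ via a longer composition of simple tilts, or equivalently by translating a simpler Y-tree heart through a non-trivial element of $\Sph(A_{4})$. Using the relations of Propositions 1.26 and 1.27 together with the explicit action of the spherical twists on Grothendieck classes, the required sequence of tilts can be exhibited; Theorem 1.36 then guarantees $\tau\in\Stab^{0}(A_{4})$. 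Since $\mathcal{H}_{\sigma}\simeq A_{4}\text{-mod}$ has Brauer tree $T_{4}$ while $\mathcal{H}_{\tau}$ has Brauer tree $Y$, the two hearts are non-isomorphic, and the proposition follows.
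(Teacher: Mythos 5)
Your choice of test data coincides exactly with the paper's: the line $T_{4}$ with charges $(3i,i,i,i)$ against a Y-tree with charges $(2i,i,i,2i)$, and your observation that the sign-pattern obstruction from the three-simples case disappears for this pair of trees is the correct starting point. The gap is in the step you yourself flag as the main obstacle: you never establish that a Y-tree heart in $\Stab^{0}(A_{4})$ whose simples have the prescribed classes $[S_{2}]+[S_{3}]$, $[S_{4}]$, $[S_{1}]-[S_{3}]-[S_{4}]$, $[S_{3}]+[S_{4}]$ actually exists. Unimodularity of this matrix is far from sufficient --- the classes of the simples of a heart reachable by tilting are constrained by the tilting combinatorics --- and ``the required sequence of tilts can be exhibited'' is precisely the assertion that needs proof. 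As written, the argument produces a linear functional taking values in $\mathbb{H}$ on four prescribed classes, but no stability condition $\tau$.

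The paper's proof is organized so that this realization problem never arises. Lemma 2.20 shows that for \emph{any} heart in $\Stab^{0}(A_{4})$ with Brauer tree $\Gamma$ the Y-tree (for instance $\mathcal{L}_{S_{1}}(A_{4}\text{-mod})$, which certainly exists), the image under $d_{A_{4}}^{*}\circ\mathcal{Z}$ of a stability condition supported on it with charge vector $w\in\mathbb{H}^{4}$ is, up to sign and a permutation of coordinates, the combinatorially computed $d_{\Gamma}^{*}(w)$. Lemma 2.18 says that the permutation action of $\mathfrak{S}_{5}$ on $\mathbb{C}^{5}$ is realized by $\Br_{5}$ acting through the $\phi_{i}$, and the shift $[1]$ realizes the sign. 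Since $d_{A_{4}}^{*}(3i,i,i,i)=(3i,-4i,2i,-2i,i)$ and $d_{\Gamma}^{*}(2i,i,i,2i)=(2i,-4i,i,3i,-2i)$ differ by the permutation $(1\,3\,5\,4)$, one applies the corresponding element $g\in\langle\Br_{5},[1]\rangle$ to the Y-tree stability condition; $g$ preserves the equivalence class of the heart and, by equivariance and injectivity of $d_{A_{4}}^{*}$, yields the required $\tau$ with $Z_{\tau}=Z_{\sigma}$. The ingredient missing from your argument is exactly this equivariance: it converts ``find a heart with these four prescribed $K_{0}$-classes'' into the one-line check that two explicit vectors of $\mathbb{C}^{5}$ lie in the same $\tilde{\mathfrak{S}}_{5}$-orbit. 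If you want to keep your formulation, you must either exhibit the tilting sequence realizing your specific classes, or replace that step by the orbit argument.
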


\begin{proof}
We consider the following two trees whose edges are endowed with a complex number such that we have $d_{A_{4}}(3i,i,i,i,i)=(3i,-4i,2i,-2i,i)$ and $d_{\Gamma}(2i,i,i,2i)=(2i,-4i,i,3i,-2i)=(1\;3\;5\;4).(3i,-4i,2i,-2i,i)$:
 $$T_{4}: \xymatrix{\bullet \ar@{-}[r]^{3i} & \bullet \ar@{-}[r]^{i}& \bullet \ar@{-}[r]^{i} &\bullet \ar@{-}[r]^{i}& \bullet}$$
$$ \xymatrix{
 & {\bullet} \ar@{-}[dr]^{2i} \\
\Gamma: & &   {\bullet}  \ar@{-}[r] ^{i}\ar@{-}[dl]_{i}&    {\bullet}  \ar@{-}[r]^{2i}  &   {\bullet} \\
&   {\bullet}} 
  $$
Here we have chosen the labelling of the vertices of the trees so that their far right vertices have minus sign. As the decomposition maps are defined up to sign, another choice of labelling would only lead to a shift by one of those stability conditions.\\
We conclude that in $\Stab^{0}(A_{4})$, we have two stability conditions $(Z,A_{4})$ and $(Z',A_{\Gamma})$ that have the same image under $(d_{A_{4}}^{*}\circ\mathcal{Z})$ and by injectivity of $d_{A_{4}}^{*}$, this says that $\bar{\mathcal{Z}}$ is not injective.
\end{proof}

This proposition should be related to a conjecture of Thomas (cf. [Tho06]). Indeed in the last remark of [Tho06], Thomas conjectured that two stability conditions with the same central charge on a Calabi-Yau category should only differ by an autoequivalence of the category that is the identity at the level of the Grothendieck group. However as we are working over a symmetric algebra, our setting loosely corresponds to a 0-Calabi-Yau category. Hence, this proposition can be thought as providing a limitation to Thomas's conjecture.

\subsubsection{Description of $Im(\mathcal{Z})$}
Inside $\Stab^{0}(A_{3})$, we denote by $\mathcal{U}:=U(A_{3})\sqcup U(\St_{3})$ where the tilted heart $\St_{3}$ is realised from $A_{3}$ by the elementary perverse equivalence $L_{1}$. We denote by $Z_{\mathcal{U}}:=\mathcal{Z}(\mathcal{U})$. Even though $\mathcal{U}$ is not quite a fundamental domain, thanks to Proposition $2.6$ we have that the orbit of $\mathcal{U}$ by the extended braid group action $\langle\Br_{4},[1]\rangle$ will be all of $\Stab^{0}(A_{3})$. Down to the Grothendieck group level, this yields to an action of $\tilde{\mathfrak{S}}_{4}=\langle\mathfrak{S}_{4}, \gamma \rangle$ where $\gamma$ acts by multiplication by $-1$, on $K_{0}(A_{3})^{*}\otimes_{\mathbb{Z}}\mathbb{C}$. Hence as the injectivity of $d_{A}^{*}$ gives $\tilde{\mathfrak{S}}_{4}.Z_{\mathcal{U}}=(d_{A}^{*})^{-1}(\tilde{\mathfrak{S}}_{4}.d_{A}^{*}(Z_{\mathcal{U}}))$, we have that $\Im(\mathcal{Z})=(d_{A}^{*})^{-1}(\tilde{\mathfrak{S}}_{4}.d_{A}^{*}(Z_{\mathcal{U}}))$. The following lemma provides us with an explicit description of $\tilde{\mathfrak{S}}_{4}.d_{A}^{*}(Z_{\mathcal{U}})$ and hence of $\Im(\mathcal{Z})$.

\begin{lemma}
We have $\tilde{\mathfrak{S}}_{4}.d_{A}^{*}(Z_{\mathcal{U}})=\Im(d_{A}^{*})\backslash ((\epsilon_{i}=0, i=1\ldots 4)\sqcup \bigsqcup\limits_{\sigma \in \tilde{\mathfrak{S}}_{4}} \sigma.l)$, where $l$ is the line $\epsilon_{1}=\epsilon_{3}=-\epsilon_{2}=-\epsilon_{4}$.
\end{lemma}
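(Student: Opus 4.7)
The plan is to combine the explicit parametrisations of $d_A^*(Z_{U(A_3)})$ and $d_A^*(Z_{U(\St_3)})$ with a case analysis on the positions of the four coordinates $\epsilon_i$ relative to the upper and lower half-planes. A stability condition in $U(A_3)$ with $z_i = Z(S_i) \in \mathbb{H}$ maps to $(z_1, -z_1-z_2, z_2+z_3, -z_3)$ (signature $(+,-,+,-)$), while one in $U(\St_3)$ maps to $(z_1, z_2, -z_1-z_2-z_3, z_3)$ (signature $(+,+,-,+)$), as computed just before the lemma. Both images visibly lie in $\Im(d_A^*)$, the hyperplane $\{\epsilon : \sum_i \epsilon_i = 0\}$, and the induced $\tilde{\mathfrak{S}}_4$-action on this hyperplane is coordinate permutation composed with global sign flip.

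I would then fix $\epsilon \in \Im(d_A^*)$ with all $\epsilon_i \neq 0$ and partition the analysis by its ``signature'', meaning the number of coordinates that lie in $\mathbb{H}$ (the remaining ones being in $-\mathbb{H}$ since a non-zero complex number lies in $\mathbb{H}$ or in $-\mathbb{H}$). Closure of $\mathbb{H}$ under addition rules out the signatures with all four or none of the $\epsilon_i$ in $\mathbb{H}$, and the signatures ``one in $\mathbb{H}$'' and ``three in $\mathbb{H}$'' are interchanged by $\gamma$. In each of these unbalanced cases, a suitable transposition puts $\epsilon$ into the shape $(+,+,-,+)$ required for $d_A^*(Z_{U(\St_3)})$; the sum-zero relation then automatically produces the ``central'' coordinate $-z_1-z_2-z_3$, so no further wall condition appears.

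The key technical step is the balanced case. After a suitable permutation, I may assume $\epsilon = (a,b,c,d)$ has signature $(+,-,+,-)$. The observation is that $\epsilon$ lies in $d_A^*(Z_{U(A_3)})$ precisely when the additional wall condition $a+b \in -\mathbb{H}$ holds, and that $(13)(24)\cdot\epsilon = (c,d,a,b)$ lies in $d_A^*(Z_{U(A_3)})$ precisely when $c+d = -(a+b) \in -\mathbb{H}$. Hence one of the two recovers $\epsilon$ into the tile, unless $a+b = 0$. In that degenerate sub-case one has $\epsilon = (a,-a,c,-c)$, and applying $(13)$ (respectively $(24)$) places $\epsilon$ into the tile provided $c-a \in -\mathbb{H}$ (respectively $a-c \in -\mathbb{H}$). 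One of these always holds unless $a=c$, which is precisely the condition $\epsilon \in l$.

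It remains to identify the complement. Points with some $\epsilon_i = 0$ cannot belong to any tile since all simples of $A_3\text{-mod}$ and of $\St_3$ have non-zero central charge; and the $\tilde{\mathfrak{S}}_4$-orbit of $l$ consists of exactly three complex lines, one for each partition of $\{1,2,3,4\}$ into two pairs, which are exactly the loci missed by the balanced-case analysis above. The main obstacle is the balanced signature step: one must simultaneously normalise the signature via an outer permutation and correct the wall condition $a+b \in -\mathbb{H}$ via the Klein-four subgroup (and treat the further degeneracy when $a+b=0$), recognising that the only genuine obstruction is the coincidence that places $\epsilon$ on an orbit of $l$.
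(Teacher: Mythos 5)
Your proof is correct and follows essentially the same route as the paper's: write down the explicit parametrisations $(z_{1},-(z_{1}+z_{2}),z_{2}+z_{3},-z_{3})$ and $(z_{1},z_{2},-(z_{1}+z_{2}+z_{3}),z_{3})$ of the two tiles, observe that the sum-zero condition together with the additive closure of $\mathbb{H}$ forces a signature of one, two or three coordinates in $\mathbb{H}$, and resolve each signature by an element of $\tilde{\mathfrak{S}}_{4}$. If anything, your handling of the balanced signature is more complete than the paper's, which only records the dichotomy $\epsilon_{1}+\epsilon_{2}\in\pm\mathbb{H}$ and leaves the degenerate sub-case $\epsilon_{1}+\epsilon_{2}=0$ (precisely where the line $l$ and its two translates arise) implicit; your use of $(13)$ and $(24)$ to settle that sub-case fills this small gap.
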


\begin{proof}
Firstly, we write down the image of $U(A_{3})$ and $U(\St_{3})$. We have 
\begin{eqnarray}
d_{A}^{*}\circ \mathcal{Z}(U(A_{3}))=\{(z_{1},-(z_{1}+z_{2}),z_{2}+z_{3},-z_{3});\: z_{i}\in \mathbb{H} \}
\end{eqnarray} and
\begin{eqnarray}
d_{A}^{*}\circ \mathcal{Z}(U(\St_{3}))=\{(z_{1},z_{2},-(z_{1}+z_{2}+z_{3}),z_{3});\: z_{i}\in \mathbb{H} \}.
\end{eqnarray} From that, we easily deduce the inclusion $(\subseteq)$.\\
The image $\Im(d_{A}^{*})$ consists of $\{(\epsilon_{1},\ldots,\epsilon_{4})\in \mathbb{C}^{4}\vert\;  \sum_{i}\epsilon_{i}=0\}$. Consider now $(u_{1},u_{2},u_{3},u_{4})$ belonging to the right hand side. If three out of the four coordinates lie in the same upper half-plane, it is in the $\tilde{\mathfrak{S}}_{4}$-orbit of $(3)$. Otherwise, we can assume that $u_{1},u_{3}\in \mathbb{H}$ and $u_{2},u_{4}\in \mathbb{C}\backslash\mathbb{H}$. Now either $u_{1}+u_{2}<0$ and $(u_{1},u_{2},u_{3},u_{4})\in d_{A}^{*}\circ \mathcal{Z}(U(A_{3}))$ or $u_{1}+u_{2}>0$ and $(u_{1},u_{2},u_{3},u_{4})\in (1\:3)(2\:4).d_{A}^{*}\circ \mathcal{Z}(U(A_{3}))$. This gives us the other inclusion and hence concludes our proof.
\end{proof}

We can then deduce from this lemma a description of the image of $\mathcal{Z}$, where $z_{i}=\mathcal{Z}([S_{i}])$ for $S_{1},S_{2},S_{3}$ the simple $A_{3}$-modules.
 
\begin{corollary}
We have $\Im(\mathcal{Z})=\mathbb{C}^{3}\backslash ((z_{1}=0) \cup (z_{3}=0) \cup (z_{1}+z_{2}=0) \cup (z_{2}+z_{3}=0) \cup \bigsqcup\limits_{\sigma \in \tilde{\mathfrak{S}}_{4}} \sigma.l)$, where $l$ is the line $\{(z,0,z)|\; z\in \mathbb{C}\}$.
 \end{corollary}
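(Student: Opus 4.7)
The plan is to deduce this corollary directly from Lemma 2.23 by pulling back its description of $\tilde{\mathfrak{S}}_4.d_A^*(Z_{\mathcal{U}})$ along the injective linear map $d_A^*$. First I would identify $K_0(A_3)^*\otimes_{\mathbb{Z}}\mathbb{C}$ with $\mathbb{C}^3$ via the coordinates $(z_1,z_2,z_3) = (Z([S_1]),Z([S_2]),Z([S_3]))$, so that the formula from Section 2.5.1 reads
$$d_A^*\circ \mathcal{Z}(Z,\mathcal{P}) = (z_1,\,-(z_1+z_2),\,z_2+z_3,\,-z_3).$$
From the paragraph just before Lemma 2.23 we have $\Im(\mathcal{Z}) = (d_A^*)^{-1}(\tilde{\mathfrak{S}}_4.d_A^*(Z_{\mathcal{U}}))$, so applying $(d_A^*)^{-1}$ to the conclusion of Lemma 2.23 reduces everything to identifying the preimages of the excluded loci in $\mathbb{C}^3$.

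Next I would compute these preimages coordinate-by-coordinate. The four coordinate hyperplanes $\{\epsilon_i = 0\}$ pull back, by the explicit formula above, to $\{z_1=0\}$, $\{z_1+z_2=0\}$, $\{z_2+z_3=0\}$ and $\{z_3=0\}$ respectively, exactly the four hyperplanes appearing in the corollary. For the exceptional line $l = \{(\epsilon,-\epsilon,\epsilon,-\epsilon) : \epsilon \in \mathbb{C}\}$, solving the linear system $z_1 = \epsilon$, $z_1+z_2 = \epsilon$, $z_2+z_3 = \epsilon$, $z_3 = \epsilon$ forces $z_2 = 0$ and $z_1 = z_3 = \epsilon$, so $(d_A^*)^{-1}(l) = \{(z,0,z) : z \in \mathbb{C}\}$, which is precisely the line $l$ of the corollary.

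Finally, to handle each translate $\sigma.l$ I would invoke the $\tilde{\mathfrak{S}}_4$-equivariance of $d_A^*$: Lemma 2.19 provides equivariance for the $\mathfrak{S}_4$-part (permutations on $\mathbb{C}^4$ versus spherical twists on $K_0$), and the extra generator $\gamma$ acts as multiplication by $-1$ on both sides, so $d_A^*$ intertwines the two $\tilde{\mathfrak{S}}_4$-actions. Consequently $(d_A^*)^{-1}(\sigma.l) = \sigma.(d_A^*)^{-1}(l)$ inside $\mathbb{C}^3$, with $\tilde{\mathfrak{S}}_4$ now acting via the twists $\phi_i$ together with the sign, and assembling these pieces yields the stated description of $\Im(\mathcal{Z})$.

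There is no substantive obstacle beyond bookkeeping once Lemma 2.23 is granted; the only subtle point is to keep in mind that the group action on $\mathbb{C}^3$ implicit in the corollary's right-hand side is not the naive permutation action but the transported one via the $\phi_i$'s and the shift, which is exactly what $(d_A^*)^{-1}$ conjugates back from the permutation-and-sign action on $\mathbb{C}^4$.
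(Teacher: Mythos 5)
Your proposal is correct and matches the paper's (implicit) argument exactly: the corollary is obtained by pulling back the description in the preceding lemma along the injective map $d_{A}^{*}$, whose image is the hyperplane $\sum_{i}\epsilon_{i}=0$, and your coordinate computations of the preimages of the four hyperplanes and of the line $l$, together with the $\tilde{\mathfrak{S}}_{4}$-equivariance from the earlier lemma, are precisely the bookkeeping the paper leaves to the reader. No issues.
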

We now need another algebraic topology result, that would help us compute the fundamental group of $\Im(\mathcal{Z})$, as it will tell us that removing lines does not alter the fundamental group.

\begin{proposition}
Let $M$ be a connected complex manifold and $N$ a proper closed submanifold of $M$ such that the codimension of $N$ in $M$ is at least $2$. For $x\in M\backslash N$, the inclusion $i: M\backslash N \hookrightarrow M$ induces an isomorphism of fundamental groups $i^{*}:\pi_{1}(M\backslash N,x) \stackrel{\sim}{\rightarrow} \pi_{1}(M,x)$.
\end{proposition}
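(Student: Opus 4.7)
The plan is to prove this by the classical transversality / general position argument, showing both surjectivity and injectivity of $i^{*}$. The key numerical fact is that since $N$ is a complex submanifold of complex codimension at least $2$, its real codimension in $M$ is at least $4$, i.e.\ $\dim_{\mathbb{R}} N \leq \dim_{\mathbb{R}} M - 4$.

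For surjectivity: given a continuous loop $\gamma : (S^{1}, *) \to (M, x)$, I would first approximate $\gamma$ by a smooth loop based at $x$ and homotopic to it rel basepoint (using that $M$ is a smooth manifold and $\{x\}$ is disjoint from the closed set $N$, so smoothing can be performed in a neighbourhood where $\gamma$ avoids $N$). Then Thom's transversality theorem allows one to perturb this smooth loop, rel basepoint, to be transverse to $N$. Since
\[
\dim_{\mathbb{R}} S^{1} + \dim_{\mathbb{R}} N \;\leq\; 1 + (\dim_{\mathbb{R}} M - 4) \;<\; \dim_{\mathbb{R}} M,
\]
transversality forces the image of the perturbed loop to miss $N$ altogether, giving a loop in $M \setminus N$ homotopic to $\gamma$ in $M$.

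For injectivity: suppose two loops $\gamma_{0}, \gamma_{1}$ in $M \setminus N$ are homotopic in $M$ via a basepoint-preserving homotopy $H : [0,1] \times S^{1} \to M$. I would smooth $H$ rel its boundary (where it already avoids $N$, using again that $N$ is closed and the boundary is compact inside the open set $M \setminus N$), and then apply relative transversality to perturb $H$, rel boundary, to be transverse to $N$. This time the domain is $2$-dimensional, so
\[
\dim_{\mathbb{R}}([0,1] \times S^{1}) + \dim_{\mathbb{R}} N \;\leq\; 2 + (\dim_{\mathbb{R}} M - 4) \;<\; \dim_{\mathbb{R}} M,
\]
and transversality again forces the perturbed homotopy to miss $N$ entirely, producing a homotopy in $M \setminus N$ between $\gamma_{0}$ and $\gamma_{1}$.

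The main obstacle is technical rather than conceptual: one must carry out the smoothing and perturbations \emph{relative} to the basepoint (and, in the injectivity argument, relative to the full boundary of the square) while keeping control that the perturbed map stays close to the original. This is standard once one works in a tubular neighbourhood of the boundary data, which already lies in the open set $M \setminus N$; any sufficiently $C^{0}$-small perturbation will remain inside $M \setminus N$ near the parts that are already there. The numerical estimates above then guarantee that transverse intersection with $N$ is empty everywhere else, which is precisely what the argument needs. The result is classical and can be found, for example, in Bredon's \emph{Topology and Geometry}.
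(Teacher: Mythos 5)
Your proposal is correct and follows essentially the same general-position argument as the paper: perturb loops (resp.\ homotopies of loops) rel basepoint to miss $N$, using that the real codimension of $N$ is at least $4$ so that $1$- and $2$-dimensional domains can be pushed off it. You simply make explicit the smoothing and Thom transversality steps that the paper leaves implicit.
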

\begin{proof}
Let $\gamma:[0,1]\rightarrow M$ a loop in $M$ with base point $x$. As $N$ is of real codimension greater than two and the interval $[0,1]$ of real dimension one, we can perturb $\gamma$ into a new loop $\gamma'$ with same base point so that $\Im(\gamma')\cap N=\emptyset$. But since $[\gamma]=[\gamma']$ in $\pi_{1}(M,x)$ and $\gamma'$ is a loop in $M\backslash N$, so that $i^{*}$ is a surjective homomorphism.\\
Consider now $\gamma:[0,1]\rightarrow M\backslash N$ with $g\in \ker(i^{*})$ so that there exists a homotopy $H:[0,1]^{2}\rightarrow M$ from $H\vert_{[0,1]\times \{0\}}=\gamma$ and $H\vert_{[0,1]\times \{1\}}=0_{x}$, the constant loop at $x$. But since $N$ is of real codimension at least four by assumption while $[0,1]^{2}$ is of real dimension two. Just as before, we can perturb $H$ into a new homotopy  $H':[0,1]^{2}\rightarrow M$ from $H'\vert_{[0,1]\times \{0\}}=\gamma$ and $H'\vert_{[0,1]\times \{1\}}=0_{x}$ such that $\Im(H')\cap N=\emptyset$, so that $H'$ is now a homotopy in $M\backslash N$ \textit{i.e.} $i^{*}$ is injective.
\end{proof}

\begin{proposition}
Let $x\in \Im(\mathcal{Z})$, we have that $\pi_{1}(\Im(\mathcal{Z}),x)=\mathbb{Z}^{4}$.
\end{proposition}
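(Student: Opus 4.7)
My plan is to reduce the computation of $\pi_{1}(\Im(\mathcal{Z}))$ to a classical fundamental-group computation for the complement of an arrangement of four lines in $\mathbb{CP}^{2}$, which can then be handled by Zariski's theorem.

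First, I would apply the previous proposition iteratively to strip off the finitely many complex lines $\sigma.l$ for $\sigma\in \tilde{\mathfrak{S}}_{4}$. Each such line is a smooth complex submanifold of complex codimension two (equivalently real codimension four) in $\mathbb{C}^{3}$, so after removing any number of them the ambient remains an open connected complex manifold in which the next line $\sigma.l$ still meets the current ambient in a closed smooth submanifold of complex codimension two (its intersection with the already-removed hyperplanes and lines is a finite set of points, which we simply delete). Iterated application of the proposition then yields
\[
\pi_{1}(\Im(\mathcal{Z}),x) \cong \pi_{1}(M,x),
\]
where $M := \mathbb{C}^{3} \setminus (H_{1}\cup H_{2}\cup H_{3}\cup H_{4})$ is the complement of the central hyperplane arrangement $\mathcal{A}$ consisting of $H_{1}=\{z_{1}=0\}$, $H_{2}=\{z_{3}=0\}$, $H_{3}=\{z_{1}+z_{2}=0\}$ and $H_{4}=\{z_{2}+z_{3}=0\}$.

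Next, I would exploit the central structure of $\mathcal{A}$. The scaling $\mathbb{C}^{*}$-action on $\mathbb{C}^{3}$ restricts to a free action on $M$, and since $H_{1}\subset \mathcal{A}$ every point of $M$ satisfies $z_{1}\neq 0$. Hence the normalisation $z_{1}=1$ furnishes a global section $s:[z_{1}:z_{2}:z_{3}]\mapsto (1,z_{2}/z_{1},z_{3}/z_{1})$ of the principal $\mathbb{C}^{*}$-bundle $M\to M/\mathbb{C}^{*}$, trivialising it and giving a homeomorphism $M\cong \mathbb{C}^{*}\times (\mathbb{CP}^{2}\setminus\overline{\mathcal{A}})$, where $\overline{\mathcal{A}}$ is the projectivised arrangement of four lines. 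Therefore $\pi_{1}(M)\cong \mathbb{Z}\times \pi_{1}(\mathbb{CP}^{2}\setminus\overline{\mathcal{A}})$.

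Finally, a direct computation of the six pairwise intersections $\overline{H}_{i}\cap \overline{H}_{j}$ in $\mathbb{CP}^{2}$ produces six distinct points, so $\overline{\mathcal{A}}$ is an arrangement of four lines in general position (no three concurrent). Zariski's classical theorem on the fundamental group of the complement of a generic line arrangement in $\mathbb{CP}^{2}$ then yields $\pi_{1}(\mathbb{CP}^{2}\setminus \overline{\mathcal{A}})\cong \mathbb{Z}^{3}$, and combining with the previous step,
\[
\pi_{1}(\Im(\mathcal{Z}),x) \cong \mathbb{Z}\times \mathbb{Z}^{3} = \mathbb{Z}^{4}.
\]
The only input beyond the results stated in the paper is Zariski's theorem on line arrangements; the rest consists of the general-position verification and the splitting of the $\mathbb{C}^{*}$-bundle, which I expect to be the routine but most delicate steps.
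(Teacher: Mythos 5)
Your argument is correct and takes essentially the same route as the paper: first strip the codimension-two lines $\sigma.l$ using the preceding proposition, then reduce along the trivial $\mathbb{C}^{*}$-bundle to a two-dimensional arrangement complement whose fundamental group is $\mathbb{Z}^{3}$. The only difference is cosmetic --- the paper passes to the affine chart $z_{1}=1$ (so its base is $\mathbb{C}^{2}$ minus three lines in general position, handled via [OrTe, Theorem 5.57]) whereas you keep the projective picture of four generic lines in $\mathbb{CP}^{2}$ and invoke Zariski's theorem; these are the same space and the same computation.
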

\begin{proof}
Thanks to the previous lemma, we now have a better chance of computing $\pi_{1}(\Im(\mathcal{Z}),x)$ for any $x\in \Im(\mathcal{Z})$. Indeed, when computing the fundamental group of $\Im(\mathcal{Z})$, the lines removed do not change the fundamental group according to the previous proposition. Hence we are now left to consider the space $X_{3}:=\mathbb{C}^{3}\backslash ((z_{1}=0) \cup (z_{3}=0) \cup (z_{1}+z_{2}=0) \cup (z_{2}+z_{3}=0) $ where  $\pi_{1}(\Im(\mathcal{Z}))=\pi_{1}(X_{3})$. We have a $\mathbb{C}^{\times}$-bundle $X_{3} \rightarrow \widetilde{X_{3}}, (z_{1},z_{2},z_{3})\mapsto (z_{2}z_{1}^{-1},z_{3}z_{1}^{-1})$ where $\widetilde{X_{3}}:=\mathbb{C}^{2}\backslash ((x=0)\sqcup (y+1=0) \sqcup (x+y=0))$. Henceforth, it only remains to compute the fundamental group of $\widetilde{X_{3}}$. Using [OrTe, Theorem 5.57], we found that $\pi_{1}(\widetilde{X_{3}})=\mathbb{Z}^{3}$ and hence we can conclude that $\pi_{1}(\Im(\mathcal{Z}),x)=\mathbb{Z}^{4}$.
\end{proof}

The collection of all the previous results leads to the following rather surprising result.
\begin{theorem}
The Bridgeland local homeomorphism $\mathcal{Z}:\Stab^{0}(A_{3})\rightarrow \Im(\mathcal{Z})$ is \textbf{not} a covering map.
\end{theorem}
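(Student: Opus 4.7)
The plan is to argue by contradiction: assume $\mathcal{Z}\colon \Stab^{0}(A_{3})\to \Im(\mathcal{Z})$ is a covering map. Since $\Stab^{0}(A_{3})$ is simply connected by Theorem 2.16, and since $\Im(\mathcal{Z})$ is path-connected and locally path-connected (being an open subset of $\mathbb{C}^{3}$, by Corollary 2.23), this would force $\mathcal{Z}$ to be a universal cover. The standard theory of covering spaces then identifies the deck transformation group $\mathrm{Deck}(\mathcal{Z})$ with $\pi_{1}(\Im(\mathcal{Z}))$, which is $\mathbb{Z}^{4}$ by Proposition 2.24. Hence it suffices to exhibit a non-abelian subgroup of $\mathrm{Deck}(\mathcal{Z})$ to reach a contradiction.

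The natural candidate is the pure braid group $P_{4}=\ker(\Br_{4}\to \mathfrak{S}_{4})$. By Proposition 2.3, $\Br_{4}$ acts faithfully on $\Stab^{0}(A_{3})$, so $P_{4}$ acts faithfully as well. On $K_{0}(A_{3})$ each spherical twist $\phi_{i}$ acts as a reflection of order two, so the $\Br_{4}$-action factors through the quotient $\Br_{4}\twoheadrightarrow \mathfrak{S}_{4}$ at the Grothendieck group level; in particular any $g\in P_{4}$ acts trivially on $K_{0}$. Consequently, for every $(Z,\mathcal{P})\in \Stab^{0}(A_{3})$,
\[
\mathcal{Z}(g\cdot (Z,\mathcal{P}))=Z\circ g^{-1}=Z=\mathcal{Z}(Z,\mathcal{P}),
\]
so $P_{4}$ acts by deck transformations of $\mathcal{Z}$, yielding an injection $P_{4}\hookrightarrow \mathrm{Deck}(\mathcal{Z})\simeq \mathbb{Z}^{4}$.

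The contradiction then comes from the fact that $P_{4}$ is non-abelian: for instance the strand-forgetting map gives a surjection $P_{4}\twoheadrightarrow P_{3}\simeq F_{2}\times \mathbb{Z}$, whose target is visibly non-abelian, so neither is $P_{4}$. Since no non-abelian group embeds in $\mathbb{Z}^{4}$, the covering hypothesis is untenable. The most delicate point in writing this up carefully is verifying that $P_{4}$ actually lands inside $\mathrm{Deck}(\mathcal{Z})$ rather than a larger stabiliser; this is handled by combining the faithfulness of the $\Br_{4}$-action on $\Stab^{0}(A_{3})$ (Proposition 2.3) with the factorisation of the $\Br_{4}$-action on $K_{0}$ through $\mathfrak{S}_{4}$, both of which are already in place.
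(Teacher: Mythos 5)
Your proof is correct, but it takes a genuinely different route from the paper's. The paper argues locally and directly: starting from a stability condition with standard heart it lets $Z([S_{2}])$ tend to $0$; the hyperplane $z_{2}=0$ is not excised from $\Im(\mathcal{Z})$ (Lemma 2.22 and Corollary 2.23), so the limiting central charge lies in the image, yet the degeneration cannot be followed inside $\Stab^{0}(A_{3})$ because in the standard heart $S_{2}$ is simple, hence semistable, and its central charge cannot tend to $0$ --- so such points of $\Im(\mathcal{Z})$ admit no evenly covered neighbourhood. You instead run a global deck-group obstruction: a covering with simply connected, path-connected total space (Theorem 2.16) over the open, locally path-connected set $\Im(\mathcal{Z})$ is universal, forcing $\mathrm{Deck}(\mathcal{Z})\cong\pi_{1}(\Im(\mathcal{Z}))=\mathbb{Z}^{4}$ (Proposition 2.24), which is incompatible with the faithful, central-charge-preserving action of the non-abelian group $P_{4}$; the two inputs you need --- faithfulness (Proposition 2.3, ultimately Khovanov--Seidel) and triviality of the $P_{4}$-action on $K_{0}$ (Lemma 2.18, equivalently the commutative triangle before Proposition 2.21) --- are indeed both in place, and your identification of the delicate point is the right one. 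Both arguments are sound given what Section 2 has established. Yours is formally cleaner and robust (it only needs $\pi_{1}(\Im(\mathcal{Z}))$ to contain no copy of $P_{4}$, not the exact value $\mathbb{Z}^{4}$), but it consumes the two hardest results of the section; the paper's argument is essentially self-contained modulo the description of $\Im(\mathcal{Z})$ and, more importantly, pinpoints the concrete geometric mechanism of the failure, namely that the unexcised hyperplanes are loci over which fibres of $\mathcal{Z}$ degenerate rather than being evenly covered.
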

\begin{proof}
We let $\sigma\in \Stab^{0}(A_{3})$ a stability condition with standard heart and let $Z([S_{2}])$ tend to $0$. This is possible in $\Im(\mathcal{Z})$ as the complex hyperplane $Z([S_{2}])=-(\varepsilon_{1}+\varepsilon_{2})=0$ has not been excised in $\Im(\mathcal{Z})$ according to Lemma $2.22$. On the other hand, this degeneration is not permitted in $\Stab^{0}(A_{3})$ as by definition, the central charge of any semistable module (in particular any simple module) cannot tend to $0$. The key point is that in some hearts $Z([S_{2}])$ corresponds to a simple so it cannot tend to 0, while in other hearts there is no such restriction.
\end{proof}

\section{About co-stability conditions}
Some triangulated categories have empty space of stability conditions, like the compact derived category of the dual numbers over an algebraically closed field. This motivated J\o rgensen and Pauksztello to introduce in [JoPa11] the space of co-stability conditions attached to a triangulated category as a `continuous' version of co-$t$-structures. They show that a set of \textit{nice} co-stability conditions is, just like for stability conditions, a complex manifold. By \textit{nice} is meant co-stability conditions satisfying the so-called \textit{condition (S)}, which will turn out to be quite a restriction in the case of symmetric algebras as we are about to see.\\
We do not wish here to get into the details of the theory of co-stability conditions, but one can think of co-stability conditions as a dual version of stability conditions, just like going from $t$-structures to co-$t$-structures. We recall the following definition from [JoPa11].

\begin{definition}
A co-slicing $\mathcal{Q}$ of a triangulated category $\mathcal{C}$ satisfies condition (S) if for every $\phi \in \mathbb{R}$ and any two non-isomorphic indecomposables $x,y \in \mathcal{Q}(\phi)$ we have $\Hom_{\mathcal{C}}(x,y)=0$.
\end{definition}

Also, just as for stability conditions, a co-stability condition is given by a co-heart and a co-stability function on its co-heart which has the split Harder-Narasimhan property.\\
As the reader might not be satisfied by the result of Theorem 2.30, one could hope that the space of co-stability conditions of $\Ho({A_{3}}\text{-proj})$ offers a better result. Alas, as we are about to see, it is in some sense even worse behaved, and not only for Brauer tree algebras but for any symmetric algebra.\\
In the following, we denote by $A$ a finite-dimensional symmetric $k$-algebra. We assume that $A$ is not semi-simple.%A=k won't work
 The abelian category ${A}$\text{-mod} of $\mathcal{D}^{b}(A)$ is to hearts what the additive category ${A}$\text{-proj} is to co-hearts in $\Ho({A}\text{-proj})$.\\
For all co-stability conditions $(Z,\mathcal{Q})$ with co-heart ${A}$\text{-proj}, each projective indecomposable $A$-module belongs to a slice $\mathcal{Q}(\phi)$ for some $\phi \in (0;1]$. But as $A$ is a symmetric algebra, for any $P,Q$ two $A$-projectives, one have $\Hom_{A}(P,Q)\simeq\Hom_{A}(Q,P)^{*}$. Hence, because of the condition (S), there is no co-stability conditions with co-heart ${A}$\text{-proj}. Motivated by this remark, we obtain the stronger following result.

\begin{theorem}
Let $A$ be a symmetric non semi-simple $k$-algebra. Then the space of co-stability conditions $\CoStab$$(\Ho({A}\text{-proj}))$ is empty.
\end{theorem}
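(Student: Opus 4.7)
The plan is to assume the existence of a co-stability condition $(Z, \mathcal{Q})$ on $\Ho(A\text{-proj})$ satisfying condition (S) and derive a contradiction. The crucial leverage comes from the Serre-type duality enjoyed by $\Ho(A\text{-proj})$ when $A$ is symmetric: since $A^{*} \simeq A$ as $A$-bimodules, the Nakayama functor $-\otimes_{A}^{\mathbf{L}} A^{*}$ is isomorphic to the identity, yielding
$$\Hom_{\Ho(A\text{-proj})}(X,Y) \simeq D\Hom_{\Ho(A\text{-proj})}(Y,X)$$
for all $X, Y$. By the split Harder--Narasimhan property built into any co-stability condition, every indecomposable object of $\Ho(A\text{-proj})$ lies in a single slice $\mathcal{Q}(\phi)$. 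For two non-isomorphic indecomposables $X \in \mathcal{Q}(\phi)$ and $Y \in \mathcal{Q}(\phi')$: if $\phi = \phi'$, condition (S) directly gives $\Hom(X,Y) = 0$; if $\phi \neq \phi'$, the Hom-vanishing axiom of the co-slicing annihilates one of $\Hom(X,Y), \Hom(Y,X)$, and the Serre-type duality above propagates the vanishing to both. So any co-stability condition on $\Ho(A\text{-proj})$ forces \emph{all} Hom's between non-isomorphic indecomposables to vanish.

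To contradict this I would exhibit a pair of non-isomorphic indecomposables in $\Ho(A\text{-proj})$ with non-zero Hom. Assume (by Morita invariance) that $A$ is basic, and write $1 = \sum e_{i}$ as a sum of primitive orthogonal idempotents. Since $A$ is non-semisimple, $\rad(A) \neq 0$, and there exist indices $i, j$ with $e_{i} \rad(A) e_{j} \neq 0$. If $i \neq j$, the indecomposable projectives $P_{i}, P_{j}$ are non-isomorphic and $\Hom_{A}(P_{j}, P_{i}) \neq 0$, already contradicting the previous paragraph. Otherwise, pick $0 \neq f \in \rad(\End_{A}(P_{i}))$ and consider the two-term perfect complex $C = (P_{i} \xrightarrow{\,f\,} P_{i})$ concentrated in degrees $-1$ and $0$. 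A routine homotopy-level computation shows that $\End_{\Ho(A\text{-proj})}(C)$ is local (so $C$ is indecomposable), that both cohomologies $H^{-1}(C) = \ker(f)$ and $H^{0}(C) = P_{i}/\Im(f)$ are non-zero (so $C \not\simeq P_{i}$ in $\Ho(A\text{-proj})$), and finally that
$$\Hom_{\Ho(A\text{-proj})}(P_{i}, C) \simeq \End_{A}(P_{i})\big/\bigl(f \cdot \End_{A}(P_{i})\bigr)$$
is non-zero since $f$ lies in the maximal ideal of the local ring $\End_{A}(P_{i})$. Thus $(P_{i}, C)$ is the required pair.

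The main obstacle is the first paragraph: one must unwind the J\o rgensen--Pauksztello axioms for co-slicings carefully to guarantee that the Harder--Narasimhan decomposition is genuinely split (so every indecomposable sits in a single slice) and that the Hom-vanishing axiom across different slices has the direction needed for the Serre-type duality to transfer the vanishing from one direction to the other. Once this conceptual step is in place, the second paragraph is a concrete module-theoretic computation that goes through for any non-semisimple symmetric algebra.
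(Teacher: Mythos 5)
Your proof is correct and rests on the same pivot as the paper's: the $0$-Calabi--Yau duality $\Hom(X,Y)\simeq \Hom(Y,X)^{*}$ on $\Ho(A\text{-proj})$ for $A$ symmetric forces any two indecomposables with a non-zero Hom between them into a single slice, where condition (S) then fails. Where you do genuinely more work is in exhibiting the witness pair: the paper's proof merely says ``consider any two indecomposable objects in the co-heart with non-zero Hom-space between them'' without producing such a pair, and its preliminary remark about pairs of projectives does not suffice when $A$ is local (e.g.\ $k[x]/(x^{2})$), since condition (S) only constrains \emph{non-isomorphic} indecomposables and there is then only one indecomposable projective up to isomorphism. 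Your two-term complex $C=(P_{i}\xrightarrow{\,f\,}P_{i})$ with $0\neq f\in\rad\End_{A}(P_{i})$ closes exactly that gap, and your first paragraph is also cleaner in that it argues with arbitrary indecomposables of $\Ho(A\text{-proj})$, each sitting in a single slice by the split Harder--Narasimhan property, rather than only with objects of the co-heart; the direction of the Hom-vanishing axiom across slices is indeed immaterial once the duality is in play. The only blemish is a harmless indexing slip: with the usual identification $\Hom_{A}(Ae_{j},Ae_{i})\simeq e_{j}Ae_{i}$, the hypothesis $e_{i}\rad(A)e_{j}\neq 0$ produces a non-zero map between $P_{i}$ and $P_{j}$ in the direction opposite to the one you wrote, which of course changes nothing.
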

\begin{proof}
Everything relies on the fact that as $A$ is a symmetric $k$-algebra, the triangulated category $\Ho({A}\text{-proj})$ is a $0$-Calabi-Yau category, \textit{i.e.} for $P^{\bullet}, Q^{\bullet}\in \Ho({A}\text{-proj})$, the Hom-spaces $\Hom_{\Ho({A}\text{-proj})}(P^{\bullet},Q^{\bullet})$ and  $\Hom_{\Ho({A}\text{-proj})}(Q^{\bullet},P^{\bullet})$ are naturally dual (cf. [Ric96, Corollary 3.2]). Now for any co-slicing $\mathcal{Q}$, consider any two indecomposables objects in the co-heart of $\mathcal{Q}$ with non-zero Hom-space between them. By the definition of a co-slicing and as $\Ho({A}\text{-proj})$ is a $0$-Calabi-Yau category, they have to belong to the same slice, and so this co-stability condition cannot satisfy condition (S). Hence, we have that the space of co-stability conditions of $\Ho({A}\text{-proj})$ is empty.
\end{proof}


\begin{thebibliography}{}


\bibitem[Aih10]{Ai} T. Aihara. \textit{Mutating Brauer trees}, arXiv:1009.3210v3, 2010.

\bibitem[Ale86]{Al} J.L. Alperin. \textit{Local representation theory}. Cambridge Univ. Press, Cambridge, 1986.
 
\bibitem[BBD82]{BBD} A. Beilinson, J. Bernstein, and P. Deligne. Faisceaux pervers. In \textit{Analysis and topology on singular spaces I}, Ast\'erisque \textbf{100}, Soc. Math.France, Paris, 5-171, 1982.

\bibitem[BeRe07]{BeRe} A. Beligiannis and I. Reiten. Homological and homotopical aspects of torsion theories, \textit{Mem. Amer. Math. Soc.} \textbf{188}, no. 883, 2007.

\bibitem[BrTh11]{BraTh} C. Brav and H. Thomas. Braid groups and Kleinian singularities, \textit{Mathematische Annalen} \textbf{351}. 4, 1005-1017, 2011.

\bibitem[Bri05]{Bri1} T. Bridgeland. Space of stability conditions, \textit{Algebraic geometry: Seattle 2005, Part 1}, 1-21, 2009.

\bibitem[Bri07]{Bri2} T. Bridgeland. Stability conditions on triangulated categories, \textit{Ann. of Math.} 
(2), 166(2):317-345, 2007. 

\bibitem[Bri09]{Bri3} T. Bridgeland. Stability conditions and Kleinian singularities, \textit{Int. Math. Res. Not. 
IMRN}, (21):4142-4157, 2009.

\bibitem[ChRo]{ChRou2} J. Chuang and R. Rouquier. Calabi-Yau algebras and perverse equivalences, in preparation.

\bibitem[Ha02]{Ha} A. Hatcher. \textit{Algebraic Topology}. Cambridge University Press, 2002.

\bibitem[HaReSm96]{HaReSm} D. Happel, I. Reiten, S. O. Smal\o. Tilting in abelian categories and quasitilted algebras, \textit{Mem. Amer. Math. Soc.} \textbf{575}, 1996.

\bibitem[HaRi82]{HaRi} D. Happel, C.M. Ringel. Tilted algebras, \textit{Trans. Amer. Math. Soc.} \textbf{274}, 399-443, 1982.

\bibitem[HuKh01]{HueKho} R. S. Huerfano and M. Khovanov. A category for the adjoint representation, \textit{J.Algebra} \textbf{246}, 514-542, 2001.

\bibitem[Ike14]{Ike} Akishi Ikeda. Stability conditions for pre-projective algebras and root systems of Kac-Moody Lie algebras, arXiv: 1402.1392v1, 2014.

\bibitem[JoPa11]{JoPau} P. J\o rgensen, D. Pauksztello, The co-stability manifold of a triangulated category, arXiv:1109.4006, 2011. 

\bibitem[KhSe02]{KhoSei} M. Khovanov and P.Seidel. Quivers, Floer cohomology, and braid group actions, \textit{J. Amer. Math. Soc.}, \textbf{15}(1):203-271, 2002.

\bibitem[Oku98]{Ok} T. Okuyama. Some examples of derived equivalent blocks of finite groups, preprint, 1998.

\bibitem[OrTe92]{OrTe} P. Orlik, H. Terao. \textit{Arrangements of hyperplanes}. Grundlehren Math. Wiss, vol. 30, Springer Verlag, New York-Berlin-Heidelberg, 1992.

\bibitem[Muc08]{Mu} I. Muchtadi-Alamsyah. Braid action on derived category of Nakayama algebras, \textit{Comm. Algebra} \textbf{36}, no. 7, 2544-2569, 2008.

\bibitem[Qiu11]{Qiu} Y. Qiu. Stability conditions and quantum dilogarithm identities for Dynkin quivers, arXiv:1111.1010v2, 2011.

\bibitem[Ric89]{Ri1} J. Rickard. Derived categories and stable equivalences, \textit{J. Pure Appl. Algebra} \textbf{61}(3): 303-317, 1989.

\bibitem[Ric02]{Ri2} J. Rickard. Equivalences of derived categories for symmetric algebras, \textit{J. Algebra} \textbf{257}, 460-481, 2002.

\bibitem[RoZi03]{RouZi} R. Rouquier and A. Zimmermann. Picard groups for derived module categories, \textit{Proc. London Math. Soc.} (3) \textbf{87}, no. 1, 197-225, 2003. 

\bibitem[SeTh01]{SeiTho }P. Seidel and R. Thomas. Braid group actions on derived categories of coherent sheaves, \textit{Duke Math. J.}, \textbf{108}(1):37-108, 2001.

\bibitem[Tho06]{Tho} R.P. Thomas. Stability conditions and the braid group, \textit{Comm. Anal. Geom.}, \textbf{14}(1):135-161, 2006.

\bibitem[Wo10]{Wo} J. Woolf, Stability conditions, torsion theories and tilting, \textit{J. London Math. Soc.}\textbf{ 82}, 663-682, 2010.

\end{thebibliography}
\end{document}